\documentclass[a4paper,11pt,reqno]{amsart}
\usepackage{amssymb,amsmath,amsthm}
\usepackage{booktabs, array, ragged2e, longtable}
\newcolumntype{P}[1]{>{\RaggedRight\arraybackslash}p{#1}}
\numberwithin{equation}{section}
\usepackage{natbib}
\usepackage{paralist}
\usepackage[margin=0.75in]{geometry}
\usepackage{mathrsfs}
\usepackage{setspace}
\usepackage[final]{graphicx}
\usepackage{xcolor}
\usepackage{epstopdf}
\usepackage{subcaption}
\usepackage{float}
\usepackage{enumitem}
\usepackage{placeins}
\usepackage{hyperref}
\hypersetup{
  hidelinks,
  pdftitle={Finite-Sample Metric Non-Collapse for Geometrically Supervised Latent Control Models},
  pdfauthor={Alain Bensoussan, Minh-Nhat Phung, Minh-Binh Tran},
  pdfsubject={Finite-sample metric non-collapse and controlled semiconjugacy for geometrically supervised latent control models},
  pdfkeywords={geometrically supervised latent models, metric non-collapse, geometric coercivity, semiconjugacy, planning transfer}
}

\newcommand{\R}{\mathbb{R}}

\newcommand{\dd}{\mathrm{d}}

\DeclareMathOperator{\Lip}{Lip}
\DeclareMathOperator{\Cov}{Cov}

\theoremstyle{plain}
\newtheorem{theorem}{Theorem}[section]
\newtheorem{lemma}[theorem]{Lemma}
\newtheorem{proposition}[theorem]{Proposition}
\newtheorem{corollary}[theorem]{Corollary}

\theoremstyle{definition}
\newtheorem{definition}[theorem]{Definition}

\theoremstyle{remark}
\newtheorem{remark}[theorem]{Remark}

\newenvironment{assumption}[2]%
  {\par\medskip\noindent%
   \phantomsection\label{ass:#1}{}%
   \textbf{Assumption~#1\ (#2).}\itshape\ }%
  {\par\medskip}

\usepackage{multirow}
\newcolumntype{L}[1]{>{\raggedright\let\newline\\\arraybackslash\hspace{0pt}}m{#1}}
\newcolumntype{C}[1]{>{\centering\let\newline\\\arraybackslash\hspace{0pt}}m{#1}}
\newcolumntype{R}[1]{>{\raggedleft\let\newline\\\arraybackslash\hspace{0pt}}m{#1}}

\begin{document}
\title[Finite-Sample Metric Non-Collapse for Geometrically Supervised Latent World Models]{Finite-Sample Metric Non-Collapse for Geometrically Supervised Latent World Models in Control}
\author[Alain Bensoussan
	]{Alain Bensoussan
	}

	\address[Alain Bensoussan
	]{Naveen Jindal School of Management \\
		University of Texas at Dallas, Richardson, TX, 75080 USA}

	\email[
	]{\href{mailto:alain.bensoussan@utdallas.edu}{alain.bensoussan@utdallas.edu}}

	\author[Minh-Nhat Phung]{Minh-Nhat Phung }

	\address[Minh-Nhat Phung]{Department of Mathematics \\
		Texas A\&M University, College Station, TX, 77843 USA}

	\email[Minh-Nhat Phung]{\href{mailto:pmnt1114@tamu.edu}{pmnt1114@tamu.edu}}

	\author[Minh-Binh Tran]{Minh-Binh Tran}
	\address[Minh-Binh Tran]{Department of Mathematics \\
		Texas A\&M University, College Station, TX, 77843 USA}

	\email[Minh-Binh Tran]{\href{mailto:minhbinh@tamu.edu}{minhbinh@tamu.edu}}
	\thanks{ M.-N. Phung and M.-B. Tran are funded in part by NSF CAREER  DMS-2303146, and NSF Grants DMS-2204795, DMS-2305523,  DMS-2306379. The Supplementary Material is available on arXiv at https://arxiv.org/abs/2608.07265.}

\begin{abstract}
We establish a finite-sample learning-to-control theory for geometrically supervised latent models of nonlinear deterministic systems. Geometric supervision is used only during training: simulator state, proprioception, or state estimates with independently validated metric and directional error bounds supply observable-state distances and tangent directions, while deployment remains observation- and action-conditioned. We introduce an encoder-only local--global metric hinge that enforces directional resolution and separated-state discrimination. Under regular observable-factor, coverage, finite-capacity approximation, and uniform $C^{1,1}$ hypotheses, a computable one-sided regularization regime has a strong selection property: with high probability, every approximate empirical minimizer is simultaneously pointwise co-Lipschitz and uniformly approximately semiconjugate to the controlled dynamics. Approximation, sampling, and optimization errors remain explicit and separate. Norm-constrained tensor-product B-spline classes constructively realize the approximation hypotheses, and the interpolation exponent converting mean residual control into a uniform bound is sharp. A modular deterministic corollary transfers the learned certificates to trajectory, finite-horizon cost, learned-cost-head, and optimizer guarantees, while a validated finite-net result enables sharper model-specific certification. Controlled experiments isolate collapse and folding, quantify the analytic certificate's reserve, and demonstrate the control benefit of restored metric resolution. The principal contribution is a complete finite-sample implication from approximate empirical optimization to metric faithfulness, uniform controlled dynamics, and reliable planning for the same learned model.
\end{abstract}
\maketitle
\allowdisplaybreaks
\raggedbottom

\section{Introduction}
\label{sec:introduction}

A learned latent world model is useful for control only if it does more than fit one-step data on average. It must retain the state-space distinctions that matter under interventions, and its latent transition must remain uniformly compatible with the controlled dynamics. Otherwise a planner may exploit folds, overlaps, or poorly resolved regions of the representation even when the empirical prediction loss is small. The central question of this paper is therefore control-theoretic: under what conditions does an empirically trained latent model become a metrically faithful approximate realization of a nonlinear deterministic control system, with quantitative guarantees for downstream planning?

The model-based reinforcement-learning viewpoint that learned dynamics can support planning and policy improvement goes back to Dyna and underlies modern world-model methods for visual control~\cite{Sutton1991Dyna,Ha2018WorldModels,Hafner2019PlaNet,Hafner2020Dreamer,Hafner2025DreamerV3}. Methods such as PlaNet and Dreamer encode high-dimensional observations into compact latent variables and learn the dynamics in latent space. Joint-embedding predictive architectures (JEPA) adopt a related but distinct principle: instead of reconstructing every detail of the next observation, the model predicts its representation~\cite{LeCun2022AMI}. This principle has led to I-JEPA for images~\cite{Assran2023JEPA}, V-JEPA for video~\cite{Bardes2024VJEPA}, and V-JEPA~2, which combines video pretraining with robot interaction data to construct an action-conditioned latent model for planning~\cite{Assran2025VJEPA2}. These developments motivate a precise mathematical problem: when does action-conditioned latent prediction produce both a non-collapsed representation and a latent transition that faithfully represents the underlying controlled system?

We consider a deterministic controlled system
\[
s_{t+1}=G(s_t,a_t),
\qquad
x_t=H(s_t),
\]
where the deployed model receives the observation $x_t$. An action-conditioned latent model consists of an encoder $\Phi$ and a transition map $F$,
\[
z_t=\Phi(H(s_t)),
\qquad
z_{t+1}\approx F(z_t,a_t),
\]
with residual
\[
R[\Phi,F](s,a)
:=
\Phi(H(G(s,a)))-F(\Phi(H(s)),a).
\]
A uniform bound on this residual is an approximate controlled-semiconjugacy estimate: evolving the physical system and then encoding nearly agrees with encoding first and evolving in latent space. The prediction loss is observation-based, but the metric regularizer studied here uses additional training information. Observable-factor coordinates, or equivalent state distances and tangent directions, are used to evaluate its local and separated-pair terms. The framework is formulated for settings where observable-state metric information is available, including simulator-state, proprioceptive, and state-estimation regimes.

Forward prediction alone cannot yield the representation property needed for deterministic control. The constant encoder $\Phi\equiv z_0$, together with $F\equiv z_0$, has zero prediction loss. More subtly, positive latent variance or covariance does not prevent folding: a representation may have substantial global spread while identifying distinct states or losing arbitrarily much local metric resolution. For the worst-case deterministic planning guarantees pursued here, the relevant non-collapse condition is not merely distributional. It is a pointwise lower metric estimate of the form
\[
\|\Phi(H(s))-\Phi(H(s'))\|_Z
\ge c_\ast |s-s'|,
\qquad s,s'\in S,
\]
which makes the learned representation quantitatively injective on the observable state factor.

To enforce this geometry, we introduce an encoder-only local--global metric hinge. Its local directional component penalizes infinitesimal collapse of $D(\Phi\circ H)$, while its separated-pair component penalizes overlap of states that are a prescribed distance apart in the observable-state metric. The penalty is one-sided: it vanishes after the desired local and global margins are reached.  It is therefore decoder-free, inverse-model-free, and independent of a reconstruction objective. The two components have antecedents in conditioning and lower co-Lipschitz regularization; the contribution here is the finite-sample control theorem obtained from their joint enforcement.

The theorem-level question is whether approximate optimization of the empirical training objective itself forces both a quantitatively faithful representation and a uniformly accurate controlled latent transition.  The proof has the following parallel-output structure:
\[
\boxed{
\begin{array}{c}
\text{empirical objective comparison and margin clearing}\\[1mm]
\begin{array}{c@{\hspace{5em}}c}\swarrow&\searrow\end{array}\\[-1mm]
\begin{array}{c@{\qquad}c}
\begin{array}{c}\text{pointwise}\\[-1mm]\text{co-Lipschitzness}\end{array}
&
\begin{array}{c}\text{uniform approximate}\\[-1mm]\text{controlled semiconjugacy}\end{array}
\end{array}\\[-1mm]
\begin{array}{c@{\hspace{5em}}c}\searrow&\swarrow\end{array}\\[-1mm]
\text{worst-case finite-horizon optimizer transfer}
\end{array}}
\]
Thus the result is a finite-sample realization theorem rather than only an anti-collapse or planning-regret statement.  The representation theorem supplies the metric and dynamics constants in parallel; the deterministic planning corollary then uses both, while approximation, statistical, training-optimization, cost-head, and planner errors remain explicit and separate.

\textbf{Precise novelty statement.} Under explicit observable-geometry, approximation, coverage, regularity, and optimization hypotheses, our principal theorem proves a finite-sample selection principle with quantitative geometric coercivity: throughout a computable one-sided regularization half-line, every approximate empirical minimizer of the fixed objective is pointwise co-Lipschitz and uniformly approximately semiconjugate to the controlled dynamics. This every-minimizer objective-to-certificate implication, followed by a modular deterministic planning transfer, is the paper's central contribution.

The principal contributions are:
\begin{enumerate}[label=(\roman*),itemsep=0pt,topsep=2pt]
\item We identify collapse and folding obstructions for prediction and covariance control, and introduce an encoder-only local--global hinge aligned with pointwise metric faithfulness.
\item We prove the every-minimizer finite-sample co-Lipschitz and uniform controlled-semiconjugacy theorem, with approximation, statistical, and optimization errors separated and an explicit certified regularization regime.
\item We verify the finite-capacity hypotheses constructively for norm-constrained tensor-product B-splines, establish sharpness of the interpolation exponent, and transfer the learned certificates to deterministic planning with compatible or learned costs.
\item We provide controlled experiments that isolate the obstructions, quantify the analytic certificate's uniformity reserve, and demonstrate the control benefit of restored metric resolution.
\end{enumerate}

The explicit lower bound on $\lambda$ is the comparison scale at which the faithful zero-penalty comparator forces every approximate minimizer into the margin-clearing regime. It defines an a priori certified half-line and an error-budget map; after training, the finite-net proposition gives a sharper model-specific a posteriori certificate. The experiments use the same quantities diagnostically without identifying finite-set performance with the theorem's continuum guarantee.

\begin{theorem}[Main representation and planning theorem, informal form]
\label{thm:main-introduction}
Suppose that the controlled system admits a finite-dimensional regular observable factor, that the training and metric-sampling distributions satisfy the stated coverage conditions, and that the finite-capacity model classes approximate a smooth, non-collapsed exact realization with uniform \(C^{1,1}\) bounds. Then, for sufficiently large capacity \(W\) and sample size \(n\), for every confidence level \(\delta\in(0,1)\) and training tolerance \(\varepsilon_{\rm train}\ge0\), there exists an explicit regularization threshold such that, with probability at least \(1-\delta\), every \(\varepsilon_{\rm train}\)-approximate empirical minimizer \((\Phi^\star,F^\star)\) satisfies
\[
\|\Phi^\star(H(s))-\Phi^\star(H(s'))\|_Z
\ge c_\ast |s-s'|
\qquad
\text{for all }s,s'\in S,
\]
for an explicit constant \(c_\ast>0\), and
\[
\sup_{(s,a)\in S\times A}
\left\|
\Phi^\star(H(G(s,a)))
-
F^\star(\Phi^\star(H(s)),a)
\right\|_Z
\le
\eta_{W,n,\delta}(\varepsilon_{\rm train}).
\]
The modulus \(\eta_{W,n,\delta}(\varepsilon_{\rm train})\) tends to zero with the approximation, statistical, and training-optimization errors. 
Separately, if the physical stage and terminal costs are Lipschitz continuous in the state variable and \(\widehat{\mathbf a}\) is a \(\xi_{\rm plan}\)-approximate optimizer of the induced latent problem, then, for horizon \(T\), there exists an explicit bound \(\Delta_T\) such that
\[
J_{\rm true}(s_0;\widehat{\mathbf a})-J^\star(s_0)
\le
2\Delta_T\!\left(\eta_{W,n,\delta}(\varepsilon_{\rm train})\right)
+
\xi_{\rm plan},
\]
where \(J_{\rm true}\) denotes the total physical cost under the true dynamics and \(J^\star\) denotes the corresponding optimal value.

Learned latent cost heads add \(2(T\varepsilon_\ell+\varepsilon_g)\); exact constants and transfer bounds are in Theorem~\ref{thm:representation} and Corollary~\ref{cor:empirical-planning-transfer}.
\end{theorem}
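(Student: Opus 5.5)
The plan follows the boxed diagram in the introduction: an objective comparison, then two parallel consequences (co-Lipschitzness and uniform semiconjugacy), then a deterministic planning transfer.

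\textbf{Step 1: objective comparison.} Write the empirical objective as
\[
\widehat{\mathcal L}_n=\widehat{\mathcal L}^{\rm pred}_n+\lambda\,\widehat{\mathcal R}_n ,
\]
where $\widehat{\mathcal R}_n$ is the local--global hinge. Using the finite-capacity approximation hypothesis, pick a comparator $(\Phi_W,F_W)$ in the model class close in $C^{1,1}$ to the smooth non-collapsed exact realization. Because the exact realization clears both hinge margins with room to spare, a small enough $C^1$ perturbation still clears them. So $(\Phi_W,F_W)$ has zero hinge penalty, and its prediction loss is of the order of the approximation error $\varepsilon_{\rm app}(W)$.

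For any $\varepsilon_{\rm train}$-approximate minimizer this gives
\[
\widehat{\mathcal L}^{\rm pred}_n(\Phi^\star,F^\star)+\lambda\,\widehat{\mathcal R}_n(\Phi^\star)\le \varepsilon_{\rm app}+\varepsilon_{\rm train}.
\]
Uniform concentration over the class then transfers both terms to population quantities at a cost $\varepsilon_{\rm stat}(n,\delta)$ with probability $1-\delta$. The uniform $C^{1,1}$ bounds make the class totally bounded, so a covering-number bound plus Hoeffding suffices. The outcome is that the population hinge is at most $(\varepsilon_{\rm app}+\varepsilon_{\rm train}+\varepsilon_{\rm stat})/\lambda$, and the population prediction loss is at most $\varepsilon_{\rm app}+\varepsilon_{\rm train}+2\varepsilon_{\rm stat}$ (the hinge is nonnegative).

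\textbf{Step 2a: co-Lipschitz branch.} Convert the small mean hinge into pointwise margin clearing, with margins halved. Suppose the local term fails at some point and direction. The Lipschitz bound on $D(\Phi\circ H)$, which comes from the $C^{1,1}$ hypothesis, keeps the failure on a ball of definite radius. The coverage lower bound on the metric-sampling density then makes the mean hinge at least a fixed positive number. The same argument applies to separated pairs. Choosing the threshold $\lambda_0$ so that $(\varepsilon_{\rm app}+\varepsilon_{\rm train}+\varepsilon_{\rm stat})/\lambda_0$ is below that number yields the certified half-line.

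Margin clearing then gives co-Lipschitzness in two regimes:
\begin{itemize}
\item For pairs with $|s-s'|\ge r$, the separated-pair margin gives the bound directly.
\item For pairs with $|s-s'|<r$, use a Taylor expansion along the segment in the regular observable factor. The local margin bounds $\|D(\Phi\circ H)v\|$ from below, and the $C^{1,1}$ remainder is quadratic. With $r$ small, this gives $c_\ast|s-s'|$ with, for example, $c_\ast=\min(\text{local margin}/2,\ \text{global margin}/\mathrm{diam}\,S)$.
\end{itemize}

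\textbf{Step 2b: semiconjugacy branch.} The residual $R[\Phi^\star,F^\star]$ is uniformly Lipschitz, with constants from the $C^{1,1}$ bounds and the Lipschitz constants of $G$ and $H$. The training distribution has a density lower bound on a regular domain. A standard interpolation inequality, comparing the $L^2$ norm to the $L^\infty$ norm for Lipschitz functions on a $d$-dimensional domain, then gives
\[
\sup\|R\|\lesssim \|R\|_{L^2}^{2/(d+2)}\,L^{d/(d+2)} .
\]
This defines $\eta_{W,n,\delta}(\varepsilon_{\rm train})$, which tends to zero with the three errors.

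\textbf{Step 3: planning transfer.} Fix an action sequence. Induction on $t$, using $\sup\|R\|\le\eta$ and the Lipschitz constant of $F^\star$ in $z$, shows that the latent rollout and the encoded true trajectory differ by at most $\eta\sum_{k<t}L_F^k$. Co-Lipschitzness turns a latent cost that is compatible through $c_\ast^{-1}$ into a physical cost error. Summing over the horizon gives a uniform bound $|J_{\rm true}-J_{\rm lat}|\le\Delta_T(\eta)$ valid for every action sequence.

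The usual sandwich argument then finishes:
\[
J_{\rm true}(\widehat{\mathbf a})\le J_{\rm lat}(\widehat{\mathbf a})+\Delta_T\le J_{\rm lat}(\mathbf a^\star)+\xi_{\rm plan}+\Delta_T\le J^\star+2\Delta_T+\xi_{\rm plan}.
\]
Learned cost heads with uniform errors $\varepsilon_\ell$ per step and $\varepsilon_g$ terminal contribute the extra $2(T\varepsilon_\ell+\varepsilon_g)$ through the same sandwich.

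\textbf{Main obstacle.} The hardest step is Step 2a, the passage from a small mean hinge to pointwise clearing for every minimizer. It needs a quantitative lower bound on the mass of the failure set, uniform over the whole class. This is where the $C^{1,1}$ bounds, the coverage constants and the explicit $\lambda$ threshold must fit together. I would first prove a lemma: for any $\Phi$ in the class, a margin deficit of $\theta$ at one point forces a hinge mean of at least $c\,\theta^{1+m}$, where $m$ is the dimension of the sampled set (local: state$\times$direction; pair: state$\times$state). I would then invert this bound.
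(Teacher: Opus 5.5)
Your route is the paper's. It compares against a margin-clearing competitor with zero empirical and population hinge, and gets uniform deviations from Hoeffding plus a union bound over $C^0$ nets. It then applies the Lipschitz--Ahlfors $L^2$-to-$L^\infty$ interpolation to the two hinge defects and to the residual, uses a short-chord/long-chord split for $c_\ast$, and closes with the simulation-lemma sandwich using McShane-extended compatible costs. There is, however, a bookkeeping error in Step~1 that changes the form of the certified threshold.

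When you pass from the empirical to the population hinge of $(\Phi^\star,F^\star)$, the deviation of the regularizer itself is added \emph{after} dividing by $\lambda$:
\[
\mathcal N_{\rm met}(\Phi^\star)\le\frac{\varepsilon_{\rm app}+\varepsilon_{\rm stat}+\varepsilon_{\rm train}}{\lambda}+\varepsilon_{\rm stat}.
\]
It is not $(\varepsilon_{\rm app}+\varepsilon_{\rm train}+\varepsilon_{\rm stat})/\lambda$. Hence no choice of $\lambda$ alone pushes this bound below the clearing level of your Step~2a, which is the paper's $P^\ast$. You first need $\varepsilon_{\rm stat}<P^\ast$; this is where ``sufficiently large $n$'' is genuinely used. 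Only then does $\lambda\ge(\varepsilon_{\rm app}+\varepsilon_{\rm stat}+\varepsilon_{\rm train})/(P^\ast-\varepsilon_{\rm stat})$ give the half-line. This is not cosmetic: in the paper's finite-capacity experiment the binding obstruction is exactly $\varepsilon_{\rm stat}\gg P^\ast$, and there no admissible $\lambda$ exists.

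There are also some smaller slips.
\begin{itemize}
\item With the squared hinge, a deficit $\theta$ at one point forces a mean of order $\theta^{2+m}$, not $\theta^{1+m}$. This holds up to the saturation branch when the ball radius exceeds the Ahlfors radius. Your Step~2b exponent $2/(d+2)$ is the consistent one.
\item The Lipschitz constant of $(s,v)\mapsto[\kappa-\|D\psi(s)[v]\|_Z]_+$ needs $\sup\|D\psi\|$ as well as $\Lip(D\psi)$.
\item The approximation hypothesis gives only $C^1$ closeness with a uniform $C^{1,1}$ budget, not closeness in the $C^{1,1}$ norm. That is all you actually use.
\item Your split scale $r$ must be the pair-sampling scale $\rho$ built into the regularizer. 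So ``$r$ small'' is the a priori design constraint $B_{\rm loc}\rho\le\kappa/4$, and $\alpha$ is then chosen below the comparator's separation at scale $\rho$.
\end{itemize}
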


Section~\ref{sec:related-representation} positions the result; Sections~\ref{sec:system-quotient}--\ref{sec:main} develop the model and prove the theorem; Sections~\ref{sec:why-semiconjugacy}--\ref{sec:discussion-certification} give planning transfer, evidence, scope, and outlook.

\section{Relation to coercivity, metric embeddings, abstraction, and planning transfer}
\label{sec:related-representation}

\paragraph{\bf Coercivity and stability gaps.}
A recurring theme in inverse problems and statistical learning is that a data-fit residual becomes scientifically meaningful only after it is paired with a stability mechanism; see, for example, the regularization framework of Engl, Hanke, and Neubauer~\cite{EnglHankeNeubauer1996}.  The present theory brings that perspective to latent control.  Pure prediction exposes a geometric stability gap, and the local--global hinge supplies a conditional lower stability estimate from empirical objective value to pointwise latent geometry under explicit observable-factor, realizability, regularity, coverage, supervision, and optimization hypotheses.

\paragraph{\bf Quantitative injectivity and metric regularization.}
Bi-Lipschitz and co-Lipschitz embeddings quantify injectivity through lower metric distortion, while pairwise and contrastive objectives shape representation geometry~\cite{HadsellChopraLeCun2006}.  The local term controls directional singular values on the observable factor, and the separated-pair term resolves distant folds.  The theorem then propagates sampled lower-margin control to a continuum co-Lipschitz constant under $C^{1,1}$ regularity and coverage, producing a quantitative injectivity certificate for every approximate empirical minimizer.

\paragraph{\bf Observability, abstraction, bisimulation, and semiconjugacy.}
Observable quotients and state abstractions identify the distinctions that must be retained for dynamics and control.  Approximation metrics and approximate bisimulation provide system-level notions of behavioral closeness~\cite{GirardPappas2007,GirardPappas2011,ferns2004metrics,ferns2011bisimulation}, with learned counterparts such as DeepMDP and DBC~\cite{gelada2019deepmdp,zhang2021learning}.  Here the observable factor isolates the recoverable control state, and the theorem equips it simultaneously with a lower metric certificate and a uniform deterministic controlled-semiconjugacy estimate.

\paragraph{\bf Empirical-to-uniform estimates.}
Uniform laws of large numbers and covering-number methods control deviations over function classes~\cite{vanDerVaartWellner1996,Dudley1999}.  We combine these population estimates with capacity-uniform Lipschitz budgets and lower-Ahlfors coverage to obtain pointwise continuum bounds.  The sharp $L^2$-to-$L^\infty$ exponent identifies the exact regularity-limited conversion rate and makes the dimension dependence of the guarantee explicit.

\paragraph{\bf Simulation lemmas and planning transfer.}
Classical simulation lemmas and Lipschitz model-based bounds propagate one-step model error to finite-horizon value or planning error~\cite{kearns2002near,asadi2018lipschitz}.  Our deterministic sup-norm formulation is tailored to optimization over action sequences.  The representation theorem supplies exactly the two constants required by the modular transfer argument: $\eta$ controls the latent dynamics mismatch, and $c_*$ controls the Lipschitz constants of compatible latent costs.

\paragraph{\bf JEPA and learned world models.}
Modern world models and JEPA methods motivate latent action-conditioned prediction~\cite{Sutton1991Dyna,Ha2018WorldModels,Hafner2019PlaNet,Hafner2020Dreamer,LeCun2022AMI,Assran2023JEPA,Bardes2024VJEPA,Assran2025VJEPA2}.  Recent theory establishes complementary results on finite-sample JEPA generalization and planning regret, structured identifiability, planning-aligned geometry, reachability, action sensitivity, and prediction--control mismatch~\cite{Cui2026GeneralizationJEPA,Klindt2026LeJEPAWorldModel,Zhang2026ControlledWorldModels,Destrade2026ValueGuidedJEPA,Li2026RCAux,You2026ControlPredictability,Wang2026CertifiedPredictability,Gan2026ActSWM,Liu2026ProWorld}.  Auxiliary-task JEPA theory characterizes which distinctions an additional prediction target can preserve~\cite{Yu2025AuxJEPA}, while bi-Lipschitz autoencoder regularization targets injectivity and robustness in reconstruction models~\cite{Zhan2026BLAE}.  Controlled-world-model identifiability addresses latent recovery under structural hypotheses~\cite{Zhang2026ControlledWorldModels}.  The present theorem addresses the subsequent objective-level implication: a fixed geometrically supervised finite-sample objective forces pointwise non-collapse and uniform semiconjugacy for all approximate minimizers in the certified regime, and those certificates feed directly into deterministic optimizer transfer.

\section{Deterministic controlled systems and the observable quotient}
\label{sec:system-quotient}

In this section, we mathematically formalize the deterministic system and the observable quotient.

First, let the state space \(S\subset\mathbb R^{d_s}\) be a compact convex Euclidean domain with nonempty interior, and let the action space \(A\subset\mathbb R^{d_a}\) be a compact Euclidean domain. No smoothness of \(\partial S\) is imposed. Whenever derivatives up to order \(C^{1,1}\) are used, the relevant maps are assumed to admit \(C^{1,1}\) extensions to open Euclidean neighborhoods of their compact domains. The metric spaces \((S,d_S)\) and \((A,d_A)\) are Euclidean, with
\begin{align*}
	d_S(s,s'):=|s-s'|,\qquad d_A(a,a'):=|a-a'|.
\end{align*}

The state-action space \(M = S \times A\) is equipped with the usual product metric
\begin{align*}
	d_M((s,a),(s',a')) := d_S(s,s')+d_A(a,a').
\end{align*}
The dimension of the state-action space is denoted by \(d_m:= d_s+d_a\).

The true deterministic dynamics and observation map are
\[
s_{t+1}=G(s_t,a_t),\qquad x_t=H(s_t),
\]
where
\[
G:S\times A\to S,
\qquad
H:S\to X.
\]
Here \(X\) is a separable Banach space, typically \(\R^{d_x}\) with \(d_x\gg d_s\).
We assume throughout that \(G\) and \(H\) are Lipschitz. In the metric-regularized results, Lemma~\ref{lem:smooth-realizer} and those that follow, the neighborhood extensions stipulated above are assumed to be of class \(C^{1,1}\).

If \(H\) is not injective, no representation built from observations can recover more
than the information contained in controlled observation histories. The correct object is
therefore an observable quotient.

\begin{definition}[Observational equivalence]
For \(s,s'\in S\), write \(s\sim s'\) if, for every \(k\ge0\) and every action sequence
\((a_0,\ldots,a_{k-1})\in A^k\),
\[
H(G_{a_{0:k-1}}(s))=H(G_{a_{0:k-1}}(s')),
\]
where \(G_{a_{0:k-1}}\) denotes the \(k\)-fold composition of \(G\) along the prescribed actions, that is
\begin{align*}
	G_{a_{0:-1}}(s)=s,\quad G_{a_{0:0}}(s)= G(s,a_0),\quad G_{a_{0:k}}(s)=G(G_{a_{0:k-1}}(s),a_k).
\end{align*}
This is the deterministic, controlled analogue of bisimulation and state abstraction~\cite{ferns2004metrics,ravindran2003smdp}.
\end{definition}

\begin{definition}[Observable quotient]
\label{def:observable-quotient}
For \(s\in S\), denote its observational equivalence class by
\[
[s]:=\{r\in S:\ r\sim s\}.
\]
The observable quotient is
\[
S_{\mathrm{obs}}:=S/\!\sim:=\{[s]:s\in S\}.
\]
For two equivalence classes \([s],[s']\in S_{\mathrm{obs}}\), define the quotient chain pseudometric\footnote{It is necessary to have chains in the definition, the naive definition \(\tilde d([s],[s'])=\inf\{d_S(u,v):u\in [s],v\in[s']\}\) is not enough. For example, \(S=\{0,1,2,3\}\), which is cut into \(3\) equivalent classes \(\{0\},\{1,2\},\{3\}\); \(\tilde d\) does not define a metric in such case.}
\[
\begin{aligned}
d_{\mathrm{obs}}([s],[s'])
:=
\inf_{N\ge1}\inf
\Bigg\{
\sum_{j=0}^{N-1} d_S(u_j,v_j):\;&
 u_j,v_j\in S,
\quad j=0,\ldots,N-1,\\
& u_0\in [s],
\quad v_{N-1}\in [s'],\\
& v_j\sim u_{j+1},
\quad j=0,\ldots,N-2
\Bigg\}.
\end{aligned}
\]
The condition \(v_j\sim u_{j+1}\) allows zero-cost jumps within observational equivalence
classes. If \(d_{\mathrm{obs}}\) is only a pseudometric, we quotient once more by the
zero-distance relation. Equivalently, in the sequel we work on the metric quotient and
write
\[
d_{S_{\mathrm{obs}}}:=d_{\mathrm{obs}}.
\]
\end{definition}
The dynamics descend to the quotient. Indeed, if \(s\sim s'\), then for every \(a\in A\)
and every future action sequence, the observation sequences generated from \(G(s,a)\) and
\(G(s',a)\) coincide. Hence
\[
\bar G([s],a):=[G(s,a)]
\]
is well-defined. The observation map descends to
\[
\bar H:S_{\mathrm{obs}}\to X,
\qquad
\bar H([s])=H(s),
\]
because \(s\sim s'\) implies \(H(s)=H(s')\). However, this quotient construction alone
need not make \(\bar H\) bi-Lipschitz. Thus the following observability condition is a
substantive structural assumption.

\begin{assumption}{A1}{Regular observable Markov factor}
We assume that there exists a finite-dimensional observable factor
\[
        q:S\to S_{\rm o}\subset\mathbb R^{d_o},
\]
where \(S_{\rm o}\) is a compact convex Euclidean domain with nonempty interior, such that
\[
        q(s)=q(s') \quad\Longleftrightarrow\quad s\sim s'.
\]
Moreover, the dynamics, observation map, and costs factor through \(q\):
there exist
\[
        G_{\rm o}:S_{\rm o}\times A\to S_{\rm o},\qquad
        H_{\rm o}:S_{\rm o}\to X,
\]
and
\[
        \ell_{\rm o}:S_{\rm o}\times A\to\mathbb R,\qquad
        g_{\rm o}:S_{\rm o}\to\mathbb R
\]
such that
\[
        q(G(s,a))=G_{\rm o}(q(s),a),
        \qquad
        H(s)=H_{\rm o}(q(s)),
\]
and
\[
        \ell(s,a)=\ell_{\rm o}(q(s),a),
        \qquad
        g(s)=g_{\rm o}(q(s)).
\]
The maps \(G_{\rm o}\) and \(H_{\rm o}\) admit \(C^{1,1}\) extensions to
open Euclidean neighborhoods of their compact domains, and \(H_{\rm o}\) is
bi-Lipschitz onto its image:
\[
        c_H |y-y'|
        \le
        \|H_{\rm o}(y)-H_{\rm o}(y')\|_X
        \le
        L_H |y-y'|,
	\qquad \text{for all }y,y'\in S_{\rm o}.
\]
After this identification, we work on \(S:=S_{\rm o}\), \(G:=G_{\rm o}\),
and \(H:=H_{\rm o}\).  All derivatives below are taken with respect to the
Euclidean observable coordinate \(y\in S_{\rm o}\).
\end{assumption}

Under Assumption~\hyperref[ass:A1]{A1}, the observable factor is identifiable from the current observation through the bi-Lipschitz map $H_{\rm o}$. Controlled histories motivate the quotient construction, but they are not used by the deployed encoder in the theorem.

\begin{remark}
An arbitrary metric quotient space is not automatically a smooth Euclidean domain. Accordingly, the present Euclidean $C^{1,1}$ proof framework imposes the regular finite-dimensional factor structure of Assumption~\hyperref[ass:A1]{A1}.

Classical indistinguishability theory in control has addressed the manifold structure of the quotient space and has provided a sufficient condition in~\cite[Theorem 3.5]{hermann1977nonlinear}.

Under certain assumptions, the quotient metric $d_{\rm obs}$ endows a topology that agrees with the quotient topology in~\cite[Ex. 3.1.14]{burago2001course}.

Assumption~\hyperref[ass:A1]{A1} supplies a finite-dimensional Euclidean coordinate for the observable factor. The Euclidean factor metric used in all subsequent estimates need not coincide with the chain metric $d_{\rm obs}$. If one wishes to translate the estimates back to the chain metric, an additional bi-Lipschitz comparison between the two metrics must be assumed.
\end{remark}

For the remainder of the paper, the metric on the observable factor is defined by
\[
        \bar d_{\rm obs}([s],[s'])
        :=
        |q(s)-q(s')|,
\]
and after identifying $S$ with $S_{\rm o}$ we write $d_S(s,s'):=|s-s'|$. All lower co-Lipschitz constants below refer to this Euclidean observable-factor metric.

\begin{assumption}{A2}{Observable-factor excitation / coverage}
Let \(\mu\) be the training measure on \(S\times A\), and define the pushforward measure
\[
        \mu_{\rm o}:=(q,\operatorname{Id}_A)_\#\mu
\]
on \(S_{\rm o}\times A\). We assume that \(\mu_{\rm o}\) is lower Ahlfors
\((d_o+d_a)\)-regular: there exist \(m_0>0\) and \(r_0>0\) such that
\[
        \mu_{\rm o}(B_r(s,a))
        \ge m_0 r^{d_o+d_a}
        \qquad
	\text{for all }(s,a)\in S_{\rm o}\times A,\quad 0<r\le r_0.
\]
\end{assumption}

After the identification in Assumption~\hyperref[ass:A1]{A1}, we write \(\mu:=\mu_{\rm o}\), \(d_s:=d_o\), and \(d_m:=d_o+d_a\).

In the common absolutely continuous case in which \(\dd\mu=\rho\,\dd y\) on \(M\subset\mathbb R^{d_m}\), a lower density bound \(\rho\ge\rho_{\min}>0\) implies Assumption~\hyperref[ass:A2]{A2} provided the domain also has a uniform interior-volume lower bound, namely \(|M\cap B_r(y)|\ge c_M r^{d_m}\) for all \(y\in M\) and all sufficiently small \(r\). This geometric thickness condition holds, for example, for compact convex domains and for bounded Lipschitz domains after fixing the small-radius range.

\section{Action-conditioned latent world models}
\label{sec:latent-world-model}
In this section, we define the notation employed for the JEPA model.

Let \(Z=\R^{d_z}\) be the latent space and let \(K_Z\) be a compact convex subset of \(Z\) with nonempty interior. A positive local directional margin requires $D\psi(s):\mathbb R^{d_s}\to\mathbb R^{d_z}$ to be injective, and therefore necessarily $d_z\ge d_s$; we assume this dimension condition throughout the metric results.  Because $S\subset\mathbb R^{d_s}$ is compact and $K_Z$ has nonempty interior, a translated and scaled coordinate inclusion embeds $S$ smoothly and bi-Lipschitzly into $\operatorname{int}K_Z$.  Supplementary Section~S1 gives this construction and a smooth range-enforcing map explicitly.
An action-conditioned latent world model consists of an encoder
\[
\Phi:X\to K_Z
\]
and a latent transition map
\[
F:K_Z\times A\to K_Z.
\]
Only the restriction of \(\Phi\) to the set \(H(S)\) is utilized.
Consequently, we define the induced latent state mapping
\[
	\psi:=\Phi\circ H:S\to K_Z.
\]

To analyze the learning properties of the JEPA model, we fix uniformly bounded Lipschitz classes of ambient maps
\[
\mathcal A_\Phi
\subset
\bigl\{\Phi:X\to K_Z:\ \Phi|_{H(S)}\in C^{0,1}(H(S);K_Z)\bigr\},
\qquad
\mathcal A_F\subset C^{0,1}(K_Z\times A;K_Z),
\qquad
\mathcal A:=\mathcal A_\Phi\times\mathcal A_F.
\]
Only the restrictions of the encoders to \(H(S)\) enter the objective. For compactness of minimizers, we assume that these restrictions and the transition classes are closed in the relevant uniform topology.

For the metric results in Section~\ref{sec:main}, we take \(X=\mathbb R^{d_x}\) and require each admissible encoder to be supplied with a specified \(C^{1,1}\) ambient representative on an open neighborhood of \(H(S)\) (the concrete spline encoders are global maps on \(X\)). For such a representative, we use
\[
\|\Phi\|_{C^1(H(S))}
:=
\sup_{x\in H(S)}\|\Phi(x)\|_Z
+
\sup_{x\in H(S)}\|D\Phi(x)\|_{\mathrm{op}}.
\]
Thus every derivative of \(\Phi\) appearing below is the derivative of its specified ambient representative. Equivalently, the geometric regularity may be expressed through the induced map \(\psi=\Phi\circ H\) on the Euclidean observable factor.

For a horizon \(T\in\mathbb N\), a stage cost \(\ell:S\times A\to\mathbb R\), and a terminal cost  \(g:S\to\mathbb R\), we denote an action sequence by
\[
a_{0:T-1}:=(a_0,\dots,a_{T-1})\in A^T.
\]
When the horizon \(T\) is clear from the context, we also write
\(\mathbf a:=a_{0:T-1}\). The deterministic control problem is
\[
J^\star(s_0)=\inf_{\mathbf a\in A^T} J_{\rm true}(s_0;\mathbf a),
\]
where
\[
J_{\rm true}(s_0;\mathbf a)
=
\sum_{t=0}^{T-1}\ell(s_t^{\mathbf a},a_t)+g(s_T^{\mathbf a}),
\qquad
s_{t+1}^{\mathbf a}=G(s_t^{\mathbf a},a_t).
\]
If $G$, $\ell$, and $g$ are continuous, compactness of $A^T$ implies existence of an optimal control sequence.

The next lemma establishes the mathematical availability of exact compatible latent costs once metric non-collapse has been proved. Section~\ref{sec:simulation-lemma} then extends this construction to learned cost heads through explicit compatibility errors.

\begin{lemma}[Compatible latent costs induced by metric non-collapse]
\label{lem:latent-cost-extension}
Let \(\psi:S\to K_Z\) satisfy
\[
\|\psi(s)-\psi(s')\|_Z
\ge c_\ast|s-s'|
\qquad
\text{for all }s,s'\in S,
\]
with \(c_\ast>0\). Assume
\begin{equation}
\label{eq:separate-cost-lipschitz}
|\ell(s,a)-\ell(s',a')|
\le
L_{\ell,s}|s-s'|+L_{\ell,a}|a-a'|,
\qquad
|g(s)-g(s')|
\le
L_g|s-s'|.
\end{equation}
Then
\[
\ell_\psi(\psi(s),a):=\ell(s,a),
\qquad
g_\psi(\psi(s)):=g(s)
\]
are well-defined. They admit extensions
\[
\widetilde\ell:K_Z\times A\to\mathbb R,
\qquad
\widetilde g:K_Z\to\mathbb R
\]
satisfying exact compatibility
\[
\widetilde\ell(\psi(s),a)=\ell(s,a),
\qquad
\widetilde g(\psi(s))=g(s),
\]
and the separate Lipschitz estimate
\begin{equation}
\label{eq:latent-cost-sharp-constants}
|\widetilde\ell(z,a)-\widetilde\ell(z',a')|
\le
\frac{L_{\ell,s}}{c_\ast}\|z-z'\|_Z
+
L_{\ell,a}|a-a'|,
\qquad
|\widetilde g(z)-\widetilde g(z')|
\le
\frac{L_g}{c_\ast}\|z-z'\|_Z.
\end{equation}
In particular,
\[
L_{\widetilde\ell,z}\le\frac{L_{\ell,s}}{c_\ast},
\qquad
L_{\widetilde g}\le\frac{L_g}{c_\ast}.
\]
\end{lemma}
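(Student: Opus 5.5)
The plan is to transport the costs to the image set $\psi(S)\subset K_Z$, check that they are Lipschitz there, and then extend them to all of $K_Z$ (respectively $K_Z\times A$) by a McShane-type infimal convolution.

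\emph{Well-definedness.} The co-Lipschitz bound makes $\psi$ injective: if $\psi(s)=\psi(s')$ then $c_\ast|s-s'|\le 0$, so $s=s'$. Hence $\psi:S\to\psi(S)$ is a bijection, and
\[
\ell_\psi(z,a):=\ell(\psi^{-1}(z),a),
\qquad
g_\psi(z):=g(\psi^{-1}(z))
\]
are well defined on $\psi(S)\times A$ and on $\psi(S)$. Moreover $\psi^{-1}$ is $(1/c_\ast)$-Lipschitz on $\psi(S)$. Combining this with \eqref{eq:separate-cost-lipschitz} gives, for $z=\psi(s)$ and $z'=\psi(s')$,
\[
|\ell_\psi(z,a)-\ell_\psi(z',a')|\le \frac{L_{\ell,s}}{c_\ast}\|z-z'\|_Z+L_{\ell,a}|a-a'|,
\]
and similarly $|g_\psi(z)-g_\psi(z')|\le (L_g/c_\ast)\|z-z'\|_Z$.

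\emph{Extension of $g$.} Set
\[
\widetilde g(z):=\inf_{s\in S}\Big\{g(s)+\frac{L_g}{c_\ast}\|z-\psi(s)\|_Z\Big\}.
\]
This is the standard McShane construction: an infimum of $(L_g/c_\ast)$-Lipschitz functions of $z$. It is finite because $S$ is compact, $g$ is continuous, and the infimum is bounded below using any fixed point. It is exactly compatible with $g$, since at $z=\psi(s_0)$ the candidate $s=s_0$ gives value $g(s_0)$, while the Lipschitz bound on $\psi(S)$ shows every other candidate is at least $g(s_0)$.

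\emph{Extension of $\ell$.} For the stage cost, define analogously
\[
\widetilde\ell(z,a):=\inf_{(s,b)\in S\times A}\Big\{\ell(s,b)+\frac{L_{\ell,s}}{c_\ast}\|z-\psi(s)\|_Z+L_{\ell,a}|a-b|\Big\}.
\]
This is a McShane extension with respect to the weighted product metric
\[
\rho((z,a),(z',a')):=\frac{L_{\ell,s}}{c_\ast}\|z-z'\|_Z+L_{\ell,a}|a-a'|,
\]
with Lipschitz constant $1$.

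The one point needing care is the separate Lipschitz estimate \eqref{eq:latent-cost-sharp-constants}, as opposed to a single combined constant. I will check it directly. Each candidate function is $1$-Lipschitz in $\rho$, and an infimum of $1$-Lipschitz functions is $1$-Lipschitz in $\rho$; this is exactly the separate estimate. For exact compatibility at $(\psi(s_0),a_0)$, taking $(s,b)=(s_0,a_0)$ gives $\widetilde\ell(\psi(s_0),a_0)\le \ell(s_0,a_0)$. The reverse inequality follows because the transported Lipschitz bound on $\psi(S)\times A$ is precisely $\ell(s_0,a_0)-\ell(s,b)\le\rho((\psi(s_0),a_0),(\psi(s),b))$.

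Finally, the values land in $\mathbb R$ because all costs are bounded on compact sets. The stated constants $L_{\widetilde\ell,z}\le L_{\ell,s}/c_\ast$ and $L_{\widetilde g}\le L_g/c_\ast$ are read off directly from the two estimates.
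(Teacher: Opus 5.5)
Your proposal is correct and follows the paper's proof: co-Lipschitzness gives injectivity and a $(1/c_\ast)$-Lipschitz inverse on $\psi(S)$, and McShane infimal convolutions with respect to the weighted product distance then give exactly the paper's formulas for $\widetilde\ell$ and $\widetilde g$. You verify directly that an infimum of $\rho$-$1$-Lipschitz functions is $\rho$-$1$-Lipschitz, which uses only the triangle inequality for $\rho$; this also covers a vanishing weight without the paper's passage to a metric quotient.
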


\begin{proof}
The lower co-Lipschitz estimate implies injectivity and
\[
|s-s'|\le c_\ast^{-1}\|\psi(s)-\psi(s')\|_Z.
\]
Hence, \(\ell_\psi\) is \(1\)-Lipschitz on \(\psi(S)\times A\) with respect to the weighted product distance
\[
d_\ell((z,a),(z',a'))
:=
\frac{L_{\ell,s}}{c_\ast}\|z-z'\|_Z
+L_{\ell,a}|a-a'|.
\]
When both weights are positive this is a metric; if one weight vanishes it is a pseudometric, and the same argument is applied after quotienting the corresponding zero-distance equivalence classes. Likewise, \(g_\psi\) is \(L_g/c_\ast\)-Lipschitz on \(\psi(S)\). Therefore, applying McShane's extension theorem \cite{McShane1934} to the resulting metric quotient (or directly when the weights are positive), we conclude that the functions
\[
\widetilde\ell(z,a)
:=
\inf_{(s',a')\in S\times A}
\left\{
\ell(s',a')
+\frac{L_{\ell,s}}{c_\ast}\|z-\psi(s')\|_Z
+L_{\ell,a}|a-a'|
\right\}
\]
and
\[
\widetilde g(z)
:=
\inf_{s'\in S}
\left\{
g(s')+\frac{L_g}{c_\ast}\|z-\psi(s')\|_Z
\right\}
\]
agree with the original functions on the encoded state set and satisfy
\eqref{eq:latent-cost-sharp-constants}.
\end{proof}

For a representation \((\Phi,F)\) such that \(\psi=\Phi\circ H\) is lower co-Lipschitz, and for Lipschitz costs \(\ell\) and \(g\) as in Lemma~\ref{lem:latent-cost-extension}, we obtain induced latent costs \(\widetilde{\ell}\) and \(\widetilde{g}\). The corresponding total latent cost is defined by
\[
J_{\rm lat}(\psi(s_0);\mathbf a)
:=
\sum_{t=0}^{T-1}\widetilde{\ell}(z_t,a_t)+\widetilde{g}(z_T),
\qquad
z_0=\psi(s_0),\quad
z_{t+1}=F(z_t,a_t),\quad t=0,\dots,T-1.
\]

Let \(Y\subset\mathbb R^d\) be a Euclidean domain and let \(E\) be a normed space. The Lipschitz constant of \(f:Y\to E\) is denoted by \(\Lip(f)\). A function class \(\mathfrak F\) has Lipschitz budget \(L\) if every \(f\in \mathfrak F\) is Lipschitz and \(\Lip(f)\le L\).
Furthermore, \(\mathfrak F\) has \(C^{1,1}\) budget \(\Lambda\) if every \(f\in \mathfrak F\) has a specified extension that is \(C^{1,1}\) on an open neighborhood of \(Y\) and
\[
\|f\|_{C^0(Y)}+\|Df\|_{C^0(Y)}+\Lip(Df;Y)\le \Lambda.
\]
Here, \(Df:Y\to\mathcal L(\mathbb R^d,E)\) denotes the derivative of the specified extension, and
\[
\|Df\|_{C^0(Y)}
:=
\sup_{y\in Y}\|Df(y)\|_{\mathcal L(\mathbb R^d,E)}.
\]

\begin{definition}[JEPA residual and prediction loss]
For \((\Phi,F)\in\mathcal A\), define
\[
R[\Phi,F](s,a)
:=
\Phi(H(G(s,a)))-F(\Phi(H(s)),a)
=
\psi(G(s,a))-F(\psi(s),a).
\]
The population prediction loss is
\[
	\mathcal E_{\rm pred}(\Phi,F)
:=
\int_M \|R[\Phi,F](s,a)\|_Z^2\dd\mu(s,a).
\]
Given samples \((s_i,a_i)_{i=1}^n\sim\mu^{\otimes n}\), the empirical prediction loss is
\[
\widehat{\mathcal E_{\rm pred}}_n(\Phi,F)
:=
\frac1n\sum_{i=1}^n
\|R[\Phi,F](s_i,a_i)\|_Z^2.
\]
\end{definition}

The residual is uniformly Lipschitz on \(M\) whenever \(G,H,\Phi,F\) have uniform Lipschitz constants. We denote a structural Lipschitz bound by \(L_R\).

\begin{proposition}[Existence of regularized empirical minimizers]
\label{prop:existence}
Let $\tau$ be a topology on the admissible class $\mathcal A=\mathcal A_\Phi\times\mathcal A_F$. Assume that $\mathcal A$ is compact in $\tau$, and that the empirical prediction loss and $\widehat{\mathcal N}_n:\mathcal A_\Phi\to[0,\infty]$ are $\tau$-lower semicontinuous. Then, for every $\lambda>0$, the empirical objective
\[
\widehat{\mathcal E}_{\lambda,n}(\Phi,F)
:=
\widehat{\mathcal E_{\rm pred}}_n(\Phi,F)
+\lambda\widehat{\mathcal N}_n(\Phi)
\]
attains its infimum on $\mathcal A$.

For the Lipschitz-only setting one may take $\tau$ to be uniform convergence and use Arzel\`a--Ascoli. For the metric regularizer, which depends on $D(\Phi\circ H)$, one takes $\tau$ to be the $C^1$ topology. At each fixed capacity $W$, a class that is closed in $C^1$ and uniformly bounded in $C^{1,1}$ is compact in $C^1$; the empirical local and global hinge terms are then continuous.
\end{proposition}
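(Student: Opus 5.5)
The plan is to apply the direct method of the calculus of variations: a lower semicontinuous functional on a compact space attains its minimum. Fix $\lambda>0$. The objective $\widehat{\mathcal E}_{\lambda,n}$ takes values in $[0,\infty]$, and both summands are nonnegative, so the infimum $m:=\inf_{\mathcal A}\widehat{\mathcal E}_{\lambda,n}$ is well defined. If $m=+\infty$, every element of $\mathcal A$ is a minimizer; since $\mathcal A$ is compact it is nonempty (assumed implicitly), and we are done. Otherwise, choose a minimizing sequence, or a minimizing net if $\tau$ is not first countable, $(\Phi_k,F_k)\subset\mathcal A$ with $\widehat{\mathcal E}_{\lambda,n}(\Phi_k,F_k)\to m$.

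Compactness of $\mathcal A$ in $\tau$ gives a subnet converging to some $(\Phi^\star,F^\star)\in\mathcal A$. For general topologies subnets are the correct tool; with the $C^1$ and uniform topologies on these classes, metrizability lets one use sequences. The empirical prediction loss is $\tau$-lower semicontinuous on $\mathcal A$. The regularizer $\widehat{\mathcal N}_n$ is lower semicontinuous on $\mathcal A_\Phi$, hence also along the projection $(\Phi,F)\mapsto\Phi$, because the projection is continuous for the product topology; here I read $\tau$ as a product topology, or at least one making this projection continuous. A sum of two $[0,\infty]$-valued lower semicontinuous functions with a positive weight $\lambda$ is lower semicontinuous, with no $\infty-\infty$ ambiguity because both terms are nonnegative. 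Therefore
\[
\widehat{\mathcal E}_{\lambda,n}(\Phi^\star,F^\star)\le\liminf \widehat{\mathcal E}_{\lambda,n}(\Phi_k,F_k)=m,
\]
and so $(\Phi^\star,F^\star)$ is a minimizer. Equivalently, one can argue that the sublevel sets $\{\widehat{\mathcal E}_{\lambda,n}\le m+1/j\}$ are nonempty, closed and nested in a compact space, so their intersection is nonempty.

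The remaining work is verifying the hypotheses in the two stated settings, and this is the only real step. In the Lipschitz-only setting, take $\tau$ to be uniform convergence of $\psi=\Phi\circ H$ on $S$ and of $F$ on $K_Z\times A$. The uniform Lipschitz budgets, together with boundedness in the compact set $K_Z$ and compactness of the domains $H(S)$ and $K_Z\times A$, give equicontinuity and uniform boundedness. Arzel\`a--Ascoli then gives relative compactness, and the assumed closedness gives compactness. The empirical loss depends only on finitely many point evaluations. Each residual $R(s_i,a_i)=\psi(G(s_i,a_i))-F(\psi(s_i),a_i)$ is continuous under uniform convergence, using the uniform equicontinuity of the $F_k$ to pass $F_k(\psi_k(s_i),a_i)\to F(\psi(s_i),a_i)$. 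So the empirical loss is in fact continuous.

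In the metric setting, take $\tau$ to be the $C^1$ topology. A uniform $C^{1,1}$ budget makes $\{\Phi\}$ and $\{D\Phi\}$ bounded and equi-Lipschitz on a compact neighborhood of $H(S)$. Arzel\`a--Ascoli applied to the pair $(\Phi,D\Phi)$ gives $C^1$ relative compactness, and closedness in $C^1$ gives compactness. The empirical hinge terms are finite sums of continuous functions: positive parts of margins minus $\|D\psi(s_j)v_j\|$ or $\|\psi(s_j)-\psi(s'_j)\|$, where $D\psi=D\Phi(H)\,DH$ depends continuously on $\Phi$ in $C^1$. They are therefore continuous, and in particular lower semicontinuous.

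The main obstacle I expect is bookkeeping about the topology: ensuring the objective depends only on restrictions to $H(S)$, and that the specified ambient $C^{1,1}$ representatives behave consistently under limits. I would handle this by defining $\tau$ on the restrictions and quotienting representatives.
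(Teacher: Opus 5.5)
Your proposal is correct and follows the same direct-method argument the paper gives: take a minimizing sequence, use compactness of $\mathcal A$ to obtain a $\tau$-limit point, and conclude by lower semicontinuity of the two nonnegative summands. Your subnet (or nested-sublevel-set) formulation is slightly more careful than the paper's, which extracts a convergent subsequence and so tacitly assumes sequential compactness (harmless for the metrizable $C^0$ and $C^1$ topologies). Your check of the Lipschitz-only and $C^1$ settings also fills in remarks that the paper states without proof.
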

\begin{proof}
	Let \(\{(\Phi_j,F_j)\}_{j\ge1}\) be a minimizing sequence for the empirical objective. Since \(\mathcal A\) is \(\tau\)-compact, there exists a $\tau$-convergent subsequence \(\{(\Phi_{j_k},F_{j_k})\}_{k\ge1}\) with limit $(\Phi_0,F_0)\in\mathcal A$. Because the prediction term and the regularizer are \(\tau\)-lower semicontinuous, we have
\[
\widehat{\mathcal E}_{\lambda,n}(\Phi_0,F_0)
\le
\liminf_{k\to\infty}
\widehat{\mathcal E}_{\lambda,n}(\Phi_{j_k},F_{j_k}).
\]
Thus, $(\Phi_0,F_0)$ is a minimizer.
\end{proof}

We refer to the parameter \(\lambda\) as \emph{regularization strength}. The metric regularizer is presented in Section~\ref{sec:metric-regularizer}. The admissible range of \(\lambda\) for the uniform approximate controlled semiconjugacy estimate is discussed in Section~\ref{sec:main}.

\section{Geometric obstructions for pure forward prediction and covariance/spectral spread}
\label{sec:failures}

The following two propositions identify the obstruction that the metric regularizer will
address. They show that forward prediction alone, and forward prediction plus purely
second-moment spread penalties, do not imply a controlled semiconjugacy useful for
planning.

\begin{proposition}[Pure forward JEPA admits collapsed zero-loss minimizers]
\label{prop:pure-fails}
Assume \(K_Z\neq\emptyset\). For any \(z_0\in K_Z\), the constant encoder and transition
map
\[
\Phi(x)\equiv z_0,
\qquad
F(z,a)\equiv z_0
\]
satisfy
\[
R[\Phi,F](s,a)=0
\qquad \text{for all }(s,a)\in M.
\]
Hence pure forward prediction cannot by itself imply non-collapse.
\end{proposition}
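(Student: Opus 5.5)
The plan is to substitute directly into the definition of the residual and check that every term is the constant \(z_0\). For any \((s,a)\in M\), the encoder gives \(\Phi(H(G(s,a)))=z_0\) and \(\Phi(H(s))=z_0\). The transition map is constant, so \(F(\Phi(H(s)),a)=F(z_0,a)=z_0\). Subtracting,
\[
R[\Phi,F](s,a)=z_0-z_0=0 .
\]

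Before computing, I will verify admissibility of the pair in the sense of Section~\ref{sec:latent-world-model}. Because \(z_0\in K_Z\) (this is where \(K_Z\neq\emptyset\) is used), both maps take values in \(K_Z\). A constant map is Lipschitz with constant zero and is trivially \(C^{1,1}\), so it fits the Lipschitz classes defining \(\mathcal A_\Phi\) and \(\mathcal A_F\), provided those classes contain constants. This is the one point that needs a remark, since the statement concerns the maps themselves rather than a particular finite-capacity class.

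It then follows that \(\mathcal E_{\rm pred}(\Phi,F)=0\) and \(\widehat{\mathcal E_{\rm pred}}_n(\Phi,F)=0\) for every sample. Hence the pair is a global minimizer of both the population and empirical prediction losses. However, \(\psi=\Phi\circ H\) is constant, so \(\|\psi(s)-\psi(s')\|_Z=0\) for all \(s,s'\). This rules out any co-Lipschitz bound with \(c_\ast>0\) whenever \(S\) has more than one point, which holds because \(S\) has nonempty interior.

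There is no genuine obstacle in this argument. The only care needed is the admissibility remark and stating the conclusion correctly: zero prediction loss is compatible with total collapse. Therefore minimizing prediction alone cannot force non-collapse.
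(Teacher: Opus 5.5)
Your proposal is correct and follows the paper's proof: direct substitution shows that both \(\Phi(H(G(s,a)))\) and \(F(\Phi(H(s)),a)\) equal \(z_0\), and the constant induced map \(\psi\equiv z_0\) then admits no positive co-Lipschitz constant because \(S\) is not a singleton. Your extra remarks are harmless additions that the paper leaves implicit: admissibility of constant maps, and the vanishing of the population and empirical prediction losses.
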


\begin{proof}
For every \((s,a)\), we have
\[
\Phi(H(G(s,a)))=z_0=F(\Phi(H(s)),a).
\]
Thus, the residual is identically zero. On the other hand, we also observe that the induced latent map \(\psi\equiv z_0\) has no positive lower co-Lipschitz constant unless \(S\) is a singleton.
\end{proof}

To prevent a total collapse in pure forward JEPA, a natural attempt is to add covariance or spectral spread.

\begin{definition}[State marginal of the training measure]\label{def:state-marginal}
Let \(\mu\) be the training measure on the state--action space \(M=S\times A\). We denote
by
\[
\pi_S:M\to S,
\qquad
\pi_S(s,a)=s,
\]
the canonical projection. The \(S\)-marginal of \(\mu\) is the pushforward probability
measure
\[
\mu_S:=(\pi_S)_\#\mu.
\]
Equivalently, for every Borel set \(B\subset S\),
\[
\mu_S(B)=\mu(B\times A).
\]
Equivalently, if \((\mathbf S,\mathbf A)\) is an \(M\)-valued random variable with
\(\mathcal L(\mathbf S,\mathbf A)=\mu\), then
\[
\mathcal L(\mathbf S)=\mu_S.
\]\end{definition}

For a latent map \(\psi:S\to Z=\mathbb R^{d_z}\), define its \(\mu_S\)-mean by
\[
\bar\psi:=\int_S\psi(s)\dd\mu_S(s)\in\mathbb R^{d_z}.
\]
The covariance matrix of \(\psi(s)\) under \(s\sim\mu_S\) is
\[
\Sigma_\psi
:=
\Cov_{s\sim\mu_S}(\psi(s))
:=
\int_S
\bigl(\psi(s)-\bar\psi\bigr)
\bigl(\psi(s)-\bar\psi\bigr)^\top
\dd\mu_S(s)
\in\R^{d_z\times d_z}.
\]
Equivalently,
\[
(\Sigma_\psi)_{ij}
=
\int_S
\bigl(\psi_i(s)-\bar\psi_i\bigr)
\bigl(\psi_j(s)-\bar\psi_j\bigr)
\dd\mu_S(s).
\]
Variance-type regularizers penalize small coordinate variances, i.e. small diagonal
entries \((\Sigma_\psi)_{jj}\). Spectral regularizers penalize small eigenvalues, for
example through a term such as
\[
\sum_{j=1}^{d_z}
\bigl[\gamma-\lambda_j(\Sigma_\psi)\bigr]_+^2
\qquad\text{or}\qquad
\bigl[\gamma-\lambda_{\min}(\Sigma_\psi)\bigr]_+^2.
\]
Such penalties rule out total collapse in a global second-moment sense, but they do not imply injectivity or a pointwise lower co-Lipschitz bound.

\begin{proposition}[Covariance/Spectral spread does not imply injectivity]
\label{prop:covariance-fails}
There exists a deterministic control system and a non-injective zero-residual latent model whose latent code has positive variance.
Consequently, covariance or spectral spread alone cannot imply the desired pointwise lower co-Lipschitz property without additional structural assumptions.
\end{proposition}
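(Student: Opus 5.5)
The proof is by explicit construction: we build a system with a non-injective encoder that folds the state space but still has zero residual and positive latent variance. The simplest choice is $S=[-1,1]$ (compact, convex, nonempty interior), any compact action set $A$ (for instance $A=[-1,1]$), and $G(s,a)=-s$ or simply $G(s,a)=s$, with $H(s)=s$ injective and Lipschitz. The training measure $\mu$ is the normalized Lebesgue measure on $S\times A$. We take $d_z=1$ (or embed into $\mathbb R^{d_z}$ via a coordinate inclusion), a compact convex $K_Z$ containing $[0,1]$, the folding encoder $\Phi(x)=x^2$ (restricted to $H(S)$), and the transition $F(z,a)=z$.

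The first step is to verify that the residual vanishes identically. If $G(s,a)=-s$, then
\[
\psi(G(s,a))=(-s)^2=s^2=F(\psi(s),a),
\]
so $R[\Phi,F]\equiv 0$ on $M$. Hence both the population and the empirical prediction losses are zero. All maps involved are smooth and Lipschitz, and $F$ maps $K_Z\times A$ into $K_Z$. If the composite range must stay inside $K_Z$, we simply choose $K_Z\supset[0,1]$.

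The second step is to compute the variance. Under $\mu_S$, the state marginal is uniform on $[-1,1]$, so $\bar\psi=\int s^2\,\dd s/2=1/3$ and
\[
\Sigma_\psi=\int_{-1}^1 (s^2-1/3)^2\,\dd s/2=1/5-1/9=4/45>0.
\]
In the one-dimensional latent case $\lambda_{\min}(\Sigma_\psi)=4/45$. So any variance or spectral penalty with threshold $\gamma\le 4/45$ vanishes, and if a larger spread is wanted we rescale $\Phi$ by a constant. For $d_z>1$, we place independent spread-carrying coordinates in the remaining latent directions, for example components depending only on $s^2$ with distinct polynomial profiles such as $s^4$ and $s^6$. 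These stay invariant under $s\mapsto -s$, so the residual remains zero while the covariance matrix becomes nondegenerate, since $1,s^2,s^4,\dots$ are linearly independent in $L^2(\mu_S)$.

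The final step is non-injectivity: $\psi(s)=\psi(-s)$ for $s\neq -s$, so no lower co-Lipschitz constant $c_\ast>0$ exists. Moreover, $\psi'(0)=0$, so the local directional margin also fails. I expect the only mild obstacle to be the full-rank covariance in higher latent dimension, which the even-polynomial construction handles. The "consequently" clause then follows immediately, because the constructed model attains the minimal possible prediction loss together with any prescribed spread level, yet it violates the pointwise lower estimate.
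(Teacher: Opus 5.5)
Your construction is correct and is essentially the paper's own proof. Both use the folding encoder $\psi(s)=s^2$ on $S=[-1,1]$, up to a scale $C$ (which requires enlarging $K_Z$ to $[0,C]$ when you rescale), together with identity-type dynamics, $F(z,a)=z$, a uniform $\mu_S$, and the variance $1/5-1/9=4/45$. Your reflection dynamics $G(s,a)=-s$ and the even-polynomial coordinates $(s^2,s^4,\dots,s^{2d_z})$ for a positive-definite covariance are harmless extras; the paper obtains full rank differently, via $(Cs_1^2,Ds_2)$ on $[-1,1]^2$ in its numerical section.
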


\begin{proof}
We consider the example
\[
S=[-1,1],
\qquad A=\{0\},
\qquad G(s,0)=s,
\qquad H(s)=s.
\]
Let $\mu_S$ be the uniform probability measure on $[-1,1]$. Here the singleton action set is only a notational shorthand for an uncontrolled special case. If one insists that the action domain have nonempty Euclidean interior, one may instead take any compact interval \(A\) and define \(G(s,a)=s\) and \(F(z,a)=z\) independently of \(a\); the argument is unchanged.
Let \(Z=\mathbb R\). We choose a constant \(C>0\), and set
\[
\psi(s)=Cs^2,
\qquad
F(z,0)=z.
\]
Then, we obtain the residual
\[
R[\psi,F](s,0)=\psi(G(s,0))-F(\psi(s),0)=Cs^2-Cs^2=0.
\]
Moreover, we compute the variance
\[
	\operatorname{Var}_{\mu_S}(\psi(s))
=
C^2\operatorname{Var}_{\mu_S}(s^2)
= C^2 (\mathbb E_{\mu_S}[s^4]-(\mathbb E_{\mu_S}[s^2])^2)
=
\frac{4C^2}{45}.
\]
Thus, \(\psi(s)\) has positive variance for every fixed \(C>0\). For any prescribed variance threshold, one may choose a sufficiently large compact latent interval \(K_Z=[0,C]\) and then choose \(C\) so that the threshold is cleared. No claim of unbounded variance is made within a single fixed compact latent range.

However, \(\psi(s)=\psi(-s)\), so
\(\psi\) is not injective and no inequality of the form
\[
\|\psi(s)-\psi(s')\|_Z\ge c\,d_S(s,s')
\]
can hold with \(c>0\).
\end{proof}

These examples identify the precise role of geometric regularization.  Pure forward JEPA leaves non-collapse unconstrained, and covariance or spectral spread leaves pointwise lower geometry unconstrained.  The local--global objective supplies exactly the directional and separated-pair information used by the positive theorem to establish a uniform co-Lipschitz certificate.

\section{The metric hinge regularizer}
\label{sec:metric-regularizer}

We now introduce a soft encoder-only metric regularizer. It requires neither a decoder nor an inverse model and imposes no hard architectural constraint.  Instead, it directly promotes two complementary geometric properties of the latent representation: infinitesimal directional resolution and separation of well-separated states.

Throughout this section, \(S\subset\R^{d_s}\) is a compact convex Euclidean domain and all maps are understood as restrictions of \(C^{1,1}\) maps defined on open neighborhoods.

\paragraph{\bf Training-information convention.}
Although the prediction residual is evaluated from observation--action transitions, the metric penalty is supervised by the observable-factor geometry. Its empirical implementation uses samples of the coordinate $s$ (or an equivalent metric representation), state distances $|s-s'|$, and tangent directions $v$. This directly covers simulator-state and proprioceptive settings while preserving observation--action deployment.  State-estimation or pixel-based geometry may be used when its distance and direction errors are independently bounded and absorbed into reduced effective margins before applying Assumptions~\hyperref[ass:A3]{A3}--\hyperref[ass:A4]{A4}; the theorem is then applied to those certified effective quantities.  No unquantified state-estimation claim is needed, and the deployed encoder--transition interface is unchanged.

Let
\[
\mathcal U:=S\times\mathbb S^{d_s-1},
\]
where \(\mathbb S^{d_s-1}\) is the Euclidean unit sphere. We call \(\mathcal U\) the unit-direction sampling set.
It is equipped with the product probability measure
\[
\omega:=\mu_S\otimes\sigma,
\]
where \(\sigma\) is normalized surface measure on \(\mathbb S^{d_s-1}\). Fix a scale
\(0<\rho<{\rm diam}(S)\). Let
\[
P_\rho:=\{(s,s')\in S\times S:\ |s-s'|\ge \rho\}.
\]
Assuming \((\mu_S\otimes\mu_S)(P_\rho)>0\), we define \(\nu_\rho\) to be the normalized restriction of \(\mu_S\otimes\mu_S\) to \(P_\rho\), namely
\[
\nu_\rho(E)
:=
\frac{(\mu_S\otimes\mu_S)(E\cap P_\rho)}
{(\mu_S\otimes\mu_S)(P_\rho)}
\qquad
\text{for every Borel set }E\subset S\times S.
\]
Equivalently, \(\nu_\rho\) is the conditional law of two independent
\(\mu_S\)-distributed states \(s,s'\), conditioned on the event
\(|s-s'|\ge \rho\).

The spaces \(M, \mathcal U\) and \(P_\rho\) are endowed with the product metrics
\[
d_M((s,a),(s',a')):=|s-s'|+|a-a'|,
\]
\[
d_{U}((s,v),(s',v')):=|s-s'|+|v-v'|,
\]
and
\[
d_P((s_1,s_2),(s_1',s_2')):=|s_1-s_1'|+|s_2-s_2'|.
\]

Fix margins \(\kappa>0\) and \(\alpha>0\). For \(\psi=\Phi\circ H\), we define the metric regularizers
\[
	\mathcal N_{\rm loc}(\Phi)
:=
\int_{\mathcal U}
\bigl[\kappa-\|D\psi(s)[v]\|_Z\bigr]_+^2
\dd\omega(s,v),
\]
\[
	\mathcal N_{\rm glob}(\Phi)
:=
\int_{P_\rho}
\bigl[\alpha-\|\psi(s)-\psi(s')\|_Z\bigr]_+^2
\dd\nu_\rho(s,s'),
\]
and
\[
	\mathcal N_{\rm met}(\Phi):=\mathcal N_{\rm loc}(\Phi)+\mathcal N_{\rm glob}(\Phi).
\]
The local term is an averaged hinge version of the pointwise infinitesimal condition
\[
\sigma_{\min}(D\psi(s))\ge\kappa,
\qquad
\sigma_{\min}(D\psi(s))=
\inf_{|v|=1}\|D\psi(s)[v]\|_Z.
\]
In the same perspective, the global term is an averaged hinge version of the metric condition
\[
	\|\psi(s)-\psi(s')\|_{Z}\ge\alpha,\quad\text{when } |s-s'|\ge\rho.
\]

We now define the empirical version of the metric regularizer. Let
\[
\{(s_i,a_i)\}_{i=1}^{n_M}
\]
be independent samples from the training measure \(\mu\) on \(M=S\times A\). Let
\[
\{(\xi_j,v_j)\}_{j=1}^{n_U}
\]
be independent samples from the unit-direction measure
\[
\omega=\mu_S\otimes \sigma
\qquad\text{on}\qquad
\mathcal U=S\times\mathbb S^{d_s-1},
\]
where \(\sigma\) is normalized surface measure on
\(\mathbb S^{d_s-1}\). Finally, let
\[
\{(\zeta_k,\zeta'_k)\}_{k=1}^{n_P}
\]
be independent samples from the separated-pair measure \(\nu_\rho\) on
\[
P_\rho=\{(s,s')\in S\times S:\ |s-s'|\ge \rho\}.
\]
We assume that these three sample families are mutually independent.

We denote by \(\mathcal A(W)\) an admissible class of encoder--latent-transition pairs depending on some trainable parameter \(W\). For \((\Phi,F)\in\mathcal A(W)\), with \(\psi=\Phi\circ H\), we define the empirical prediction loss by
\[
	\widehat{\mathcal E_{\rm pred}}_{n_M}(\Phi,F)
:=
\frac1{n_M}\sum_{i=1}^{n_M}
\|
\psi(G(s_i,a_i))-F(\psi(s_i),a_i)
\|_Z^2.
\]
We define the empirical local metric penalty by
\[
	\widehat{\mathcal N_{\rm loc}}_{n_U}(\Phi)
:=
\frac1{n_U}\sum_{j=1}^{n_U}
\bigl[
\kappa-\|D\psi(\xi_j)[v_j]\|_Z
\bigr]_+^2,
\]
and define the empirical separated-pair metric penalty by
\[
	\widehat{\mathcal N_{\rm glob}}_{n_P}(\Phi)
:=
\frac1{n_P}\sum_{k=1}^{n_P}
\bigl[
\alpha-\|\psi(\zeta_k)-\psi(\zeta'_k)\|_Z
\bigr]_+^2.
\]
The empirical metric regularizer is then
\[
	\widehat{\mathcal N_{\rm met}}_{n_U,n_P}(\Phi)
:=
\widehat{\mathcal N_{\rm loc}}_{n_U}(\Phi)
+
\widehat{\mathcal N_{\rm glob}}_{n_P}(\Phi).
\]
Consequently, for a regularization parameter \(\lambda>0\), the metric-regularized empirical JEPA objective is
\[
	\widehat{\mathcal E}^{\mathrm{met}}_{\lambda,n_M,n_U,n_P}(\Phi,F)
:=
\widehat{\mathcal E_{\rm pred}}_{n_M}(\Phi,F)
+
\lambda\widehat{\mathcal N_{\rm met}}_{n_U,n_P}(\Phi).
\]
When no distinction between the three sample sizes is needed, we take
\[
n_M=n_U=n_P=n
\]
and write simply
\[
	\widehat{\mathcal E}^{\mathrm{met}}_{\lambda,n}(\Phi,F)
:=
\widehat{\mathcal E_{\rm pred}}_{n}(\Phi,F)
+
\lambda\widehat{\mathcal N_{\rm met}}_{n}(\Phi),
\qquad
\widehat{\mathcal N_{\rm met}}_{n}(\Phi)
:=
\widehat{\mathcal N_{\rm loc}}_{n}(\Phi)+\widehat{\mathcal N_{\rm glob}}_{n}(\Phi).
\]

\begin{assumption}{A3}{Metric sampling geometry}
The metric sampling spaces \((\mathcal U,d_U,\omega)\) and \((P_\rho,d_P,\nu_\rho)\) are lower Ahlfors regular.
That is, there exist constants \(m_{U},m_{P},r_{U},r_{P}>0\) and dimensions
\[
q_U=2d_s-1,
\qquad
q_P=2d_s,
\]
such that
\[
	\begin{split}
\omega(B_r(s,v))\ge m_{U} r^{q_{U}}
\qquad \text{for all }(s,v)\in \mathcal U,
\quad 0<r\le r_U,\\
\nu_{\rho}(B_{\bar r}(s,s'))\ge m_{P} \bar r^{q_{P}}
\qquad \text{for all }(s,s')\in P_\rho,
\quad 0<\bar r\le r_P.
	\end{split}
\]
\end{assumption}
The lower Ahlfors conditions in Assumptions~\hyperref[ass:A2]{A2} and~\hyperref[ass:A3]{A3} yield the following \(L^2\to L^\infty\) interpolation property.

\begin{lemma}[Lipschitz--Ahlfors \(L^2\)-to-\(L^\infty\) interpolation]
\label{lem:l2-linf}
Let \((Y,d,\nu)\) be a compact metric probability space. Assume that there exist
constants \(m>0\), \(r_0>0\), and an exponent \(q>0\) such that
\[
\nu(B_r(y))\ge m r^q
\qquad \text{for all }y\in Y,
\quad 0<r\le r_0.
\]

If \(u:Y\to[0,\infty)\) is \(0\)-Lipschitz, then it is constant and
\[
\|u\|_{L^\infty(Y)}^2
=
\int_Yu^2\,\dd\nu.
\]
Thus the constant case is immediate. In the nonconstant case, let \(u\) be \(L\)-Lipschitz with \(L>0\), and suppose
\[
\int_Y u^2\dd\nu\le \varepsilon.
\]
Then
\[
\|u\|_{L^\infty(Y)}
\le
\Theta_Y(\varepsilon),
\]
where
\[
\Theta_Y(\varepsilon)
:=
2\max\left\{
\left(\frac{L^q\varepsilon}{m}\right)^{1/(q+2)},
\quad
\left(\frac{\varepsilon}{m r_0^q}\right)^{1/2}
\right\}.
\]
In particular, \(\Theta_Y(\varepsilon)\to0\) as \(\varepsilon\to0\). Furthermore, we set
\[\Theta_Y^{-1}(\tau):=\min\left\{ \frac{m}{L^q}\left( \frac{\tau}{2}\right)^{q+2},mr_0^q\left( \frac{\tau}{2} \right)^{2} \right\},\]
which satisfies \(\Theta_Y(\Theta_Y^{-1}(\tau))=\tau\) for \(\tau>0\). In the case \(L=0\), we set \(1/L^q=+\infty\) by convention.
\end{lemma}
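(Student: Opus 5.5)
The plan is the standard ``a large value spreads over a ball'' argument. Let $y_0\in Y$ with $t:=u(y_0)$. Since $Y$ is compact and $u$ is continuous, the supremum is attained, so it suffices to bound $t$ at a maximizer. The constant case $L=0$ is immediate: $u\equiv t$, and since $\nu$ is a probability measure, $\int u^2\,\dd\nu=t^2$. We therefore assume $L>0$ and $t>0$.

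For $y\in B_r(y_0)$ the Lipschitz bound gives $u(y)\ge t-Lr$. We choose $r:=\min\{t/(2L),\,r_0\}$, so that $u\ge t/2$ on $B_r(y_0)$. The lower Ahlfors bound then yields
\[
\varepsilon\ \ge\ \int_{B_r(y_0)}u^2\,\dd\nu\ \ge\ \frac{t^2}{4}\,m r^q .
\]
We split into two cases according to which term attains the minimum in $r$.

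\emph{Case $r=t/(2L)$.} Then $\varepsilon\ge m t^{q+2}/(4\cdot 2^qL^q)$, i.e.\ $t^{q+2}\le 2^{q+2}L^q\varepsilon/m$, so
\[
t\le 2\bigl(L^q\varepsilon/m\bigr)^{1/(q+2)}.
\]
\emph{Case $r=r_0$.} Then $\varepsilon\ge t^2 m r_0^q/4$, so
\[
t\le 2\bigl(\varepsilon/(m r_0^q)\bigr)^{1/2}.
\]
In either case $t\le\Theta_Y(\varepsilon)$. Both terms in $\Theta_Y$ are increasing and vanish as $\varepsilon\to0$, which gives $\Theta_Y(\varepsilon)\to0$.

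For the inverse, set $\tau>0$ and $\varepsilon^\ast:=\Theta_Y^{-1}(\tau)$. The first branch of $\Theta_Y$ evaluated at $\frac{m}{L^q}(\tau/2)^{q+2}$ equals exactly $\tau$, and the second branch evaluated at $m r_0^q(\tau/2)^2$ equals exactly $\tau$. Each branch is strictly increasing in $\varepsilon$, so taking $\varepsilon^\ast$ to be the smaller of the two arguments gives:
\begin{itemize}
\item the branch whose argument equals $\varepsilon^\ast$ evaluates to $\tau$;
\item the other branch evaluates to at most $\tau$.
\end{itemize}
Hence the maximum is exactly $\tau$, i.e.\ $\Theta_Y(\Theta_Y^{-1}(\tau))=\tau$. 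When $L=0$, the convention $1/L^q=+\infty$ makes the first entry inactive, consistent with the constant case.

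The only delicate point is the choice of radius. It must simultaneously keep $u\ge t/2$ on the ball and stay within the Ahlfors range $r\le r_0$; the min-based choice of $r$ above handles both, and everything else is bookkeeping.
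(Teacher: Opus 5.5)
Your proposal is correct and follows the paper's argument: you lower-bound $\int u^2\,\dd\nu$ on a ball of radius $\min\{t/(2L),r_0\}$ around a maximizer, where $u\ge t/2$, and split according to which radius is active, which are exactly the paper's cases $h/(2L)\le r_0$ and $h/(2L)>r_0$. You also verify $\Theta_Y(\Theta_Y^{-1}(\tau))=\tau$ using the monotonicity of the two branches, a step the paper states without proof.
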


\begin{proof}
We set \(h=\|u\|_{L^\infty(Y)}\), and choose \(y_0\in Y\) such that \(u(y_0)=h\).
We suppose first that \(h/(2L)\le r_0\), then the Lipschitz bound implies
\[
u(y)\ge \frac h2
\qquad \text{for all } y\in B_{h/(2L)}(y_0).
\]
Consequently, using the hypothesis, we obtain
\[
\varepsilon
\ge \int_Y u^2\dd\nu
\ge \frac{h^2}{4}\nu(B_{h/(2L)}(y_0))
\ge \frac{h^2}{4}m\left(\frac{h}{2L}\right)^q
=
\frac{m}{2^{q+2}L^q}h^{q+2}.
\]
Hence, in the first case, we arrive at
\[
h\le
2\left(\frac{L^q}{m}\varepsilon\right)^{1/(q+2)}.
\]

In a similar way, we suppose now that \(h/(2L)>r_0\), then \(u\ge h/2\) on \(B_{r_0}(y_0)\), and therefore
\[
\varepsilon
\ge \frac{h^2}{4}\nu(B_{r_0}(y_0))
\ge \frac{h^2}{4}m r_0^q.
\]
Thus, in the second case, we obtain
\[
h\le
2\left(\frac{\varepsilon}{m r_0^q}\right)^{1/2}.
\]
Combining the two cases gives the stated estimate.
\end{proof}

\section{Finite-sample geometric coercivity and controlled semiconjugacy}
\label{sec:main}

This section proves the principal finite-sample implication.  The main text retains the comparator, margin-clearing, empirical-process, and representation arguments that drive the theorem.  Detailed spline localization, coefficient stability, and architecture-specific covering-number calculations are provided in Supplementary Sections~S1--S2.

\begin{table}[t]
\centering
\small
\begin{tabular}{P{0.27\textwidth}P{0.66\textwidth}}
\toprule
\textbf{Hypothesis} & \textbf{Exact role in the proof}\\
\midrule
\hyperref[ass:A1]{A1}: regular observable Markov factor & Supplies a finite-dimensional Euclidean observable state factor, a bi-Lipschitz observation embedding, and descended dynamics and costs.\\[1mm]
Smooth realizability hypotheses of Lemma~\ref{lem:smooth-realizer} & Supply a $C^{1,1}$ bi-Lipschitz latent embedding, a smooth range-enforcing map, and hence the exact non-collapsed comparator used in the objective comparison.\\[1mm]
\hyperref[ass:A2]{A2}: lower-Ahlfors dynamics coverage & Converts the population $L^2$ prediction residual into a uniform semiconjugacy defect through Lemma~\ref{lem:l2-linf}.\\[1mm]
\hyperref[ass:A3]{A3}: tangent and separated-pair coverage & Converts population local and global hinge defects into uniform directional and separated-pair margins.\\[1mm]
\hyperref[ass:A4]{A4}: finite-capacity approximation and uniform budgets & Provides a margin-clearing comparator, compactness/equi-Lipschitz control, and the second-order remainder estimate for short chords.\\[1mm]
Approximate empirical global minimization & Compares the learned objective with the zero-metric-penalty comparator and turns the regularization threshold into population defect bounds.\\
\bottomrule
\end{tabular}
\caption{Assumption-to-conclusion map.  The hypotheses provide geometry, regularity, coverage, and optimization control; the theorem proves stability of all approximate empirical minimizers within that regime.}
\label{tab:assumption-proof-map}
\end{table}

\subsection{A smooth non-collapsed comparator and a concrete finite-capacity class}
The representation theorem requires a faithful comparator and finite-capacity classes that approximate it without losing regularity. Supplementary Section~S1 contains the affine latent embedding, local inverse charts, extension and range-enforcement arguments, tensor-product spline construction, compactness proof, coefficient counts, margin-clearing proof, and nonempty-parameter calculation; Supplementary Section~S2 contains the architecture-specific covering estimates.

\begin{lemma}[Exact non-collapsed Lipschitz realization]\label{lem:exact-Lip}
Under Assumption~\hyperref[ass:A1]{A1} and the standing conditions $d_z\ge d_s$ and $\operatorname{int}K_Z\ne\varnothing$, there are a smooth affine bi-Lipschitz embedding $\psi_0:S\to\operatorname{int}K_Z$, a $C^{1,1}$ range-enforcing map $\Pi_Z:Z\to K_Z$ that is the identity near $\psi_0(S)$, and Lipschitz maps $\Phi_0,F_0$ such that $\Phi_0\circ H=\psi_0$ and $F_0(\psi_0(s),a)=\psi_0(G(s,a))$.
\end{lemma}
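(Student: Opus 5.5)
The plan is to build the four objects explicitly, in the order $\psi_0$, $\Pi_Z$, $\Phi_0$, $F_0$. The exact-realization identities then reduce to compositions that hold by design.

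\emph{The embedding $\psi_0$.} Since $d_z\ge d_s$, let $\iota:\mathbb R^{d_s}\to\mathbb R^{d_z}$ be the coordinate inclusion $y\mapsto(y,0)$; it is a linear isometry. Because $\operatorname{int}K_Z\neq\varnothing$, there is a closed ball $\overline B_{2r}(z_c)\subset\operatorname{int}K_Z$. Fix any $y_c\in S$ and, using compactness of $S$, put
\[
\psi_0(y):=z_c+\beta\,\iota(y-y_c),\qquad \beta:=\frac{r}{1+\operatorname{diam}(S)}.
\]
This map is affine, hence $C^\infty$. It satisfies $\|\psi_0(y)-\psi_0(y')\|_Z=\beta|y-y'|$, so it is bi-Lipschitz, and $\psi_0(S)\subset \overline B_r(z_c)$.

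\emph{The range-enforcing map $\Pi_Z$.} I take a radial retraction onto the ball. Let $\chi:[0,\infty)\to[0,2r)$ be smooth, nondecreasing, with $\chi(t)=t$ for $t\le 3r/2$ and $\chi(t)<2r$ for all $t$. Set $\Pi_Z(z):=z_c+\chi(|z-z_c|)\,\frac{z-z_c}{|z-z_c|}$ for $z\neq z_c$, and $\Pi_Z(z_c):=z_c$. Then $\Pi_Z$ is smooth: it is the identity on $B_{3r/2}(z_c)$, which is a neighborhood of $\psi_0(S)$, and away from $z_c$ it is a composition of smooth maps. Its derivatives are bounded, because $\chi'$ and $\chi''$ are bounded and for $|z-z_c|\ge 3r/2$ the factor $1/|z-z_c|$ is smooth with bounded derivatives. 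Hence $\Pi_Z$ is globally $C^{1,1}$ and maps $Z$ into $B_{2r}(z_c)\subset K_Z$.

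\emph{The encoder $\Phi_0$.} By Assumption~A1, $H$ is bi-Lipschitz onto $H(S)$, so $H^{-1}:H(S)\to S$ is $c_H^{-1}$-Lipschitz. Define $\Phi_0:=\psi_0\circ H^{-1}$ on $H(S)$; this gives $\Phi_0\circ H=\psi_0$ by construction. To obtain a map on all of $X$, I extend $\psi_0\circ H^{-1}$ coordinatewise by McShane to a Lipschitz map $\widetilde\Phi:X\to Z$. This costs at most a factor $\sqrt{d_z}$ in the Lipschitz constant. Then set $\Phi_0:=\Pi_Z\circ\widetilde\Phi$. On $H(S)$ the values lie in $\psi_0(S)$, where $\Pi_Z$ is the identity, so the identity $\Phi_0\circ H=\psi_0$ survives.

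\emph{The transition $F_0$.} On $\psi_0(S)\times A$, define $F_0(z,a):=\psi_0\bigl(G(\psi_0^{-1}(z),a)\bigr)$. Here $\psi_0^{-1}$ is $\beta^{-1}$-Lipschitz on $\psi_0(S)$, and $G$ is Lipschitz, so this map is Lipschitz in $(z,a)$. It satisfies $F_0(\psi_0(s),a)=\psi_0(G(s,a))$ exactly. I extend it by McShane to $Z\times A$ and compose with $\Pi_Z$, which keeps values in $K_Z$ and does not change them on $\psi_0(S)\times A$, since there the values already lie in $\psi_0(S)$.

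The only delicate point is making the global $C^{1,1}$ claim for $\Pi_Z$ rigorous: smoothness at the center, and a uniformly bounded second derivative at infinity. Both are handled by the identity near $z_c$ and the saturation of $\chi$. Everything else is bookkeeping of Lipschitz constants. The $C^{1,1}$ regularity of $\Phi_0$ and $F_0$ themselves is not claimed in this lemma; that is deferred to Lemma~\ref{lem:smooth-realizer}.
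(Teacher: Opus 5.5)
Your proposal is correct and follows essentially the same construction as the paper: a scaled coordinate inclusion $\psi_0$ into a ball inside $\operatorname{int}K_Z$, a smooth radial saturation as $\Pi_Z$, and coordinatewise McShane extensions of $\psi_0\circ H^{-1}$ and of $(z,a)\mapsto\psi_0(G(\psi_0^{-1}(z),a))$, each followed by a range-enforcing map. The differences are only cosmetic: the paper saturates in $|z-z_c|^2$ rather than $|z-z_c|$, and it composes the McShane extensions with the $1$-Lipschitz metric projection $\Pi_{K_Z}$ rather than with $\Pi_Z$. Your choice also preserves the exact identities, and it keeps the same Lipschitz constants if you take $0\le\chi'\le1$.
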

\begin{lemma}[Quantitative inverse regularity]\label{lem:inverse-C11}
Every $C^{1,1}$ map on a neighborhood of a compact convex Euclidean domain that is quantitatively co-Lipschitz on the domain admits finitely many $C^{1,1}$ inverse charts near its compact image, with constants determined by the co-Lipschitz constant and the fixed-neighborhood $C^{1,1}$ budget.
\end{lemma}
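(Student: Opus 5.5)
The statement to prove is Lemma~\ref{lem:inverse-C11}. Let $\psi$ be $C^{1,1}$ on an open neighborhood $\mathcal O$ of the compact convex domain $S$, with budget $\|\psi\|_{C^0}+\|D\psi\|_{C^0}+\Lip(D\psi)\le\Lambda$. Assume the co-Lipschitz bound $\|\psi(s)-\psi(s')\|_Z\ge c_\ast|s-s'|$ on $S$. The plan has three steps: turn the co-Lipschitz bound into a uniform lower bound on $\sigma_{\min}(D\psi)$, apply a quantitative inverse function theorem at a fixed radius, and cover $S$ by finitely many such balls.

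\textbf{Step 1: infinitesimal injectivity.} For $s\in S$ and a unit $v$ pointing into $S$ (possible by convexity and nonempty interior), consider $s'=s+tv$. The $C^{1,1}$ Taylor bound gives
\[
\|\psi(s+tv)-\psi(s)-tD\psi(s)v\|_Z\le\tfrac{\Lambda}{2}t^2 .
\]
Combining this with the co-Lipschitz bound and letting $t\downarrow0$ yields $\|D\psi(s)v\|_Z\ge c_\ast$. Inward directions form a set with nonempty interior in the sphere at every point of a convex body with interior. By density when $s$ is interior, and by continuity of $D\psi$ up to the boundary, we get $\sigma_{\min}(D\psi(s))\ge c_\ast$ for all $s\in S$. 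Since $\Lip(D\psi)\le\Lambda$, this extends to $\sigma_{\min}(D\psi(y))\ge c_\ast/2$ on the $c_\ast/(2\Lambda)$-neighborhood of $S$, intersected with $\mathcal O$.

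\textbf{Step 2: quantitative inverse charts.} When $d_z>d_s$ the map is an immersion, so "inverse chart" means a $C^{1,1}$ left inverse defined on a neighborhood of a piece of the image. Fix $s_0$ and let $P$ be the orthogonal projection onto $\operatorname{ran}D\psi(s_0)$, and set $\chi:=D\psi(s_0)^{-1}P\circ\psi$, a map from $\mathbb R^{d_s}$ to itself. Its derivative is $D\psi(s_0)^{-1}PD\psi(y)=I+O(\Lambda|y-s_0|/c_\ast)$. Hence on a ball of radius $r_\ast:=\min\{c_\ast/(4\Lambda),\operatorname{dist}(S,\partial\mathcal O)/2\}$ the map $\chi$ is a contraction perturbation of the identity. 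The standard Banach fixed-point inverse function theorem then gives a $C^{1,1}$ inverse $\chi^{-1}$ on a ball of radius comparable to $r_\ast$. The bounds are $\|D\chi^{-1}\|\le2$ and $\Lip(D\chi^{-1})\lesssim\Lambda/c_\ast$ up to powers of $c_\ast^{-1}$; the Lipschitz bound follows from $D(\chi^{-1})=(D\chi\circ\chi^{-1})^{-1}$ and the Lipschitz bound on matrix inversion. The chart is then $\Psi_{s_0}:=\chi^{-1}\circ D\psi(s_0)^{-1}P$, defined on the tube $\{z:\|z-\psi(s_0)\|<c\,c_\ast r_\ast\}$. It satisfies $\Psi_{s_0}\circ\psi=\mathrm{id}$ near $s_0$.

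\textbf{Step 3: covering.} By compactness, cover $S$ by $N\le C(d_s)(\operatorname{diam}S/r_\ast)^{d_s}$ balls $B_{r_\ast/2}(s_k)$. The co-Lipschitz bound is what makes each chart globally consistent: if $z=\psi(s)$ lies in the image tube around $\psi(s_k)$, then $|s-s_k|\le c_\ast^{-1}\|z-\psi(s_k)\|$. So $s$ lies in the chart's domain, and $\Psi_{s_k}(z)=s$ holds globally on $\psi(S)$, not just for the local branch. This is where $c_\ast$ is used beyond Step 1. The number of charts, their radii, and their $C^{1,1}$ norms then depend only on $c_\ast$, $\Lambda$, $\operatorname{diam}S$, $d_s$ and the fixed neighborhood.

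\textbf{Main obstacle.} I expect the hardest part to be Step 1 at boundary points of $S$, together with the bookkeeping that makes the inverse-function radius explicit in $(c_\ast,\Lambda)$. I would handle the boundary through convexity, since the inward cone at each boundary point has nonempty interior, combined with continuity of $D\psi$.
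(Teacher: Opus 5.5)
Your proposal is correct and follows essentially the same route as the paper. Co-Lipschitz difference quotients give $\sigma_{\min}(D\psi)\ge c_\ast$; at boundary points this needs approximation from the interior together with continuity of $D\psi$, since a nonempty inward cone alone would not suffice. Your map $\chi=D\psi(s_0)^{-1}P\circ\psi$ is exactly the paper's Moore--Penrose composite $(A_0^\top A_0)^{-1}A_0^\top\circ f$, whose derivative stays uniformly close to the identity on a ball of radius of order $c_\ast/\Lambda$, and a finite cover finishes the argument. Your Step~3 observation is a small, useful sharpening of the paper, which states the inversion identity only on $f(\overline\Omega\cap V_{s_0})$: the co-Lipschitz bound forces every point of $\psi(S)$ in a chart's tube to come from that chart's domain.
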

\begin{lemma}[$C^{1,1}$ exact representatives]\label{lem:smooth-realizer}
Assume \hyperref[ass:A1]{A1} and $X=\mathbb R^{d_x}$, and use the affine embedding and smooth range-enforcing map furnished by Lemma~\ref{lem:exact-Lip}.  Then the exact realization may be chosen with $\Phi_0\in C^{1,1}(X;K_Z)$ and $F_0\in C^{1,1}(K_Z\times A;K_Z)$, while retaining the exact identities on $H(S)$ and $\psi_0(S)\times A$.
\end{lemma}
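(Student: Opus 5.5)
The plan is to build $\Phi_0$ and $F_0$ explicitly from the affine embedding $\psi_0$, the range-enforcing map $\Pi_Z$ of Lemma~\ref{lem:exact-Lip}, and $C^{1,1}$ extensions of the data in Assumption~\hyperref[ass:A1]{A1}. Write $\psi_0(y)=b+\beta Ey$, where $E:\mathbb R^{d_s}\to\mathbb R^{d_z}$ is the coordinate inclusion, $\beta>0$, and $\psi_0(S)\subset\operatorname{int}K_Z$. Its affine left inverse $\psi_0^{-1}(z):=\beta^{-1}E^\top(z-b)$ is smooth on all of $Z$ and satisfies $\psi_0^{-1}(\psi_0(y))=y$.

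I would first define the transition. Let $\widetilde G$ be a $C^{1,1}$ extension of $G=G_{\rm o}$ to an open neighborhood $U\times V$ of $S\times A$, as granted by A1. The set $\{(\psi_0^{-1}(z),a)\}$ lies in $U\times V$ for $z$ near $\psi_0(S)$, but for general $z\in K_Z$ it may leave $U$. To handle this, I compose with a smooth retraction-type map: pick a $C^{1,1}$ map $r:\mathbb R^{d_s}\to U$ equal to the identity on a neighborhood of $S$. Since $S$ is convex, a mollified nearest-point projection onto a slightly enlarged convex set works; if $A$ is not convex, one uses a similar map on the action side. Then set
\[
F_0(z,a):=\Pi_Z\bigl(\psi_0(\widetilde G(r(\psi_0^{-1}(z)),a))\bigr).
\]
This is a composition of $C^{1,1}$ maps with bounded derivatives on compact sets, so it is $C^{1,1}$ on a neighborhood of $K_Z\times A$ and takes values in $K_Z$. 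For $s\in S$ we get $r(\psi_0^{-1}(\psi_0(s)))=s$ and $\widetilde G(s,a)=G(s,a)\in S$, so $\psi_0(G(s,a))\in\psi_0(S)$, where $\Pi_Z$ is the identity. This gives $F_0(\psi_0(s),a)=\psi_0(G(s,a))$.

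The encoder is the main obstacle. I need a $C^{1,1}$ left inverse of $H$ near $H(S)\subset\mathbb R^{d_x}$, but $H$ is only bi-Lipschitz with a $C^{1,1}$ extension. I would use Lemma~\ref{lem:inverse-C11} applied to a $C^{1,1}$ extension $\widetilde H$ of $H$ on a neighborhood of the convex set $S$, which is co-Lipschitz with constant $c_H$. Co-Lipschitzness on a convex set with nonempty interior forces $\sigma_{\min}(D\widetilde H)\ge c_H$ on $S$: for interior points take difference quotients along all directions, and for boundary points use continuity of $D\widetilde H$. Lemma~\ref{lem:inverse-C11} then gives finitely many $C^{1,1}$ inverse charts $\chi_i$ on open sets $O_i\subset\mathbb R^{d_x}$ covering $H(S)$, with $\chi_i(H(y))=y$ for $y\in S$ and $H(y)\in O_i$. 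Such charts can be realized concretely by local retractions onto the image manifold followed by the local inverse. I then glue them with a smooth partition of unity $\{\theta_i\}$ subordinate to $\{O_i\}$ and equal to $\sum_i\theta_i=1$ on a neighborhood of $H(S)$, setting $h(x):=\sum_i\theta_i(x)\chi_i(x)$. The gluing is consistent because every $\chi_i$ returns the same point $y$ on $H(S)$, so $h(H(y))=y$ there. The care needed is to keep each chart's neighborhood inside the domain where the Lipschitz constants are controlled. Since $h$ is only defined near $H(S)$, I multiply it by a cutoff and extend arbitrarily smoothly elsewhere, or instead define $\Phi_0$ globally through the cutoff as below.

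Finally I set
\[
\Phi_0(x):=\Pi_Z\bigl(\vartheta(x)\,\psi_0(h(x))+(1-\vartheta(x))\,z_\ast\bigr),
\]
where $\vartheta$ is a smooth cutoff equal to $1$ near $H(S)$ and supported in the domain of $h$, and $z_\ast\in\psi_0(S)$. Then $\Phi_0\in C^{1,1}(X;K_Z)$, and on $H(S)$ we have $\Phi_0\circ H=\Pi_Z(\psi_0)=\psi_0$. Both identities survive, and the $C^{1,1}$ budgets depend only on $c_H$, $L_H$, the $C^{1,1}$ norms of $G$ and $H$, $\beta$, and $\Pi_Z$.
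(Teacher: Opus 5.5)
Your proposal is correct. The encoder is built the same way as in the paper: local inverse charts from Lemma~\ref{lem:inverse-C11}, glued by a partition of unity, padded by a constant $z_\ast$, and composed with $\Pi_Z$. You glue the $\mathbb R^{d_s}$-valued charts before applying $\psi_0$, whereas the paper glues $\psi_0\circ\Gamma_i^H\circ P_i^H$; this difference is immaterial.

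The transition is where you genuinely differ. The paper reruns the chart machinery: it applies Lemma~\ref{lem:inverse-C11} to $\psi_0$, glues the maps $(\zeta,a)\mapsto\psi_0(G(\Gamma_j^\psi(P_j^\psi\zeta),a))$ with a partition of unity, pads with $z_\ast$, and composes with $\Pi_Z$. You instead use the global affine left inverse of $\psi_0$ and keep its output inside the extension domain of $G$ with a retraction $r$.

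The mollified projection onto $\{\operatorname{dist}(\cdot,S)\le\varepsilon\}$ does work. Its values stay in that convex set, all its derivatives are bounded, and it is the identity where $\operatorname{dist}(\cdot,S)<\varepsilon-\delta$ because the mollifier is symmetric. No action-side retraction is needed, since your $F_0$ is already defined on $Z\times V$, an open neighborhood of $K_Z\times A$.

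Your route gives one closed formula without cutoffs and makes clear that only the affineness of $\psi_0$ and the convexity of $S$ are used. The paper's route treats both maps uniformly. It would also survive replacing $\psi_0$ by an arbitrary $C^{1,1}$ co-Lipschitz embedding, for which no global smooth left inverse is available.

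Two details in your encoder need care.

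\begin{itemize}
\item Lemma~\ref{lem:inverse-C11} gives $\Gamma_i^H(P_i^H H(y))=y$ only for $y\in S\cap V_i^H$. So the ambient chart domains must be shrunk until they meet $H(S)$ only inside $H(S\cap V_i^H)$; this is possible because that set is relatively open in the compact set $H(S)$. The paper states this step explicitly, and it is exactly what makes your gluing consistent.
\item The ambient charts should be the linear-projection charts $\Gamma_i^H\circ P_i^H$ themselves. A nearest-point retraction onto a $C^{1,1}$ image is in general only Lipschitz, so your alternative ``local retraction'' realization would lose the $C^{1,1}$ regularity.
\end{itemize}
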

\begin{assumption}{A4}{Finite-capacity $C^1$ approximation with uniform $C^{1,1}$ budgets}
The exact pair is approximated by closed classes $\mathcal A_\Phi(W)\times\mathcal A_F(W)$ mapping into $K_Z$, with capacity-independent Lipschitz and $C^{1,1}$ budgets and
\[
\|\Phi_W-\Phi_0\|_{C^1(H(S))}+\|F_W-F_0\|_{C^1(\psi_0(S)\times A)}\le\delta(W),\qquad \delta(W)\to0.
\]
\end{assumption}
\begin{proposition}[Concrete finite-capacity realization]\label{prop:spline-A4}
For $X=\mathbb R^{d_x}$ and the exact representatives and range-enforcing map of Lemma~\ref{lem:smooth-realizer}, norm-constrained tensor-product cubic B-spline classes on fixed rectangular neighborhoods of $H(S)$ and $K_Z\times A$ satisfy Assumption~\hyperref[ass:A4]{A4}. They may be chosen so that
\[
\delta(W)\le C_\Phi W^{-1/d_x}+C_F W^{-1/(d_z+d_a)},
\]
their Lipschitz and $C^{1,1}$ budgets are independent of $W$, and each class is compact in $C^1$.
\end{proposition}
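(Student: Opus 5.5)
The statement to prove is Proposition~\ref{prop:spline-A4}. The plan is to build the spline classes by quasi-interpolating the exact $C^{1,1}$ representatives of Lemma~\ref{lem:smooth-realizer}, composing with the range-enforcing map $\Pi_Z$, and then defining each class as a norm-constrained coefficient ball that is large enough to contain the approximants.

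\textbf{Setup.} Fix closed rectangles $Q_X\supset H(S)$ and $Q_{ZA}\supset K_Z\times A$, each containing a fixed neighborhood of the relevant compact set on which $\Phi_0$ and $F_0$ are $C^{1,1}$. Lemma~\ref{lem:smooth-realizer} provides such representatives; after multiplying by a smooth cutoff, we may take them to be $C^{1,1}$ with compact support in slightly larger rectangles.

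\textbf{Approximation.} On a uniform tensor grid of mesh $h$ in $Q_X$, with $W_\Phi\asymp h^{-d_x}$ coefficients, apply a local cubic quasi-interpolant $\mathcal Q_h$, for example the de Boor--Fix / Schoenberg-type projector that reproduces polynomials of degree $\le 1$. Standard local estimates, using stability of $\mathcal Q_h$ in $L^\infty$ and reproduction of affine functions on each stencil, give
\[
\|D^k(f-\mathcal Q_hf)\|_{L^\infty}\le C h^{2-k}\,\|f\|_{C^{1,1}},\qquad k=0,1.
\]
The same stability also yields the budget bound $\|\mathcal Q_hf\|_{C^{1,1}}\le C\|f\|_{C^{1,1}}$, uniformly in $h$. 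This follows from bounding second differences of coefficients by $h^2\Lip(Df)$, together with the spline derivative formula that expresses $D^2$ of a spline through divided differences of its coefficients. The rate $h\asymp W^{-1/d}$ therefore gives $C^1$ error $O(W^{-1/d})$, which is the claimed order; it is even conservative.

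The same construction is applied to $F_0$ on $Q_{ZA}$ with $d=d_z+d_a$. To force values into $K_Z$, set $\Phi_W:=\Pi_Z\circ\mathcal Q_h\Phi_0$ and $F_W:=\Pi_Z\circ\mathcal Q_hF_0$. By Lemma~\ref{lem:exact-Lip}, $\Pi_Z$ is $C^{1,1}$ and equals the identity near $\psi_0(S)$. Hence, for $h$ small, the composition changes nothing near the evaluation sets, while the chain rule keeps the $C^{1,1}$ budgets uniform.

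If instead the class must consist of plain splines, I would define $\mathcal A_\Phi(W)$ as $\{\Pi_Z\circ s : s\text{ a spline with coefficient seminorms}\le B\}$. Here the constraint is a bound $B$ on the sup norm of the coefficients and their first and second divided differences. The bound $B$ is chosen from the stability constants, independently of $W$, so that it contains $\mathcal Q_h\Phi_0$.

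\textbf{Closedness and compactness.} The coefficient set is compact in a finite-dimensional space, and the map from coefficients to functions is continuous into $C^1$. Composition with $\Pi_Z$ is also continuous in $C^1$. So each class is the continuous image of a compact set, hence compact and closed in $C^1$. The Lipschitz and $C^{1,1}$ budgets follow from the coefficient constraints via the divided-difference bounds, and these constants depend only on $B$, the spline degree, and $\Pi_Z$, not on $h$.

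\textbf{Main obstacle.} The delicate step is the $h$-independent $C^{1,1}$ budget: a bound on $\Lip(D s)$ in terms of normalized second divided differences of the coefficients, uniform across knot cells, that also survives composition with $\Pi_Z$. I would handle it with the standard B-spline derivative recursion and the uniform knot sequence, and cite Supplementary Section~S1 for the coefficient-stability details.
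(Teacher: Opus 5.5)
Your proposal is correct and follows essentially the paper's route. You quasi-interpolate the exact $C^{1,1}$ representatives with an affine-reproducing local cubic scheme, compose with $\Pi_Z$ (which is the identity near $\psi_0(S)$, so the $C^1$ error on $H(S)$ and $\psi_0(S)\times A$ is unchanged), take norm-constrained classes large enough to contain these approximants, and obtain $C^1$-compactness as the continuous image of a compact coefficient set.

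There are two differences in presentation. First, the paper imposes the constraint directly on the raw spline, $\|u\|_{C^0}+\|Du\|_{C^0}+\Lip(Du)\le B$. The uniform $C^{1,1}$ budget is then immediate, and the quasi-interpolant's uniform $C^{1,1}$ stability is proved by the cellwise inverse estimate $\|D^jQ_hv\|_{L^\infty(K_h)}\le Ch^{-j}\|v\|_{L^\infty(\omega_{K_h})}$ together with local affine reproduction. You instead encode the constraint through normalized coefficient divided differences, which forces you to use the B-spline derivative formulas. Second, the paper makes the encoder a global map on $X$ via a saturation $\rho_\Phi$; your globally defined B-spline sums serve the same purpose.
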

\begin{proposition}[Margin-clearing competitors]\label{prop:margin-clearing}
Choose $0<\kappa<\inf_{\mathcal U}\|D\psi_0(s)[v]\|_Z$ and $0<\alpha<\inf_{P_\rho}\|\psi_0(s)-\psi_0(s')\|_Z$. For all sufficiently large $W$, the approximants clear both margins, have zero population and empirical metric penalty, and satisfy
\[
\mathcal E_{\rm pred}(\Phi_W,F_W)\le\beta(W):=C_\sharp\delta(W)^2\to0.
\]
\end{proposition}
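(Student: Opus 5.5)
The plan is to obtain both claims from the $C^1$ closeness of Assumption~\hyperref[ass:A4]{A4}, perturbing the exact margins of $\psi_0$ and exploiting the exact semiconjugacy identity $F_0(\psi_0(s),a)=\psi_0(G(s,a))$.

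\textbf{Step 1: margin clearing.} Set $\psi_W:=\Phi_W\circ H$. By Assumption~\hyperref[ass:A1]{A1}, $H$ is Lipschitz with constant $L_H$ (and $C^{1,1}$). The chain rule then gives
\[
\|\psi_W-\psi_0\|_{C^0(S)}\le\delta(W),
\qquad
\sup_S\|D\psi_W-D\psi_0\|_{\rm op}\le L_H\,\delta(W).
\]
For every $(s,v)\in\mathcal U$,
\[
\|D\psi_W(s)[v]\|_Z\ge\inf_{\mathcal U}\|D\psi_0[v]\|_Z-L_H\delta(W),
\]
and for every $(s,s')\in P_\rho$,
\[
\|\psi_W(s)-\psi_W(s')\|_Z\ge\inf_{P_\rho}\|\psi_0(s)-\psi_0(s')\|_Z-2\delta(W).
\]
Let $\gamma$ be the minimum of the two gaps between these infima and the chosen margins $\kappa$ and $\alpha$; the strict choice of margins makes $\gamma>0$. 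Once $(L_H+2)\delta(W)<\gamma$, both hinges vanish identically on $\mathcal U$ and on $P_\rho$. This holds pointwise, so the population penalty and the empirical penalty at any sample points are both zero. Compactness of $\mathcal U$ and of $P_\rho$ makes the infima attained and positive: for $\mathcal U$ because $\psi_0$ is an affine injective embedding, and for $P_\rho$ because $\psi_0$ is bi-Lipschitz, so the infimum is at least $c\rho$.

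\textbf{Step 2: prediction residual.} Write
\[
R_W(s,a)=\bigl[\psi_W(G(s,a))-\psi_0(G(s,a))\bigr]+\bigl[F_0(\psi_0(s),a)-F_W(\psi_0(s),a)\bigr]+\bigl[F_W(\psi_0(s),a)-F_W(\psi_W(s),a)\bigr].
\]
The first bracket is bounded by $\delta(W)$, since $G$ maps into $S$. The second is bounded by $\delta(W)$, directly from $\|F_W-F_0\|_{C^0(\psi_0(S)\times A)}\le\delta(W)$. The third is bounded by $L_F\delta(W)$, where $L_F$ is the capacity-independent Lipschitz budget of $\mathcal A_F(W)$.

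\textbf{Main obstacle.} The delicate point is the third bracket. The $C^1$ control of $F_W$ is given only on $\psi_0(S)\times A$, but the argument $\psi_W(s)$ may lie slightly off that set. So the Lipschitz constant of $F_W$ must be controlled on the segment joining $\psi_0(s)$ and $\psi_W(s)$. Two ingredients make this work. Convexity of $K_Z$ keeps the segment inside the domain of $F_W$. The uniform budget from Assumption~\hyperref[ass:A4]{A4} then supplies a $W$-independent $L_F$ on all of $K_Z\times A$. If that budget is only stated near $\psi_0(S)$, one instead uses the open-neighborhood extension together with the fact that $\psi_W(S)$ lies within $\delta(W)$ of $\psi_0(S)$ for large $W$.

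\textbf{Conclusion.} Squaring and integrating against the probability measure $\mu$ gives
\[
\mathcal E_{\rm pred}(\Phi_W,F_W)\le(2+L_F)^2\delta(W)^2,
\]
so one may take $C_\sharp=(2+L_F)^2$. It follows that $\beta(W)\to0$ because $\delta(W)\to0$.
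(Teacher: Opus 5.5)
Your proposal is correct and follows essentially the paper's own proof: a chain-rule $C^1$ perturbation of $\psi_0$ clears both margins pointwise, so the population and empirical penalties vanish for every sample, and the residual is split into the same three terms using $F_0(\psi_0(s),a)=\psi_0(G(s,a))$ and the $W$-independent Lipschitz budget of $F_W$ on $K_Z\times A$. The differences are cosmetic. You bound value differences by the $C^0$ part of $\delta(W)$ alone, which gives $C_\sharp=(2+L_F)^2$ instead of the paper's $\bigl(1+(1+L_F^{\rm bud})(1+\|DH\|_{C^0(S)})\bigr)^2$, and your ``main obstacle'' does not really arise, since $\psi_W(s)\in K_Z$ and the budget is already stated on all of $K_Z\times A$.
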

\begin{lemma}[Nonempty parameter regime]\label{lem:nonempty-parameters}
Let $\kappa_0:=\inf_{(s,v)\in\mathcal U}\|D\psi_0(s)[v]\|_Z>0$ and let $B_{\rm loc}$ be the capacity-uniform second-order budget.  Every $\kappa\in(0,\kappa_0)$ admits
\[
0<\rho<\min\!\left\{\operatorname{diam}(S),\frac{\kappa}{4B_{\rm loc}}\right\},
\qquad
0<\alpha<\inf_{P_\rho}\|\psi_0(s)-\psi_0(s')\|_Z,
\]
with the usual $+\infty$ convention when $B_{\rm loc}=0$.  Hence the geometric margin regime required by Theorem~\ref{thm:representation} is nonempty.
\end{lemma}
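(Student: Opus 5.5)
The plan is to verify the three nonemptiness claims one at a time. Each reduces to a positivity statement about the affine comparator $\psi_0$ of Lemma~\ref{lem:exact-Lip}. No result beyond Lemma~\ref{lem:exact-Lip}, Assumption~\hyperref[ass:A2]{A2}, and elementary compactness is needed.

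\emph{Step 1 ($\kappa_0>0$).} Since $\psi_0$ is a smooth affine bi-Lipschitz embedding, its derivative $D\psi_0(s)\equiv L_0$ is a constant linear map $\mathbb R^{d_s}\to\mathbb R^{d_z}$. The lower bi-Lipschitz bound $\|\psi_0(s)-\psi_0(s')\|_Z\ge c_0|s-s'|$ holds on $S$, which has nonempty interior. Applying it to $s'=s+tv$ with $s$ interior and $t\downarrow0$ gives $\|L_0v\|_Z\ge c_0|v|$, so $L_0$ is injective; this is consistent with $d_z\ge d_s$. Hence $\kappa_0=\sigma_{\min}(L_0)\ge c_0>0$, and the interval $(0,\kappa_0)$ for $\kappa$ is nonempty.

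\emph{Step 2 (choice of $\rho$).} Fix $\kappa\in(0,\kappa_0)$. Both $\operatorname{diam}(S)>0$ (nonempty interior) and $\kappa/(4B_{\rm loc})>0$ hold, where the latter is read as $+\infty$ if $B_{\rm loc}=0$. Their minimum is therefore a positive number, possibly $\operatorname{diam}(S)$ itself, so some $\rho$ lies strictly between $0$ and it. I also check that $P_\rho$ carries positive $\mu_S\otimes\mu_S$ mass, so that $\nu_\rho$ is well defined. Since $\rho<\operatorname{diam}(S)$ and $S$ is compact, there are $s_1,s_2\in S$ with $|s_1-s_2|=\operatorname{diam}(S)>\rho$. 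Set $r:=\min\{r_0,(\operatorname{diam}(S)-\rho)/3\}$. By Assumption~\hyperref[ass:A2]{A2}, applied to the product balls $B_r(s_i,a)$ with any $a\in A$ and projected to $S$, we get $\mu_S(B_r(s_i))\ge m_0r^{d_m}>0$. Every pair in $B_r(s_1)\times B_r(s_2)$ is separated by at least $\operatorname{diam}(S)-2r>\rho$. Hence $(\mu_S\otimes\mu_S)(P_\rho)\ge\mu_S(B_r(s_1))\,\mu_S(B_r(s_2))>0$.

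\emph{Step 3 (choice of $\alpha$).} The set $P_\rho$ is a closed subset of the compact set $S\times S$ and is nonempty by Step 2, so it is compact. The function $(s,s')\mapsto\|\psi_0(s)-\psi_0(s')\|_Z$ is continuous, so its infimum over $P_\rho$ is attained. By the bi-Lipschitz lower bound this infimum is at least $c_0\rho>0$. Any $\alpha$ in $(0,\inf_{P_\rho}\|\psi_0(s)-\psi_0(s')\|_Z)$ therefore exists, and the triple $(\kappa,\rho,\alpha)$ satisfies all the displayed constraints.

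I expect no serious obstacle. The only points needing care are the positivity of the $\nu_\rho$ normalization in Step 2, which is where the coverage hypothesis \hyperref[ass:A2]{A2} is used, and the convention $\kappa/(4B_{\rm loc})=+\infty$ when $B_{\rm loc}=0$. The role of the bound $\rho<\kappa/(4B_{\rm loc})$ in the theorem, the short-chord second-order remainder estimate, is not needed here; only the positivity of that bound matters.
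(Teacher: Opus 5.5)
Your proposal is correct and follows essentially the same route as the paper. You choose $\rho$ in the nonempty interval $\bigl(0,\min\{\operatorname{diam}(S),\kappa/(4B_{\rm loc})\}\bigr)$, use a diameter-realizing pair to show that the compact set $P_\rho$ is nonempty, and bound $\inf_{P_\rho}\|\psi_0(s)-\psi_0(s')\|_Z\ge c_{\psi_0}\rho>0$ by the co-Lipschitz property.

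Your checks that $\kappa_0>0$ and that $(\mu_S\otimes\mu_S)(P_\rho)>0$ via Assumption A2 (so that $\nu_\rho$ is well defined) go slightly beyond the paper. Conversely, the paper states one fact that you use silently, namely $B_{\rm loc}<\infty$. This follows from the uniform $C^{1,1}$ budgets of Assumption A4, the $C^{1,1}$ regularity of $H$, and the chain rule. Your remark that nothing beyond the comparator lemma, A2, and compactness is needed therefore slightly understates the inputs.
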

These statements supply the only construction input used below: a zero-penalty comparator with prediction error $\beta(W)$ and uniform regularity. Their complete statements and proofs are in Supplementary Section~S1; the architecture-specific entropy estimates are in Supplementary Section~S2.

\subsection{Uniform statistical control}
\label{sub:Stat-devi}

Fix $\kappa,\alpha>0$ and define
\[
\mathcal G_M(W)=\{(s,a)\mapsto\|R[\Phi,F](s,a)\|_Z^2:(\Phi,F)\in\mathcal A(W)\},
\]
\[
\mathcal G_U(W)=\{(s,v)\mapsto[\kappa-\|D(\Phi\circ H)(s)[v]\|_Z]_+^2:\Phi\in\mathcal A_\Phi(W)\},
\]
and
\[
\mathcal G_P(W)=\{(s,s')\mapsto[\alpha-\|(\Phi\circ H)(s)-(\Phi\circ H)(s')\|_Z]_+^2:\Phi\in\mathcal A_\Phi(W)\}.
\]
Let $N_M(r;W),N_U(r;W),N_P(r;W)$ denote their $C^0$ covering numbers.  The generic deviation argument is retained here; the spline coefficient and parameter-Lipschitz verification is in Supplementary Section~S2.
\begin{lemma}[Equi-Lipschitz integrand classes]
\label{lem:equi-lipschitz-classes}
Adopt the uniform \(C^{1,1}\) budgets of Assumption~\hyperref[ass:A4]{A4} and the standing Lipschitz regularity of \(G\) and \(H\). By Assumption~\hyperref[ass:A4]{A4}, there are constants \(C_\psi,L_F^{\mathrm{bud}}<\infty\), independent of \(W\), such that for every \(\Phi\in\mathcal A_\Phi(W)\), writing \(\psi:=\Phi\circ H\), we have
\[
\|\psi\|_{C^0(S)}+\|D\psi\|_{C^0(S)}+\Lip(D\psi;S)\le C_\psi,
\]
and for every \(F\in\mathcal A_F(W)\),
\[
F:K_Z\times A\to K_Z,\qquad \Lip(F)\le L_F^{\mathrm{bud}}.
\]
Then \(\mathcal G_M(W),\mathcal G_U(W),\mathcal G_P(W)\) are uniformly bounded and equi-Lipschitz on their respective compact domains: there exist constants
\[B_M,B_U,B_P<\infty\quad \text{and} \quad\widetilde L_M,\widetilde L_U,\widetilde L_P<\infty,\]
independent of \(W\), such that every \(g\in\mathcal G_Y(W)\) satisfies
\[
0\le g\le B_Y,\qquad \Lip(g)\le\widetilde L_Y,\qquad Y\in\{M,U,P\}.
\]
Consequently, \(N_Y(r;W)\) is finite for every \(r>0\) and \(Y\in\{M,U,P\}\).

Furthermore, there exist constants \(\overline{L}_M, \overline{L}_U, \overline{L}_P<\infty\) such that
\[\Lip(\sqrt{g})\le \overline{L}_Y,\qquad \text{for all }g\in \mathcal G_Y(W),\ Y\in\{M,U,P\}.\]
\end{lemma}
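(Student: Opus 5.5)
The plan is to bound each building block uniformly by the budgets of Lemma~\ref{lem:equi-lipschitz-classes} and Assumption~\hyperref[ass:A4]{A4}, and then to compose them. Write \(L_G,L_H\) for the Lipschitz constants of \(G,H\) and \(D_Z:=\operatorname{diam}(K_Z)<\infty\).

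\textbf{Step 1: the residual class \(\mathcal G_M(W)\).} Since \(\psi\) and \(F\) take values in the compact set \(K_Z\), the residual satisfies \(\|R[\Phi,F]\|_Z\le D_Z\). Hence \(0\le g\le B_M:=D_Z^2\). For Lipschitz control we use
\[
\|R(s,a)-R(s',a')\|_Z\le \|\psi(G(s,a))-\psi(G(s',a'))\|_Z+\|F(\psi(s),a)-F(\psi(s'),a')\|_Z .
\]
With \(\Lip(\psi)\le C_\psi\) (from the bound on \(D\psi\) on the convex set \(S\)), the first term is at most \(C_\psi L_G\,d_M\). The second is at most \(L_F^{\rm bud}(C_\psi|s-s'|+|a-a'|)\), where we use the product metric on \(K_Z\times A\) and absorb norm-equivalence constants. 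So \(\sqrt g=\|R\|_Z\) is Lipschitz with a constant \(\overline L_M:=C_\psi L_G+L_F^{\rm bud}\max\{C_\psi,1\}\), by the reverse triangle inequality. Then \(g=(\sqrt g)^2\) gives \(\Lip(g)\le 2D_Z\overline L_M=:\widetilde L_M\).

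\textbf{Step 2: the hinge classes \(\mathcal G_U(W)\) and \(\mathcal G_P(W)\).} The key elementary fact is that \(t\mapsto[\kappa-t]_+\) is \(1\)-Lipschitz and takes values in \([0,\kappa]\) for \(t\ge0\). For \(\mathcal G_U\), the map \((s,v)\mapsto D\psi(s)[v]\) satisfies
\[
\|D\psi(s)[v]-D\psi(s')[v']\|_Z\le \Lip(D\psi)|s-s'|+\|D\psi\|_{C^0}|v-v'|\le C_\psi\,d_U .
\]
So \(\sqrt g=[\kappa-\|D\psi(s)[v]\|]_+\) is \(C_\psi\)-Lipschitz and bounded by \(\kappa\). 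This yields \(B_U=\kappa^2\), \(\overline L_U=C_\psi\), \(\widetilde L_U=2\kappa C_\psi\). Similarly, \((s,s')\mapsto\psi(s)-\psi(s')\) is \(C_\psi\)-Lipschitz for \(d_P\), giving \(B_P=\alpha^2\), \(\overline L_P=C_\psi\), \(\widetilde L_P=2\alpha C_\psi\). None of these constants depends on \(W\).

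\textbf{Step 3: finite covering numbers.} Each domain \(M\), \(\mathcal U\), \(P_\rho\) is compact; \(P_\rho\) is closed in \(S\times S\). A uniformly bounded, equi-Lipschitz family on a compact metric space is totally bounded in \(C^0\) by Arzel\`a--Ascoli. Concretely, take a finite \(r/(3\widetilde L_Y)\)-net of the domain and discretize the values on the net at mesh \(r/3\). This gives \(N_Y(r;W)\le (3B_Y/r+1)^{N_{\rm dom}(r/(3\widetilde L_Y))}<\infty\).

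\textbf{Main obstacle.} The one delicate point is obtaining the \(W\)-independent constant \(C_\psi\) for \(\psi=\Phi\circ H\) itself, which the statement simply posits from Assumption~\hyperref[ass:A4]{A4}. If one instead derives it, one uses \(D\psi=D\Phi(H)\,DH\) and \(\Lip(D\psi)\le \Lip(D\Phi)L_H^2+\|D\Phi\|\Lip(DH)\). This needs the \(C^{1,1}\) extension of \(H\) from Assumption~\hyperref[ass:A1]{A1} and convexity of \(S\), so that derivative bounds give Lipschitz bounds. Once this is granted, the rest is the routine composition above.
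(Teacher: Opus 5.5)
Your proposal is correct and follows essentially the same route as the paper. Both arguments obtain the sup bounds from $D_Z=\operatorname{diam}(K_Z)$ and the margins $\kappa,\alpha$. Both get Lipschitz control of $\sqrt g$ by composing the residual or hinge map with the reverse triangle inequality. Both then multiply by $2D_Z$, $2\kappa$, $2\alpha$ to pass to $g=(\sqrt g)^2$, and both use Arzel\`a--Ascoli for the covering numbers.

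The differences are cosmetic. You merge $L_\psi,B_D,L_D$ into the single constant $C_\psi$, which the paper also notes is allowed. You give an explicit net count instead of just citing Arzel\`a--Ascoli. You also write out the chain-rule derivation of the $W$-independent budget for $\psi=\Phi\circ H$, which the paper takes as given.
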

\begin{proof}
	We first treat the case \(Y=M\).

Since \(K_Z\subset Z\) is compact, we set
\[
	D_Z:={\rm diam}\,(K_Z)<\infty.
\]
For every admissible encoder \(\Phi\), the induced map
\[
\psi=\Phi\circ H:S\to K_Z
\]
takes values in \(K_Z\). Hence
\[
\|\psi(s)-\psi(s')\|_Z\le D_Z
\qquad \text{for all } s,s'\in S.
\]
The uniform \(C^{1,1}\) budget gives constants, independent of \(W\), such that
\[
\Lip(\psi)\le L_\psi,
\qquad
\|D\psi\|_{C^0(S)}\le B_D,
\qquad
\Lip(D\psi)\le L_D.
\]
We may take these constants to be bounded by \(C_\psi\).

	Let
\[
g_M(s,a)
:=
\|R[\Phi,F](s,a)\|_Z^2,
\]
where
\[
R[\Phi,F](s,a)
=
\psi(G(s,a))-F(\psi(s),a).
\]
Since both \(\psi(G(s,a))\) and \(F(\psi(s),a)\) lie in \(K_Z\), we have
\[
	\|R[\Phi,F](s,a)\|_Z\le D_Z
	\qquad \text{for all } (s,a)\in M.
\]
Thus, we obtain the bound
\[
	0\le g_M(s,a)\le D_Z^2.
\]
We may therefore take
\[
B_M:=D_Z^2.
\]

We next prove a uniform Lipschitz bound for \(R[\Phi,F]\). Let
\[
p=(s,a),
\qquad
p'=(s',a').
\]
Then, by the triangle inequality, we estimate
\[
\|R[\Phi,F](p)-R[\Phi,F](p')\|_Z
\le
\|\psi(G(s,a))-\psi(G(s',a'))\|_Z
+
\|
F(\psi(s),a)-F(\psi(s'),a')
\|_Z.
\]
For the first term, using the Lipschitz bounds for \(\psi\) and \(G\), we have
\[
\begin{aligned}
\|\psi(G(s,a))-\psi(G(s',a'))\|_Z
&\le
L_\psi\, d_S(G(s,a),G(s',a'))  \\
&\le
L_\psi \Lip(G)\, d_M(p,p').
\end{aligned}
\]
For the second term, using the Lipschitz budget of \(F\), we have
\[
\begin{aligned}
\|
F(\psi(s),a)-F(\psi(s'),a')
\|_Z
&\le
L_F^{\mathrm{bud}}
\bigl(
\|\psi(s)-\psi(s')\|_Z+d_A(a,a')
\bigr)  \\
&\le
L_F^{\mathrm{bud}}
\bigl(
	L_\psi |s-s'|+d_A(a,a')
\bigr)  \\
&\le
L_F^{\mathrm{bud}}\max\{L_\psi,1\}
d_M(p,p').
\end{aligned}
\]
Therefore, we arrive at
\[
\Lip(R[\Phi,F])
\le
L_R,
\]
where
\[
L_R
:=
L_\psi \Lip(G)
+
L_F^{\mathrm{bud}}\max\{L_\psi,1\}.
\]
The constant \(L_R\) is independent of \(W\).

Now, we use the elementary inequality
\[
\bigl|\|x\|_Z^2-\|y\|_Z^2\bigr|
\le
(\|x\|_Z+\|y\|_Z)\|x-y\|_Z.
\]
Since \(\|R[\Phi,F]\|\le D_Z\), we obtain
\[
\begin{aligned}
|g_M(p)-g_M(p')|
&=
\left|
\|R[\Phi,F](p)\|_Z^2
-
\|R[\Phi,F](p')\|_Z^2
\right| \\
&\le
2D_Z
\|R[\Phi,F](p)-R[\Phi,F](p')\|_Z  \\
&\le
2D_Z L_R\,d_M(p,p').
\end{aligned}
\]
By the reverse triangle inequality, we have
\[
|\sqrt{g_M(p)}-\sqrt{g_M(p')}|
=
\bigl|\|R[\Phi,F](p)\|_Z-\|R[\Phi,F](p')\|_Z\bigr|
\le
\|R[\Phi,F](p)-R[\Phi,F](p')\|_Z
\le L_R d_M(p,p').
\]
Hence every element of \(\mathcal G_M(W)\) satisfies
\[
\Lip(g_M)\le \widetilde L_M,
\qquad
\widetilde L_M:=2({\rm diam}\, K_Z) L_R,
\qquad
\overline{L}_M:= L_R.
\]

The cases \(Y=U,P\) are analogous, using that \(t\mapsto[c-t]_+^2\) is \(2c\)-Lipschitz on \([0,\infty)\) and that \((s,v)\mapsto\|D\psi(s)[v]\|_Z\) and \((s,s')\mapsto\|\psi(s)-\psi(s')\|_Z\) are Lipschitz with constants \(\max\{B_D,L_D\}\) and \(L_\psi\), respectively.
The uniform bounds and the Lipschitz budgets are
\begin{align*}
	&B_U = \kappa^2,\quad \widetilde{L}_U = 2\kappa \max\{B_D,L_D\},\quad \overline{L}_U=\max\{B_D,L_D\},\\
	&B_P = \alpha^2,\quad \widetilde{L}_P = 2\alpha L_\psi,\quad \overline{L}_P=L_\psi.
\end{align*}

Finally, we show that the \(C^0\)-covering numbers \(N_Y(r;W)\) are finite for every \(r>0\) and each \(Y\in\{M,U,P\}\).

Indeed, for any \(Y\in\{M,U,P\}\), the class \(\mathcal G_Y(W)\) is uniformly bounded and equi-Lipschitz on the corresponding compact domain, with constants independent of \(W\).
By the Arzel\`a--Ascoli theorem, any such class is relatively compact in
\(C^0(Y)\). In particular, it is totally bounded in the \(C^0\)-norm. Therefore, for
every \(r>0\), there exists a finite \(r\)-net in \(C^0(Y)\), and hence \(N_Y(r;W)<\infty\).
\end{proof}
\begin{lemma}[Uniform statistical deviation]
\label{lem:uniform-stat}
Assume the hypotheses of Lemma~\ref{lem:equi-lipschitz-classes}. For each $Y\in\{M,U,P\}$, choose in advance a deterministic radius
\[
r_Y=r_Y(n,\delta,W)\in(0,B_Y),
\]
and define
\[
\varepsilon_Y(n,\delta;W)
:=
2r_Y
+
B_Y\sqrt{
\frac{\log(6N_Y(r_Y;W)/\delta)}{2n}
}.
\]
Set
\[
\varepsilon_{{\rm stat},W}(n,\delta)
:=
\max\left\{
\varepsilon_M(n,\delta;W),
\varepsilon_U(n,\delta;W)+\varepsilon_P(n,\delta;W)
\right\}.
\]
Then, with probability at least $1-\delta$, simultaneously for all $(\Phi,F)\in\mathcal A(W)$,
\[
\left|
\widehat{\mathcal E_{\rm pred}}_n(\Phi,F)
-
\mathcal E_{\rm pred}(\Phi,F)
\right|
\le
\varepsilon_{{\rm stat},W}(n,\delta),
\]
and
\[
\left|
\widehat{\mathcal N_{\rm met}}_n(\Phi)
-
\mathcal N_{\rm met}(\Phi)
\right|
\le
\varepsilon_{{\rm stat},W}(n,\delta).
\]
For fixed $W$ and $\delta$, one may choose deterministic radii $r_Y(n)\downarrow0$ slowly enough that
\[
\frac{\log N_Y(r_Y(n);W)}{n}\to0,
\]
in which case $\varepsilon_{{\rm stat},W}(n,\delta)\to0$.
\end{lemma}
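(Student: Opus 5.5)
The argument is a standard finite-net (covering) argument combined with Hoeffding's inequality and a union bound, applied separately to each of the three sample families. Everything is arranged so that the total failure probability splits as $\delta/6$ per one-sided event, with two sides and three families.

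\emph{Step 1: finite net for each family.} Fix $Y\in\{M,U,P\}$ and write $N_Y:=N_Y(r_Y;W)$. By Lemma~\ref{lem:equi-lipschitz-classes}, $N_Y$ is finite. Let $\{g_1,\dots,g_{N_Y}\}$ be a $C^0$ $r_Y$-net of $\mathcal G_Y(W)$. If the centers are not required to lie in the class, I would first pass to an internal net at radius $2r_Y$ by the usual argument. To avoid this doubling, I will instead truncate the centers to $[0,B_Y]$; truncation does not increase the sup-distance to class members, which take values in $[0,B_Y]$. Each center is then a fixed, sample-independent function with values in $[0,B_Y]$.

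\emph{Step 2: concentration on the net.} Apply Hoeffding's inequality to the $n$ i.i.d.\ samples of the relevant family: $(s_i,a_i)\sim\mu$ for $M$, $(\xi_j,v_j)\sim\omega$ for $U$, and $(\zeta_k,\zeta_k')\sim\nu_\rho$ for $P$. For each center $g_k$,
\[
\Pr\Bigl(\Bigl|\tfrac1n\textstyle\sum g_k-\int g_k\Bigr|>t\Bigr)\le 2e^{-2nt^2/B_Y^2}.
\]
Take $t=B_Y\sqrt{\log(6N_Y/\delta)/(2n)}$. A union bound over the $N_Y$ centers then gives failure probability at most $\delta/3$ for this family. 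A further union bound over the three families gives a total failure probability of at most $\delta$.

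\emph{Step 3: transfer from the net to the whole class.} For any $g\in\mathcal G_Y(W)$, choose a center $g_k$ with $\|g-g_k\|_{C^0}\le r_Y$. Both the empirical mean and the population mean of $g$ differ from those of $g_k$ by at most $r_Y$. Hence $\bigl|\widehat{P}_n g-Pg\bigr|\le 2r_Y+t=\varepsilon_Y$ for all $g$ simultaneously on the good event.

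\emph{Step 4: apply to the two objective terms.} Every $(\Phi,F)\in\mathcal A(W)$ yields an element of $\mathcal G_M(W)$. This gives the prediction bound with $\varepsilon_M\le\varepsilon_{{\rm stat},W}$. Every $\Phi$ yields elements of $\mathcal G_U(W)$ and $\mathcal G_P(W)$. Since $\widehat{\mathcal N_{\rm met}}_n-\mathcal N_{\rm met}$ is the sum of the local and global deviations, the triangle inequality gives the bound $\varepsilon_U+\varepsilon_P\le\varepsilon_{{\rm stat},W}$. The requirement that the three sample families be mutually independent is not even needed, since only a union bound is used.

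\emph{Step 5: the asymptotic claim.} Given $r_Y(n)\downarrow0$ with $\log N_Y(r_Y(n);W)/n\to0$, the term $\log(6/\delta)/n$ also vanishes, so each $\varepsilon_Y\to0$. Such radii exist because $N_Y(r;W)<\infty$ for each fixed $r$. Concretely, one can choose a sequence $r_m\downarrow0$ together with thresholds $n_m$ beyond which $\log N_Y(r_m;W)\le n/m$, and set $r_Y(n)=r_m$ for $n\in[n_m,n_{m+1})$. This is a diagonal argument, and I expect it to be the only mildly delicate step. The only other point needing care is the measurability and net-center issue in Step 1, which the truncation handles.
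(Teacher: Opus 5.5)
Your proposal is correct and follows essentially the same route as the paper: a finite $C^0$ net of radius $r_Y$, Hoeffding plus a union bound giving failure probability $\delta/3$ per family, the $2r_Y$ net-to-class transfer, addition of the $U$ and $P$ deviations, and a diagonal choice of radii. Your truncation of the net centers to $[0,B_Y]$ only makes explicit the boundedness that the paper uses tacitly when it writes $0\le g_j\le B_Y$.
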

\begin{proof}
Fix $Y\in\{M,U,P\}$ and let $\{g_1,\ldots,g_N\}$ be an $r_Y$-net of $\mathcal G_Y(W)$ in $C^0(Y)$, where $N=N_Y(r_Y;W)$. If $\|g-g_j\|_{C^0(Y)}\le r_Y$, then
\[
| (\mu_{Y,n}-\mu_Y)g |
\le
| (\mu_{Y,n}-\mu_Y)g_j |+2r_Y.
\]
Since $0\le g_j\le B_Y$, Hoeffding's inequality and a union bound give
\[
\mathbb P\!\left(
\max_{1\le j\le N}|(\mu_{Y,n}-\mu_Y)g_j|>t
\right)
\le
2N\exp\!\left(-\frac{2nt^2}{B_Y^2}\right).
\]
Taking
\[
t=B_Y\sqrt{\frac{\log(6N_Y(r_Y;W)/\delta)}{2n}}
\]
shows that the stated bound for class $Y$ fails with probability at most $\delta/3$. A union bound over $Y=M,U,P$ yields the simultaneous event. The prediction loss is the integral of an element of $\mathcal G_M(W)$, while the two metric penalties are integrals of elements of $\mathcal G_U(W)$ and $\mathcal G_P(W)$; the displayed estimates follow by addition.

For fixed $W$, total boundedness gives $N_Y(r;W)<\infty$ for every $r>0$. A diagonal choice of deterministic radii can therefore be made so that $r_Y(n)\downarrow0$ and $\log N_Y(r_Y(n);W)=o(n)$, proving convergence.
\end{proof}
\begin{corollary}[Finite-parametric estimate]
\label{cor:parametric-estimate}
Assume the hypotheses of Lemma~\ref{lem:uniform-stat}. Since the three classes \(\mathcal G_M(W)\), \(\mathcal G_U(W)\), \(\mathcal G_P(W)\) are all determined by the parameters of the pair \((\Phi,F)\in\mathcal A(W)\), they share a common parametrization
\[
\theta\in\Theta_W\subset[-R_W,R_W]^{p_W}.
\]
Suppose that, for each \(Y\in\{M,U,P\}\), there is a constant \(L_{\theta,Y,W}>0\)
with
\[
\|g_\theta-g_{\theta'}\|_{C^0(Y)}\le L_{\theta,Y,W}\|\theta-\theta'\|_{\ell^\infty}
\qquad\text{for all }\theta,\theta'\in\Theta_W.
\]

Then, for every \(r>0\),
\[
N_Y(r;W)\le\Bigl(1+\tfrac{2R_W L_{\theta,Y,W}}{r}\Bigr)^{p_W},
\]
and hence, for every deterministic choice $r_Y\in(0,B_Y)$,
\[
\varepsilon_Y(n,\delta;W)
\le
2r_Y+B_Y\sqrt{\frac{p_W\log\bigl(1+\tfrac{2R_W L_{\theta,Y,W}}{r_Y}\bigr)+\log(6/\delta)}{2n}}.
\]
In particular, taking \(r_Y=B_Y n^{-1/2}\) gives, for \(n>1\),
\[
\varepsilon_Y(n,\delta;W)
\le\frac{2B_Y}{\sqrt n}
+B_Y\sqrt{\frac{p_W\log\bigl(1+\tfrac{2R_W L_{\theta,Y,W}\sqrt n}{B_Y}\bigr)+\log(6/\delta)}{2n}}.
\]
If \(R_W L_{\theta,Y,W}\le C_Y W^{q_Y}\) for constants \(C_Y,q_Y\) independent of \(n\), then
\[
\varepsilon_Y(n,\delta;W)
=O\!\left(\sqrt{\frac{p_W(\log n+\log W)+\log(1/\delta)}{n}}\right).
\]
The same rate holds for
\(\varepsilon_{{\rm stat},W}(n,\delta)=\max\{\varepsilon_M,\ \varepsilon_U+\varepsilon_P\}\):
\[
\varepsilon_{{\rm stat},W}(n,\delta)
=O\!\left(\sqrt{\frac{p_W(\log n+\log W)+\log(1/\delta)}{n}}\right).
\]
In particular, if \(W\) is fixed, or if the capacity schedule satisfies
\(W=W(n)\le Cn^q\) for some \(C,q>0\), then \(\log W=O(\log n)\) and
\[
\varepsilon_{{\rm stat},W}(n,\delta)
=O\!\left(\sqrt{\frac{p_W\log n+\log(1/\delta)}{n}}\right).
\]
\end{corollary}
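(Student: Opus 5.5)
The plan has three steps: a volumetric covering bound on the parameter cube, substitution into the deviation bound of Lemma~\ref{lem:uniform-stat}, and a rate computation.

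\textbf{Covering bound.} Fix $Y$ and $r>0$. By the parameter-Lipschitz hypothesis, any $r/L_{\theta,Y,W}$-net of $\Theta_W$ in $\ell^\infty$ maps to an $r$-net of $\mathcal G_Y(W)$ in $C^0(Y)$. We cover the cube $[-R_W,R_W]^{p_W}$ coordinatewise. Along each axis we take a uniform grid of spacing $2\epsilon$ with $\epsilon=r/L_{\theta,Y,W}$. This needs at most $\lceil R_W/\epsilon\rceil\le 1+R_W/\epsilon$ points, which is at most $1+2R_W L_{\theta,Y,W}/r$.

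The net points may lie outside $\Theta_W$. If so, we use the standard halving trick: pass to an external net of radius $r/2$ and then select one parameter of $\Theta_W$ per nonempty cell. The cruder bound $1+2R_WL/r$ already absorbs the factor $2$ from halving, since $\lceil 2R_W L/r\rceil\le 1+2R_WL/r$ when cells have side $r/L$. Taking products over the $p_W$ coordinates gives $N_Y(r;W)\le(1+2R_WL_{\theta,Y,W}/r)^{p_W}$.

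\textbf{Substitution.} Next we substitute into the definition of $\varepsilon_Y$ from Lemma~\ref{lem:uniform-stat}. Since $\log(6N/\delta)=\log N+\log(6/\delta)$, the bound gives the stated display for every $r_Y\in(0,B_Y)$. The choice $r_Y=B_Yn^{-1/2}$ lies in $(0,B_Y)$ for $n>1$, which yields the second display.

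\textbf{Rate.} Assume $R_WL_{\theta,Y,W}\le C_YW^{q_Y}$. Then
\[
\log\bigl(1+2C_YW^{q_Y}\sqrt n/B_Y\bigr)=O(\log n+\log W),
\]
with constants depending only on $C_Y,q_Y,B_Y$. Using $\sqrt{a+b}\le\sqrt a+\sqrt b$, the term $2B_Y/\sqrt n$ is dominated by the square-root term whenever $p_W\ge1$. This gives the $O$ bound for each $\varepsilon_Y$. For $\varepsilon_{{\rm stat},W}$ we note that the max and sum of three such quantities keep the same order. If $W\le Cn^q$, then $\log W\le\log C+q\log n=O(\log n)$, which gives the final display.

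\textbf{Main obstacle.} The only delicate point is the first step: the net must be internal to $\Theta_W$, because Lemma~\ref{lem:uniform-stat} uses net elements of $\mathcal G_Y(W)$ in its Hoeffding step. I would handle this with the cell-selection argument above. Alternatively, the Hoeffding step works equally well with any bounded functions $g_j$ satisfying $0\le g_j\le B_Y$. So an external net consisting of $g_\theta$ for $\theta$ in the cube also suffices, provided the parametrization extends to the cube with the same Lipschitz bound; this can be noted as a remark. Everything else is routine logarithm bookkeeping.
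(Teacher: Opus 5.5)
Your proposal is correct and matches the paper's proof: the paper also partitions each coordinate interval $[-R_W,R_W]$ into at most $1+2R_WL_{\theta,Y,W}/r$ subintervals of length at most $r/L_{\theta,Y,W}$. It then picks one representative of $\Theta_W$ in each nonempty cell, giving an internal $r$-net (exactly what the Hoeffding step with $0\le g_j\le B_Y$ needs), and substitutes into Lemma~\ref{lem:uniform-stat} with the same routine logarithm bookkeeping. Your preliminary external grid of spacing $2r/L_{\theta,Y,W}$ is superfluous once you use the cell-selection argument, which is the version the paper uses.
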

\begin{proof}
Fix \(Y\in\{M,U,P\}\); we bound the covering number of \(\mathcal G_Y(W)\) in the \(C^0(Y)\)-norm.

We partition each coordinate interval $[-R_W,R_W]$ into at most
\[
1+\frac{2R_WL_{\theta,Y,W}}{r}
\]
subintervals of length at most $r/L_{\theta,Y,W}$. This partitions the parameter box into at most
\[
N:=\left(1+\frac{2R_WL_{\theta,Y,W}}{r}\right)^{p_W}
\]
cells. For every cell that intersects $\Theta_W$, choose one representative $\theta_j\in\Theta_W$ from that intersection. Any two points in the same cell are at $\ell^\infty$-distance at most $r/L_{\theta,Y,W}$. Hence, for each $\theta\in\Theta_W$, the representative of its cell satisfies
\[
\|g_\theta-g_{\theta_j}\|_{C^0(Y)}
\le
L_{\theta,Y,W}\|\theta-\theta_j\|_{\ell^\infty}
\le r.
\]
Therefore the selected $g_{\theta_j}$ form an $r$-net of \(\mathcal G_Y(W)\) in \(C^0(Y)\), so
\[
N_Y(r;W)\le N
\le\Bigl(1+\frac{2R_W L_{\theta,Y,W}}{r}\Bigr)^{p_W}.
\]
Substituting this bound into Lemma~\ref{lem:uniform-stat} gives the stated bound on \(\varepsilon_Y(n,\delta;W)\); the remaining estimates follow by the elementary computations in the statement.
\end{proof}

\subsection{Principal finite-sample representation theorem}

We now fix the three interpolation moduli used in the estimates. As in
Subsection~\ref{sub:Stat-devi}, we consider the function classes
\(\mathcal G_M(W)\), \(\mathcal G_U(W)\), and \(\mathcal G_P(W)\) with fixed
\(\kappa,\alpha>0\). By Lemma~\ref{lem:equi-lipschitz-classes}, let
\(\overline L_M\), \(\overline L_U\), and \(\overline L_P\) denote the
uniform Lipschitz bounds for the square-root integrands
\[
\|R[\Phi,F]\|_Z,
\qquad
[\kappa-\|D(\Phi\circ H)(s)[v]\|_Z]_+,
\qquad
[\alpha-\|(\Phi\circ H)(s)-(\Phi\circ H)(s')\|_Z]_+,
\]
respectively. These are the functions to which the
Lipschitz--Ahlfors \(L^2\)-to-\(L^\infty\) lemma is applied.

The moduli \(\Theta_M,\Theta_{\mathcal U},\Theta_{P_\rho}\) are therefore the
maps furnished by Lemma~\ref{lem:l2-linf} with data
\[
(d_m,m_0,r_0,\overline L_M),
\qquad
(q_U,m_U,r_U,\overline L_U),
\qquad
(q_P,m_P,r_P,\overline L_P),
\]
respectively.

\begin{definition}[Approximate empirical minimizer]
\label{def:approx-empirical-minimizer}
For \(\lambda>0\), define
\[
\widehat{\mathcal J}_{\lambda,n}(\Phi,F)
:=
\widehat{\mathcal E_{\rm pred}}_n(\Phi,F)
+
\lambda\widehat{\mathcal N_{\rm met}}_n(\Phi).
\]
Given \(\varepsilon_{\rm train}\ge0\), a pair
\((\Phi^\star,F^\star)\in\mathcal A(W)\) is called an
\(\varepsilon_{\rm train}\)-approximate empirical minimizer if
\[
\widehat{\mathcal J}_{\lambda,n}(\Phi^\star,F^\star)
\le
\inf_{(\Phi,F)\in\mathcal A(W)}
\widehat{\mathcal J}_{\lambda,n}(\Phi,F)
+
\varepsilon_{\rm train}.
\]
The case \(\varepsilon_{\rm train}=0\) is the exact global-minimizer case.
\end{definition}

\begin{theorem}[Finite-sample metric non-collapse and controlled semiconjugacy]
\label{thm:representation}
Assume \hyperref[ass:A1]{A1}, \hyperref[ass:A2]{A2}, \hyperref[ass:A4]{A4}, the hypotheses of Lemma~\ref{lem:smooth-realizer}, and the statistical setup of Lemmas~\ref{lem:equi-lipschitz-classes}--\ref{lem:uniform-stat} at capacity $W$.  Fix $(\kappa,\rho,\alpha)$ as in Lemma~\ref{lem:nonempty-parameters}, and assume \hyperref[ass:A3]{A3} for this $\rho$.  In particular,
\[
0<\rho<\operatorname{diam}(S),\qquad B_{\rm loc}\rho\le\frac\kappa4,
\quad
B_{\rm loc}:=\sup_W\sup_{\Phi\in\mathcal A_\Phi(W)}\Lip(D(\Phi\circ H)).
\]
Let $(\Phi_\sharp,F_\sharp)$ be the margin-clearing competitor of Proposition~\ref{prop:margin-clearing}, with
\[
\mathcal E_{\rm pred}(\Phi_\sharp,F_\sharp)\le\beta(W).
\]
Let $\varepsilon_{{\rm stat},W}(n,\delta)$ be the error in Lemma~\ref{lem:uniform-stat}, let $\varepsilon_{\rm train}\ge0$, and set
\[
P^*:=\min\{\Theta_{\mathcal U}^{-1}(\kappa/2),\Theta_{P_\rho}^{-1}(\alpha/2)\}.
\]
Suppose $\varepsilon_{{\rm stat},W}(n,\delta)<P^*$ and choose
\begin{equation}
\label{eq:lambda-min-training}
\lambda\ge\lambda_{\min}(W,n,\delta;\varepsilon_{\rm train})
:=\frac{\beta(W)+\varepsilon_{{\rm stat},W}(n,\delta)+\varepsilon_{\rm train}}
{P^*-\varepsilon_{{\rm stat},W}(n,\delta)}.
\end{equation}
Then, with probability at least $1-\delta$, every $\varepsilon_{\rm train}$-approximate empirical minimizer $(\Phi^\star,F^\star)$ satisfies
\begin{equation}
\label{eq:main-colipschitz}
\|\Phi^\star(H(s))-\Phi^\star(H(s'))\|_Z\ge c_*d_S(s,s')
\qquad(s,s'\in S),
\end{equation}
where
\[
c_*:=\min\left\{\frac\kappa4,\frac\alpha{2\operatorname{diam}(S)}\right\}>0,
\]
and
\begin{equation}
\label{eq:main-semiconjugacy}
\sup_{(s,a)\in M}\|\Phi^\star(H(G(s,a)))-F^\star(\Phi^\star(H(s)),a)\|_Z
\le \eta_{W,n,\delta}(\varepsilon_{\rm train}),
\end{equation}
with
\begin{equation}
\label{eq:eta-with-training-error}
\eta_{W,n,\delta}(\varepsilon_{\rm train})
:=\Theta_M\!\left(\beta(W)+2\varepsilon_{{\rm stat},W}(n,\delta)+\varepsilon_{\rm train}\right).
\end{equation}
If $W=W(n)$ and the approximation, statistical, and training errors vanish, then $\eta_{W(n),n,\delta}(\varepsilon_{\rm train}(n))\to0$.  In the finite-parametric setting of Corollary~\ref{cor:parametric-estimate}, it is enough that
\[
\beta(W(n))\to0,\qquad \varepsilon_{\rm train}(n)\to0,
\qquad \frac{p_{W(n)}(\log n+\log W(n))}{n}\to0.
\]
\end{theorem}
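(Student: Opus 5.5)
The plan is to work throughout on the high-probability event of Lemma~\ref{lem:uniform-stat}. On that event, both the prediction loss and the metric penalty deviate from their population versions by at most $\varepsilon_{\rm stat}:=\varepsilon_{{\rm stat},W}(n,\delta)$, uniformly over $\mathcal A(W)$. Everything after that is deterministic.

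\emph{Objective comparison.} Let $(\Phi^\star,F^\star)$ be an $\varepsilon_{\rm train}$-approximate empirical minimizer. I would compare it with the margin-clearing competitor $(\Phi_\sharp,F_\sharp)$ of Proposition~\ref{prop:margin-clearing}. That competitor has zero empirical metric penalty, and its empirical prediction loss is at most $\beta(W)+\varepsilon_{\rm stat}$. Hence
\[
\widehat{\mathcal E_{\rm pred}}_n(\Phi^\star,F^\star)+\lambda\widehat{\mathcal N_{\rm met}}_n(\Phi^\star)\le \beta(W)+\varepsilon_{\rm stat}+\varepsilon_{\rm train}.
\]
Both terms on the left are nonnegative, so two bounds follow.
\begin{itemize}
\item \emph{Prediction bound.} Dropping the penalty gives $\widehat{\mathcal E_{\rm pred}}_n(\Phi^\star,F^\star)\le\beta+\varepsilon_{\rm stat}+\varepsilon_{\rm train}$. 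Transferring to the population gives $\mathcal E_{\rm pred}(\Phi^\star,F^\star)\le\beta+2\varepsilon_{\rm stat}+\varepsilon_{\rm train}$.
\item \emph{Penalty bound.} Dividing by $\lambda$ and using \eqref{eq:lambda-min-training} gives $\widehat{\mathcal N_{\rm met}}_n(\Phi^\star)\le P^*-\varepsilon_{\rm stat}$. The hypothesis $\varepsilon_{\rm stat}<P^*$ guarantees that $\lambda_{\min}$ is finite and positive. Transferring to the population gives $\mathcal N_{\rm loc}(\Phi^\star)+\mathcal N_{\rm glob}(\Phi^\star)\le P^*$, so each of the two terms is at most $P^*$.
\end{itemize}

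\emph{Pointwise margins via interpolation.} I would apply Lemma~\ref{lem:l2-linf} to the square-root integrands $u_{\rm loc}=[\kappa-\|D\psi^\star(s)[v]\|_Z]_+$ on $(\mathcal U,\omega)$ and $u_{\rm glob}=[\alpha-\|\psi^\star(s)-\psi^\star(s')\|_Z]_+$ on $(P_\rho,\nu_\rho)$, where $\psi^\star=\Phi^\star\circ H$. Their Lipschitz constants are at most $\overline L_U$ and $\overline L_P$, uniformly in $W$, by Lemma~\ref{lem:equi-lipschitz-classes}. Their squared $L^2$ norms are exactly $\mathcal N_{\rm loc}(\Phi^\star)$ and $\mathcal N_{\rm glob}(\Phi^\star)$. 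Since $\Theta_Y$ is nondecreasing and $\Theta_Y(\Theta_Y^{-1}(\tau))=\tau$, the bound $P^*\le\Theta^{-1}_{\mathcal U}(\kappa/2)$ yields $\|D\psi^\star(s)[v]\|_Z\ge\kappa/2$ for every $(s,v)\in\mathcal U$. Likewise $P^*\le\Theta^{-1}_{P_\rho}(\alpha/2)$ yields $\|\psi^\star(s)-\psi^\star(s')\|_Z\ge\alpha/2$ whenever $|s-s'|\ge\rho$. The same lemma, applied to $\|R[\Phi^\star,F^\star]\|_Z$ on $(M,\mu)$ with the population prediction bound above, gives \eqref{eq:main-semiconjugacy} with $\eta$ as in \eqref{eq:eta-with-training-error}.

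\emph{From margins to co-Lipschitz.} I would split chords $h=s'-s$ into two cases.
\begin{itemize}
\item \emph{Long chords, $|h|\ge\rho$.} Here $\|\psi^\star(s)-\psi^\star(s')\|_Z\ge\alpha/2\ge\frac{\alpha}{2\operatorname{diam}(S)}|h|$, because $|h|\le\operatorname{diam}(S)$.
\item \emph{Short chords, $|h|<\rho$.} Convexity of $S$ keeps the segment from $s$ to $s'$ inside $S$, so I would write
\[
\psi^\star(s')-\psi^\star(s)=D\psi^\star(s)h+\int_0^1\bigl(D\psi^\star(s+th)-D\psi^\star(s)\bigr)h\,\dd t.
\]
The remainder has norm at most $\tfrac12B_{\rm loc}|h|^2$. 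Therefore the norm of the difference is at least $\bigl(\tfrac\kappa2-\tfrac12B_{\rm loc}\rho\bigr)|h|\ge\bigl(\tfrac\kappa2-\tfrac\kappa8\bigr)|h|\ge\tfrac\kappa4|h|$, using $B_{\rm loc}\rho\le\kappa/4$.
\end{itemize}
Together the two cases give \eqref{eq:main-colipschitz} with $c_*=\min\{\kappa/4,\alpha/(2\operatorname{diam}(S))\}$.

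\emph{Asymptotic claim.} This follows because $\Theta_M(\varepsilon)\to0$ as $\varepsilon\to0$, with constants independent of $W$, combined with Corollary~\ref{cor:parametric-estimate}.

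\emph{Main obstacle.} I expect the delicate step to be the interpolation step. There I must check that the Lipschitz constants of the square-root hinge integrands are capacity-uniform, which uses the $C^{1,1}$ budget on $D\psi^\star$. I must also check that the population hinge integrals are precisely the squared $L^2$ norms fed into Lemma~\ref{lem:l2-linf}, and that the lower Ahlfors bounds of \hyperref[ass:A3]{A3} hold on all of $\mathcal U$ and $P_\rho$. Otherwise the bounds would only hold on the supports of $\omega$ and $\nu_\rho$, not at every point.
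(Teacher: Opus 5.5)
Your proposal is correct and follows essentially the same argument as the paper. On the event of Lemma~\ref{lem:uniform-stat} you compare with the zero-penalty margin-clearing competitor to get $\mathcal E_{\rm pred}(\Phi^\star,F^\star)\le\beta(W)+2\varepsilon_{\rm stat}+\varepsilon_{\rm train}$ and $\mathcal N_{\rm met}(\Phi^\star)\le P^*$, apply Lemma~\ref{lem:l2-linf} to the two square-root hinge defects and to the residual norm, and finish with the same long-chord/short-chord split giving $\tfrac{3\kappa}{8}|h|\ge c_*|h|$. The points you flag as delicate are supplied directly by Lemma~\ref{lem:equi-lipschitz-classes} and by the lower Ahlfors bounds of Assumption~A3, which hold at every point of $\mathcal U$ and $P_\rho$.
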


\begin{proof}
Write $\varepsilon:=\varepsilon_{{\rm stat},W}(n,\delta)$.  On the event in Lemma~\ref{lem:uniform-stat}, the prediction and metric empirical/population deviations are at most $\varepsilon$ uniformly over $\mathcal A(W)$.  Approximate optimality against the margin-clearing comparator gives
\begin{equation}
\label{eq:population-errors-with-training}
\mathcal E_{\rm pred}(\Phi^\star,F^\star)
\le\beta(W)+2\varepsilon+\varepsilon_{\rm train},
\qquad
\mathcal N_{\rm met}(\Phi^\star)
\le
\frac{\beta(W)+\varepsilon+\varepsilon_{\rm train}}{\lambda}+\varepsilon.
\end{equation}
Equation~\eqref{eq:lambda-min-training} makes the second bound at most $P^*$.

Set $\psi^\star:=\Phi^\star\circ H$.  The local defect
\[
q_{\rm loc}^\star(s,v):=[\kappa-\|D\psi^\star(s)[v]\|_Z]_+
\]
is uniformly Lipschitz and has squared $L^2$ norm at most $P^*$.  Lemma~\ref{lem:l2-linf} and the definition of $P^*$ yield
\begin{equation}
\label{eq:local-derivative-margin-trained}
\|D\psi^\star(s)[v]\|_Z\ge\frac\kappa2
\qquad((s,v)\in\mathcal U).
\end{equation}
The same argument for the separated-pair defect gives
\begin{equation}
\label{eq:global-pair-margin-trained}
\|\psi^\star(s)-\psi^\star(s')\|_Z\ge\frac\alpha2
\qquad\text{when }d_S(s,s')\ge\rho.
\end{equation}
For pairs at distance at least $\rho$, this implies the second term in $c_*$.  For $h:=|s'-s|<\rho$, let $e=(s'-s)/h$ and $\gamma(t)=s+t(s'-s)$.  Convexity, the fundamental theorem of calculus, \eqref{eq:local-derivative-margin-trained}, and the second-order budget give
\[
\begin{aligned}
\|\psi^\star(s')-\psi^\star(s)\|_Z
&\ge h\|D\psi^\star(s)[e]\|_Z
-h\int_0^1\|(D\psi^\star(\gamma(t))-D\psi^\star(s))[e]\|_Z\,\dd t\\
&\ge\frac\kappa2h-\frac{B_{\rm loc}}2h^2
\ge\frac{3\kappa}{8}h\ge c_*h.
\end{aligned}
\]
This proves \eqref{eq:main-colipschitz}.

Finally, the residual norm is uniformly Lipschitz on $M$, and \eqref{eq:population-errors-with-training} bounds its squared population $L^2$ norm.  Assumption~\hyperref[ass:A2]{A2} and Lemma~\ref{lem:l2-linf} therefore give \eqref{eq:main-semiconjugacy}.  The convergence statements follow from $\Theta_M(r)\to0$ as $r\to0$ and Corollary~\ref{cor:parametric-estimate}.
\end{proof}

\begin{corollary}[Explicit admissible regularization range]
\label{cor:lambda-range}
Under the hypotheses of Theorem~\ref{thm:representation}, the admissible strengths form the nonempty half-line
\[
[\lambda_{\min}(W,n,\delta;\varepsilon_{\rm train}),\infty).
\]
There is no finite upper endpoint.  For fixed $W$ and $\varepsilon_{\rm train}$,
\[
\lambda_{\min}(W,n,\delta;\varepsilon_{\rm train})
\longrightarrow\frac{\beta(W)+\varepsilon_{\rm train}}{P^*}\qquad(n\to\infty).
\]
If all three errors vanish along $W=W(n)$, then $\lambda_{\min}\to0$, so every fixed $\lambda>0$ is eventually admissible.
\end{corollary}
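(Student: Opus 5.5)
The plan is to read the corollary off the explicit formula \eqref{eq:lambda-min-training}. The proof of Theorem~\ref{thm:representation} uses $\lambda$ in exactly one place: the requirement that the penalty bound in \eqref{eq:population-errors-with-training},
\[
\frac{\beta(W)+\varepsilon+\varepsilon_{\rm train}}{\lambda}+\varepsilon,
\qquad \varepsilon:=\varepsilon_{{\rm stat},W}(n,\delta),
\]
be at most $P^*$. The first step is to observe that this expression is strictly decreasing in $\lambda$ when its numerator is positive, and constant otherwise. Solving the inequality for $\lambda$ therefore gives exactly $\lambda\ge\lambda_{\min}$, using $P^*-\varepsilon>0$. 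It follows that the set of admissible strengths is an up-closed half-line, so it has no finite upper endpoint.

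The second step is nonemptiness. By hypothesis $\varepsilon<P^*$, and $P^*>0$ because each $\Theta^{-1}(\tau)$ in Lemma~\ref{lem:l2-linf} is a minimum of positive quantities for $\tau>0$. Hence $\lambda_{\min}$ is a finite nonnegative number and the interval $[\lambda_{\min},\infty)$ is nonempty. If $\lambda_{\min}=0$, which happens only when all three errors vanish, I would interpret the range as $(0,\infty)$, since the theorem assumes $\lambda>0$.

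The third step is the limit. For fixed $W$ and $\delta$, Lemma~\ref{lem:uniform-stat}, with radii chosen as there, gives $\varepsilon_{{\rm stat},W}(n,\delta)\to0$. Continuity of the quotient in $\varepsilon$ at $\varepsilon=0$ then yields
\[
\lambda_{\min}\to\frac{\beta(W)+\varepsilon_{\rm train}}{P^*}.
\]
The one point to check is that $P^*$ does not depend on $n$ or $W$. This holds because the data entering $\Theta_{\mathcal U}$ and $\Theta_{P_\rho}$, namely the Ahlfors constants and the Lipschitz budgets $\overline L_U,\overline L_P$ from Lemma~\ref{lem:equi-lipschitz-classes}, are capacity-independent.

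The fourth step treats a capacity schedule $W=W(n)$ along which $\beta(W(n))$, $\varepsilon_{\rm stat}$ and $\varepsilon_{\rm train}$ all tend to zero. The numerator of $\lambda_{\min}$ then tends to $0$ while the denominator tends to $P^*>0$, so $\lambda_{\min}\to0$. Consequently any fixed $\lambda>0$ eventually exceeds $\lambda_{\min}$ and is admissible for all large $n$. I do not expect a substantive obstacle here. The only care needed is to confirm that $P^*$ is uniform in $W$ and $n$, and that $\varepsilon<P^*$ holds eventually along the schedule, which follows from $\varepsilon\to0$.
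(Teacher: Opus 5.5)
Your proposal is correct and follows the same route as the paper. The paper's proof simply notes that $\lambda$ enters the argument of Theorem~\ref{thm:representation} only through the lower bound that makes the penalty estimate at most $P^*$ (the comparator having zero empirical penalty), and then reads the limits off the formula for $\lambda_{\min}$. Your extra checks are exactly the details the paper leaves implicit: $P^*>0$ is independent of $n$ and $W$, and $\varepsilon_{{\rm stat},W}(n,\delta)\to0$ requires the slowly shrinking radii of Lemma~\ref{lem:uniform-stat}. Your hedged case $\lambda_{\min}=0$ never actually occurs, since $r_Y>0$ forces $\varepsilon_{{\rm stat},W}(n,\delta)>0$.
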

\begin{proof}
The comparison uses only the lower bound on $\lambda$, while the comparator has zero empirical metric penalty.  The limits follow directly from the formula.
\end{proof}

\subsection{Sharpness of the interpolation exponent}
\label{sec:exponent}
The exponent supplied by Lipschitz--Ahlfors interpolation cannot be improved under the stated regularity alone.
\begin{proposition}[Sharpness of the $1/(q+2)$ exponent]\label{prop:sharp-exponent}
For $Y=[0,1]^q$ and fixed $L>0$, there are nonnegative $L$-Lipschitz functions $u_h$ with $\|u_h\|_\infty=h$ and $\int_Yu_h^2\le C_qL^{-q}h^{q+2}$. Consequently no uniform estimate with exponent larger than $1/(q+2)$ holds on this class.
\end{proposition}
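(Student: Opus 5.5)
The plan is to build an explicit one-parameter family of tent (cone) functions on the cube. Fix the center $y_0=(1/2,\dots,1/2)$ and, for $0<h\le L/4$, set
\[
u_h(y):=\bigl[h-L|y-y_0|\bigr]_+ ,\qquad y\in Y=[0,1]^q .
\]
Three properties are immediate. First, $u_h$ is nonnegative. Second, it is $L$-Lipschitz, since $y\mapsto|y-y_0|$ is $1$-Lipschitz and $t\mapsto[t]_+$ is $1$-Lipschitz. Third, $\|u_h\|_\infty=u_h(y_0)=h$. The restriction $h\le L/4$ ensures the support ball $B_{h/L}(y_0)$ has radius at most $1/4$ and so lies inside the cube. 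This keeps the measure computation exact and matches the regime $h/(2L)\le r_0$ of Lemma~\ref{lem:l2-linf}.

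Next compute the $L^2$ norm in polar coordinates about $y_0$, with $\omega_{q-1}$ the surface area of the unit sphere:
\[
\int_Y u_h^2 = \omega_{q-1}\int_0^{h/L}(h-Lr)^2 r^{q-1}\,\dd r .
\]
Substitute $r=(h/L)t$. The integral becomes $\omega_{q-1}h^2(h/L)^q\int_0^1(1-t)^2t^{q-1}\,\dd t$, which equals $C_qL^{-q}h^{q+2}$ with $C_q=\omega_{q-1}B(q,3)=2\omega_{q-1}/(q(q+1)(q+2))$. This is exactly the claimed bound. The Lebesgue measure on $[0,1]^q$ is lower Ahlfors $q$-regular, so the family lies in the class covered by Lemma~\ref{lem:l2-linf}.

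For the sharpness consequence, argue by contradiction. Suppose there were a uniform estimate $\|u\|_\infty\le C\varepsilon^{\theta}$ for all nonnegative $L$-Lipschitz $u$ with $\int u^2\le\varepsilon$ and all small $\varepsilon$, with some $\theta>1/(q+2)$. Apply it to $u_h$ with $\varepsilon_h:=C_qL^{-q}h^{q+2}$. This gives
\[
h\le C\,(C_qL^{-q})^{\theta}h^{\theta(q+2)} .
\]
Since $\theta(q+2)>1$, the right side is $o(h)$ as $h\to0$, a contradiction. So the exponent $1/(q+2)$ in $\Theta_Y$ is optimal.

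The only real care needed is to keep the support inside the domain, so the polar computation is exact rather than merely an upper bound. The upper bound is in fact all that is required, so even this step is not a serious obstacle.
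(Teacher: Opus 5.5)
Your proposal is correct and follows the paper's proof essentially verbatim. Both use the same truncated cone $[h-L|y-y_0|]_+$ centered at $(1/2,\dots,1/2)$, the same scaling computation of $\int_Y u_h^2$, and the same contradiction as $h\downarrow0$. The only differences are cosmetic: your polar-coordinate evaluation $C_q=2\omega_{q-1}/(q(q+1)(q+2))$ is just the paper's $C_q=\int_{B_1(0)}(1-|\xi|)^2\,\dd\xi$ made explicit, and you restrict to $h\le L/4$ where the paper takes $h<L/2$.
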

The truncated-cone construction and calculation are given in Supplementary Section~S3.

\subsection{A finite-net a posteriori certificate and use of the threshold}
\label{sub:finite-net-certificate}
\label{sec:using-the-theorem}

After training, the theorem's a priori design guarantee is complemented by a model-specific route that certifies the uniform defects directly.

\begin{proposition}[Validated finite-net certificate]
\label{prop:finite-net-certificate}
Let $\psi=\Phi\circ H$ and $F$ satisfy the same compact-range and second-order budget as in Theorem~\ref{thm:representation}.  Define
\[
\mathfrak g_U(s,v):=[\kappa-\|D\psi(s)[v]\|_Z]_+,
\quad \mathfrak g_P(s,s'):=[\alpha-\|\psi(s)-\psi(s')\|_Z]_+,
\]
and $\mathfrak g_M(s,a):=\|\psi(G(s,a))-F(\psi(s),a)\|_Z$.  Assume
\[
\Lip(\mathfrak g_U)\le L_U,\qquad \Lip(\mathfrak g_P)\le L_P,\qquad \Lip(\mathfrak g_M)\le L_M.
\]
Let $\mathcal Q_U,\mathcal Q_P,\mathcal Q_M$ be finite nets with fill distances $h_U,h_P,h_M$.  If
\[
\max_{\mathcal Q_U}\mathfrak g_U+L_Uh_U\le\frac\kappa2,
\qquad
\max_{\mathcal Q_P}\mathfrak g_P+L_Ph_P\le\frac\alpha2,
\qquad B_{\rm loc}\rho\le\frac\kappa4,
\]
then
\[
\|\psi(s)-\psi(s')\|_Z\ge
\min\left\{\frac\kappa4,\frac\alpha{2\operatorname{diam}(S)}\right\}d_S(s,s')
\quad(s,s'\in S).
\]
Moreover,
\[
\sup_{(s,a)\in M}\mathfrak g_M(s,a)\le\max_{\mathcal Q_M}\mathfrak g_M+L_Mh_M.
\]
Consequently, the deterministic planning bounds of Corollary~\ref{cor:empirical-planning-transfer} apply with these certified constants.
\end{proposition}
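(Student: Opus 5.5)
The proof is deterministic: it replaces the probabilistic $L^2$-to-$L^\infty$ step in the proof of Theorem~\ref{thm:representation} with a finite-net Lipschitz extension. After that substitution, the second half of that proof carries over unchanged.

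\textbf{Step 1: from the nets to uniform bounds.} Fix a function $\mathfrak g\in\{\mathfrak g_U,\mathfrak g_P,\mathfrak g_M\}$ on its compact domain $Y\in\{\mathcal U,P_\rho,M\}$, with net $\mathcal Q$ of fill distance $h$. By definition of fill distance, every $y\in Y$ has some $y_j\in\mathcal Q$ with $d(y,y_j)\le h$. Then
\[
\mathfrak g(y)\le\mathfrak g(y_j)+\Lip(\mathfrak g)\,h\le\max_{\mathcal Q}\mathfrak g+L h .
\]
Applied to $\mathfrak g_M$, this is exactly the semiconjugacy bound. Applied to $\mathfrak g_U$ and $\mathfrak g_P$, the hypotheses give
\[
\sup_{\mathcal U}[\kappa-\|D\psi(s)[v]\|_Z]_+\le\frac\kappa2,
\qquad
\sup_{P_\rho}[\alpha-\|\psi(s)-\psi(s')\|_Z]_+\le\frac\alpha2 .
\]
Hence $\|D\psi(s)[v]\|_Z\ge\kappa/2$ for all $(s,v)\in\mathcal U$, and $\|\psi(s)-\psi(s')\|_Z\ge\alpha/2$ whenever $|s-s'|\ge\rho$. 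These are precisely \eqref{eq:local-derivative-margin-trained} and \eqref{eq:global-pair-margin-trained}, now obtained without any probabilistic input.

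\textbf{Step 2: the co-Lipschitz bound, split by chord length.}

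\emph{Long chords.} If $|s-s'|\ge\rho$, then $|s-s'|\le\operatorname{diam}(S)$ gives
\[
\|\psi(s)-\psi(s')\|_Z\ge\frac\alpha2\ge\frac{\alpha}{2\operatorname{diam}(S)}\,|s-s'| .
\]

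\emph{Short chords.} If $h:=|s'-s|<\rho$, convexity of $S$ puts the segment $\gamma(t)=s+t(s'-s)$ inside $S$. Write $e=(s'-s)/h$. The fundamental theorem of calculus and $\Lip(D\psi)\le B_{\rm loc}$ give
\[
\|\psi(s')-\psi(s)\|_Z\ge\frac\kappa2h-\frac{B_{\rm loc}}2h^2 .
\]
Since $h<\rho$ and $B_{\rm loc}\rho\le\kappa/4$, this is at least $\tfrac{3\kappa}{8}h\ge\tfrac\kappa4 h$.

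Taking the minimum of the two cases yields the constant $\min\{\kappa/4,\alpha/(2\operatorname{diam}(S))\}$.

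\textbf{Step 3: planning transfer.} The planning consequence follows from Corollary~\ref{cor:empirical-planning-transfer}. Its input is a co-Lipschitz constant and a uniform residual bound, which is exactly what Steps~1--2 supply.

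The only delicate point is bookkeeping: the fill distances must be measured in the same product metrics $d_U$ and $d_P$ in which the constants $L_U$ and $L_P$ are Lipschitz bounds. Once that convention is fixed, no step presents a genuine obstacle.
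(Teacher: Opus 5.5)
Your proposal is correct and follows essentially the same route as the paper. A nearest-net-point Lipschitz bound turns the three finite-net maxima into uniform defect bounds; then the short-chord/long-chord argument from the proof of Theorem~\ref{thm:representation} gives the constant $\min\{\kappa/4,\alpha/(2\operatorname{diam}(S))\}$, and the uniform residual bound feeds the planning corollary. You simply write out in full what the paper's proof compresses into three sentences.
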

\begin{proof}
Choose a nearest net point in each compact domain.  Lipschitz continuity bounds each continuum defect by its net maximum plus the corresponding fill-distance remainder.  The first two inequalities give the uniform margins used in the short-chord/long-chord proof of Theorem~\ref{thm:representation}; the third defect bound is the uniform semiconjugacy estimate.
\end{proof}

The analytic threshold and Proposition~\ref{prop:finite-net-certificate} give complementary certification routes.  The class-uniform formula for $\lambda_{\min}$ proves an a priori one-sided design half-line for every approximate empirical minimizer and displays the dependence on coverage, regularity, approximation, sampling, and optimization.  The finite-net proposition is model-specific and is typically sharper: validated interval bounds, exact spline or finite-element estimates, or controlled fill-distance remainders convert measured local, separated-pair, residual, and Lipschitz quantities into the constants used by the same deterministic planning corollary.

For calibration, one may sweep $\lambda$ upward and retain the smallest value for which held-out directional and separated-pair margins are stable across samples and restarts while prediction residual and upper distortion remain controlled.  This is a certificate-aligned diagnostic, not a monotonicity claim for nonconvex optimization.  It becomes a rigorous a posteriori certificate only after the Lipschitz and fill-distance remainders in Proposition~\ref{prop:finite-net-certificate} have been validated.

\section{Deterministic planning as a downstream corollary}
\label{sec:why-semiconjugacy}
\label{sec:simulation-lemma}

Theorem~\ref{thm:representation} supplies the representation and dynamics certificates.  Adding Lipschitz costs, a finite horizon, and a planner converts those certificates into deterministic trajectory, cost, and optimizer-transfer guarantees.  Isolating this modular step makes the reusable role of the learned geometric and semiconjugacy constants explicit.

For $T\in\mathbb N$ and $L\ge0$, define
\begin{equation}
\label{eq:CD_def}
C(T,L):=\sum_{j=0}^{T-1}L^j,
\qquad D(T,L):=\sum_{t=0}^{T-1}\sum_{j=0}^{t-1}L^j,
\end{equation}
where the inner sum is zero at $t=0$.  Put
\[
\Delta_{T,L}(\eta):=\eta[L_{\widetilde\ell,z}D(T,L)+L_{\widetilde g}C(T,L)],
\quad
\widehat\Delta_{T,L}(\eta):=\eta[L_{\widehat\ell,z}D(T,L)+L_{\widehat g}C(T,L)].
\]

\begin{lemma}[Deterministic simulation estimate]
\label{lem:simulation}
Let $\psi:S\to K_Z$ and $F:K_Z\times A\to K_Z$ satisfy
\[
\sup_{(s,a)}\|\psi(G(s,a))-F(\psi(s),a)\|_Z\le\eta,
\]
and suppose $F$ is $L_F$-Lipschitz in its latent-state argument.  For a common action sequence, let $s_{t+1}=G(s_t,a_t)$, $z_{t+1}=F(z_t,a_t)$, and $z_0=\psi(s_0)$.  Then
\[
\|\psi(s_t)-z_t\|_Z\le\eta C(t,L_F).
\]
If $\widetilde\ell$ and $\widetilde g$ are Lipschitz in the latent variable, the true-image and latent-rollout costs differ by at most $\Delta_{T,L_F}(\eta)$.
\end{lemma}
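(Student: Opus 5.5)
The plan is a one-step error recursion followed by a Lipschitz cost comparison. Write $e_t:=\psi(s_t)-z_t$, so that $e_0=0$. We insert the intermediate quantity $F(\psi(s_t),a_t)$ into
\[
e_{t+1}=\psi(G(s_t,a_t))-F(z_t,a_t)
\]
and split it as
\[
e_{t+1}=\bigl[\psi(G(s_t,a_t))-F(\psi(s_t),a_t)\bigr]+\bigl[F(\psi(s_t),a_t)-F(z_t,a_t)\bigr].
\]
The first bracket is bounded by $\eta$ via the uniform semiconjugacy hypothesis. The second bracket is bounded by $L_F\|e_t\|_Z$, because both points carry the same action $a_t$. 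This requires $\psi(s_t)$ and $z_t$ to lie in the domain $K_Z$ where $F$ is $L_F$-Lipschitz in the latent argument; that holds since $\psi$ and $F$ map into $K_Z$.

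This gives the linear recursion $\|e_{t+1}\|_Z\le\eta+L_F\|e_t\|_Z$ with $\|e_0\|_Z=0$. Induction on $t$ yields $\|e_t\|_Z\le\eta\sum_{j=0}^{t-1}L_F^j=\eta\,C(t,L_F)$, using the convention $C(0,L)=0$ for the empty sum, which matches $e_0=0$.

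For the cost statement, the true-image cost is $\sum_{t=0}^{T-1}\widetilde\ell(\psi(s_t),a_t)+\widetilde g(\psi(s_T))$ and the latent-rollout cost is $\sum_{t=0}^{T-1}\widetilde\ell(z_t,a_t)+\widetilde g(z_T)$. We compare them termwise:
\begin{itemize}
\item Each stage term differs by at most $L_{\widetilde\ell,z}\|e_t\|_Z\le L_{\widetilde\ell,z}\,\eta\,C(t,L_F)$. The action arguments coincide, so only the latent Lipschitz constant enters.
\item The terminal term differs by at most $L_{\widetilde g}\,\eta\,C(T,L_F)$.
\end{itemize}
Summing over $t=0,\dots,T-1$ and noting $\sum_{t=0}^{T-1}C(t,L_F)=D(T,L_F)$ by the definition in \eqref{eq:CD_def}, the total discrepancy is at most
\[
\eta\bigl[L_{\widetilde\ell,z}D(T,L_F)+L_{\widetilde g}C(T,L_F)\bigr]=\Delta_{T,L_F}(\eta).
\]

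There is no real obstacle; the argument is a discrete Gr\"onwall estimate. The only points needing care are the following.
\begin{itemize}
\item The indexing conventions: the empty sums at $t=0$, and the terminal error being indexed by $C(T,\cdot)$ while the stage errors accumulate into $D(T,\cdot)$.
\item Lipschitz continuity is used only in the latent argument with the action held fixed, so the action-Lipschitz constant of $\widetilde\ell$ never appears.
\end{itemize}
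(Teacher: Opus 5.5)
Your proposal is correct and follows the same route as the paper: the one-step recursion $e_{t+1}\le\eta+L_Fe_t$ with $e_0=0$ gives $e_t\le\eta\,C(t,L_F)$. Summing the stage discrepancies via $\sum_{t=0}^{T-1}C(t,L_F)=D(T,L_F)$ and adding the terminal discrepancy $L_{\widetilde g}\,\eta\,C(T,L_F)$ then yields exactly $\Delta_{T,L_F}(\eta)$, which is the paper's argument with the bookkeeping written out.
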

\begin{proof}
The recurrence $e_{t+1}\le\eta+L_Fe_t$, $e_0=0$, gives $e_t\le\eta C(t,L_F)$.  Summing the stage-cost discrepancies and adding the terminal discrepancy gives the stated bound.
\end{proof}

\begin{corollary}[Uniform finite-horizon planning transfer]
\label{cor:planning-transfer}
Assume $\psi$ is $c$-co-Lipschitz, the physical costs are Lipschitz in state, and let $\widetilde\ell,\widetilde g$ be the compatible latent extensions of Lemma~\ref{lem:latent-cost-extension}.  If $\widehat{\mathbf a}$ is $\xi_{\rm plan}$-optimal for the latent horizon-$T$ problem, then
\[
J_{\rm true}(s_0;\widehat{\mathbf a})-J^\star(s_0)
\le2\Delta_{T,L_F}(\eta)+\xi_{\rm plan},
\]
with $L_{\widetilde\ell,z}\le L_{\ell,s}/c$ and $L_{\widetilde g}\le L_g/c$.
For learned cost heads satisfying the compatibility bounds
\[
\sup_{s,a}|\widehat\ell(\psi(s),a)-\ell(s,a)|\le\varepsilon_\ell,
\qquad
\sup_s|\widehat g(\psi(s))-g(s)|\le\varepsilon_g,
\]
and the latent-state Lipschitz estimates
\[
|\widehat\ell(z,a)-\widehat\ell(z',a)|
\le L_{\widehat\ell,z}\|z-z'\|_Z,
\qquad
|\widehat g(z)-\widehat g(z')|
\le L_{\widehat g}\|z-z'\|_Z,
\]
the right-hand side becomes
\[
2[\widehat\Delta_{T,L_F}(\eta)+T\varepsilon_\ell+\varepsilon_g]+\xi_{\rm plan}.
\]
\end{corollary}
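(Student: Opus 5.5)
The plan is a standard simulation-lemma comparison: bound the latent-model cost against the true cost uniformly over action sequences, then sandwich the planner's output.

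\textbf{Step 1 (uniform cost comparison).} Fix $s_0$ and any $\mathbf a\in A^T$, with true states $s_t$ and latent rollout $z_t$ from $z_0=\psi(s_0)$. By compatibility, $J_{\rm true}(s_0;\mathbf a)=\sum_t\widetilde\ell(\psi(s_t),a_t)+\widetilde g(\psi(s_T))$. Lemma~\ref{lem:simulation} gives $\|\psi(s_t)-z_t\|_Z\le\eta C(t,L_F)$. Two facts are needed here:
\begin{itemize}
\item $C(t,L_F)=\sum_{j<t}L_F^j$, so $\sum_{t=0}^{T-1}C(t,L_F)=D(T,L_F)$;
\item the terminal term carries the factor $C(T,L_F)$.
\end{itemize}
Combining these with the latent Lipschitz constants of $\widetilde\ell,\widetilde g$ yields
\[
\bigl|J_{\rm true}(s_0;\mathbf a)-J_{\rm lat}(\psi(s_0);\mathbf a)\bigr|\le\Delta_{T,L_F}(\eta)
\]
uniformly in $\mathbf a$. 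By Lemma~\ref{lem:latent-cost-extension} applied with the $c$-co-Lipschitz bound, $L_{\widetilde\ell,z}\le L_{\ell,s}/c$ and $L_{\widetilde g}\le L_g/c$.

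\textbf{Step 2 (optimizer transfer).} By compactness of $A^T$ and continuity, choose a true optimizer $\mathbf a^\star$ with $J_{\rm true}(s_0;\mathbf a^\star)=J^\star(s_0)$. If no optimizer is attained, use an $\epsilon$-optimizer and let $\epsilon\to0$. Then chain
\[
J_{\rm true}(\widehat{\mathbf a})\le J_{\rm lat}(\widehat{\mathbf a})+\Delta\le \inf J_{\rm lat}+\xi_{\rm plan}+\Delta\le J_{\rm lat}(\mathbf a^\star)+\xi_{\rm plan}+\Delta\le J^\star(s_0)+2\Delta+\xi_{\rm plan}.
\]

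\textbf{Step 3 (learned heads).} Define the learned latent objective $\widehat J_{\rm lat}$ from $\widehat\ell,\widehat g$. Compare it with the true cost through the intermediate quantity $\sum_t\widehat\ell(\psi(s_t),a_t)+\widehat g(\psi(s_T))$:
\begin{itemize}
\item the compatibility bounds give a discrepancy of at most $T\varepsilon_\ell+\varepsilon_g$ from $J_{\rm true}$;
\item the Lipschitz estimates of $\widehat\ell,\widehat g$ with Lemma~\ref{lem:simulation} give at most $\widehat\Delta_{T,L_F}(\eta)$ from $\widehat J_{\rm lat}$.
\end{itemize}
Hence $|J_{\rm true}-\widehat J_{\rm lat}|\le\widehat\Delta_{T,L_F}(\eta)+T\varepsilon_\ell+\varepsilon_g$ uniformly in $\mathbf a$, and the sandwich of Step 2 doubles this error.

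\textbf{Main obstacle.} There is no deep difficulty. The one point needing care is that $\psi(s_t)$ and $z_t$ both lie in $K_Z$, the domain on which the extended latent costs are Lipschitz. This holds since $\psi$ and $F$ map into $K_Z$. Beyond that, the index bookkeeping for $D(T,L)$ has to be done correctly.
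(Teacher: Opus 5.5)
Your proposal is correct and follows essentially the same route as the paper: the paper's proof likewise uses Lemma~\ref{lem:simulation} (via compatibility and $\sum_{t=0}^{T-1}C(t,L_F)=D(T,L_F)$) to bound the true/latent objective discrepancy uniformly over $A^T$. It then adds and subtracts the costs of a true optimizer and the $\xi_{\rm plan}$-optimal latent sequence, inserting the learned-head compatibility error $T\varepsilon_\ell+\varepsilon_g$ before the two-sided comparison doubles it.
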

\begin{proof}
Lemma~\ref{lem:simulation} bounds the true/latent objective discrepancy uniformly over the common action space.  Add and subtract the true and latent costs of a latent optimizer and a true optimizer.  Approximate latent optimization adds $\xi_{\rm plan}$; learned-head compatibility contributes $T\varepsilon_\ell+\varepsilon_g$ before the two-sided optimizer comparison.
\end{proof}

\begin{corollary}[Finite-sample planning consequence]
\label{cor:empirical-planning-transfer}
Under Theorem~\ref{thm:representation}, assume the physical stage and terminal costs satisfy
\[
|\ell(s,a)-\ell(s',a')|\le L_{\ell,s}d_S(s,s')+L_{\ell,a}d_A(a,a'),
\qquad |g(s)-g(s')|\le L_gd_S(s,s').
\]
For every fixed horizon $T$ and every $\xi_{\rm plan}$-optimal latent action sequence,
\begin{equation}
\label{eq:main-theorem-planning-bound}
J_{\rm true}(s_0;\widehat{\mathbf a})-J^\star(s_0)
\le2\Delta_{T,L_F^{\rm bud}}(\eta_{W,n,\delta}(\varepsilon_{\rm train}))+\xi_{\rm plan}.
\end{equation}
For learned heads satisfying
\[
\sup_{s,a}|\widehat\ell(\psi^\star(s),a)-\ell(s,a)|\le\varepsilon_\ell,
\qquad \sup_s|\widehat g(\psi^\star(s))-g(s)|\le\varepsilon_g,
\]
and
\[
|\widehat\ell(z,a)-\widehat\ell(z',a)|
\le L_{\widehat\ell,z}\|z-z'\|_Z,
\qquad
|\widehat g(z)-\widehat g(z')|
\le L_{\widehat g}\|z-z'\|_Z
\]
on the relevant compact latent domain, one instead has
\begin{equation}
\label{eq:main-theorem-learned-cost-bound}
J_{\rm true}(s_0;\widehat{\mathbf a})-J^\star(s_0)
\le2[\widehat\Delta_{T,L_F^{\rm bud}}(\eta_{W,n,\delta}(\varepsilon_{\rm train}))+T\varepsilon_\ell+\varepsilon_g]+\xi_{\rm plan}.
\end{equation}
\end{corollary}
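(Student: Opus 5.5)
The plan is to assemble this corollary from three results stated earlier: Theorem~\ref{thm:representation} supplies the certificates, Lemma~\ref{lem:latent-cost-extension} supplies the compatible latent costs, and Corollary~\ref{cor:planning-transfer} performs the transfer. We work on the probability-$(1-\delta)$ event of Theorem~\ref{thm:representation}. Fix an $\varepsilon_{\rm train}$-approximate empirical minimizer $(\Phi^\star,F^\star)$ and set $\psi^\star=\Phi^\star\circ H$. On that event, \eqref{eq:main-colipschitz} shows that $\psi^\star$ is $c_*$-co-Lipschitz. Likewise, \eqref{eq:main-semiconjugacy} gives the uniform semiconjugacy defect bound with $\eta:=\eta_{W,n,\delta}(\varepsilon_{\rm train})$. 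By Assumption~\hyperref[ass:A4]{A4}, or equivalently the budget in Lemma~\ref{lem:equi-lipschitz-classes}, $F^\star$ is Lipschitz with constant at most $L_F^{\rm bud}$, and in particular in its latent argument.

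For the compatible-cost bound \eqref{eq:main-theorem-planning-bound}, apply Lemma~\ref{lem:latent-cost-extension} with $c_\ast=c_*$ to $\psi^\star$. This produces $\widetilde\ell,\widetilde g$ that are exactly compatible on $\psi^\star(S)$, with $L_{\widetilde\ell,z}\le L_{\ell,s}/c_*$ and $L_{\widetilde g}\le L_g/c_*$. Then apply Lemma~\ref{lem:simulation} with $L_F=L_F^{\rm bud}$. One point must be checked here: $C(t,L)$ and $D(T,L)$ are nondecreasing in $L\ge0$, since they are sums of nonnegative powers. Therefore using the budget constant in place of the true Lipschitz constant of $F^\star$ only enlarges $\Delta_{T,L}$. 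Lemma~\ref{lem:simulation} then gives, for every action sequence $\mathbf a\in A^T$,
\[
|J_{\rm true}(s_0;\mathbf a)-J_{\rm lat}(\psi^\star(s_0);\mathbf a)|\le\Delta_{T,L_F^{\rm bud}}(\eta).
\]
Exact compatibility is what turns the true-image cost $\sum_t\widetilde\ell(\psi^\star(s_t),a_t)+\widetilde g(\psi^\star(s_T))$ into $J_{\rm true}$.

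The optimizer comparison is the argument of Corollary~\ref{cor:planning-transfer}. Let $\mathbf a^\star$ be a true optimizer, which exists by compactness and continuity, or take a near-optimizer and pass to a limit. Then chain the inequalities:
\[
J_{\rm true}(\widehat{\mathbf a})\le J_{\rm lat}(\widehat{\mathbf a})+\Delta\le J_{\rm lat}(\mathbf a^\star)+\xi_{\rm plan}+\Delta\le J^\star+2\Delta+\xi_{\rm plan}.
\]

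For the learned-head bound \eqref{eq:main-theorem-learned-cost-bound}, the argument runs the same way with $\widehat\ell,\widehat g$ replacing the extensions. The uniform per-sequence discrepancy now has two parts. The first is $\widehat\Delta_{T,L_F^{\rm bud}}(\eta)$, obtained from Lemma~\ref{lem:simulation} with the head Lipschitz constants on the compact latent domain; both the rollout $z_t$ and the image $\psi^\star(s_t)$ lie in $K_Z$. The second is the compatibility error between $\widehat\ell(\psi^\star(s_t),a_t)$ and $\ell(s_t,a_t)$, which sums to at most $T\varepsilon_\ell+\varepsilon_g$. Doubling this total discrepancy in the two-sided comparison gives the stated bound.

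None of these steps is deep; the main work is bookkeeping. The likely sticking point is the justification for substituting $L_F^{\rm bud}$, together with the requirement that the latent extensions and heads be Lipschitz on all of $K_Z$, where the rollouts live, rather than only on $\psi^\star(S)$. Both are settled by the monotonicity of $C$ and $D$ in $L$ and by the McShane extension being defined on all of $K_Z$.
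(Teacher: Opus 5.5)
Your proposal is correct and follows the paper's own proof, which simply combines Theorem~\ref{thm:representation}, Lemma~\ref{lem:latent-cost-extension}, and Corollary~\ref{cor:planning-transfer}; you have unpacked the simulation-lemma bookkeeping and the two-sided optimizer comparison that the paper leaves implicit. Your remark that $C$ and $D$ are monotone in $L$ is harmless but not needed: $F^\star$ is already $L_F^{\rm bud}$-Lipschitz, so Lemma~\ref{lem:simulation} can be applied with that constant directly.
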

\begin{proof}
Combine Theorem~\ref{thm:representation}, Lemma~\ref{lem:latent-cost-extension}, and Corollary~\ref{cor:planning-transfer}.
\end{proof}

\paragraph{Horizon dependence.}
If $L_F^{\rm bud}=1$, then $C(T,1)=T$ and $D(T,1)=T(T-1)/2$, so the running-cost contribution is generally $O(\eta T^2/c_*)$.  If $L_F^{\rm bud}<1$, rollout error is bounded by $\eta/(1-L_F^{\rm bud})$; if $L_F^{\rm bud}>1$, the worst-case envelope grows geometrically.  These formulas furnish certified horizon-dependent envelopes; trajectory-local or contractive structure can produce substantially tighter realized errors.

\paragraph{Mean-to-uniform residual control.}
The simulation lemma requires a supremum residual, not only a population mean. A localized Lipschitz bump can have fixed pointwise height while its $L^2$ norm tends to zero with its support radius, and a deterministic optimizer may visit that high-error region. Theorem~\ref{thm:representation} closes this gap by combining regularity with coverage. The exact bump calculation, an adversarial-planner diagnostic, and additional Lyapunov and reachability consequences are in Supplementary Section~S3.

\IfFileExists{numbers_macros.tex}{
\newcommand{\AthreeSeeds}{12}
\newcommand{\AthreeSteps}{2000}
\newcommand{\AthreeRZ}{2.0}
\newcommand{\AthreeGrid}{4001}
\newcommand{\AEvenPureNoninj}{12}
\newcommand{\AEvenPureCmin}{0}
\newcommand{\AEvenPureCmax}{0.12}
\newcommand{\AEvenPureKgeom}{\ensuremath{1.20\!\times\!10^{11}}}
\newcommand{\AEvenPureMloc}{\ensuremath{1.20\!\times\!10^{-5}}}
\newcommand{\AEvenPureEta}{\ensuremath{5.34\!\times\!10^{-4}}}
\newcommand{\AEvenCovNoninj}{10}
\newcommand{\AEvenCovCmin}{\ensuremath{3.75\!\times\!10^{-7}}}
\newcommand{\AEvenCovCmax}{1.57}
\newcommand{\AEvenCovKgeom}{\ensuremath{4.37\!\times\!10^{6}}}
\newcommand{\AEvenCovMloc}{\ensuremath{2.32\!\times\!10^{-4}}}
\newcommand{\AEvenCovEta}{\ensuremath{6.24\!\times\!10^{-3}}}
\newcommand{\AEvenLocalNoninj}{9}
\newcommand{\AEvenLocalCmin}{\ensuremath{1.50\!\times\!10^{-7}}}
\newcommand{\AEvenLocalCmax}{3.82}
\newcommand{\AEvenLocalKgeom}{\ensuremath{3.49\!\times\!10^{7}}}
\newcommand{\AEvenLocalMloc}{0.02}
\newcommand{\AEvenLocalEta}{\ensuremath{4.84\!\times\!10^{-3}}}
\newcommand{\AEvenGlobalNoninj}{1}
\newcommand{\AEvenGlobalCmin}{0.82}
\newcommand{\AEvenGlobalCmax}{1.33}
\newcommand{\AEvenGlobalKgeom}{1.59}
\newcommand{\AEvenGlobalMloc}{0.82}
\newcommand{\AEvenGlobalEta}{\ensuremath{7.83\!\times\!10^{-3}}}
\newcommand{\AEvenHingeNoninj}{9}
\newcommand{\AEvenHingeCmin}{\ensuremath{3.36\!\times\!10^{-6}}}
\newcommand{\AEvenHingeCmax}{16.45}
\newcommand{\AEvenHingeKgeom}{\ensuremath{4.55\!\times\!10^{6}}}
\newcommand{\AEvenHingeMloc}{0.01}
\newcommand{\AEvenHingeEta}{0.01}
\newcommand{\AEvenHingeupNoninj}{3}
\newcommand{\AEvenHingeupCmin}{0.70}
\newcommand{\AEvenHingeupCmax}{1.36}
\newcommand{\AEvenHingeupKgeom}{1.97}
\newcommand{\AEvenHingeupMloc}{0.70}
\newcommand{\AEvenHingeupEta}{\ensuremath{8.06\!\times\!10^{-3}}}
\newcommand{\AEvenTwosidedNoninj}{3}
\newcommand{\AEvenTwosidedCmin}{0.46}
\newcommand{\AEvenTwosidedCmax}{0.55}
\newcommand{\AEvenTwosidedKgeom}{1.19}
\newcommand{\AEvenTwosidedMloc}{0.46}
\newcommand{\AEvenTwosidedEta}{\ensuremath{4.64\!\times\!10^{-3}}}
\newcommand{\APlainPureNoninj}{10}
\newcommand{\APlainPureCmin}{0}
\newcommand{\APlainPureCmax}{0.16}
\newcommand{\APlainPureKgeom}{\ensuremath{9.04\!\times\!10^{10}}}
\newcommand{\APlainPureMloc}{\ensuremath{1.62\!\times\!10^{-5}}}
\newcommand{\APlainPureEta}{\ensuremath{5.72\!\times\!10^{-4}}}
\newcommand{\APlainCovNoninj}{1}
\newcommand{\APlainCovCmin}{0.25}
\newcommand{\APlainCovCmax}{1.17}
\newcommand{\APlainCovKgeom}{5.19}
\newcommand{\APlainCovMloc}{0.25}
\newcommand{\APlainCovEta}{\ensuremath{9.25\!\times\!10^{-3}}}
\newcommand{\APlainLocalNoninj}{1}
\newcommand{\APlainLocalCmin}{0.50}
\newcommand{\APlainLocalCmax}{0.54}
\newcommand{\APlainLocalKgeom}{1.10}
\newcommand{\APlainLocalMloc}{0.50}
\newcommand{\APlainLocalEta}{\ensuremath{4.62\!\times\!10^{-3}}}
\newcommand{\APlainGlobalNoninj}{0}
\newcommand{\APlainGlobalCmin}{0.83}
\newcommand{\APlainGlobalCmax}{1.29}
\newcommand{\APlainGlobalKgeom}{1.55}
\newcommand{\APlainGlobalMloc}{0.83}
\newcommand{\APlainGlobalEta}{\ensuremath{9.57\!\times\!10^{-3}}}
\newcommand{\APlainHingeNoninj}{0}
\newcommand{\APlainHingeCmin}{0.90}
\newcommand{\APlainHingeCmax}{1.31}
\newcommand{\APlainHingeKgeom}{1.48}
\newcommand{\APlainHingeMloc}{0.90}
\newcommand{\APlainHingeEta}{\ensuremath{9.12\!\times\!10^{-3}}}
\newcommand{\APlainHingeupNoninj}{0}
\newcommand{\APlainHingeupCmin}{0.90}
\newcommand{\APlainHingeupCmax}{1.30}
\newcommand{\APlainHingeupKgeom}{1.48}
\newcommand{\APlainHingeupMloc}{0.90}
\newcommand{\APlainHingeupEta}{\ensuremath{9.12\!\times\!10^{-3}}}
\newcommand{\APlainTwosidedNoninj}{0}
\newcommand{\APlainTwosidedCmin}{0.49}
\newcommand{\APlainTwosidedCmax}{0.53}
\newcommand{\APlainTwosidedKgeom}{1.12}
\newcommand{\APlainTwosidedMloc}{0.49}
\newcommand{\APlainTwosidedEta}{\ensuremath{4.21\!\times\!10^{-3}}}
\newcommand{\AEvenHingeOkN}{3}
\newcommand{\AEvenHingeOkCmin}{0.86}
\newcommand{\AEvenHingeOkCmax}{1.27}
\newcommand{\AEvenHingeOkKgeom}{1.47}
\newcommand{\AEvenHingeBadN}{9}
\newcommand{\AEvenHingeBadCmin}{\ensuremath{2.59\!\times\!10^{-6}}}
\newcommand{\AEvenHingeBadCmax}{20.78}
\newcommand{\AEvenHingeBadKgeom}{\ensuremath{7.87\!\times\!10^{6}}}
\newcommand{\AEvenHingeupOkN}{9}
\newcommand{\AEvenHingeupOkCmin}{0.76}
\newcommand{\AEvenHingeupOkCmax}{1.34}
\newcommand{\AEvenHingeupOkKgeom}{1.78}
\newcommand{\AEvenHingeupBadN}{3}
\newcommand{\AEvenHingeupBadCmin}{\ensuremath{2.23\!\times\!10^{-6}}}
\newcommand{\AEvenHingeupBadCmax}{2.55}
\newcommand{\AEvenHingeupBadKgeom}{\ensuremath{1.15\!\times\!10^{6}}}
\newcommand{\BClearFirstN}{2048}
\newcommand{\BClearLams}{0.3}
\newcommand{\BClearNs}{2048}
\newcommand{\CpureRatioMin}{4.03}
\newcommand{\CpureRatioMax}{25.79}
\newcommand{\BPstar}{\ensuremath{4.81\!\times\!10^{-6}}}
\newcommand{\BEpsMin}{5.19}
\newcommand{\BEpsMax}{18.46}
\newcommand{\BBeta}{\ensuremath{7.01\!\times\!10^{-4}}}
\newcommand{\BRatioMin}{\ensuremath{1.08\!\times\!10^{6}}}
\newcommand{\BBestLam}{0.0}
\newcommand{\BBestEta}{\ensuremath{5.22\!\times\!10^{-8}}}
\newcommand{\BBestCmin}{0}
\newcommand{\BBestCleared}{0}
\newcommand{\BBestSeeds}{2}
\newcommand{\BZeroCmin}{0}
\newcommand{\BZeroEta}{\ensuremath{5.22\!\times\!10^{-8}}}
\newcommand{\BZeroNOneTwoEightCmin}{0}
\newcommand{\BZeroNOneTwoEightEta}{\ensuremath{4.31\!\times\!10^{-8}}}
\newcommand{\BZeroNOneTwoEightNloc}{0.16}
\newcommand{\BZeroNOneTwoEightCleared}{0}
\newcommand{\BZeroNOneTwoEightSeeds}{2}
\newcommand{\BLoNOneTwoEightCmin}{0.67}
\newcommand{\BLoNOneTwoEightEta}{0.11}
\newcommand{\BLoNOneTwoEightNloc}{0}
\newcommand{\BLoNOneTwoEightCleared}{0}
\newcommand{\BLoNOneTwoEightSeeds}{2}
\newcommand{\BMidNOneTwoEightCmin}{0.36}
\newcommand{\BMidNOneTwoEightEta}{0.37}
\newcommand{\BMidNOneTwoEightNloc}{\ensuremath{4.77\!\times\!10^{-3}}}
\newcommand{\BMidNOneTwoEightCleared}{0}
\newcommand{\BMidNOneTwoEightSeeds}{2}
\newcommand{\BZeroNFiveOneTwoCmin}{0}
\newcommand{\BZeroNFiveOneTwoEta}{\ensuremath{4.58\!\times\!10^{-8}}}
\newcommand{\BZeroNFiveOneTwoNloc}{0.16}
\newcommand{\BZeroNFiveOneTwoCleared}{0}
\newcommand{\BZeroNFiveOneTwoSeeds}{2}
\newcommand{\BLoNFiveOneTwoCmin}{0.71}
\newcommand{\BLoNFiveOneTwoEta}{0.01}
\newcommand{\BLoNFiveOneTwoNloc}{0}
\newcommand{\BLoNFiveOneTwoCleared}{0}
\newcommand{\BLoNFiveOneTwoSeeds}{2}
\newcommand{\BMidNFiveOneTwoCmin}{0.38}
\newcommand{\BMidNFiveOneTwoEta}{0.25}
\newcommand{\BMidNFiveOneTwoNloc}{\ensuremath{5.15\!\times\!10^{-3}}}
\newcommand{\BMidNFiveOneTwoCleared}{0}
\newcommand{\BMidNFiveOneTwoSeeds}{2}
\newcommand{\BZeroNTwoZeroFourEightCmin}{0}
\newcommand{\BZeroNTwoZeroFourEightEta}{\ensuremath{5.22\!\times\!10^{-8}}}
\newcommand{\BZeroNTwoZeroFourEightNloc}{0.16}
\newcommand{\BZeroNTwoZeroFourEightCleared}{0}
\newcommand{\BZeroNTwoZeroFourEightSeeds}{2}
\newcommand{\BLoNTwoZeroFourEightCmin}{0.68}
\newcommand{\BLoNTwoZeroFourEightEta}{0.02}
\newcommand{\BLoNTwoZeroFourEightNloc}{0}
\newcommand{\BLoNTwoZeroFourEightCleared}{0}
\newcommand{\BLoNTwoZeroFourEightSeeds}{2}
\newcommand{\BMidNTwoZeroFourEightCmin}{0.88}
\newcommand{\BMidNTwoZeroFourEightEta}{0.03}
\newcommand{\BMidNTwoZeroFourEightNloc}{0}
\newcommand{\BMidNTwoZeroFourEightCleared}{1}
\newcommand{\BMidNTwoZeroFourEightSeeds}{2}
\newcommand{\BSeeds}{2}
\newcommand{\CrefFailTFive}{0.69}
\newcommand{\CrefSpreadTFive}{0.07}
\newcommand{\CoptimizerSpreadTFive}{\ensuremath{9.98\!\times\!10^{-3}}}
\newcommand{\CrefFailTOneZero}{0.66}
\newcommand{\CrefSpreadTOneZero}{0.11}
\newcommand{\CoptimizerSpreadTOneZero}{0.05}
\newcommand{\CrefFailTTwoZero}{0.72}
\newcommand{\CrefSpreadTTwoZero}{0.28}
\newcommand{\CoptimizerSpreadTTwoZero}{0.28}
\newcommand{\CrefRestarts}{6}
\newcommand{\CrhoPooledRatio}{0.62}
\newcommand{\CrhoPooledBT}{-0.06}
\newcommand{\CrhoPooledEta}{-0.34}
\newcommand{\CrhoPooledInvC}{0.55}
\newcommand{\CrhoPooledRatioLo}{0.30}
\newcommand{\CrhoPooledRatioHi}{0.80}
\newcommand{\CrhoTFiveRatio}{0.51}
\newcommand{\CrhoTFiveBT}{-0.23}
\newcommand{\CrhoTFiveEta}{-0.35}
\newcommand{\CrhoTFiveInvC}{0.44}
\newcommand{\CrhoTTwoZeroRatio}{0.69}
\newcommand{\CrhoTTwoZeroBT}{0.15}
\newcommand{\CrhoTTwoZeroEta}{-0.45}
\newcommand{\CrhoTTwoZeroInvC}{0.65}
\newcommand{\CrhoNoPurePooledRatio}{0.42}
\newcommand{\CrhoNoPurePooledBT}{0.06}
\newcommand{\CrhoNoPurePooledEta}{0.07}
\newcommand{\CrhoNoPurePooledInvC}{0.30}
\newcommand{\CrhoNoPurePooledRatioLo}{0.04}
\newcommand{\CrhoNoPurePooledRatioHi}{0.69}
\newcommand{\CrhoNoPureTFiveRatio}{0.20}
\newcommand{\CrhoNoPureTFiveBT}{-0.21}
\newcommand{\CrhoNoPureTFiveEta}{0.09}
\newcommand{\CrhoNoPureTFiveInvC}{0.09}
\newcommand{\CrhoNoPureTTwoZeroRatio}{0.54}
\newcommand{\CrhoNoPureTTwoZeroBT}{0.59}
\newcommand{\CrhoNoPureTTwoZeroEta}{-0.11}
\newcommand{\CrhoNoPureTTwoZeroInvC}{0.48}
\newcommand{\CproxyLooseMed}{81.95}
\newcommand{\CproxyLooseMax}{\ensuremath{1.18\!\times\!10^{5}}}
\newcommand{\CchordMin}{0.21}
\newcommand{\CchordMax}{1.18}
\newcommand{\CsigMin}{0.20}
\newcommand{\CsigMax}{1.18}
\newcommand{\CSeeds}{[0, 1, 2]}
\newcommand{\CInits}{32}
\newcommand{\CPop}{48}
\newcommand{\CIters}{4}
\newcommand{\CClSteps}{12}
\newcommand{\CpureEta}{0.01}
\newcommand{\CpureCmin}{\ensuremath{1.75\!\times\!10^{-4}}}
\newcommand{\CpureKgeom}{177.21}
\newcommand{\CpureEpsL}{1.82}
\newcommand{\CcovEta}{0.13}
\newcommand{\CcovCmin}{0.19}
\newcommand{\CcovKgeom}{5.69}
\newcommand{\CcovEpsL}{0.29}
\newcommand{\ClocalEta}{0.08}
\newcommand{\ClocalCmin}{0.12}
\newcommand{\ClocalKgeom}{5.62}
\newcommand{\ClocalEpsL}{0.28}
\newcommand{\CglobalEta}{0.04}
\newcommand{\CglobalCmin}{0.06}
\newcommand{\CglobalKgeom}{4.15}
\newcommand{\CglobalEpsL}{0.71}
\newcommand{\ChingeEta}{0.09}
\newcommand{\ChingeCmin}{0.04}
\newcommand{\ChingeKgeom}{19.72}
\newcommand{\ChingeEpsL}{0.29}
\newcommand{\CtwosidedEta}{0.06}
\newcommand{\CtwosidedCmin}{0.14}
\newcommand{\CtwosidedKgeom}{3.92}
\newcommand{\CtwosidedEpsL}{0.34}
\newcommand{\CpuredJmedTFive}{1.41}
\newcommand{\CpuredJmaxTFive}{5.81}
\newcommand{\CpureFracNegTFive}{0}
\newcommand{\CpuredJoracleTFive}{0.68}
\newcommand{\CpuredJmedTOneZero}{1.54}
\newcommand{\CpuredJmaxTOneZero}{5.90}
\newcommand{\CpureFracNegTOneZero}{0}
\newcommand{\CpuredJoracleTOneZero}{0.63}
\newcommand{\CpuredJmedTTwoZero}{1.55}
\newcommand{\CpuredJmaxTTwoZero}{5.22}
\newcommand{\CpureFracNegTTwoZero}{0}
\newcommand{\CpuredJoracleTTwoZero}{0.50}
\newcommand{\CcovdJmedTFive}{0.10}
\newcommand{\CcovdJmaxTFive}{3.48}
\newcommand{\CcovFracNegTFive}{0.03}
\newcommand{\CcovdJoracleTFive}{0.06}
\newcommand{\CcovdJmedTOneZero}{0.10}
\newcommand{\CcovdJmaxTOneZero}{1.33}
\newcommand{\CcovFracNegTOneZero}{0.12}
\newcommand{\CcovdJoracleTOneZero}{0.07}
\newcommand{\CcovdJmedTTwoZero}{0.23}
\newcommand{\CcovdJmaxTTwoZero}{1.89}
\newcommand{\CcovFracNegTTwoZero}{0.06}
\newcommand{\CcovdJoracleTTwoZero}{0.10}
\newcommand{\ClocaldJmedTFive}{0.08}
\newcommand{\ClocaldJmaxTFive}{2.37}
\newcommand{\ClocalFracNegTFive}{0.22}
\newcommand{\ClocaldJoracleTFive}{0.06}
\newcommand{\ClocaldJmedTOneZero}{0.07}
\newcommand{\ClocaldJmaxTOneZero}{2.63}
\newcommand{\ClocalFracNegTOneZero}{0.16}
\newcommand{\ClocaldJoracleTOneZero}{0.06}
\newcommand{\ClocaldJmedTTwoZero}{0.38}
\newcommand{\ClocaldJmaxTTwoZero}{3.74}
\newcommand{\ClocalFracNegTTwoZero}{0.19}
\newcommand{\ClocaldJoracleTTwoZero}{0.13}
\newcommand{\CglobaldJmedTFive}{0.08}
\newcommand{\CglobaldJmaxTFive}{3.86}
\newcommand{\CglobalFracNegTFive}{0.09}
\newcommand{\CglobaldJoracleTFive}{0.05}
\newcommand{\CglobaldJmedTOneZero}{0.16}
\newcommand{\CglobaldJmaxTOneZero}{2.46}
\newcommand{\CglobalFracNegTOneZero}{0.09}
\newcommand{\CglobaldJoracleTOneZero}{0.07}
\newcommand{\CglobaldJmedTTwoZero}{0.20}
\newcommand{\CglobaldJmaxTTwoZero}{2.29}
\newcommand{\CglobalFracNegTTwoZero}{0.25}
\newcommand{\CglobaldJoracleTTwoZero}{0.10}
\newcommand{\ChingedJmedTFive}{0.20}
\newcommand{\ChingedJmaxTFive}{3.01}
\newcommand{\ChingeFracNegTFive}{0.06}
\newcommand{\ChingedJoracleTFive}{0.05}
\newcommand{\ChingedJmedTOneZero}{0.18}
\newcommand{\ChingedJmaxTOneZero}{3.00}
\newcommand{\ChingeFracNegTOneZero}{0.16}
\newcommand{\ChingedJoracleTOneZero}{0.06}
\newcommand{\ChingedJmedTTwoZero}{0.32}
\newcommand{\ChingedJmaxTTwoZero}{2.94}
\newcommand{\ChingeFracNegTTwoZero}{0.09}
\newcommand{\ChingedJoracleTTwoZero}{0.11}
\newcommand{\CtwosideddJmedTFive}{0.09}
\newcommand{\CtwosideddJmaxTFive}{1.49}
\newcommand{\CtwosidedFracNegTFive}{0.16}
\newcommand{\CtwosideddJoracleTFive}{0.06}
\newcommand{\CtwosideddJmedTOneZero}{0.06}
\newcommand{\CtwosideddJmaxTOneZero}{1.68}
\newcommand{\CtwosidedFracNegTOneZero}{0.16}
\newcommand{\CtwosideddJoracleTOneZero}{0.06}
\newcommand{\CtwosideddJmedTTwoZero}{0.18}
\newcommand{\CtwosideddJmaxTTwoZero}{1.32}
\newcommand{\CtwosidedFracNegTTwoZero}{0.19}
\newcommand{\CtwosideddJoracleTTwoZero}{0.11}
}{}
\section{Numerical evidence}
\label{sec:numerics}\label{sec:experiments}

The experiments provide three complementary forms of evidence for the mathematical mechanism: exact collapse and folding obstructions, a controlled finite-capacity study of threshold ingredients and margin formation, and a nonlinear latent-MPC benchmark linking representation geometry to true-system control.  The high-probability result is established analytically; the numerical study connects its principal quantities to observable finite-model behavior.  All trained suprema are evaluated on declared finite sets with adversarial refinement.  Supplementary Section~S4 provides the protocol, summarized tables, threshold ingredients, and additional figures; complete per-seed outputs, objective and gradient histories, evaluation arrays, and other supporting numerical documents are retained by the authors and will be provided upon request.

Every trained encoder uses the same compact output range, and all objectives share fixed metric arrays, matched initializations, and update counts.  Lower geometric diagnostics are reported together with upper chord and derivative diagnostics, so improved resolution is separated cleanly from scale inflation.  Multi-restart objective values, gradient diagnostics, and held-out geometric margins provide a common certificate-aligned view of optimization stability.

\begin{table}[t]\centering\footnotesize
\resizebox{\textwidth}{!}{%
\begin{tabular}{lccccccc}\toprule
experiment & $R_Z$ & upper control & $n_M/n_U/n_P$ & runs per objective & architecture & params & preprocessing time (s)\\\midrule
A3 folding & 2.0 & bounded range; upper hinge in objective (vi) & 512/512/256 & 12 seeds $\times$ 2 regimes & MLP $1$-$128$-$128$-$1$ & -- & --\\
B spline (main grid) & 2.0 & coefficient box $\pm3.0$ & $n$/$n$/$n$, $n\le2048$ & 2 seeds $\times$ 3 restarts & cubic B-spline & -- & --\\
B spline (capacity sweep) & 2.0 & coefficient box & $512$/$512$/$512$ & 1 seed(s) & cubic B-spline & -- & --\\
B bounded MLP & 2.0 & approx. PI spectral target 1.6 & $n$/$n$/$n$ & 2 seeds & MLP width 64 & -- & --\\
C pendulum + planning & 2.0 & approx. PI spectral target 1.5 & 2048/1024/512 & 3 seeds & MLP width 64 & 13896 & 20.64 [14.87, 50.40]\\
\bottomrule\end{tabular}
}
\caption{Protocol alignment. Every trained encoder in a given experiment maps into the same compact latent box $K_Z=[-R_Z,R_Z]^{d_z}$, uses the same fixed $D_M,D_U,D_P$ arrays, and receives the same number of updates. Run counts are read from the authors' retained result records, which will be provided upon request. The timing column is the median [min, max] over individual representation preprocessing runs, including representation training, cost-head fitting, and diagnostics but excluding MPC planning; it is not a total experiment wall-clock.}\label{tab:protocol}
\end{table}

\subsection{Exact and trained folding obstructions}
\label{sec:expA}
For $S=[-1,1]$, identity dynamics, and $\psi_C(s)=Cs^2$, the JEPA residual is zero and the latent variance is $4C^2/45$, yet $\psi_C(s)=\psi_C(-s)$.  In two dimensions,
\[
\psi_{C,D}(s_1,s_2)=(Cs_1^2,Ds_2)
\]
has full-rank covariance $\operatorname{diag}(4C^2/45,D^2/3)$ while remaining non-injective.  These exact examples establish the geometric obstruction that motivates the local--global penalty.

The bounded trained stress test compares pure prediction, covariance spread, local-only, global-only, the local--global hinge, the hinge plus an upper directional penalty, and a two-sided baseline.  Under plain initialization the combined hinge is injective in all $\AthreeSeeds$ runs.  Under strongly symmetry-biased even pretraining, adding upper control reduces the number of non-injective outcomes from $\AEvenHingeNoninj$ to $\AEvenHingeupNoninj$ out of $\AthreeSeeds$, demonstrating how the lower and upper diagnostics can guide optimization and architectural refinement even from a deliberately challenging initialization.

\begin{figure}[t]\centering
\includegraphics[width=\textwidth]{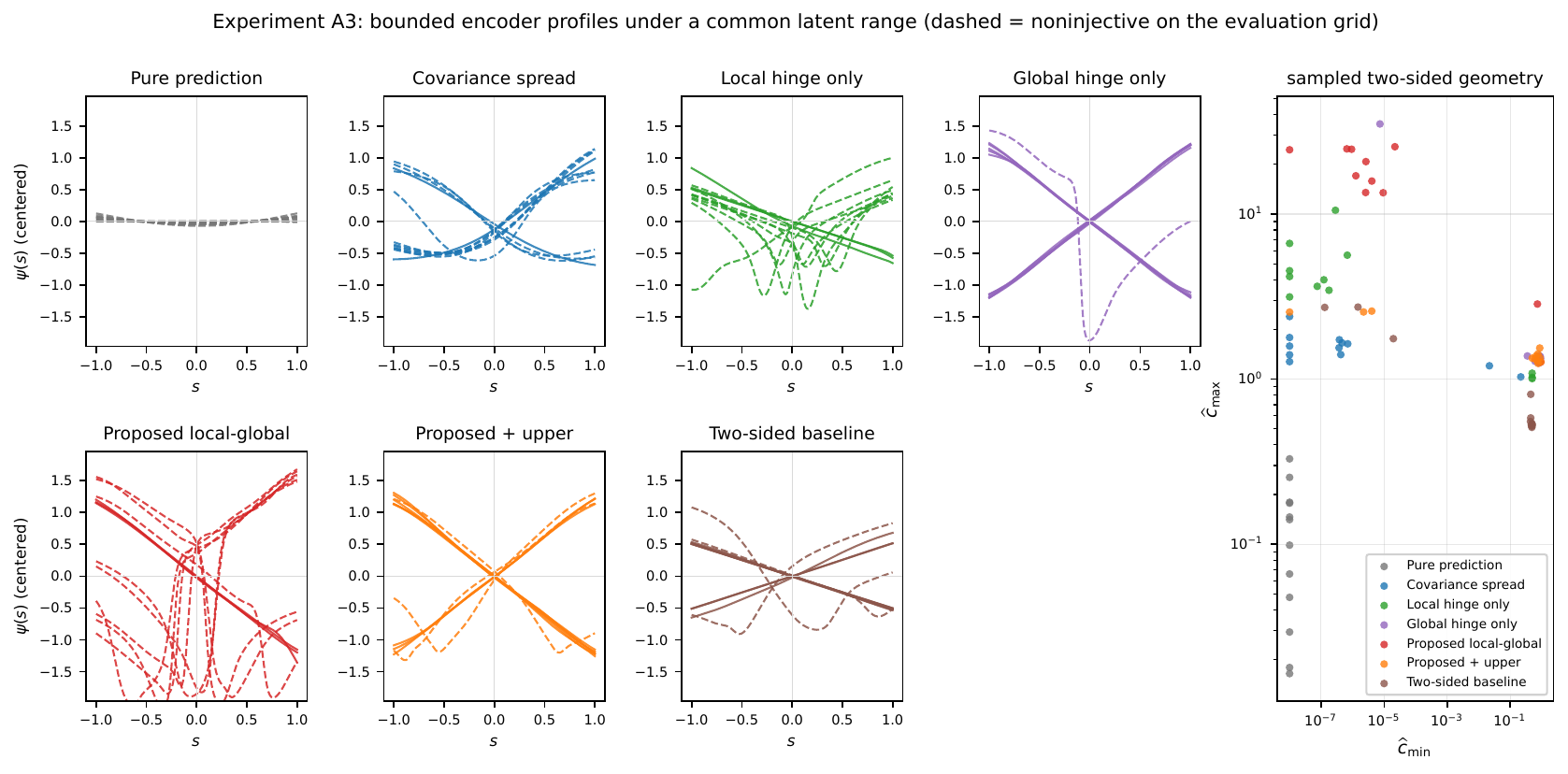}
\caption{Bounded encoder profiles and sampled two-sided geometry in the folding stress test.  Dashed profiles are non-injective on the evaluation grid.  Reporting lower and upper geometry together enables direct comparison of resolution, folding, and scale across objectives.}
\label{fig:folding}
\end{figure}

\subsection{Finite-capacity threshold diagnostic}
\label{sec:expB}
Experiment~B uses a norm-constrained coefficient-box spline proxy with fixed nested samples and repeated deterministic restarts, providing a controlled finite-dimensional laboratory for the threshold ingredients.  The interpolation threshold is $P^*=\BPstar$, while, for the medium-capacity family, the class-uniform statistical deviations range from $\BEpsMin$ to $\BEpsMax$; their separation from the sufficient sample-size margin quantifies the uniformity reserve carried by the class-wide guarantee.  At the same time, the trained models exhibit a clear intermediate regime in which sampled local and global margins strengthen while the uniform evaluation residual remains controlled.  This comparison turns the threshold formula into a diagnostic map and identifies the covering step as a concrete target for sharper data-dependent calibration.

\begin{figure}[t]\centering
\includegraphics[width=\textwidth]{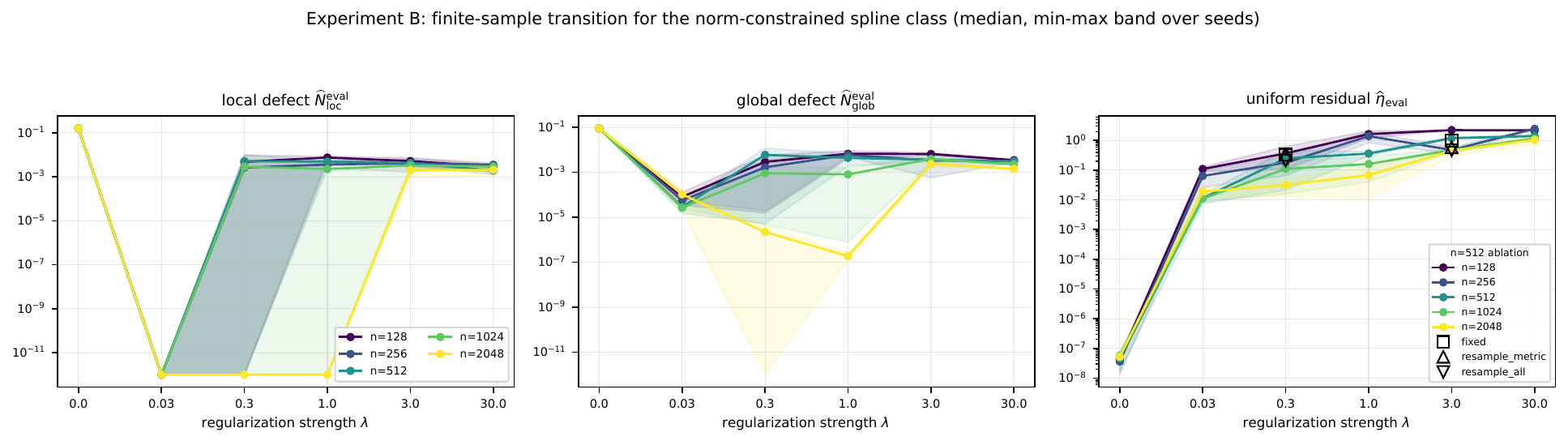}
\caption{Finite-capacity spline proxy.  Local and global empirical margins strengthen in an intermediate regularization regime, while the residual tradeoff identifies a certificate-aligned operating range.  The analytic sufficient bound retains a substantial uniformity reserve across the full class at the tested sample sizes.}
\label{fig:finitesample}
\end{figure}

\subsection{Nonlinear latent-MPC benchmark}
\label{sec:expC}
A smoothly saturated controlled pendulum with a nonlinear differentiable observation embedding is used to compare exact-model MPC and latent MPC.  All learned objectives share architecture, bounded latent range, data, optimization budget, learned cost-head form, and planning budget.  The exact-model reference is the best of $\CrefRestarts$ independent restarts and receives more total optimization effort than a learned arm, providing a stringent common baseline for the paired cost comparisons.

Pure prediction attains the smallest evaluated residual, $\widehat\eta_{\rm eval}=\CpureEta$, while the geometry-aware objectives raise the median chord constant from $\widehat c_{\min}=\CpureCmin$ to a control-relevant range.  The corresponding median paired closed-loop cost difference for pure prediction is $\CpuredJmedTFive$ at $T=5$ and $\CpuredJmedTTwoZero$ at $T=20$, between $\CpureRatioMin$ and $\CpureRatioMax$ times the medians of the regularized arms.  This is the central empirical finding: restoring the representation geometry singled out by the theorem converts accurate latent self-prediction into substantially more reliable control.

Pooled over methods, seeds, and horizons, the Spearman correlation between worst-case paired cost difference and $\widehat\eta_{\rm eval}/\widehat c_{\min}$ is $\CrhoPooledRatio$ with cluster-bootstrap interval $[\CrhoPooledRatioLo,\CrhoPooledRatioHi]$, whereas the correlation with $\widehat\eta_{\rm eval}$ alone is $\CrhoPooledEta$.  Within the compact group of geometry-aware models, the corresponding correlation is $\CrhoNoPurePooledRatio$ with interval $[\CrhoNoPurePooledRatioLo,\CrhoNoPurePooledRatioHi]$.  The ratio therefore provides a strong diagnostic gate between collapsed and non-collapsed models and a secondary, horizon-dependent diagnostic within the already reliable regime.

Covariance spread, local--global, and two-sided regularization have overlapping per-seed ranges and all achieve competitive performance on this controlled nonlinear benchmark.  Their shared success demonstrates that geometry-aware training robustly restores control-relevant representations.  The local--global hinge is mathematically distinguished because its directional and separated-pair terms are exactly the quantities for which the paper proves the complete finite-sample pointwise certificate and deterministic transfer theorem.  The evaluation-set theorem proxy carries a substantial certification reserve---its median envelope factor is $\CproxyLooseMed$ and its maximum is $\CproxyLooseMax$---and provides a transparent decomposition of the regularity and horizon-amplification terms available for trajectory-local and model-specific calibration.

\begin{table}[t]\centering\small
\begin{tabular}{lcccc}
\toprule
Objective & $\widehat\eta_{\rm eval}$ & $\widehat c_{\min}$ & median $\Delta J$, $T=5$ & median $\Delta J$, $T=20$\\
\midrule
Pure prediction & $\CpureEta$ & $\CpureCmin$ & $\CpuredJmedTFive$ & $\CpuredJmedTTwoZero$\\
Covariance spread & $\CcovEta$ & $\CcovCmin$ & $\CcovdJmedTFive$ & $\CcovdJmedTTwoZero$\\
Local hinge only & $\ClocalEta$ & $\ClocalCmin$ & $\ClocaldJmedTFive$ & $\ClocaldJmedTTwoZero$\\
Global hinge only & $\CglobalEta$ & $\CglobalCmin$ & $\CglobaldJmedTFive$ & $\CglobaldJmedTTwoZero$\\
Local--global hinge & $\ChingeEta$ & $\ChingeCmin$ & $\ChingedJmedTFive$ & $\ChingedJmedTTwoZero$\\
Two-sided baseline & $\CtwosidedEta$ & $\CtwosidedCmin$ & $\CtwosideddJmedTFive$ & $\CtwosideddJmedTTwoZero$\\
\bottomrule
\end{tabular}
\caption{Selected Experiment~C medians.  Geometry-aware training raises the lower metric constant and substantially reduces paired cost differences relative to the residual-only reference.  The regularized objectives achieve comparable control performance, and the local--global hinge pairs that competitiveness with the pointwise geometric quantities used by the theorem.}
\label{tab:planning-summary}
\end{table}

\begin{figure}[t]\centering
\includegraphics[width=\textwidth]{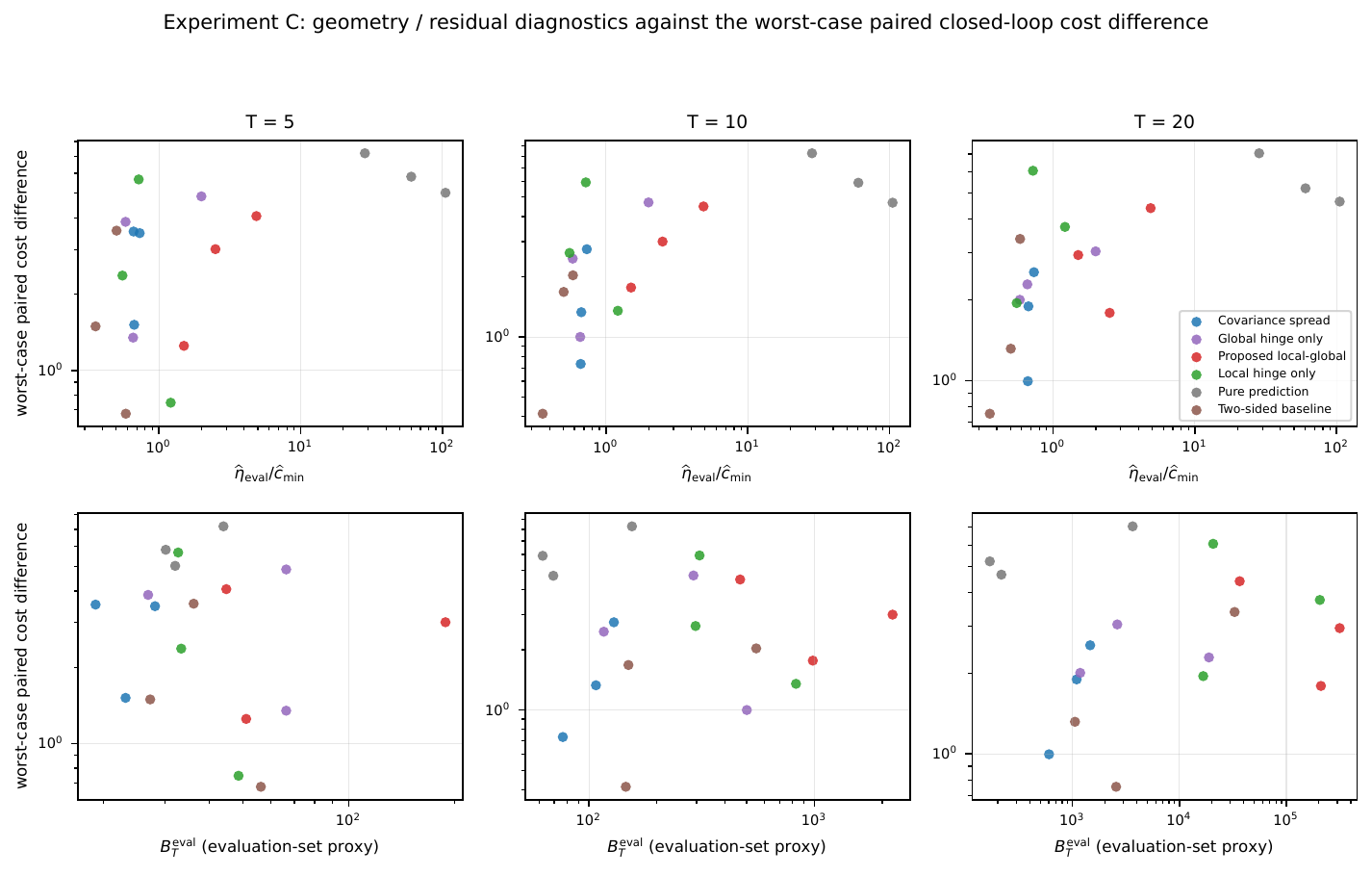}
\caption{Worst-case paired closed-loop cost difference against the geometry/residual ratio (top) and the evaluation-set theorem proxy (bottom).  The collapsed pure-prediction arm is clearly separated, while the regularized arms occupy a common control-relevant regime.}
\label{fig:geomplan}
\end{figure}
\FloatBarrier

\section{Discussion: scope, certification, and outlook}
\label{sec:discussion-certification}
The scope is deliberately concrete: observable-state distances and tangent directions are available during training from simulator state, proprioception, or certified state-estimator outputs whose distance and tangent errors leave positive effective margins, while deployment remains observation- and action-conditioned. Independently validated pixel-based geometry estimators enter the same framework only after their errors have been absorbed into those margins. The spline construction shows that the regularity and approximation hypotheses are non-vacuous, and the coverage and optimization assumptions translate into actionable sampling and margin-monitoring requirements.

The analytic threshold is a class-uniform sufficient condition, not a claim about the smallest empirically useful regularization weight. Its reserve is visible in Experiment~B and motivates sharper class-specific or finite-net certification. Experiment~C supports the broader geometric mechanism: several regularized objectives restore non-collapse and achieve competitive control, while the proposed local--global hinge is distinguished by exact alignment with the pointwise theorem. Extending the same objective-to-certificate principle to partially observed stochastic systems and independently certified visual metrics is a natural next direction.

\section{Conclusion}
\label{sec:conclusion}
For geometrically supervised latent control, the local--global hinge provides the missing pointwise stability mechanism between empirical prediction and deterministic planning. The principal theorem proves that every approximate empirical minimizer in an explicit certified regime is co-Lipschitz and uniformly approximately semiconjugate to the controlled dynamics; the modular planning result transfers those certificates to trajectory, cost, learned-cost-head, and optimizer bounds.

The constructive approximation theory, sharp interpolation result, finite-net certificate, and controlled experiments complete the learning-to-control chain. Together they show how an empirically optimized objective can select metrically faithful dynamics models and how the resulting constants support reliable finite-horizon planning without weakening the distinction between analytic certification and finite-model evidence.

\section*{Declaration on the use of AI tools}
AI tools were used for language editing, code review, and manuscript checking.  The authors assume responsibility for all content.

\bibliographystyle{unsrt}
\bibliography{bibfile}
\end{document}


\title[Supplementary Material: Metric Non-Collapse]{Supplementary Material for\\[3pt]
Finite-Sample Metric Non-Collapse for Geometrically Supervised Latent World Models in Control}
\author[Alain Bensoussan]{Alain Bensoussan}
\address[Alain Bensoussan]{Naveen Jindal School of Management\\
University of Texas at Dallas, Richardson, TX 75080, USA}
\email{\href{mailto:alain.bensoussan@utdallas.edu}{alain.bensoussan@utdallas.edu}}
\author[Minh-Nhat Phung]{Minh-Nhat Phung}
\address[Minh-Nhat Phung]{Department of Mathematics\\
Texas A\&M University, College Station, TX 77843, USA}
\email{\href{mailto:pmnt1114@tamu.edu}{pmnt1114@tamu.edu}}
\author[Minh-Binh Tran]{Minh-Binh Tran}
\address[Minh-Binh Tran]{Department of Mathematics\\
Texas A\&M University, College Station, TX 77843, USA}
\email{\href{mailto:minhbinh@tamu.edu}{minhbinh@tamu.edu}}
\date{}
\maketitle
\allowdisplaybreaks
\raggedbottom
\phantomsection\label{ass:A1}
\phantomsection\label{thm:representation}
\setcounter{tocdepth}{2}
\tableofcontents
\clearpage

\section{Exact realizers and detailed finite-capacity spline construction}
\label{supp:spline-approximation}
This section contains the complete construction material used by the main theorem.  It first constructs an exact non-collapsed latent realization, quantitative inverse charts, and global \(C^{1,1}\) representatives.  It then verifies Assumption~A4 for norm-constrained tensor-product splines, including range enforcement, compactness, coefficient counts, margin clearing, and a nonempty geometric parameter regime.  All arguments relocated from the main article are retained here in full.

\subsection{Exact and smooth non-collapsed realizers}
\begin{lemma}[Exact non-collapsed Lipschitz realization]
\label{lem:exact-Lip}
Assume Assumption~\hyperref[ass:A1]{A1}, $d_z\ge d_s$, and let $K_Z\subset\mathbb R^{d_z}$ be compact and convex with nonempty interior.  Then there exist a smooth affine bi-Lipschitz embedding $\psi_0:S\to\operatorname{int}K_Z$ with constants $0<c_{\psi_0}\le L_{\psi_0}$ and a $C^{1,1}$ map $\Pi_Z:Z\to K_Z$ that equals the identity on an open neighborhood of $\psi_0(S)$.  Moreover, there exist Lipschitz maps
\[
\Phi_0:X\to K_Z,
\qquad
F_0:K_Z\times A\to K_Z,
\]
such that
\[
\Phi_0(H(s))=\psi_0(s),
\qquad
F_0(\psi_0(s),a)=\psi_0(G(s,a)),
\qquad
R[\Phi_0,F_0]\equiv0.
\]
They may be chosen with bounds
\[
\Lip(\Phi_0)
\le
\sqrt{d_z}\,\frac{L_{\psi_0}}{c_H},
\qquad
\Lip(F_0)
\le
\sqrt{d_z}\,L_{\psi_0}L_G\max\{c_{\psi_0}^{-1},1\}.
\]
Consequently, any admissible hypothesis classes that contain these maps admit an exact non-collapsed realization. In the finite-capacity theorem, richness is supplied by the explicit approximation property, while the Lipschitz budget provides the uniform regularity needed for empirical-to-continuum control.
\end{lemma}
\begin{proof}
	Choose $z_c\in\operatorname{int}K_Z$ and $r>0$ such that
\[
\overline B_{2r}(z_c)\subset\operatorname{int}K_Z.
\]
Let $\iota:\mathbb R^{d_s}\to\mathbb R^{d_z}$ be the coordinate inclusion, fix $s_c\in S$, and set $D_S:=\max_{s\in S}|s-s_c|>0$.  With $\gamma:=r/(2D_S)$, define
\[
\psi_0(s):=z_c+\gamma\,\iota(s-s_c).
\]
Then $\psi_0(S)\subset B_r(z_c)$ and
\[
\|\psi_0(s)-\psi_0(s')\|_Z=\gamma|s-s'|,
\]
so $\psi_0$ is a smooth affine bi-Lipschitz embedding with $c_{\psi_0}=L_{\psi_0}=\gamma$.  Choose a smooth scalar saturation $\vartheta:[0,\infty)\to[0,\infty)$ such that $\vartheta(t)=1$ for $t\le r^2$ and $\vartheta(t)\sqrt t\le2r$ for all $t\ge0$, and define
\[
\Pi_Z(z):=z_c+\vartheta(|z-z_c|^2)(z-z_c).
\]
This map is $C^{1,1}$ (indeed smooth), takes values in $\overline B_{2r}(z_c)\subset K_Z$, and is the identity on $B_r(z_c)$, an open neighborhood of $\psi_0(S)$.

	Since \(H\) is bi-Lipschitz, \(H\) is injective. We define the map $\widehat\Phi_0:H(S)\to Z$ by \(\widehat\Phi_0(x):=\psi_0(H^{-1}(x))\).
	Then, we get that \(\widehat\Phi_0\) is Lipschitz with \(\Lip(\widehat\Phi_0)\le L_{\psi_0}c_H^{-1}\), where \(c_H\) is the lower co-Lipschitz constant of \(H\).
We extend each scalar coordinate of $\widehat\Phi_0$ to $X$ by the McShane extension theorem~\cite{McShane1934}.
Combining these coordinatewise extensions, we obtain the vector-valued extension $\widetilde\Phi_0$ with \[\Lip(\widetilde{\Phi}_0)\le \sqrt{d_z}\,\frac{L_{\psi_0}}{c_H}.\]
Since $K_Z$ is closed and convex, its metric projection $\Pi_{K_Z}$ is $1$-Lipschitz. As the values of \(\widehat\Phi_0\) lie in \(K_Z\) on \(H(S)\), by setting $\Phi_0:=\Pi_{K_Z}\circ\widetilde\Phi_0$, we obtain the encoder satisfying 
\[\Lip(\Phi_0)\le \sqrt{d_z}\,\frac{L_{\psi_0}}{c_H}\qquad\text{and}\qquad \Phi_0(H(s))=\psi_0(s).\]

Next, on $\psi_0(S)\times A$, we define \(\widehat F(\zeta,a) := \psi_0\bigl(G(\psi_0^{-1}(\zeta),a)\bigr)\).
With respect to the product metric
\[
d_F((\zeta,a),(\zeta',a'))
:=
\|\zeta-\zeta'\|_Z+|a-a'|,
\]
the map \(\widehat{F}\) is Lipschitz with constant at most \(L_{\psi_0}L_G\max\{c_{\psi_0}^{-1},1\}\).
As in the construction of \(\Phi_0\), applying coordinatewise McShane extension and then composing with $\Pi_{K_Z}$, we obtain $F_0$ such that 
\[\Lip(F_0)\le\sqrt{d_z}\, L_{\psi_0}L_G\max\{c_{\psi_0}^{-1},1\}\qquad \text{and}\qquad F_0(\psi_0(s),a)=\psi_0(G(s,a)).\]
Therefore, we have the identity
\[
F_0(\Phi_0(H(s)),a)
=
\psi_0(G(s,a))
=
\Phi_0(H(G(s,a))).
\]
Thus, the residual vanishes identically. The lower co-Lipschitz estimate is the assumed inequality
$\|\psi_0(s)-\psi_0(s')\|_Z\ge c_{\psi_0}d_S(s,s')$.
\end{proof}
\begin{lemma}[Quantitative inverse regularity for Euclidean observable embeddings]
\label{lem:inverse-C11}
Let $\Omega\subset\mathbb R^d$ be a bounded convex open set, let
$U\subset\mathbb R^d$ be an open neighborhood of $\overline\Omega$, and let
\[
f:U\to\mathbb R^m,
\qquad m\ge d,
\]
be of class $C^{1,1}$. Suppose that $f|_{\overline\Omega}$ is quantitatively co-Lipschitz: there exists $c_f>0$ such that
\[
\|f(s)-f(s')\|_{\mathbb R^m}\ge c_f|s-s'|
\qquad\text{for all }s,s'\in\overline\Omega.
\]
Then $f|_{\overline\Omega}$ is injective and
\[
\sigma_{\min}(Df(s))
:=\inf_{|v|=1}\|Df(s)v\|_{\mathbb R^m}
\ge c_f
\qquad\text{for every }s\in\overline\Omega.
\]

Choose $\delta>0$ such that the closed neighborhood
\[
\overline\Omega_{\delta}
:=\{x\in\mathbb R^d:\operatorname{dist}(x,\overline\Omega)\le\delta\}
\]
is contained in $U$, and set
\[
L_f:=\Lip(Df;\overline\Omega_{\delta}),
\qquad
r_*:=\min\left\{\delta,\frac{c_f}{2L_f}\right\},
\]
with the convention $c_f/(2L_f)=+\infty$ when $L_f=0$. For every
$s_0\in\overline\Omega$, define
\[
A_0:=Df(s_0),
\qquad
P_{s_0}:=(A_0^\top A_0)^{-1}A_0^\top,
\qquad
\mathcal F_{s_0}:=P_{s_0}\circ f,
\qquad
V_{s_0}:=B_{r_*}(s_0).
\]
Then
\[
P_{s_0}A_0=I_{\mathbb R^d},
\qquad
\|P_{s_0}\|_{\mathrm{op}}\le c_f^{-1},
\]
and, for all $s,s'\in V_{s_0}$,
\[
\frac12|s-s'|
\le
|\mathcal F_{s_0}(s)-\mathcal F_{s_0}(s')|
\le
\frac32|s-s'|.
\]
Consequently, $O_{s_0}:=\mathcal F_{s_0}(V_{s_0})$ is open,
$\mathcal F_{s_0}:V_{s_0}\to O_{s_0}$ is a $C^{1,1}$ diffeomorphism, and its inverse
$\Gamma_{s_0}:O_{s_0}\to V_{s_0}$ satisfies the quantitative estimates
\[
\Lip(\Gamma_{s_0};O_{s_0})\le2,
\qquad
\sup_{y\in O_{s_0}}\|D\Gamma_{s_0}(y)\|_{\mathrm{op}}\le2,
\qquad
\Lip(D\Gamma_{s_0};O_{s_0})\le\frac{8L_f}{c_f}.
\]
Moreover,
\[
(f|_{\overline\Omega})^{-1}(y)
=
\Gamma_{s_0}(P_{s_0}y)
\qquad
\text{for every }y\in f(\overline\Omega\cap V_{s_0}).
\]
Thus the local inverse charts may be chosen with a uniform radius and with constants depending only on $c_f$, the fixed-neighborhood $C^{1,1}$ budget of $f$, and the finite covering of $\overline\Omega$ by these charts.
\end{lemma}
\begin{proof}
The co-Lipschitz estimate immediately implies injectivity on $\overline\Omega$. Let $s\in\Omega$ and $v\in\mathbb S^{d-1}$. For all sufficiently small $t\ne0$, convexity gives $s+tv\in\Omega$, and hence we have
\[
\frac{\|f(s+tv)-f(s)\|_{\mathbb R^m}}{|t|}\ge c_f.
\]
Letting $t\to0$, we get $\|Df(s)v\|\ge c_f$. The same conclusion holds on $\partial\Omega$ by approximation from the interior and continuity of $Df$. Therefore, we arrive at
$\sigma_{\min}(Df(s))\ge c_f$ on $\overline\Omega$.

Fix $s_0\in\overline\Omega$. Since $A_0=Df(s_0)$ has full column rank, $A_0^\top A_0$ is invertible and $P_{s_0}$ is the Moore--Penrose left inverse of $A_0$. Thus, we have
\[
P_{s_0}A_0=I,
\qquad
\|P_{s_0}\|_{\mathrm{op}}
=\sigma_{\min}(A_0)^{-1}
\le c_f^{-1}.
\]
For $s\in V_{s_0}$, by the definitions of $r_*$ and \(V_{s_0}\), we estimate that
\[
\|D\mathcal F_{s_0}(s)-I\|_{\mathrm{op}}
=
\|P_{s_0}(Df(s)-Df(s_0))\|_{\mathrm{op}}
\le
\frac{L_f}{c_f}|s-s_0|
<\frac12.
\]
In particular, $D\mathcal F_{s_0}(s)$ is invertible and
\[
\|D\mathcal F_{s_0}(s)^{-1}\|_{\mathrm{op}}\le2.
\]
Because $V_{s_0}$ is convex, for $s,s'\in V_{s_0}$ we may integrate along the segment joining them:
\[
\mathcal F_{s_0}(s)-\mathcal F_{s_0}(s')-(s-s')
=
\int_0^1
\big(D\mathcal F_{s_0}(s'+t(s-s'))-I\big)(s-s')\,\dd t.
\]
The preceding derivative estimate therefore gives
\[
\left|
\mathcal F_{s_0}(s)-\mathcal F_{s_0}(s')-(s-s')
\right|
\le\frac12|s-s'|,
\]
which proves the two-sided estimate
\[
\frac12|s-s'|
\le
|\mathcal F_{s_0}(s)-\mathcal F_{s_0}(s')|
\le
\frac32|s-s'|.
\]
Hence $\mathcal F_{s_0}$ is injective on $V_{s_0}$. Since its derivative is invertible everywhere, the inverse function theorem shows that it is a local diffeomorphism and an open map. It follows that
$O_{s_0}=\mathcal F_{s_0}(V_{s_0})$ is open and that
$\mathcal F_{s_0}:V_{s_0}\to O_{s_0}$ is a global $C^1$ diffeomorphism. The lower estimate above implies
\[
\Lip(\Gamma_{s_0};O_{s_0})\le2,
\]
and the inverse derivative formula gives
\[
D\Gamma_{s_0}(y)
=D\mathcal F_{s_0}(\Gamma_{s_0}(y))^{-1},
\qquad
\sup_{y\in O_{s_0}}\|D\Gamma_{s_0}(y)\|_{\mathrm{op}}\le2.
\]
For $y_i=\mathcal F_{s_0}(s_i)\in O_{s_0}$, the matrix identity
$A^{-1}-B^{-1}=A^{-1}(B-A)B^{-1}$ yields
\[
\begin{aligned}
\|D\Gamma_{s_0}(y_1)-D\Gamma_{s_0}(y_2)\|_{\mathrm{op}}
&\le
4\|D\mathcal F_{s_0}(s_1)-D\mathcal F_{s_0}(s_2)\|_{\mathrm{op}}\\
&\le
4\|P_{s_0}\|_{\mathrm{op}}L_f|s_1-s_2|\\
&\le
\frac{8L_f}{c_f}|y_1-y_2|.
\end{aligned}
\]
Thus $\Gamma_{s_0}\in C^{1,1}(O_{s_0})$ with the claimed quantitative bounds. Finally, if $y=f(s)$ with $s\in\overline\Omega\cap V_{s_0}$, then
$P_{s_0}y=\mathcal F_{s_0}(s)$, and therefore
$\Gamma_{s_0}(P_{s_0}y)=s=(f|_{\overline\Omega})^{-1}(y)$.
Compactness of $\overline\Omega$ supplies a finite subcover by the neighborhoods $V_{s_0}$, while the radius and all displayed constants are uniform.
\end{proof}
\begin{lemma}[\(C^{1,1}\) exact representatives]
\label{lem:smooth-realizer}
Suppose that \hyperref[ass:A1]{A1} holds and let \(X=\R^{d_x}\).  Let \(\psi_0:S\to\operatorname{int}K_Z\) and \(\Pi_Z:Z\to K_Z\) be the affine bi-Lipschitz embedding and the smooth range-enforcing map constructed in Lemma~\ref{lem:exact-Lip}; in particular, $\Pi_Z$ is the identity on an open neighborhood of $\psi_0(S)$.  Then the exact realization admits representatives
\[
\Phi_0\in C^{1,1}(X;K_Z),
\qquad
F_0\in C^{1,1}(K_Z\times A;K_Z),
\]
with
\[
\Phi_0(H(s))=\psi_0(s),
\qquad
F_0(\psi_0(s),a)=\psi_0(G(s,a)),
\qquad
R[\Phi_0,F_0]\equiv0.
\]
\end{lemma}
\begin{proof}
We write
\[
\psi_0:S\to Z=\R^{d_z}
\]
for the affine \(C^{1,1}\) bi-Lipschitz embedding. By Assumption~\hyperref[ass:A1]{A1} and the construction in Lemma~\ref{lem:exact-Lip}, \(H\), \(G\), and \(\psi_0\) are restrictions of \(C^{1,1}\) maps defined on open Euclidean neighborhoods of their domains. After possibly shrinking these neighborhoods, we use the same notation for the corresponding extensions.

Let
\[
\Pi_Z:Z\to K_Z
\]
be the constructed $C^{1,1}$ range-enforcing map, and let
\[
\mathcal O_\psi\subset Z
\]
be an open neighborhood of \(\psi_0(S)\) on which
\[
\Pi_Z(\zeta)=\zeta.
\]

We first construct the encoder \(\Phi_0\). By Lemma \(\ref{lem:inverse-C11}\), for each \(s_i\in S\), there exist an open neighborhood \(V_i^H\subset \mathbb R^{d_s}\) of \(s_i\), a linear map
\[
        P_i^H:\mathbb R^{d_x}\to\mathbb R^{d_s},
\]
an open set \(O_i^H\subset\mathbb R^{d_s}\), and a \(C^{1,1}\) map
\[
        \Gamma_i^H:O_i^H\to V_i^H
\]
such that
\[
        P_i^H\circ H:V_i^H\to O_i^H
\]
is a \(C^{1,1}\) diffeomorphism with inverse \(\Gamma_i^H\). In particular, for \(s\in S\cap V_i^H\), we have
\[
        \Gamma_i^H(P_i^H H(s))=s .
\]
Since \(S\) is compact, we may pass to a finite subcover and assume that
\[
        S\subset \bigcup_{i=1}^{N_H} V_i^H .
\]

For each \(i\), we choose an open ambient neighborhood
\(\mathcal N_i^H\subset \mathbb R^{d_x}\) of
\(H(S\cap V_i^H)\) such that
\[
	P_i^H(\mathcal N_i^H)\subset O_i^H \quad \text{and}\quad\mathcal N_i^H\cap H(S)\subset H(S\cap V_i^H).
\]
This is possible after shrinking \(\mathcal N_i^H\), because \(H(S\cap V_i^H)\) is relatively open in \(H(S)\) and \(H(S)\) is compact.
On \(\mathcal N_i^H\), we define
\[
        \Phi_i^H(x):=\psi_0\bigl(\Gamma_i^H(P_i^H x)\bigr).
\]
Then \(\Phi_i^H\in C^{1,1}(\mathcal N_i^H;Z)\), and whenever
\(x=H(s)\in \mathcal N_i^H\cap H(S)\), we have
\[
        \Phi_i^H(H(s))
        =
        \psi_0\bigl(\Gamma_i^H(P_i^H H(s))\bigr)
        =
        \psi_0(s).
\]

We now choose smooth functions
\[
        \chi_i^H\in C_c^\infty(\mathcal N_i^H),
        \qquad i=1,\dots,N_H,
\]
such that
\[
        \sum_{i=1}^{N_H}\chi_i^H(x)=1
        \qquad \text{for all }x\text{ in a neighborhood of }H(S).
\]
Such a finite smooth partition of unity exists because the sets \(\mathcal N_i^H\) cover the compact set \(H(S)\).

Let \(z_\ast\in Z\) be fixed. We define on all of \(X=\mathbb R^{d_x}\)
\[
        \widetilde\Phi_0(x)
        :=
        \sum_{i=1}^{N_H}\chi_i^H(x)\Phi_i^H(x)
        +
        \left(1-\sum_{i=1}^{N_H}\chi_i^H(x)\right)z_\ast,
\]
where each product \(\chi_i^H\Phi_i^H\) is extended by zero outside
\(\mathcal N_i^H\). Since this is a finite sum of products of smooth functions and \(C^{1,1}\) maps,
\[
        \widetilde\Phi_0\in C^{1,1}(X;Z).
\]

Using the range-enforcing map, we set
\[
	\Phi_0:= \Pi_Z \circ \widetilde{\Phi}_0.
\]
This encoder satisfies \(\Phi_0(H(s))=\psi_0(s)\) and \(\Phi_0\in C^{1,1}(X;K_Z)\).

We next construct \(F_0\). Apply Lemma~\ref{lem:inverse-C11} to the embedding \(\psi_0\). For finitely many points \(s_j\in S\), there exist open neighborhoods \(V_j^\psi\subset\mathbb R^{d_s}\), linear maps
\[
P_j^\psi:Z\to\mathbb R^{d_s},
\]
open sets \(O_j^\psi\subset\mathbb R^{d_s}\), and local inverse maps
\[
\Gamma_j^\psi:O_j^\psi\to V_j^\psi
\]
such that \(P_j^\psi\circ\psi_0:V_j^\psi\to O_j^\psi\) is a \(C^{1,1}\) diffeomorphism and the sets \(V_j^\psi\) cover \(S\). There also exist ambient neighborhoods \(\mathcal N_j^\psi\subset Z\) of \(\psi_0(S\cap V_j^\psi)\) satisfying
\[
P_j^\psi(\mathcal N_j^\psi)\subset O_j^\psi,
\qquad
\mathcal N_j^\psi\cap\psi_0(S)
\subset
\psi_0(S\cap V_j^\psi).
\]
We choose \(\chi_j^\psi\in C_c^\infty(\mathcal N_j^\psi)\) such that \(\sum_j\chi_j^\psi=1\) on a neighborhood of \(\psi_0(S)\). Then, we define
\[
\widetilde F_0(\zeta,a)
        :=
        \sum_{j}
        \chi_j^\psi(\zeta)\psi_0\bigl(G(\Gamma_j^\psi(P_j^\psi\zeta),a)\bigr)
        +
        \left(1-\sum_{j}\chi_j^\psi(\zeta)\right)z_\ast.
\]
Applying the range-enforcing map to \(\widetilde F_0\), that is \(F_0:=\Pi_Z\circ \widetilde{F}_0\), we obtain \(F_0\in C^{1,1}(K_Z\times A;K_Z)\) with
\[F_0(\psi_0(s),a)=\psi_0(G(s,a)).\]
It immediately follows that \( \Phi_0,F_0 \) are the desired representatives.
\end{proof}

The next assumption makes explicit the finite-capacity and capacity-uniform regularity requirement.  It provides a concrete architecture-level verification target, and the spline construction below demonstrates that this target is attained by nontrivial finite-capacity classes with quantitative approximation rates.
\begin{assumption}{A4}{Finite-capacity \(C^1\)-approximation with uniform \(C^{1,1}\) budgets}
	Let \((\Phi_0,F_0)\) be the \(C^{1,1}\) exact representatives from Lemma~\ref{lem:smooth-realizer}.
There exist classes \(\mathcal A_\Phi(W),\mathcal A_F(W)\), indexed by a capacity parameter \(W\) and closed in the \(C^1\) topology, whose elements map into \(K_Z\).
Their Lipschitz and \(C^{1,1}\) budgets depend only on \(S\), \(A\), \(Z\), \(K_Z\), \(\Pi_Z\), and \((\Phi_0,F_0)\); in particular, they are independent of \(W\).
For each sufficiently large \(W\), there exist \((\Phi_W,F_W)\in\mathcal A_\Phi(W)\times\mathcal A_F(W)\) with
\[
\|\Phi_W-\Phi_0\|_{C^1(H(S))}
+
\|F_W-F_0\|_{C^1(\psi_0(S)\times A)}
\le \delta(W),
\qquad
\delta(W)\to0 \quad\text{as } W\to\infty .
\]
\end{assumption}

\subsection{Finite-capacity spline approximation}
\begin{lemma}[Range-constrained spline approximation on a box]
\label{lem:spline-A4}
Let \(\Omega\subset\R^d\) be a compact rectangular box, let \(f_0:\Omega\to\R^m\) be the restriction of a \(C^{1,1}\) map defined on an open neighborhood of \(\Omega\), and let \(K\subset\R^m\) be a compact convex set with \(f_0(\Omega)\subset\operatorname{int}K\).
Assume there is a \(C^{1,1}\) map \(\Pi_K:\R^m\to K\) that is the identity on a fixed open neighborhood of \(f_0(\Omega)\).

For a mesh width \(h>0\), let \(\mathcal S_h^m(\Omega)\) be the vector-valued tensor-product cubic B-spline space on a uniform grid of mesh size \(h\) over \(\Omega\); its dimension is \(mN_h\) with \(N_h\asymp h^{-d}\) as \(h\downarrow0\).
Writing \(W_h:=mN_h\asymp h^{-d}\) for the number of scalar coefficients, we have \(h\asymp W_h^{-1/d}\). For \(B>0\), we define the range-constrained, norm-constrained class
\[
\mathcal A_h^K(B;\Omega,m)
:=\Bigl\{\Pi_K\circ u:\ u\in\mathcal S_h^m(\Omega),\
\|u\|_{C^0(\Omega)}+\|Du\|_{C^0(\Omega)}+\Lip(Du;\Omega)\le B\Bigr\}.
\]
Every element of \(\mathcal A_h^K(B;\Omega,m)\) is \(C^{1,1}\), with a \(C^{1,1}\) bound depending only on \(B\) and \(\Pi_K\), uniformly in \(h\).

Fix a rectangular neighborhood \(\Omega^+\) with \(\Omega\Subset\Omega^+\) on which a \(C^{1,1}\) extension of \(f_0\) is defined. Then there exist constants \(B,C>0\) and \(h_0>0\), depending only on \(\Omega\), \(\Omega^+\), \(m\), \(K\), \(\Pi_K\), and the \(C^{1,1}(\Omega^+)\)-norm of this fixed extension, and functions \(\widetilde u_h\in\mathcal A_h^K(B;\Omega,m)\) for \(0<h<h_0\), such that
\[
\|\widetilde u_h-f_0\|_{C^1(\Omega)}\le C\,h\,\Lip(Df_0;\Omega^+),
\qquad
\sup_{0<h<h_0}\|\widetilde u_h\|_{C^{1,1}(\Omega)}\le C\|f_0\|_{C^{1,1}(\Omega^+)}.
\]
Equivalently, in terms of the number \(W_h\) of scalar coefficients,
\[
\|\widetilde u_h-f_0\|_{C^1(\Omega)}\le C\,W_h^{-1/d}.
\]
\end{lemma}

\begin{proof}
We use a standard local tensor-product cubic B-spline quasi-interpolant \(Q_h\) on the fixed box \(\Omega^+\), restricted afterward to \(\Omega\). The uniform grid is extended from \(\Omega\) to \(\Omega^+\), so this restriction lies in the same mesh-
\(h\) spline space \(\mathcal S_h^m(\Omega)\). We choose the usual local construction that reproduces affine polynomials and whose coefficient functionals are uniformly bounded under scaling; see \cite[Chapter~12]{schumaker2007spline} for the construction of local spline quasi-interpolants. For completeness, we derive the derivative estimates required below directly from locality, scaling, affine reproduction, and the \(C^{1,1}\) Taylor remainder.

Let \(K_h\) be a mesh cell meeting \(\Omega\), and let \(\omega_{K_h}\subset\Omega^+\) be the union of the fixed finite number of neighboring cells on which the local coefficient functionals contributing to \(Q_h\) over \(K_h\) depend. Because only a uniformly bounded number of fixed-degree tensor-product B-splines are active on a cell, the local coefficient functionals are uniformly bounded, and a derivative of order \(j\) scales as \(h^{-j}\), there is a constant independent of \(h\) such that, for \(j=0,1,2\),
\[
\|D^j Q_h v\|_{L^\infty(K_h)}
\le C h^{-j}\|v\|_{L^\infty(\omega_{K_h})}.
\]
We choose \(x_{K_h}\in K_h\) and let \(p_{K_h}(y)=f_0(x_{K_h})+Df_0(x_{K_h})(y-x_{K_h})\). Since \(Df_0\) is Lipschitz on \(\Omega^+\) and \(\omega_{K_h}\) has diameter \(O(h)\), Taylor's formula with Lipschitz derivative gives
\[
\|f_0-p_{K_h}\|_{L^\infty(\omega_{K_h})}
\le C h^2\Lip(Df_0;\Omega^+),
\qquad
\|D(f_0-p_{K_h})\|_{L^\infty(\omega_{K_h})}
\le C h\Lip(Df_0;\Omega^+).
\]
Affine reproduction gives \(Q_h p_{K_h}=p_{K_h}\). Applying the local stability estimate to \(f_0-p_{K_h}\), and combining it with the Taylor remainder, yields on every cell meeting \(\Omega\)
\[
\|Q_hf_0-f_0\|_{L^\infty(K_h)}
\le C h^2\Lip(Df_0;\Omega^+),
\qquad
\|D(Q_hf_0-f_0)\|_{L^\infty(K_h)}
\le C h\Lip(Df_0;\Omega^+),
\]
and, because \(D^2p_{K_h}=0\), we find
\[
\|D^2Q_hf_0\|_{L^\infty(K_h)}
=
\|D^2Q_h(f_0-p_{K_h})\|_{L^\infty(K_h)}
\le C\Lip(Df_0;\Omega^+).
\]
Taking the maximum over the finitely overlapping cells, we obtain the global estimates
\[
\|Q_hf_0-f_0\|_{C^0(\Omega)}\le C h^2\Lip(Df_0;\Omega^+),
\qquad
\|D(Q_hf_0-f_0)\|_{C^0(\Omega)}\le C h\Lip(Df_0;\Omega^+),
\]
together with a mesh-independent bound on \(\Lip(DQ_hf_0;\Omega)\). In particular, after increasing \(C\) and taking \(h\le1\), we have
\[
\|Q_h f_0\|_{C^0(\Omega)}+
\|D(Q_h f_0)\|_{C^0(\Omega)}+
\Lip(D(Q_h f_0);\Omega)
\le C\|f_0\|_{C^{1,1}(\Omega^+)}.
\]

Setting \(u_h:=Q_h f_0\) and choosing \(B\ge 2C\|f_0\|_{C^{1,1}(\Omega^+)}\), we obtain \(\widetilde u_h:=\Pi_K\circ u_h\in\mathcal A_h^K(B;\Omega,m)\).

By the hypothesis, \(\Pi_K\) coincides with the identity on an open neighborhood \(\mathcal O_K\) of \(f_0(\Omega)\). For sufficiently small \(h_0>0\), we have \(u_h(\Omega)\subset \mathcal O_K\) for every \(h\) with \(0<h<h_0\). Consequently, \(\widetilde{u}_h\) agrees with \(u_h\) on \(\Omega\) for \(0<h<h_0\). Therefore, we have
\begin{align*}
	\|\widetilde{u}_h-f_0\|_{C^1(\Omega)}\le Ch\Lip(Df_0;\Omega^+),
	\qquad
	\|\widetilde{u}_h\|_{C^{1,1}(\Omega)}\le C\|f_0\|_{C^{1,1}(\Omega^+)}.
\end{align*}

To complete the proof, we establish a \(C^{1,1}\) budget for \(\mathcal A_h^K(B;\Omega,m)\).

Let \(w= \Pi_K \circ u \in\mathcal A_h^K(B;\Omega,m)\), where
\begin{align*}
	u\in \mathcal S_h^m(\Omega),\quad \|u\|_{C^0(\Omega)}+\|Du\|_{C^0(\Omega)}+\Lip(Du;\Omega)\le B.
\end{align*}

Since \(\|u\|_{C^0(\Omega)}\le B\), we have \(u(\Omega)\subset\overline B_B(0)\). We define the constants
\[
M_0:=\|\Pi_K\|_{C^0(\overline B_B(0))},\quad M_1:=\|D\Pi_K\|_{C^0(\overline B_B(0))},\quad M_2:=\Lip(D\Pi_K;\overline B_B(0)).
\]
Then, it follows that
\[
\|w\|_{C^0(\Omega)}
\le M_0.
\]
By the chain rule, we get
\[
Dw(x)
=
D\Pi_K(u(x))Du(x).
\]
Hence, we further get
\[
\|Dw\|_{C^0(\Omega)}
\le
M_1\|Du\|_{C^0(\Omega)}
\le
M_1B.
\]
For \(x,y\in\Omega\), we estimate
\[
\begin{aligned}
\|Dw(x)-Dw(y)\|
&\le
\|D\Pi_K(u(x))\|
\|Du(x)-Du(y)\|
\\
&\quad+
\|D\Pi_K(u(x))-D\Pi_K(u(y))\|
\|Du(y)\|
\\
&\le
M_1\Lip(Du;\Omega)|x-y|
+
M_2\|u(x)-u(y)\|\|Du\|_{C^0(\Omega)}
\\
&\le
M_1B|x-y|+M_2B^2|x-y|.
\end{aligned}
\]
Therefore, we arrive at
\[
\Lip(Dw;\Omega)
\le
M_1B+M_2B^2.
\]
Thus every element of \(\mathcal A_h^K(B;\Omega,m)\) has a \(C^{1,1}\) norm bounded by a constant depending only on \(B\) and \(\Pi_K\).
\end{proof}

\begin{lemma}[Localized range-constrained spline approximation]
\label{lem:localized-spline-A4}
Let \(\Omega\subset\mathbb R^d\) be a compact rectangular box, let
\(E\subset\Omega\) be compact, and let
\(f_0:\Omega\to\mathbb R^m\) be the restriction of a \(C^{1,1}\) map defined on an open neighborhood of \(\Omega\).
Let \(K\subset\mathbb R^m\) be compact and convex, and suppose that
\[
\Pi_K:\mathbb R^m\to K
\]
is of class \(C^{1,1}\). Assume that there exists an open set
\(\mathcal O\subset\mathbb R^m\) such that
\[
f_0(E)\Subset\mathcal O,
\qquad
\Pi_K(y)=y
\quad\text{for every }y\in\mathcal O.
\]

Fix a rectangular neighborhood \(\Omega^+\) with \(\Omega\Subset\Omega^+\) and a fixed \(C^{1,1}\) extension of \(f_0\) to \(\Omega^+\). Let \(Q_hf_0\) denote the restriction to \(\Omega\) of the same local tensor-product cubic B-spline quasi-interpolant construction used in Lemma~\ref{lem:spline-A4}, on the extended uniform mesh, and set
\[
u_h:=Q_hf_0,
\qquad
\widetilde u_h:=\Pi_K\circ u_h.
\]
Then there exist constants \(C>0\) and \(h_0>0\), depending only on \(\Omega\), \(\Omega^+\), \(m\), \(\Pi_K\), \(\mathcal O\), and the \(C^{1,1}(\Omega^+)\)-norm of the fixed extension, such that, for \(0<h<h_0\),
\[
\|\widetilde u_h-f_0\|_{C^1(E)}
\le
Ch\,\Lip(Df_0;\Omega^+).
\]
Here
\[
\|w\|_{C^1(E)}
:=
\sup_{x\in E}\|w(x)\|
+
\sup_{x\in E}\|Dw(x)\|.
\]
Moreover,
\[
\|u_h\|_{C^0(\Omega)}
+
\|Du_h\|_{C^0(\Omega)}
+
\Lip(Du_h;\Omega)
\le
C\|f_0\|_{C^{1,1}(\Omega^+)},
\]
uniformly in \(h\), and
\[
\sup_{0<h<h_0}
\|\widetilde u_h\|_{C^{1,1}(\Omega)}
\le C.
\]
\end{lemma}

\begin{proof}
The cellwise affine-reproduction argument in Lemma~\ref{lem:spline-A4}, applied on the fixed outer box, gives constants independent of \(h\) such that
\[
\|Q_hf_0-f_0\|_{C^0(\Omega)}
\le C_0h^2\Lip(Df_0;\Omega^+),
\qquad
\|D(Q_hf_0-f_0)\|_{C^0(\Omega)}
\le C_0h\Lip(Df_0;\Omega^+),
\]
and
\[
\|Q_hf_0\|_{C^0(\Omega)}
+
\|DQ_hf_0\|_{C^0(\Omega)}
+
\Lip(DQ_hf_0;\Omega)
\le C_1\|f_0\|_{C^{1,1}(\Omega^+)}.
\]
These are precisely the mesh-independent \(C^{1,1}\) stability and \(C^1\) approximation estimates needed in the localized argument.

Because \(f_0(E)\Subset\mathcal O\), we observe that
\[
d_{\mathcal O}
:=
\operatorname{dist}
\bigl(f_0(E),\mathbb R^m\setminus\mathcal O\bigr)
>0.
\]
Thus, we can choose \(h_0>0\) such that
\[
C_0h^2\Lip(Df_0;\Omega^+)<\frac{d_{\mathcal O}}2
\qquad\text{for all }0<h<h_0.
\]
Then \(u_h(E)\subset\mathcal O\). Since \(\Pi_K\) equals the identity on the open set \(\mathcal O\), we get
\[
\Pi_K(u_h(x))=u_h(x),
\qquad
D\Pi_K(u_h(x))=I_{\mathbb R^m}
\qquad\text{for every }x\in E.
\]
Thus, we further get
\[
\widetilde u_h(x)=u_h(x),
\qquad
D\widetilde u_h(x)=Du_h(x)
\qquad\text{for every }x\in E.
\]
Hence, after increasing the constant and taking \(h\le1\), we obtain
\[
\begin{aligned}
\|\widetilde u_h-f_0\|_{C^1(E)}
&=\|u_h-f_0\|_{C^1(E)}\\
&\le
\|u_h-f_0\|_{C^0(\Omega)}
+
\|D(u_h-f_0)\|_{C^0(\Omega)}\\
&\le
Ch\,\Lip(Df_0;\Omega^+).
\end{aligned}
\]

It remains to establish the uniform \(C^{1,1}\) bound after composition
with \(\Pi_K\). The stability estimate implies that all sets \(u_h(\Omega)\) are contained in a fixed compact ball. On this ball, let
\[
M_1:=\|D\Pi_K\|_{C^0},
\qquad
M_2:=\Lip(D\Pi_K).
\]
The chain rule gives
\[
D\widetilde u_h(x)=D\Pi_K(u_h(x))Du_h(x),
\]
so we have
\[
\|D\widetilde u_h\|_{C^0(\Omega)}
\le
M_1\|Du_h\|_{C^0(\Omega)}.
\]
For \(x,y\in\Omega\), we estimate
\[
\begin{aligned}
\|D\widetilde u_h(x)-D\widetilde u_h(y)\|
&\le
M_1\|Du_h(x)-Du_h(y)\|\\
&\quad+
M_2\|u_h(x)-u_h(y)\|\,\|Du_h(y)\|.
\end{aligned}
\]
Therefore, we obtain the bound
\[
\Lip(D\widetilde u_h;\Omega)
\le
M_1\Lip(Du_h;\Omega)
+
M_2\|Du_h\|_{C^0(\Omega)}^2.
\]
Together with the compactness of \(K\), which controls the \(C^0\)-norm
of \(\widetilde u_h\), the uniform spline stability estimate proves the
claimed \(C^{1,1}\) bound.
\end{proof}

The next proposition records one concrete way in which Assumption~A4 of the main article can be satisfied.
Thus the assumption is not merely conditional on an abstract approximation axiom: there are finite-dimensional, norm-constrained architectures for which the required uniform \(C^{1,1}\) budget, the output constraint into \(K_Z\), and the \(C^1\)-approximation property hold simultaneously.

\begin{proposition}[A norm-constrained spline class satisfying Assumption~A4]
\label{prop:spline-A4}
Let \((\Phi_0,F_0)\) be the \(C^{1,1}\) exact representatives from
Lemma~\ref{lem:smooth-realizer}, and let
\[
\Pi_Z:Z\to K_Z,
\qquad
\mathcal O_Z\subset Z
\]
be the range-enforcing map and identity neighborhood from that lemma:
\[
\psi_0(S)\Subset\mathcal O_Z,
\qquad
\Pi_Z(z)=z
\quad\text{for every }z\in\mathcal O_Z.
\]

Choose compact rectangular boxes
\[
\Omega_\Phi\subset\mathbb R^{d_x},
\qquad
\Omega_F\subset\mathbb R^{d_z+d_a},
\]
such that
\[
H(S)\Subset\operatorname{int}\Omega_\Phi,
\qquad
K_Z\times A\Subset\operatorname{int}\Omega_F.
\]
Choose a fixed \(C^{1,1}\) map
\[
\rho_\Phi:\mathbb R^{d_x}\to\Omega_\Phi
\]
that equals the identity on an open neighborhood of \(H(S)\). Such a map
may be obtained by applying a fixed smooth coordinatewise saturation to
the rectangular box.

For mesh widths \(h_\Phi,h_F>0\), define
\[
\begin{aligned}
\mathcal A_\Phi(h_\Phi)
:=
\Bigl\{
\Phi(x)=\Pi_Z(u(\rho_\Phi(x))):\;&
 u\in\mathcal S_{h_\Phi}^{d_z}(\Omega_\Phi),\\
&
\|u\|_{C^0(\Omega_\Phi)}
+
\|Du\|_{C^0(\Omega_\Phi)}
+
\Lip(Du;\Omega_\Phi)
\le B_\Phi
\Bigr\},
\end{aligned}
\]
and
\[
\begin{aligned}
\mathcal A_F(h_F)
:=
\Bigl\{
F(z,a)=\Pi_Z(v(z,a)):\;&
 v\in\mathcal S_{h_F}^{d_z}(\Omega_F),\\
&
\|v\|_{C^0(\Omega_F)}
+
\|Dv\|_{C^0(\Omega_F)}
+
\Lip(Dv;\Omega_F)
\le B_F
\Bigr\},
\end{aligned}
\]
where \(F\) is restricted to \(K_Z\times A\).

For each sufficiently large integer capacity level \(W\), choose admissible uniform mesh widths \(h_\Phi(W)\) and \(h_F(W)\) such that
\[
h_\Phi(W)\asymp W^{-1/d_x},
\qquad
h_F(W)\asymp W^{-1/(d_z+d_a)},
\]
with comparison constants independent of \(W\). For example, one may choose the nearest admissible tensor-grid resolutions to the displayed powers.
Then, for sufficiently large fixed constants \(B_\Phi,B_F\), the classes
\[
\mathcal A_\Phi(W):=\mathcal A_\Phi(h_\Phi(W)),
\qquad
\mathcal A_F(W):=\mathcal A_F(h_F(W))
\]
satisfy Assumption~A4 of the main article. More precisely, there exist
\[
(\Phi_W,F_W)\in
\mathcal A_\Phi(W)\times\mathcal A_F(W)
\]
such that
\[
\|\Phi_W-\Phi_0\|_{C^1(H(S))}
+
\|F_W-F_0\|_{C^1(\psi_0(S)\times A)}
\le
\delta(W),
\]
where
\[
\delta(W)
\le
C_\Phi W^{-1/d_x}
+
C_F W^{-1/(d_z+d_a)}
\longrightarrow0.
\]
The Lipschitz and \(C^{1,1}\) budgets of the two classes are independent
of \(W\), and each class is compact, hence closed, in the \(C^1\)
topology. Each vector-valued class has \(O(d_zW)\) scalar coefficients.
\end{proposition}

\begin{proof}
We divide the argument into four steps.

\medskip\noindent
\textbf{Step 1: \(C^{1,1}\) extensions on the coordinate boxes.}
The encoder \(\Phi_0\) is defined on all of \(X=\mathbb R^{d_x}\), so we set
\[
\overline\Phi_0:=\Phi_0|_{\Omega_\Phi}.
\]

By the definition of \(C^{1,1}(K_Z\times A;K_Z)\), the transition map
\(F_0\) admits a \(C^{1,1}\) extension \(F_0^{\rm ext}\) to an open
neighborhood \(U_F\) of \(K_Z\times A\). We choose a smooth cut-off
\[
\chi_F\in C_c^\infty(U_F),
\qquad
0\le\chi_F\le1,
\]
satisfying \(\chi_F=1\) on an open neighborhood of \(K_Z\times A\). Fix
\(z_\ast\in Z\), we define
\[
\overline F_0
:=
z_\ast+\chi_F(F_0^{\rm ext}-z_\ast)
\]
on \(U_F\), extending the compactly supported second term by zero to
\(\mathbb R^{d_z+d_a}\). Then
\[
\overline F_0\in C^{1,1}(\mathbb R^{d_z+d_a};Z)
\]
and \(\overline F_0=F_0^{\rm ext}\) on an open neighborhood of \(K_Z\times A\); in particular, \(\overline F_0=F_0\) on \(K_Z\times A\).

We now define
\[
E_\Phi:=H(S),
\qquad
E_F:=\psi_0(S)\times A.
\]
The exact-realization identities give
\[
\overline\Phi_0(E_\Phi)=\psi_0(S)
\]
and
\[
\overline F_0(E_F)
=
\{\psi_0(G(s,a)):(s,a)\in S\times A\}
\subset\psi_0(S).
\]
Thus both target images are compactly contained in \(\mathcal O_Z\).

\medskip\noindent
\textbf{Step 2: localized spline approximation.}
We apply Lemma~\ref{lem:localized-spline-A4} to
\[
(\Omega,E,f_0)
=
(\Omega_\Phi,E_\Phi,\overline\Phi_0)
\]
and to
\[
(\Omega,E,f_0)
=
(\Omega_F,E_F,\overline F_0),
\]
with \(K=K_Z\), \(\Pi_K=\Pi_Z\), and \(\mathcal O=\mathcal O_Z\).
We obtain raw spline quasi-interpolants
\[
u_{h_\Phi}\in\mathcal S_{h_\Phi}^{d_z}(\Omega_\Phi),
\qquad
v_{h_F}\in\mathcal S_{h_F}^{d_z}(\Omega_F),
\]
whose \(C^{1,1}\) norms are bounded independently of the mesh widths and
such that
\[
\|\Pi_Z\circ u_{h_\Phi}-\Phi_0\|_{C^1(H(S))}
\le C_\Phi h_\Phi,
\]
\[
\|\Pi_Z\circ v_{h_F}-F_0\|_{C^1(\psi_0(S)\times A)}
\le C_Fh_F.
\]
By Lemma \ref{lem:spline-A4}, we can choose \(B_\Phi,B_F\) such that these constants are larger than the corresponding uniform raw-spline bounds and are independent of \(h_\Phi,h_F\), and hence of
\(W\).

We define
\[
\Phi_W(x)
:=
\Pi_Z\bigl(u_{h_\Phi(W)}(\rho_\Phi(x))\bigr),
\]
\[
F_W(z,a)
:=
\Pi_Z\bigl(v_{h_F(W)}(z,a)\bigr).
\]
Then, we get
\[
\Phi_W\in\mathcal A_\Phi(W),
\qquad
F_W\in\mathcal A_F(W).
\]
Because \(\rho_\Phi\) equals the identity on a neighborhood of \(H(S)\), we have
\[
\rho_\Phi(x)=x,
\qquad
D\rho_\Phi(x)=I
\quad\text{for }x\in H(S).
\]
Therefore, we arrive at
\[
\|\Phi_W-\Phi_0\|_{C^1(H(S))}
\le C_\Phi h_\Phi(W),
\]
\[
\|F_W-F_0\|_{C^1(\psi_0(S)\times A)}
\le C_Fh_F(W).
\]
The stated estimate for \(\delta(W)\) follows.

\medskip\noindent
\textbf{Step 3: uniform \(C^{1,1}\) budgets.}
We consider the encoder
\[
\Phi=\Pi_Z\circ u\circ\rho_\Phi
\in\mathcal A_\Phi(W),
\]
and set
\[
R_1:=\|D\rho_\Phi\|_{C^0},
\qquad
R_2:=\Lip(D\rho_\Phi).
\]
The raw-spline constraint gives
\[
\|D(u\circ\rho_\Phi)\|_{C^0}
\le B_\Phi R_1,
\]
\[
\Lip(D(u\circ\rho_\Phi))
\le B_\Phi(R_1^2+R_2).
\]
All values of \(u\circ\rho_\Phi\) lie in the fixed ball
\(\overline B_{B_\Phi}(0)\). On this ball, we define
\[
P_1:=\|D\Pi_Z\|_{C^0},
\qquad
P_2:=\Lip(D\Pi_Z).
\]
The chain rule yields
\[
\|D\Phi\|_{C^0}
\le
P_1B_\Phi R_1
\]
and
\[
\Lip(D\Phi)
\le
P_1B_\Phi(R_1^2+R_2)
+
P_2B_\Phi^2R_1^2.
\]
Since \(\Phi\) takes values in the compact set \(K_Z\), this gives a
\(C^{1,1}\) budget independent of \(W\).

For \(F=\Pi_Z\circ v\in\mathcal A_F(W)\), the same argument gives
\[
\|DF\|_{C^0}
\le
\widetilde P_1B_F,
\qquad
\Lip(DF)
\le
\widetilde P_1B_F+
\widetilde P_2B_F^2,
\]
where \(\widetilde P_1,\widetilde P_2\) are bounds for \(D\Pi_Z\) and
\(\Lip(D\Pi_Z)\) on the fixed ball containing the raw spline values.
Thus the transition classes also have uniform \(C^{1,1}\) budgets, and
both classes map into \(K_Z\) by construction.

\medskip\noindent
\textbf{Step 4: finite capacity and \(C^1\)-closedness.}
At fixed mesh width, each spline space is finite-dimensional. The raw
coefficient sets determined by
\[
\|u\|_{C^0}+\|Du\|_{C^0}+\Lip(Du)\le B_\Phi
\]
and
\[
\|v\|_{C^0}+\|Dv\|_{C^0}+\Lip(Dv)\le B_F
\]
are closed and bounded in finite-dimensional spaces. By equivalence of
norms, they are compact.

The maps
\[
u\longmapsto\Pi_Z\circ u\circ\rho_\Phi
\]
and
\[
v\longmapsto(\Pi_Z\circ v)|_{K_Z\times A}
\]
are continuous into the \(C^1\) topology. Hence
\(\mathcal A_\Phi(W)\) and \(\mathcal A_F(W)\) are continuous images of
compact sets and are therefore compact, in particular closed, in
\(C^1\).

Finally, a tensor-product spline space in input dimension \(d\) has
\(O(h^{-d})\) scalar basis functions per output coordinate. The chosen
mesh widths satisfy
\[
h_\Phi(W)^{-d_x}\asymp W,
\qquad
h_F(W)^{-(d_z+d_a)}\asymp W.
\]
Thus each vector-valued class has \(O(d_zW)\) scalar coefficients. All
requirements of Assumption~A4 of the main article are satisfied.
\end{proof}

\subsection{Margin-clearing competitors and a nonempty parameter regime}
The margin-clearing comparator is the bridge between realizability and the empirical objective.
\begin{proposition}[Margin-clearing competitors]
\label{prop:margin-clearing}
Adopt the hypotheses of Lemma~\ref{lem:smooth-realizer} and Assumption~\hyperref[ass:A4]{A4}.
We define
\[
\kappa_0:=\inf_{(s,v)\in\mathcal U}\|D\psi_0(s)[v]\|_Z,
\qquad
\alpha_0(\rho):=\inf_{(s,s')\in P_\rho}\|\psi_0(s)-\psi_0(s')\|_Z.
\]
By the bi-Lipschitz property of \(\psi_0\), \(\kappa_0>0\) and \(\alpha_0(\rho)>0\), we choose margins
\[0<\kappa<\kappa_0,\qquad 0<\alpha<\alpha_0(\rho).\]

Then, for every \(W\) sufficiently large, the pair \((\Phi_W,F_W)\in\mathcal A_\Phi(W)\times\mathcal A_F(W)\) from Assumption~\hyperref[ass:A4]{A4} is a margin-clearing competitor.
More precisely, writing \(\psi_W:=\Phi_W\circ H\), we get
\[
\|D\psi_W(s)[v]\|_Z\ge\kappa \quad\text{for all }(s,v)\in\mathcal U,
\qquad
\|\psi_W(s)-\psi_W(s')\|_Z\ge\alpha \quad\text{for all }(s,s')\in P_\rho .
\]
Consequently,
\[
\mathcal N_{\rm loc}(\Phi_W)=\mathcal N_{\rm glob}(\Phi_W)=\mathcal N_{\rm met}(\Phi_W)=0,
\]
and the same identities hold for the empirical penalties, for every sample:
\[
\widehat{\mathcal N_{\rm loc}}_{n}(\Phi_W)
=\widehat{\mathcal N_{\rm glob}}_{n}(\Phi_W)
=\widehat{\mathcal N_{\rm met}}_{n}(\Phi_W)=0 .
\]
Moreover,
\[
\mathcal E_{\rm pred}(\Phi_W,F_W)\le\beta(W),\qquad \beta(W):=C_\sharp\,\delta(W)^2\to0,
\]
where \(C_\sharp\) depends only on \(H\), \(G\), and the uniform Lipschitz and \(C^{1,1}\) budgets.
\end{proposition}
\begin{proof}
	We have the identities
	\begin{align*}
		\psi_W:=\Phi_W\circ H,\quad \psi_0:=\Phi_0\circ H,
	\end{align*}
and by the chain rule,
\[
D\psi_W(s)-D\psi_0(s)
=
\bigl(D\Phi_W(H(s))-D\Phi_0(H(s))\bigr)DH(s).
\]
Consequently, we estimate
\[
\|\psi_W-\psi_0\|_{C^1(S)}
\le
\|\Phi_W-\Phi_0\|_{C^0(H(S))}
+\|D\Phi_W-D\Phi_0\|_{C^0(H(S))}\|DH\|_{C^0(S)}
\le
C^\sharp_H\delta(W),
\]
where \(C^\sharp_H:=1+\|DH\|_{C^0(S)}\).

For every \((s,v)\in\mathcal U=S\times\mathbb S^{d_s-1}\), we have
\[
\begin{aligned}
\|D\psi_W(s)[v]\|_Z
&\ge
\|D\psi_0(s)[v]\|_Z
-
\|(D\psi_W(s)-D\psi_0(s))[v]\|_Z \\
&\ge
\kappa_0
-
\|D\psi_W-D\psi_0\|_{C^0(S)} \\
&\ge
\kappa_0
-
C_H^\sharp\delta(W).
\end{aligned}
\]
Since \(0<\kappa<\kappa_0\) and \(\delta(W)\to0\), we may choose \(W\) sufficiently large so that
\[
C_H^\sharp\delta(W)\le \kappa_0-\kappa.
\]
Then, it follows that
\[
\|D\psi_W(s)[v]\|_Z\ge \kappa
\qquad \text{for all }(s,v)\in\mathcal U.
\]
Therefore, we have proved
\[
	\mathcal N_{\rm loc}(\Phi_W)=0
	\quad\text{and}\quad
	\widehat{\mathcal N_{\rm loc}}_n(\Phi_W)=0.
\]

We next show the separated-point margin.
For every \((s,s')\in P_\rho\), we estimate
\[
\begin{aligned}
\|\psi_W(s)-\psi_W(s')\|_Z
&\ge
\|\psi_0(s)-\psi_0(s')\|_Z \\
&\quad
-
\|\psi_W(s)-\psi_0(s)\|_Z
-
\|\psi_W(s')-\psi_0(s')\|_Z \\
&\ge
\alpha_0(\rho)
-
2\|\psi_W-\psi_0\|_{C^0(S)} \\
&\ge
\alpha_0(\rho)-2C_H^\sharp\delta(W).
\end{aligned}
\]
Since \(0<\alpha<\alpha_0(\rho)\), we may choose \(W\) sufficiently large so that
\[
2C_H^\sharp\delta(W)\le \alpha_0(\rho)-\alpha.
\]
Then, it follows that
\[
\|\psi_W(s)-\psi_W(s')\|_Z\ge \alpha
\qquad \text{for all }(s,s')\in P_\rho.
\]
Therefore, we get
\[
	\mathcal N_{\rm glob}(\Phi_W)=0
	\quad \text{and}\quad
	\widehat{\mathcal N_{\rm glob}}_n(\Phi_W)=0.
\]
Consequently, we obtain
\[
	\mathcal N_{\rm met}(\Phi_W)=0
	\quad\text{and}\quad
	\widehat{\mathcal N_{\rm met}}_n(\Phi_W)=0.
\]

It remains to estimate the prediction loss. Since \((\Phi_0,F_0)\) is the smooth exact realizer given by Lemma~\ref{lem:smooth-realizer}, we have
\[
\psi_0(G(s,a))
=
F_0(\psi_0(s),a)
\qquad \text{for all }(s,a)\in S\times A.
\]
Using the exact identity for \((\psi_0,F_0)\), we obtain, for \((s,a)\in M\),
\[
\begin{aligned}
\|R[\Phi_W,F_W](s,a)\|_Z
&=
\|
\psi_W(G(s,a))-F_W(\psi_W(s),a)
\|_Z \\
&\le
\|\psi_W(G(s,a))-\psi_0(G(s,a))\|_Z \\
&\quad+
\|F_0(\psi_0(s),a)-F_W(\psi_W(s),a)\|_Z.
\end{aligned}
\]
Since \(G(s,a)\in S\), the first term is bounded by
\[
\|\psi_W(G(s,a))-\psi_0(G(s,a))\|_Z
\le
\|\psi_W-\psi_0\|_{C^0(S)}
\le
C_H^\sharp\delta(W).
\]

For the second term, we add and subtract \(F_W(\psi_0(s),a)\), which gives
\[
\begin{aligned}
\|F_0(\psi_0(s),a)-F_W(\psi_W(s),a)\|_Z
&\le
\|F_0(\psi_0(s),a)-F_W(\psi_0(s),a)\|_Z \\
&\quad+
\|F_W(\psi_0(s),a)-F_W(\psi_W(s),a)\|_Z.
\end{aligned}
\]
By Assumption~\hyperref[ass:A4]{A4}, we estimate
\[
\|F_0(\psi_0(s),a)-F_W(\psi_0(s),a)\|_Z
\le
\|F_W-F_0\|_{C^0(\psi_0(S)\times A)}
\le
\delta(W).
\]

For the remaining term, we use the uniform Lipschitz budget of the admissible transition class.
Namely, there exists a constant \(L_F^{\mathrm{bud}}\), independent of \(W\), such that
\[
\|F_W(z,a)-F_W(z',a)\|_Z
\le
L_F^{\mathrm{bud}}\|z-z'\|_Z
\qquad
\text{for all }z,z'\in K_Z,\ \text{and } a\in A.
\]
Therefore, we estimate
\[
\begin{aligned}
\|F_W(\psi_0(s),a)-F_W(\psi_W(s),a)\|_Z
&\le
L_F^{\mathrm{bud}}
\|\psi_0(s)-\psi_W(s)\|_Z \\
&\le
L_F^{\mathrm{bud}}C_H^\sharp\delta(W).
\end{aligned}
\]
Combining the preceding estimates yields
\[
\|R[\Phi_W,F_W](s,a)\|_Z
\le
C_R\delta(W)
\qquad \text{for all }(s,a)\in M,
\]
where
\[
C_R
:=
1+(1+L_F^{\mathrm{bud}})C_H^\sharp.
\]

Since \(\mu\) is a probability measure on \(M\), we obtain
\[
\begin{aligned}
	\mathcal E_{\rm pred}(\Phi_W,F_W)
&=
\int_M
\|R[\Phi_W,F_W](s,a)\|_Z^2
\dd\mu(s,a) \\
&\le
\left(
\sup_{(s,a)\in M}
\|R[\Phi_W,F_W](s,a)\|_Z
\right)^2 \\
&\le
C_R^2\delta(W)^2.
\end{aligned}
\]
Thus the desired estimate holds with
\[
C_\sharp:=C_R^2,
\qquad
\beta(W):=C_\sharp\delta(W)^2.
\]
Since \(\delta(W)\to0\), we have \(\beta(W)\to0.\)
\end{proof}
\begin{lemma}[Nonempty parameter regime]
\label{lem:nonempty-parameters}
Let \(S\subset\mathbb R^{d_s}\) be compact with \(\operatorname{diam}(S)>0\), and let \(\psi_0:S\to Z\) be the \(C^{1,1}\) bi-Lipschitz embedding from Lemma~\ref{lem:smooth-realizer}, with co-Lipschitz constant \(c_{\psi_0}>0\).
Assume
\[
\kappa_0:=\inf_{(s,v)\in\mathcal U}\|D\psi_0(s)[v]\|_Z>0,
\]
and fix \(\kappa\) with \(0<\kappa<\kappa_0\). Let
\[
B_{\mathrm{loc}}:=\sup_{W}\ \sup_{\Phi\in\mathcal A_\Phi(W)}\Lip\bigl(D(\Phi\circ H)\bigr),
\]
which is finite by Assumption~\hyperref[ass:A4]{A4} together with the
\(C^{1,1}\) regularity of \(H\). We use the convention
\[
\frac{\kappa}{4B_{\mathrm{loc}}}:=+\infty
\qquad\text{when }B_{\mathrm{loc}}=0,
\]
and choose
\[
0<\rho<\min\Bigl\{\operatorname{diam}(S),\ \frac{\kappa}{4B_{\mathrm{loc}}}\Bigr\},
\qquad\text{so that}\qquad
B_{\mathrm{loc}}\,\rho\le\frac{\kappa}{4}.
\]
Since \(0<\rho<\operatorname{diam}(S)\), the compact set
\[
P_\rho:=\{(s,s')\in S\times S:\ |s-s'|\ge\rho\}
\]
is nonempty and disjoint from the diagonal. By the co-Lipschitz constant  \(c_{\psi_0}\), we have
\[
\alpha_0(\rho):=\inf_{(s,s')\in P_\rho}\|\psi_0(s)-\psi_0(s')\|_Z
\ge c_{\psi_0}\,\rho>0,
\]
so any \(\alpha\in(0,\alpha_0(\rho))\) is a valid margin. In particular, the set of parameter tuples \((\kappa,\rho,\alpha)\) satisfying
\[
0<\kappa<\kappa_0,\qquad B_{\mathrm{loc}}\,\rho\le\frac{\kappa}{4},\qquad
0<\alpha<\alpha_0(\rho)
\]
is nonempty.
\end{lemma}
\begin{proof}
	By the uniform budgets in Assumption~\hyperref[ass:A4]{A4}, the \(C^{1,1}\) regularity of \(H\), and the chain rule, the quantity
\[
B_{\mathrm{loc}}
=
\sup_{W}\sup_{\Phi\in\mathcal A_\Phi(W)}
\Lip\bigl(D(\Phi\circ H)\bigr)
<\infty.
\]

By the convention \(\kappa/(4B_{\mathrm{loc}})=+\infty\) when \(B_{\mathrm{loc}}=0\), and since \(\kappa>0\) and \(\operatorname{diam}(S)>0\), the set \(\bigl(0,\min\{\operatorname{diam}(S),\kappa/(4B_{\mathrm{loc}})\}\bigr)\) is nonempty. We may therefore choose \(\rho\) in it.
Then,
\[0<\rho<\operatorname{diam}(S)\quad\text{and}\quad B_{\mathrm{loc}}\rho\le\frac{\kappa}{4}.\]

We next verify the properties of \(P_\rho\).
We observe that the map \((s,s')\mapsto|s-s'|\) is continuous on the compact set \(S\times S\), so \[P_\rho=\{(s,s')\in S\times S:\ |s-s'|\ge\rho\}\] is compact, and the diameter is attained: there exist \(s_1,s_2\in S\) with \(|s_1-s_2|=\operatorname{diam}(S)\).

Since \(\rho<\operatorname{diam}(S)\), we have \((s_1,s_2)\in P_\rho\), so \(P_\rho\) is nonempty. Moreover, \((s,s')\in P_\rho\) implies \(|s-s'|\ge\rho>0\), hence \(s\ne s'\). Thus, \(P_\rho\) is disjoint from the diagonal.

Finally, we show the positivity of \(\alpha_0(\rho)\). By the co-Lipschitz bound for \(\psi_0\),
\[
	\|\psi_0(s)-\psi_0(s')\|_Z\ge c_{\psi_0}|s-s'|\qquad\text{for all }s,s'\in S.
\]
This implies, for every \((s,s')\in P_\rho\),
\[
\|\psi_0(s)-\psi_0(s')\|_Z\ge c_{\psi_0}|s-s'|\ge c_{\psi_0}\rho.
\]
Taking the infimum, we obtain
\[\alpha_0(\rho)\ge c_{\psi_0}\rho>0.\]
We may therefore choose
\(
0<\alpha<\alpha_0(\rho).
\)
Together with the already fixed choice
\(
0<\kappa<\kappa_0
\)
and the choice of \(\rho\) satisfying
\(
B_{\mathrm{loc}}\rho\le \kappa/4,
\)
we obtain parameters satisfying
\[
B_{\mathrm{loc}}\rho\le\frac{\kappa}{4},
\qquad
0<\kappa<\kappa_0,
\qquad
0<\alpha<\alpha_0(\rho).
\]
Therefore the admissible parameter regime is nonempty.
\end{proof}

\section{Architecture-specific covering estimates}
\label{supp:spline-covering}
The main article proves the abstract uniform deviation bound and the finite-parametric estimate.  Here we verify the required coefficient stability and parameter-Lipschitz bounds for the concrete spline dictionaries of Supplementary Section~\ref{supp:spline-approximation}.

Let us fix the parameters \(\kappa,\alpha>0\).
Define the three function classes
\[
\mathcal G_M(W)
:=
\left\{
(s,a)\mapsto \|R[\Phi,F](s,a)\|_Z^2:
(\Phi,F)\in\mathcal A(W)
\right\},
\]
\[
\mathcal G_U(W)
:=
\left\{
(s,v)\mapsto
\bigl[\kappa-\|D(\Phi\circ H)(s)[v]\|_Z\bigr]_+^2:
\Phi\in\mathcal A_\Phi(W)
\right\},
\]
\[
\mathcal G_P(W)
:=
\left\{
(s,s')\mapsto
\bigl[\alpha-\|(\Phi\circ H)(s)-(\Phi\circ H)(s')\|_Z\bigr]_+^2:
\Phi\in\mathcal A_\Phi(W)
\right\}.
\]

Let \(\mu_M,\mu_U,\mu_P\) be the population measures on \(M,\mathcal U,P_\rho\), respectively, and let \(\mu_{M,n},\mu_{U,n},\mu_{P,n}\) be the corresponding empirical measures from independent samples of size \(n\).

We denote by \(N_M(r;W),N_U(r;W),N_P(r;W)\) the \(r\)-covering numbers of \(\mathcal G_M(W),\mathcal G_U(W),\mathcal G_P(W)\) in the \(C^0\) norm.

\begin{lemma}[Uniform coefficient stability for the spline dictionaries]
\label{lem:spline-coefficient-stability}
Fix the spline degree and a compact rectangular box \(\Omega\subset\mathbb R^d\). For the normalized tensor-product B-spline bases \(\{B_j^h\}_j\) on the admissible uniform meshes used above, there exists a constant \(C_{\rm stab}\ge1\), independent of \(h\), such that every scalar spline
\[
s_c=\sum_j c_jB_j^h
\]
satisfies
\[
C_{\rm stab}^{-1}\|c\|_{\ell^\infty}
\le
\|s_c\|_{L^\infty(\Omega)}
\le
C_{\rm stab}\|c\|_{\ell^\infty}.
\]
The same estimate holds componentwise for vector-valued splines.
\end{lemma}

\begin{proof}
The upper bound is immediate from nonnegativity and the partition-of-unity property of the normalized B-spline basis. Indeed, for every \(x\in\Omega\), we have
\[
|s_c(x)|
=
\left|\sum_j c_j B_j^h(x)\right|
\le
\|c\|_{\ell^\infty}\sum_j B_j^h(x)
=
\|c\|_{\ell^\infty}.
\]
Thus, we get
\[
\|s_c\|_{L^\infty(\Omega)}
\le
\|c\|_{\ell^\infty}.
\]

It remains to prove the inverse estimate. Let \(\mathcal T_h\) denote the underlying uniform tensor-product mesh, and for each cell \(K\in\mathcal T_h\) let
\[
I(K):=\{j:B_j^h|_K\not\equiv 0\}
\]
be the set of indices of B-splines active on \(K\). Since the spline degree is fixed, the cardinality of \(I(K)\) is uniformly bounded, independently of
\(h\).

Let \(A_K:\widehat K\to K\) be the affine map from the reference cube \(\widehat K=[0,1]^d\) to \(K\). For \(j\in I(K)\), we define the rescaled restriction
\[
\widehat B_{j,K}:=B_j^h\circ A_K .
\]
By uniformity of the mesh and by the fixed degree assumption, the collection of possible families
\[
\{\widehat B_{j,K}:j\in I(K)\}
\]
belongs to a finite set of reference patterns. This finite set includes the interior pattern and the finitely many boundary knot patterns.

For each such reference pattern \(P=\{\beta_1,\dots,\beta_m\}\), we consider the linear map
\[
T_P:\mathbb R^m\to C(\widehat K),
\qquad
T_P b:=\sum_{r=1}^m b_r\beta_r .
\]
The functions \(\beta_1,\dots,\beta_m\) are locally linearly independent. Hence \(T_P\) is injective. Since \(\mathbb R^m\) is finite-dimensional, the quantity
\[
C_P
:=
\inf_{\|b\|_{\ell^\infty}=1}
\|T_P b\|_{L^\infty(\widehat K)}>0.
\]
Because only finitely many patterns \(P\) can occur, we may define
\[
C_{\min}:=\min_P C_P>0.
\]
Therefore, for every cell \(K\) and every coefficient vector \(b=(b_j)_{j\in I(K)}\), we have
\[
\|b\|_{\ell^\infty}
\le
C_{\min}^{-1}
\left\|
\sum_{j\in I(K)} b_j B_j^h
\right\|_{L^\infty(K)} .
\]

Now, we fix an arbitrary coefficient index \(j\) and choose a mesh cell \(K_j\) on which \(B_j^h\) is active. Applying the preceding local estimate to the coefficient vector \((c_i)_{i\in I(K_j)}\), we obtain
\[
|c_j|
\le
\|(c_i)_{i\in I(K_j)}\|_{\ell^\infty}
\le
C_{\min}^{-1}
\|s_c\|_{L^\infty(K_j)}
\le
C_{\min}^{-1}
\|s_c\|_{L^\infty(\Omega)} .
\]
Taking the supremum over \(j\) gives
\[
\|c\|_{\ell^\infty}
\le
C_{\min}^{-1}\|s_c\|_{L^\infty(\Omega)}.
\]
Equivalently,
\[
C_{\min}\|c\|_{\ell^\infty}
\le
\|s_c\|_{L^\infty(\Omega)}.
\]

Combining this lower bound with the upper bound, the desired estimate holds with, for example,
\[
C_{\rm stab}:=\max\{1,C_{\min}^{-1}\}.
\]
The constant depends only on the spline degree, the dimension, and the
admissible boundary knot patterns, and is independent of \(h\).

For vector-valued splines, the same argument applies to each component separately, which gives the stated componentwise estimate.
\end{proof}

The following proposition shows that the spline architecture satisfies the finite-parametric hypothesis of the finite-parametric deviation corollary in the main article.

\begin{proposition}[Spline parameter-Lipschitz bound]
\label{prop:spline-parametric}
Consider the range-constrained tensor-product cubic B-spline classes of Proposition~\ref{prop:spline-A4}, parametrized by the vector \(\theta\) of spline coefficients of \((\Phi,F)\), constrained to a box \([-R_W,R_W]^{p_W}\).
Then the hypothesis of the finite-parametric deviation corollary in the main article holds for each \(Y\in\{M,U,P\}\): each integrand class \(\mathcal G_Y(W)\) satisfies
\[
\|g_\theta-g_{\theta'}\|_{C^0(Y)}\le L_{\theta,Y,W}\|\theta-\theta'\|_{\ell^\infty},
\]
and there are constants \(C_Y,q_Y\), independent of \(n\), with
\[
R_W L_{\theta,Y,W}\le C_Y W^{q_Y}.
\]
Consequently, the finite-parametric estimate of the finite-parametric deviation corollary in the main article applies to the norm-constrained spline architecture of Proposition~\ref{prop:spline-A4}.
\end{proposition}

\begin{proof}
	We write the proof for one fixed capacity level \(W\).

	Let \(\Omega_\Phi\subset X\) and \(\Omega_F\subset Z\times\R^{d_a}\) be the compact coordinate boxes containing \(H(S)\) and \(K_Z\times A\), respectively, as in Proposition~\ref{prop:spline-A4}.

	We recall the construction of the encoder and transition spline classes \(\mathcal A_\Phi(W)\) and \(\mathcal A_F(W)\) in Proposition~\ref{prop:spline-A4}. They have different mesh widths, which we denote by \(h_\Phi\) and \(h_F\).

The encoder dictionary is a tensor-product cubic B-spline dictionary on \(\Omega_\Phi\), and the transition dictionary is a tensor-product cubic B-spline dictionary on \(\Omega_F\).

We first record the basic coefficient-stability estimates. A vector-valued spline map on a \(d\)-dimensional coordinate box has the form
\[
u_c(x)=\sum_{j=1}^{p}c_j B_j^h(x),
\]
where \(B_j^h\) are scalar tensor-product cubic B-splines, \(c_j\in\R^m\) are the output coefficients, and \(p\asymp h^{-d}\) is the number of basis functions.
Applied to the two classes, the encoder splines on \(\Omega_\Phi\subset\R^{d_x}\) have \(p_\Phi\asymp h_\Phi^{-d_x}\) basis functions and the transition splines on \(\Omega_F\subset\R^{d_z+d_a}\) have \(p_F\asymp h_F^{-(d_z+d_a)}\).
Since both maps are \(\R^{d_z}\)-valued, the combined parameter vector \(\theta=(c_\Phi,c_F)\) has dimension
\[
p_W = d_z\,(p_\Phi+p_F)\asymp d_z\bigl(h_\Phi^{-d_x}+h_F^{-(d_z+d_a)}\bigr),
\]
which is the ambient dimension of the box \([-R_W,R_W]^{p_W}\) in
the finite-parametric deviation corollary in the main article.

Since the spline degree is fixed and the grid is uniform, the basis has uniformly bounded overlap. Therefore there exist constants \(C_0,C_1>0\), independent of \(h\), such that for every two coefficient vectors \(c,c'\),
\[
\|u_c-u_{c'}\|_{C^0}
\le
C_0\|c-c'\|_{\ell^\infty},
\]
and
\[
\|D(u_c-u_{c'})\|_{C^0}
\le
C_1h^{-1}\|c-c'\|_{\ell^\infty}.
\]
Here the constants depend only on the coordinate box, the dimension, and the fixed spline degree, but not on \(h\).

We now incorporate the output range-enforcing map. For the encoder and transition maps, we use the range-constrained form \(\widetilde u_c:=\Pi_Z\circ u_c,\)
where \(\Pi_Z:Z\to K_Z\) is the fixed $C^{1,1}$ range-enforcing map.
Since the norm-constrained spline class satisfies
\[
\|u_c\|_{C^0}+\|Du_c\|_{C^0}+\Lip(Du_c)\le B,
\]
all spline maps \(u_c\) take values in a fixed compact subset of \(Z\).
Hence, \(\|D\Pi_{K_Z}\|_{C^0}\) and \(\Lip(D\Pi_{K_Z})\) are bounded on that compact set by constants independent of \(h\) and \(W\).

For the values, the mean value theorem gives
\[
\begin{aligned}
\|\widetilde u_c-\widetilde u_{c'}\|_{C^0}
&=
\|\Pi_{K_Z}(u_c)-\Pi_{K_Z}(u_{c'})\|_{C^0}  \\
&\le
\|D\Pi_{K_Z}\|_{C^0}
\|u_c-u_{c'}\|_{C^0} \\
&\le
C\|c-c'\|_{\ell^\infty}.
\end{aligned}
\]
For the derivatives, by the chain rule,
\[
D\widetilde u_c(x)
=
D\Pi_{K_Z}(u_c(x))Du_c(x).
\]
Thus, we have the identity
\[
\begin{aligned}
&D\widetilde u_c(x)-D\widetilde u_{c'}(x) \\
&\quad=
\bigl[D\Pi_{K_Z}(u_c(x))-D\Pi_{K_Z}(u_{c'}(x))\bigr]Du_c(x)
+
D\Pi_{K_Z}(u_{c'}(x))\bigl[Du_c(x)-Du_{c'}(x)\bigr].
\end{aligned}
\]

Using the uniform \(C^1\)-bound on \(u_c\), the Lipschitz bound on
\(D\Pi_{K_Z}\), and the spline stability estimates above, we obtain
\[
\begin{aligned}
\|D\widetilde u_c-D\widetilde u_{c'}\|_{C^0}
&\le
C\|u_c-u_{c'}\|_{C^0}
+
C\|D(u_c-u_{c'})\|_{C^0} \\
&\le
C\left(1+h^{-1}\right)\|c-c'\|_{\ell^\infty}.
\end{aligned}
\]
For \(0<h\le1\), this gives
\[
\|D\widetilde u_c-D\widetilde u_{c'}\|_{C^0}
\le
Ch^{-1}\|c-c'\|_{\ell^\infty}.
\]

By Lemma~\ref{lem:spline-coefficient-stability} and the raw norm constraints,
\[
\|c_\Phi\|_{\ell^\infty}
\le
C_{\rm stab}B_\Phi,
\qquad
\|c_F\|_{\ell^\infty}
\le
C_{\rm stab}B_F.
\]
Thus the combined coefficient set is contained in a cube \([-R,R]^{p_W}\) with a radius \(R\) independent of \(W\); in the finite-parametric deviation corollary in the main article one may therefore take \(R_W=R\).

We apply these estimates separately to the transition maps and to the encoder maps
\[
\Phi_{c_\Phi}=\Pi_Z\circ u_{c_\Phi}\circ\rho_\Phi.
\]
Composition with the fixed map \(\rho_\Phi\) changes the preceding \(C^0\)- and \(C^1\)-parameter bounds only by constants depending on \(\|D\rho_\Phi\|_{C^0}\) and \(\Lip(D\rho_\Phi)\), not on \(W\). Moreover, \(\rho_\Phi=\operatorname{Id}\) on a neighborhood of \(H(S)\), so the value and derivative estimates entering the three integrand classes retain the displayed \(h_\Phi^{-1}\) scaling. Here \(c_\Phi\) and \(c_F\) denote the encoder and transition coefficient vectors.

We now verify the parameter-Lipschitz estimates for the three integrand classes.

Let \(\theta:=(c_\Phi,c_F)\) be the combined parameter vector. We define
\[
g_\theta^M(s,a)
:=
\|
\psi_{c_\Phi}(G(s,a))
-
F_{c_F}(\psi_{c_\Phi}(s),a)
\|_Z^2
\]
and
\[
R_\theta(s,a)
:=
\psi_{c_\Phi}(G(s,a))
-
F_{c_F}(\psi_{c_\Phi}(s),a),
\]
where \(\psi_{c_\Phi}:= \Phi_{c_\Phi}\circ H\).

Since the two terms in \(R_\theta\) take values in \(K_Z\), the residuals are uniformly bounded:
\[
	\|R_\theta(s,a)\|_Z\le D_Z:=\operatorname{diam}(K_Z).
\]

With \(\theta'=(c'_\Phi,c'_F)\), we estimate
\[
\left|g_\theta^M(s,a)-g_{\theta'}^M(s,a)\right|
\le
2D_Z\|R_\theta(s,a)-R_{\theta'}(s,a)\|_Z.
\]
We also estimate
\[
\begin{aligned}
\|R_\theta(s,a)-R_{\theta'}(s,a)\|_Z
&\le
\|
\psi_{c_\Phi}(G(s,a))
-
\psi_{c_\Phi'}(G(s,a))
\|_Z \\
&\quad+
\|
F_{c_F}(\psi_{c_\Phi}(s),a)
-
F_{c_F'}(\psi_{c_\Phi'}(s),a)
\|_Z.
\end{aligned}
\]
For the second term, we add and subtract \(F_{c_F}(\psi_{c_\Phi'}(s),a)\):
\[
\begin{aligned}
\|
F_{c_F}(\psi_{c_\Phi}(s),a)
-
F_{c_F'}(\psi_{c_\Phi'}(s),a)
\|_Z
&\le
\|
F_{c_F}(\psi_{c_\Phi}(s),a)
-
F_{c_F}(\psi_{c_\Phi'}(s),a)
\|_Z \\
&\quad+
\|
F_{c_F}(\psi_{c_\Phi'}(s),a)
-
F_{c_F'}(\psi_{c_\Phi'}(s),a)
\|_Z.
\end{aligned}
\]
The transition class has a uniform Lipschitz budget in \(z\), say
\(L_F^{\mathrm{bud}}\). Therefore
\[
\|
F_{c_F}(\psi_{c_\Phi}(s),a)
-
F_{c_F}(\psi_{c_\Phi'}(s),a)
\|_Z
\le
L_F^{\mathrm{bud}}
\|\psi_{c_\Phi}-\psi_{c_\Phi'}\|_{C^0(S)}.
\]
The parameter-value estimate for \(F\) gives
\[
\|
F_{c_F}(\psi_{c_\Phi'}(s),a)
-
F_{c_F'}(\psi_{c_\Phi'}(s),a)
\|_Z
\le
C\|c_F-c_F'\|_{\ell^\infty}.
\]
Combining these estimates yields
\[
\|R_\theta-R_{\theta'}\|_{C^0(M)}
\le
C\|\theta-\theta'\|_{\ell^\infty}.
\]
Consequently,
\[
\|g_\theta^M-g_{\theta'}^M\|_{C^0(M)}
\le
C\|\theta-\theta'\|_{\ell^\infty}.
\]
Thus one may take
\[
L_{\theta,M,W}\le C.
\]

A similar computation for \(\mathcal G_U(W)\) and \(\mathcal G_P(W)\) yields
\[L_{\theta,U,W}\le Ch_\Phi^{-1}\quad\text{and}\quad L_{\theta,P,W}\le C.\]

We recall from the construction in Proposition~\ref{prop:spline-A4} that
\[h_\Phi^{-1}\le C_\Phi W^{1/d_x}.\]
Since the coefficient radius \(R_W=R\) is independent of \(W\), it follows that
\[
R_WL_{\theta,M,W},
\qquad
R_WL_{\theta,U,W},
\qquad
R_WL_{\theta,P,W}
\]
all grow at most polynomially in \(W\).
\end{proof}

\section{Secondary deterministic-control diagnostics}
\label{supp:control-diagnostics}
The main article retains the deterministic simulation lemma and the modular planning corollary.  This section records the horizon-amplification table, a concentrated-residual construction, the adversarial-planner diagnostic, and secondary consequences of metric non-collapse.  Together these results show constructively why uniform residual control is the correct companion to metric faithfulness and how the constants produced by the representation theorem propagate to planning, Lyapunov, and reachability statements.

The rollout factor $C(T,L_F)$ is finite for every finite $T$ and $L_F$. It remains uniformly bounded in $T$ when $L_F<1$, grows linearly when $L_F=1$, and grows geometrically when $L_F>1$. The accumulated stage-cost factor $D(T,L_F)$ has one additional order in the neutral case: $D(T,1)=T(T-1)/2$. Thus a terminal-state estimate can be $O(\eta T)$ at $L_F=1$, while an undiscounted running-cost estimate is generally $O(\eta T^2)$. Table~\ref{tab:sim-lemma} reports $\eta\,C(T,L_F)$ for $\eta=10^{-2}$ and several values of $L_F$ and $T$.

\begin{table}[ht]
\centering
\small
\setlength{\tabcolsep}{8pt}
\begin{tabular}{rcccc}
\toprule
$T$ & $L_F=0.9$ & $L_F=1.0$ & $L_F=1.1$ & $L_F=1.5$ \\
\midrule
$1$    & $1.00\!\times\!10^{-2}$ & $1.00\!\times\!10^{-2}$ & $1.00\!\times\!10^{-2}$ & $1.00\!\times\!10^{-2}$ \\
$5$    & $4.10\!\times\!10^{-2}$ & $5.00\!\times\!10^{-2}$ & $6.10\!\times\!10^{-2}$ & $1.32\!\times\!10^{-1}$ \\
$10$   & $6.51\!\times\!10^{-2}$ & $1.00\!\times\!10^{-1}$ & $1.59\!\times\!10^{-1}$ & $1.13\!\times\!10^{0}$ \\
$25$   & $9.28\!\times\!10^{-2}$ & $2.50\!\times\!10^{-1}$ & $9.83\!\times\!10^{-1}$ & $5.05\!\times\!10^{2}$ \\
$50$   & $9.95\!\times\!10^{-2}$ & $5.00\!\times\!10^{-1}$ & $1.16\!\times\!10^{1}$ & $1.28\!\times\!10^{7}$ \\
\bottomrule
\end{tabular}
\caption{Cumulative rollout factor $\eta\,C(T,L_F)$ for $\eta=10^{-2}$. For $L_F<1$ it is bounded by $\eta/(1-L_F)$ uniformly in the horizon, for $L_F=1$ it equals $\eta T$, and for $L_F>1$ it grows geometrically.}
\label{tab:sim-lemma}
\end{table}

\subsection{Sharpness of the interpolation exponent}
\label{sec:exponent}
The exponent $1/(q+2)$ supplied by Lipschitz--Ahlfors interpolation cannot be improved under those hypotheses alone.
\begin{proposition}[Sharpness of the \(1/(q+2)\) exponent]
\label{prop:sharp-exponent}
Let \(q\in\mathbb N\), let
\[
Y=[0,1]^q
\]
with Lebesgue measure, and fix \(L>0\). The exponent \(1/(q+2)\) in the estimate
\[
\|u\|_{L^\infty(Y)}
\lesssim
\left(\int_Y u^2\,\dd y\right)^{1/(q+2)}
\]
cannot be improved uniformly over all nonnegative \(L\)-Lipschitz functions
\(u:Y\to\R\).

More precisely, for arbitrarily small \(h>0\), there exists a nonnegative \(L\)-Lipschitz function \(u_h:Y\to\R\) such that
\[
\|u_h\|_{L^\infty(Y)}=h,
\qquad
\int_Y u_h^2\,\dd y
\le
C_q L^{-q}h^{q+2},
\]
where \(C_q>0\) depends only on the dimension \(q\). Consequently, for every
\[
\alpha>\frac1{q+2},
\]
there is no constant \(C_\alpha<\infty\) such that
\[
\|u\|_{L^\infty(Y)}
\le
C_\alpha
\left(\int_Y u^2\,\dd y\right)^\alpha
\]
holds for all nonnegative \(L\)-Lipschitz functions \(u:Y\to\R\).
\end{proposition}
\begin{proof}

We set \(y_0:=\left(\tfrac12,\ldots,\tfrac12\right)\in (0,1)^q\) and choose \(h\in (0,L/2)\).
Then, we observe that
\[
\frac hL<\frac12=\operatorname{dist}(y_0,\partial Y),
\]
and hence \(B_{h/L}(y_0)\subset Y\).
We define the map
\[
u_h(y)
:=
\bigl[h-L|y-y_0|\bigr]_+,
\qquad y\in Y.
\]
This is the truncated cone of height \(h\), slope \(L\), and base radius \(h/L\).
We first verify the basic properties of \(u_h\). The map \(u_h\) is the composition of the \(1\)-Lipschitz map \(y\mapsto |y-y_0|\), the affine map \(r\mapsto h- L r\) on \(\mathbb R\), and the \(1\)-Lipschitz map \(r\mapsto[r]_+\) on \(\mathbb R\).
Therefore, \(u_h\) is \(L\)-Lipschitz.
It is also nonnegative by definition. Moreover, \(u_h(y_0)=h\), and since \(u_h\le h\) everywhere, we have
\[
\|u_h\|_{L^\infty(Y)}=h.
\]
We compute
\[
\int_Y u_h(y)^2\,\dd y
=
\int_{B_{h/L}(y_0)}
\bigl(h-L|y-y_0|\bigr)^2\,\dd y.
\]
Using the change of variables
\[
y=y_0+\frac hL \xi,
\qquad
\dd y=\left(\frac hL\right)^q\dd \xi,
\]
we obtain
\[
\begin{aligned}
\int_Y u_h(y)^2\,\dd y
&=
\int_{B_1(0)}
\bigl(h-L(h/L)|\xi|\bigr)^2
\left(\frac hL\right)^q
\dd \xi  \\
&=
h^2\left(\frac hL\right)^q
\int_{B_1(0)}
(1-|\xi|)^2\,\dd \xi  \\
&=
C_q\, L^{-q}\,h^{q+2},
\end{aligned}
\]
where
\(
C_q
:=
\int_{B_1(0)}(1-|\xi|)^2\,\dd \xi
<\infty.
\)

We now establish sharpness by contradiction. Suppose that there exist
\(\alpha>1/(q+2)\)
and a constant \(C_\alpha<\infty\) such that
\[
\|u\|_{L^\infty(Y)}
\le
C_\alpha
\left(\int_Y u^2\,\dd y\right)^\alpha
\]
for every nonnegative \(L\)-Lipschitz function \(u:Y\to\R\). Applying this estimate to
\(u_h\), we obtain
\[
h
\le
C_\alpha
\left(C_q L^{-q}h^{q+2}\right)^\alpha,
\]
or equivalently,
\[
1
\le
C_\alpha
\left(C_q L^{-q}\right)^\alpha
h^{\alpha(q+2)-1}.
\]
Since \(\alpha(q+2)-1>0\), the right-hand side tends to \(0\) as \(h\downarrow0\), which is a contradiction.
Therefore, no uniform estimate with exponent \(\alpha>1/(q+2)\) can hold.

The family \(\{u_h\}\) shows that an \(L^2\)-error of order \(h^{q+2}\) can conceal a pointwise error of order \(h\).
Hence the exponent \(1/(q+2)\) in the Lipschitz--Ahlfors \(L^2\)-to-\(L^\infty\) interpolation estimate is optimal under a Lipschitz-only regularity assumption.
\end{proof}

\subsection{Why an \texorpdfstring{$L^2$}{L2}-mean residual is not enough}
\label{sec:L2-not-enough}

The natural training objective minimizes an empirical squared residual,
while its population counterpart \(\mathcal E_{\rm pred}\) is an
\(L^2(\mu)\)-average. One might hope that a small average is sufficient for
downstream planning; in general, it is not. The obstruction is geometric:
in the deterministic simulation lemma in the main article, \(\eta\) is a supremum over
\((s,a)\in M\), whereas an \(L^2(\mu)\) loss can be small even when the
residual is large on a small region. An optimizing planner may then prefer
trajectories that pass through precisely those regions where the surrogate
is overly optimistic.

\paragraph{\bf Spike example.}
Let \(M=[0,1]^{d_m}\) carry normalized Lebesgue measure, and choose an
interior center \(x_0\) such that \(B_r(x_0)\subset[0,1]^{d_m}\). Define
\[
R_{h,r}(x)
:=
h\left[1-\frac{|x-x_0|}{r}\right]_+.
\]
Then \(\|R_{h,r}\|_{L^\infty}=h\), independently of \(r\). A radial
integration gives
\[
\|R_{h,r}\|_{L^2}
=
h r^{d_m/2}
\left[
\frac{2\omega_{d_m}}{(d_m+1)(d_m+2)}
\right]^{1/2},
\]
where
\[
\omega_{d_m}
=
\frac{\pi^{d_m/2}}{\Gamma(d_m/2+1)}
\]
is the volume of the \(d_m\)-dimensional unit ball.
Table~\ref{tab:spike} reports representative values.

\begin{table}[ht]
\centering
\small
\setlength{\tabcolsep}{8pt}
\begin{tabular}{ccccc}
\toprule
$d_m$ & $r$ & $\|R_{h,r}\|_{L^\infty}$ & $\|R_{h,r}\|_{L^2}$ & $\|R_{h,r}\|_{L^2}/\|R_{h,r}\|_{L^\infty}$ \\
\midrule
$2$ & $0.50$ & $1.00\!\times\!10^{-1}$ & $3.618\!\times\!10^{-2}$ & $3.618\!\times\!10^{-1}$ \\
$2$ & $0.10$ & $1.00\!\times\!10^{-1}$ & $7.236\!\times\!10^{-3}$ & $7.236\!\times\!10^{-2}$ \\
$2$ & $0.01$ & $1.00\!\times\!10^{-1}$ & $7.236\!\times\!10^{-4}$ & $7.236\!\times\!10^{-3}$ \\
$4$ & $0.10$ & $1.00\!\times\!10^{-1}$ & $5.736\!\times\!10^{-4}$ & $5.736\!\times\!10^{-3}$ \\
$8$ & $0.10$ & $1.00\!\times\!10^{-1}$ & $3.003\!\times\!10^{-6}$ & $3.003\!\times\!10^{-5}$ \\
\bottomrule
\end{tabular}
\caption{Supremum norm and \(L^2\)-mean of the spike residual for
\(h=10^{-1}\), with the support ball contained in the cube. The
\(L^2\)-mean can be made arbitrarily small at fixed supremum norm. The
factor \(r^{d_m/2}\) makes the discrepancy especially pronounced in high
dimension.}
\label{tab:spike}
\end{table}

\paragraph{\bf Adversarial-planner diagnostic.}
The spike construction proves the average-versus-supremum separation, but
one may still ask whether a concentrated model error can materially affect
planning. We illustrate this on a fully specified one-dimensional
receding-horizon problem. This diagnostic isolates the average-versus-uniform planning mechanism in a transparent finite problem; the principal representation theorem supplies the corresponding smooth, finite-sample certification framework.

Let
\[
S=[0,1],
\qquad
A=\{-1,-0.5,0,0.5,1\},
\]
and define
\[
G(s,a)
:=
\operatorname{clip}_{[0,1]}(s+0.05a).
\]
The running cost is
\[
\ell(s,a)
:=
(s-\tau(s))^2,
\qquad
\tau(s)
:=
\begin{cases}
0.7,&s<0.5,\\
0.3,&s\ge0.5,
\end{cases}
\]
and the terminal cost is zero. Thus the preferred target switches across
the decision boundary \(s=0.5\).

At each closed-loop step, an MPC controller enumerates all
\(5^4\) action sequences \(\mathbf a=(a_0,\ldots,a_3)\in A^4\), rolls
them out for horizon \(T=4\) under its planning model \(\widetilde G\),
and minimizes
\[
\sum_{t=0}^{3}\ell(z_t,a_t),
\qquad
z_{t+1}=\widetilde G(z_t,a_t).
\]
It applies the first action of a minimizing sequence; ties are broken by
the first sequence in the lexicographic order induced by
\(-1<-0.5<0<0.5<1\).

We compare the two surrogate models
\[
\widetilde G_A(s,a)
:=
\operatorname{clip}_{[0,1]}\bigl(s+0.05a+r_A(s)\bigr),
\]
\[
r_A(s)
:=
0.15\exp\!\left(-\frac{(s-0.48)^2}{2\sigma^2}\right),
\qquad
\sigma=0.04,
\]
and
\[
\widetilde G_B(s,a)
:=
\operatorname{clip}_{[0,1]}\bigl(s+0.05a+r_B(s)\bigr),
\qquad
r_B(s)\equiv0.04.
\]
For \(i\in\{A,B\}\), define the actual one-step model error
\[
e_i(s,a)
:=
|\widetilde G_i(s,a)-G(s,a)|.
\]
We measure \(e_i\) in the product of normalized Lebesgue measure on
\(S\) and the uniform counting measure on \(A\). Composite trapezoidal
quadrature on \(2{,}000{,}001\) equally spaced state points gives
\[
\|e_A\|_{L^\infty}=0.15,
\qquad
\|e_A\|_{L^2}\approx0.03994,
\]
\[
\|e_B\|_{L^\infty}=0.04,
\qquad
\|e_B\|_{L^2}\approx0.03916.
\]
Thus the two models have nearly equal \(L^2\) one-step errors, while
Model~A has a much larger worst-case error concentrated near the decision
boundary.

For each initial state in the uniform grid of \(11\) points in
\([0.4,0.6]\), we run the receding-horizon controller for \(10\) steps.
At every step, the action selected using the relevant planning model is
applied to the true dynamics \(G\), and the true running cost is
accumulated. Let \(J_{10}^A(s_0)\), \(J_{10}^B(s_0)\), and
\(J_{10}^{\rm exact}(s_0)\) denote the resulting true closed-loop costs
when planning with \(\widetilde G_A\), \(\widetilde G_B\), and the exact
model \(G\), respectively. We report the excess costs
\[
\Delta J_i(s_0)
:=
J_{10}^i(s_0)-J_{10}^{\rm exact}(s_0),
\qquad i\in\{A,B\}.
\]

\begin{table}[ht]
\centering
\small
\setlength{\tabcolsep}{8pt}
\begin{tabular}{lcccc}
\toprule
Model & $\|e\|_{L^\infty}$ & $\|e\|_{L^2}$
& mean excess cost & max. excess cost \\
\midrule
A (spike near boundary)
& $\mathbf{0.150}$ & $0.0399$
& $\mathbf{2.10\!\times\!10^{-1}}$ & $2.82\!\times\!10^{-1}$ \\
B (uniform offset)
& $0.040$ & $0.0392$
& $1.31\!\times\!10^{-1}$ & $2.03\!\times\!10^{-1}$ \\
\bottomrule
\end{tabular}
\caption{Adversarial-planner diagnostic. The two models have nearly equal
\(L^2\) one-step errors, but Model~A has a much larger supremum error and
incurs approximately \(60\%\) more mean excess closed-loop cost relative
to exact-model MPC. The observed ordering displays the additional information supplied by uniform-error control beyond the \(L^2\)-error ordering.  The theorem places this mechanism in a complete quantitative envelope that also incorporates transition and cost Lipschitz constants and horizon factors.}
\label{tab:adversarial}
\end{table}

For a planning state \(z\), the MPC optimizer selects a sequence
\[
\mathbf a^\star(z)
\in
\operatorname*{argmin}_{\mathbf a\in A^4}
\sum_{t=0}^{3}\ell(z_t,a_t),
\qquad
z_{t+1}=\widetilde G(z_t,a_t),
\]
and applies its first component. Optimization can therefore preferentially
select trajectories passing through regions in which the surrogate is
optimistic, even if those regions have small training-measure mass. A uniform semiconjugacy modulus supplies the natural guarantee over all initial states and action sequences, strengthening the average residual into exactly the control-relevant form required by the simulation argument.

\subsection{Why metric non-collapse is the additional ingredient}
\label{sec:why-c-star}

The discussion above explains why the \emph{sup-norm} matters.
The lower co-Lipschitz constant $c_\ast$ of the principal representation theorem in the main article plays a separate but complementary role: even when the semiconjugacy modulus $\eta$ vanishes, a folded encoder produces nonsensical plans. We explain this in two ways.

\paragraph{\bf Cost and optimizer transfer.}
The lower co-Lipschitz estimate makes the original costs well-defined and Lipschitz on the encoded state set, with a quantitative loss proportional to $c_\ast^{-1}$; the latent-cost extension lemma in the main article extends these costs to all of $K_Z$. The deterministic planning-transfer corollary in the main article then compares the true and latent objectives uniformly over the common action space $A^T$. It applies to arbitrary optimizer selections, including nonunique optima, and gives
\[
J_{\rm true}(s_0;\widehat{\mathbf a}(s_0))
-J^\star(s_0)
\le
2\Delta_{T,L_F}(\eta)+\xi_{\rm plan}
\]
for every $\xi_{\rm plan}$-optimal latent action sequence. Thus $c_\ast$ controls the regularity of the cost representation, while $\eta$ controls the dynamical error accumulated over the planning horizon.

\paragraph{\bf Practical Lyapunov transfer.}
Suppose $z^\ast=\psi(s^\dagger)$ for some target state $s^\dagger\in S$, and let $V_F:Z\to\mathbb R_+$ be $L_{V_F}$-Lipschitz on the relevant compact set. Assume
\[
\beta_1\|z-z^\ast\|_Z^2\le V_F(z),
\qquad
V_F(F^\star(z,\pi_F(z)))\le \beta_2V_F(z),
\]
with $\beta_1>0$ and $0<\beta_2<1$. Then the lower co-Lipschitz estimate and the one-step semiconjugacy error imply
\[
\beta_1c_\ast^2d_S(s,s^\dagger)^2
\le
V_F(\psi(s))
\]
and
\[
V_F(\psi(G(s,\pi_F(\psi(s)))))
\le
\beta_2V_F(\psi(s))+L_{V_F}\eta_{W,n,\delta}(\varepsilon_{\rm train}).
\]
Thus $V_F\circ\psi$ is a practical Lyapunov function with an additive disturbance proportional to the semiconjugacy modulus. In particular, iteration yields
\[
V_F(\psi(s_t))
\le
\beta_2^tV_F(\psi(s_0))
+
\frac{L_{V_F}\eta_{W,n,\delta}(\varepsilon_{\rm train})}{1-\beta_2}.
\]
The role of $c_\ast$ is to convert latent closeness into state-space closeness; without a positive lower metric constant, small latent Lyapunov values need not localize the true state.

\paragraph{\bf Reachability in latent space.}
Let $\mathrm{Reach}_t(S_0)\subset S$ be the set of states reachable in $t$ steps from $S_0$, and let $\mathrm{Reach}_t^F(\psi(S_0))\subset Z$ be the corresponding set generated by the learned latent transition under the same admissible action sequences. The deterministic simulation lemma in the main article gives the valid latent-space Hausdorff estimate
\[
d_H\bigl(\psi(\mathrm{Reach}_t(S_0)),\,\mathrm{Reach}_t^F(\psi(S_0))\bigr)
\le
\eta C(t,L_F).
\]
The theorem already yields the displayed latent-space Hausdorff estimate.  When a decoder or a quantified projection onto $\psi(S)$ is available, the same argument extends this certificate to state-space reachable sets.

\subsection{Comparison with stochastic bisimulation bounds.}
\label{sec:vs-bisimulation}
The stochastic bisimulation-metric program of Ferns--Panangaden--Precup \cite{ferns2004metrics,ferns2011bisimulation}, DeepMDP \cite{gelada2019deepmdp}, and DBC \cite{zhang2021learning} provides expected value-error or representation-error bounds under specified distributions, which need not coincide with the visitation distribution induced by a downstream planner.
For a $\Delta$-accurate stochastic transition model, Asadi--Misra--Littman \cite{asadi2018lipschitz} prove the multi-step Wasserstein estimate
\[
W\!\left(\widehat T^{t}(\cdot\mid\mu),T^t(\cdot\mid\mu)\right)
\le
\Delta\sum_{j=0}^{t-1}\overline K^{j},
\qquad
\overline K:=\min\{K_{\widehat T},K_T\}.
\]
This is a transition-distribution bound; their separate discounted value-error result additionally uses Lipschitz reward assumptions.  The geometric factor has the same recurrence form as $C(t,L_F)$ in the deterministic simulation lemma, but the driving discrepancy is Wasserstein-1 rather than a pointwise supremum. In the deterministic setting considered here, the transition kernel reduces to a point map and the uniform residual gives a worst-case guarantee over initial states and action sequences. The results are therefore complementary rather than ordered by a universal notion of sharpness.

\subsection{Summary}
The constant $c_\ast$ and modulus $\eta_{W,n,\delta}(\varepsilon_{\rm train})$ of the principal representation theorem are precisely the quantities that yield
(i)~a uniform finite-horizon cost-transfer estimate and a deterministic latent-optimizer suboptimality bound for compatible Lipschitz costs,
(ii)~the resolution of state recovery from latent codes (\S\ref{sec:why-c-star}),
(iii)~Lyapunov and practical-stability estimates after transfer, and
(iv)~latent-space Hausdorff approximations of forward-reachable sets; a state-space pullback would require an additional decoder or projection assumption.
Under the stated assumptions, the local--global certificate strengthens average prediction and global-spread information into these uniform control conclusions.
The constants $c_\ast$ and $\eta_{W,n,\delta}(\varepsilon_{\rm train})$ are explicit, and the planning guarantee keeps the empirical-training tolerance $\varepsilon_{\rm train}$ separate from the latent-solver tolerance $\xi_{\rm plan}$. The finite-sample planning corollary in the main article further quantifies learned-cost errors through $\varepsilon_\ell$ and $\varepsilon_g$. The guarantee holds whenever the regularization strength belongs to the admissible range of the principal representation theorem in the main article.

\IfFileExists{numbers_macros.tex}{
\newcommand{\AthreeSeeds}{12}
\newcommand{\AthreeSteps}{2000}
\newcommand{\AthreeRZ}{2.0}
\newcommand{\AthreeGrid}{4001}
\newcommand{\AEvenPureNoninj}{12}
\newcommand{\AEvenPureCmin}{0}
\newcommand{\AEvenPureCmax}{0.12}
\newcommand{\AEvenPureKgeom}{\ensuremath{1.20\!\times\!10^{11}}}
\newcommand{\AEvenPureMloc}{\ensuremath{1.20\!\times\!10^{-5}}}
\newcommand{\AEvenPureEta}{\ensuremath{5.34\!\times\!10^{-4}}}
\newcommand{\AEvenCovNoninj}{10}
\newcommand{\AEvenCovCmin}{\ensuremath{3.75\!\times\!10^{-7}}}
\newcommand{\AEvenCovCmax}{1.57}
\newcommand{\AEvenCovKgeom}{\ensuremath{4.37\!\times\!10^{6}}}
\newcommand{\AEvenCovMloc}{\ensuremath{2.32\!\times\!10^{-4}}}
\newcommand{\AEvenCovEta}{\ensuremath{6.24\!\times\!10^{-3}}}
\newcommand{\AEvenLocalNoninj}{9}
\newcommand{\AEvenLocalCmin}{\ensuremath{1.50\!\times\!10^{-7}}}
\newcommand{\AEvenLocalCmax}{3.82}
\newcommand{\AEvenLocalKgeom}{\ensuremath{3.49\!\times\!10^{7}}}
\newcommand{\AEvenLocalMloc}{0.02}
\newcommand{\AEvenLocalEta}{\ensuremath{4.84\!\times\!10^{-3}}}
\newcommand{\AEvenGlobalNoninj}{1}
\newcommand{\AEvenGlobalCmin}{0.82}
\newcommand{\AEvenGlobalCmax}{1.33}
\newcommand{\AEvenGlobalKgeom}{1.59}
\newcommand{\AEvenGlobalMloc}{0.82}
\newcommand{\AEvenGlobalEta}{\ensuremath{7.83\!\times\!10^{-3}}}
\newcommand{\AEvenHingeNoninj}{9}
\newcommand{\AEvenHingeCmin}{\ensuremath{3.36\!\times\!10^{-6}}}
\newcommand{\AEvenHingeCmax}{16.45}
\newcommand{\AEvenHingeKgeom}{\ensuremath{4.55\!\times\!10^{6}}}
\newcommand{\AEvenHingeMloc}{0.01}
\newcommand{\AEvenHingeEta}{0.01}
\newcommand{\AEvenHingeupNoninj}{3}
\newcommand{\AEvenHingeupCmin}{0.70}
\newcommand{\AEvenHingeupCmax}{1.36}
\newcommand{\AEvenHingeupKgeom}{1.97}
\newcommand{\AEvenHingeupMloc}{0.70}
\newcommand{\AEvenHingeupEta}{\ensuremath{8.06\!\times\!10^{-3}}}
\newcommand{\AEvenTwosidedNoninj}{3}
\newcommand{\AEvenTwosidedCmin}{0.46}
\newcommand{\AEvenTwosidedCmax}{0.55}
\newcommand{\AEvenTwosidedKgeom}{1.19}
\newcommand{\AEvenTwosidedMloc}{0.46}
\newcommand{\AEvenTwosidedEta}{\ensuremath{4.64\!\times\!10^{-3}}}
\newcommand{\APlainPureNoninj}{10}
\newcommand{\APlainPureCmin}{0}
\newcommand{\APlainPureCmax}{0.16}
\newcommand{\APlainPureKgeom}{\ensuremath{9.04\!\times\!10^{10}}}
\newcommand{\APlainPureMloc}{\ensuremath{1.62\!\times\!10^{-5}}}
\newcommand{\APlainPureEta}{\ensuremath{5.72\!\times\!10^{-4}}}
\newcommand{\APlainCovNoninj}{1}
\newcommand{\APlainCovCmin}{0.25}
\newcommand{\APlainCovCmax}{1.17}
\newcommand{\APlainCovKgeom}{5.19}
\newcommand{\APlainCovMloc}{0.25}
\newcommand{\APlainCovEta}{\ensuremath{9.25\!\times\!10^{-3}}}
\newcommand{\APlainLocalNoninj}{1}
\newcommand{\APlainLocalCmin}{0.50}
\newcommand{\APlainLocalCmax}{0.54}
\newcommand{\APlainLocalKgeom}{1.10}
\newcommand{\APlainLocalMloc}{0.50}
\newcommand{\APlainLocalEta}{\ensuremath{4.62\!\times\!10^{-3}}}
\newcommand{\APlainGlobalNoninj}{0}
\newcommand{\APlainGlobalCmin}{0.83}
\newcommand{\APlainGlobalCmax}{1.29}
\newcommand{\APlainGlobalKgeom}{1.55}
\newcommand{\APlainGlobalMloc}{0.83}
\newcommand{\APlainGlobalEta}{\ensuremath{9.57\!\times\!10^{-3}}}
\newcommand{\APlainHingeNoninj}{0}
\newcommand{\APlainHingeCmin}{0.90}
\newcommand{\APlainHingeCmax}{1.31}
\newcommand{\APlainHingeKgeom}{1.48}
\newcommand{\APlainHingeMloc}{0.90}
\newcommand{\APlainHingeEta}{\ensuremath{9.12\!\times\!10^{-3}}}
\newcommand{\APlainHingeupNoninj}{0}
\newcommand{\APlainHingeupCmin}{0.90}
\newcommand{\APlainHingeupCmax}{1.30}
\newcommand{\APlainHingeupKgeom}{1.48}
\newcommand{\APlainHingeupMloc}{0.90}
\newcommand{\APlainHingeupEta}{\ensuremath{9.12\!\times\!10^{-3}}}
\newcommand{\APlainTwosidedNoninj}{0}
\newcommand{\APlainTwosidedCmin}{0.49}
\newcommand{\APlainTwosidedCmax}{0.53}
\newcommand{\APlainTwosidedKgeom}{1.12}
\newcommand{\APlainTwosidedMloc}{0.49}
\newcommand{\APlainTwosidedEta}{\ensuremath{4.21\!\times\!10^{-3}}}
\newcommand{\AEvenHingeOkN}{3}
\newcommand{\AEvenHingeOkCmin}{0.86}
\newcommand{\AEvenHingeOkCmax}{1.27}
\newcommand{\AEvenHingeOkKgeom}{1.47}
\newcommand{\AEvenHingeBadN}{9}
\newcommand{\AEvenHingeBadCmin}{\ensuremath{2.59\!\times\!10^{-6}}}
\newcommand{\AEvenHingeBadCmax}{20.78}
\newcommand{\AEvenHingeBadKgeom}{\ensuremath{7.87\!\times\!10^{6}}}
\newcommand{\AEvenHingeupOkN}{9}
\newcommand{\AEvenHingeupOkCmin}{0.76}
\newcommand{\AEvenHingeupOkCmax}{1.34}
\newcommand{\AEvenHingeupOkKgeom}{1.78}
\newcommand{\AEvenHingeupBadN}{3}
\newcommand{\AEvenHingeupBadCmin}{\ensuremath{2.23\!\times\!10^{-6}}}
\newcommand{\AEvenHingeupBadCmax}{2.55}
\newcommand{\AEvenHingeupBadKgeom}{\ensuremath{1.15\!\times\!10^{6}}}
\newcommand{\BClearFirstN}{2048}
\newcommand{\BClearLams}{0.3}
\newcommand{\BClearNs}{2048}
\newcommand{\CpureRatioMin}{4.03}
\newcommand{\CpureRatioMax}{25.79}
\newcommand{\BPstar}{\ensuremath{4.81\!\times\!10^{-6}}}
\newcommand{\BEpsMin}{5.19}
\newcommand{\BEpsMax}{18.46}
\newcommand{\BBeta}{\ensuremath{7.01\!\times\!10^{-4}}}
\newcommand{\BRatioMin}{\ensuremath{1.08\!\times\!10^{6}}}
\newcommand{\BBestLam}{0.0}
\newcommand{\BBestEta}{\ensuremath{5.22\!\times\!10^{-8}}}
\newcommand{\BBestCmin}{0}
\newcommand{\BBestCleared}{0}
\newcommand{\BBestSeeds}{2}
\newcommand{\BZeroCmin}{0}
\newcommand{\BZeroEta}{\ensuremath{5.22\!\times\!10^{-8}}}
\newcommand{\BZeroNOneTwoEightCmin}{0}
\newcommand{\BZeroNOneTwoEightEta}{\ensuremath{4.31\!\times\!10^{-8}}}
\newcommand{\BZeroNOneTwoEightNloc}{0.16}
\newcommand{\BZeroNOneTwoEightCleared}{0}
\newcommand{\BZeroNOneTwoEightSeeds}{2}
\newcommand{\BLoNOneTwoEightCmin}{0.67}
\newcommand{\BLoNOneTwoEightEta}{0.11}
\newcommand{\BLoNOneTwoEightNloc}{0}
\newcommand{\BLoNOneTwoEightCleared}{0}
\newcommand{\BLoNOneTwoEightSeeds}{2}
\newcommand{\BMidNOneTwoEightCmin}{0.36}
\newcommand{\BMidNOneTwoEightEta}{0.37}
\newcommand{\BMidNOneTwoEightNloc}{\ensuremath{4.77\!\times\!10^{-3}}}
\newcommand{\BMidNOneTwoEightCleared}{0}
\newcommand{\BMidNOneTwoEightSeeds}{2}
\newcommand{\BZeroNFiveOneTwoCmin}{0}
\newcommand{\BZeroNFiveOneTwoEta}{\ensuremath{4.58\!\times\!10^{-8}}}
\newcommand{\BZeroNFiveOneTwoNloc}{0.16}
\newcommand{\BZeroNFiveOneTwoCleared}{0}
\newcommand{\BZeroNFiveOneTwoSeeds}{2}
\newcommand{\BLoNFiveOneTwoCmin}{0.71}
\newcommand{\BLoNFiveOneTwoEta}{0.01}
\newcommand{\BLoNFiveOneTwoNloc}{0}
\newcommand{\BLoNFiveOneTwoCleared}{0}
\newcommand{\BLoNFiveOneTwoSeeds}{2}
\newcommand{\BMidNFiveOneTwoCmin}{0.38}
\newcommand{\BMidNFiveOneTwoEta}{0.25}
\newcommand{\BMidNFiveOneTwoNloc}{\ensuremath{5.15\!\times\!10^{-3}}}
\newcommand{\BMidNFiveOneTwoCleared}{0}
\newcommand{\BMidNFiveOneTwoSeeds}{2}
\newcommand{\BZeroNTwoZeroFourEightCmin}{0}
\newcommand{\BZeroNTwoZeroFourEightEta}{\ensuremath{5.22\!\times\!10^{-8}}}
\newcommand{\BZeroNTwoZeroFourEightNloc}{0.16}
\newcommand{\BZeroNTwoZeroFourEightCleared}{0}
\newcommand{\BZeroNTwoZeroFourEightSeeds}{2}
\newcommand{\BLoNTwoZeroFourEightCmin}{0.68}
\newcommand{\BLoNTwoZeroFourEightEta}{0.02}
\newcommand{\BLoNTwoZeroFourEightNloc}{0}
\newcommand{\BLoNTwoZeroFourEightCleared}{0}
\newcommand{\BLoNTwoZeroFourEightSeeds}{2}
\newcommand{\BMidNTwoZeroFourEightCmin}{0.88}
\newcommand{\BMidNTwoZeroFourEightEta}{0.03}
\newcommand{\BMidNTwoZeroFourEightNloc}{0}
\newcommand{\BMidNTwoZeroFourEightCleared}{1}
\newcommand{\BMidNTwoZeroFourEightSeeds}{2}
\newcommand{\BSeeds}{2}
\newcommand{\CrefFailTFive}{0.69}
\newcommand{\CrefSpreadTFive}{0.07}
\newcommand{\CoptimizerSpreadTFive}{\ensuremath{9.98\!\times\!10^{-3}}}
\newcommand{\CrefFailTOneZero}{0.66}
\newcommand{\CrefSpreadTOneZero}{0.11}
\newcommand{\CoptimizerSpreadTOneZero}{0.05}
\newcommand{\CrefFailTTwoZero}{0.72}
\newcommand{\CrefSpreadTTwoZero}{0.28}
\newcommand{\CoptimizerSpreadTTwoZero}{0.28}
\newcommand{\CrefRestarts}{6}
\newcommand{\CrhoPooledRatio}{0.62}
\newcommand{\CrhoPooledBT}{-0.06}
\newcommand{\CrhoPooledEta}{-0.34}
\newcommand{\CrhoPooledInvC}{0.55}
\newcommand{\CrhoPooledRatioLo}{0.30}
\newcommand{\CrhoPooledRatioHi}{0.80}
\newcommand{\CrhoTFiveRatio}{0.51}
\newcommand{\CrhoTFiveBT}{-0.23}
\newcommand{\CrhoTFiveEta}{-0.35}
\newcommand{\CrhoTFiveInvC}{0.44}
\newcommand{\CrhoTTwoZeroRatio}{0.69}
\newcommand{\CrhoTTwoZeroBT}{0.15}
\newcommand{\CrhoTTwoZeroEta}{-0.45}
\newcommand{\CrhoTTwoZeroInvC}{0.65}
\newcommand{\CrhoNoPurePooledRatio}{0.42}
\newcommand{\CrhoNoPurePooledBT}{0.06}
\newcommand{\CrhoNoPurePooledEta}{0.07}
\newcommand{\CrhoNoPurePooledInvC}{0.30}
\newcommand{\CrhoNoPurePooledRatioLo}{0.04}
\newcommand{\CrhoNoPurePooledRatioHi}{0.69}
\newcommand{\CrhoNoPureTFiveRatio}{0.20}
\newcommand{\CrhoNoPureTFiveBT}{-0.21}
\newcommand{\CrhoNoPureTFiveEta}{0.09}
\newcommand{\CrhoNoPureTFiveInvC}{0.09}
\newcommand{\CrhoNoPureTTwoZeroRatio}{0.54}
\newcommand{\CrhoNoPureTTwoZeroBT}{0.59}
\newcommand{\CrhoNoPureTTwoZeroEta}{-0.11}
\newcommand{\CrhoNoPureTTwoZeroInvC}{0.48}
\newcommand{\CproxyLooseMed}{81.95}
\newcommand{\CproxyLooseMax}{\ensuremath{1.18\!\times\!10^{5}}}
\newcommand{\CchordMin}{0.21}
\newcommand{\CchordMax}{1.18}
\newcommand{\CsigMin}{0.20}
\newcommand{\CsigMax}{1.18}
\newcommand{\CSeeds}{[0, 1, 2]}
\newcommand{\CInits}{32}
\newcommand{\CPop}{48}
\newcommand{\CIters}{4}
\newcommand{\CClSteps}{12}
\newcommand{\CpureEta}{0.01}
\newcommand{\CpureCmin}{\ensuremath{1.75\!\times\!10^{-4}}}
\newcommand{\CpureKgeom}{177.21}
\newcommand{\CpureEpsL}{1.82}
\newcommand{\CcovEta}{0.13}
\newcommand{\CcovCmin}{0.19}
\newcommand{\CcovKgeom}{5.69}
\newcommand{\CcovEpsL}{0.29}
\newcommand{\ClocalEta}{0.08}
\newcommand{\ClocalCmin}{0.12}
\newcommand{\ClocalKgeom}{5.62}
\newcommand{\ClocalEpsL}{0.28}
\newcommand{\CglobalEta}{0.04}
\newcommand{\CglobalCmin}{0.06}
\newcommand{\CglobalKgeom}{4.15}
\newcommand{\CglobalEpsL}{0.71}
\newcommand{\ChingeEta}{0.09}
\newcommand{\ChingeCmin}{0.04}
\newcommand{\ChingeKgeom}{19.72}
\newcommand{\ChingeEpsL}{0.29}
\newcommand{\CtwosidedEta}{0.06}
\newcommand{\CtwosidedCmin}{0.14}
\newcommand{\CtwosidedKgeom}{3.92}
\newcommand{\CtwosidedEpsL}{0.34}
\newcommand{\CpuredJmedTFive}{1.41}
\newcommand{\CpuredJmaxTFive}{5.81}
\newcommand{\CpureFracNegTFive}{0}
\newcommand{\CpuredJoracleTFive}{0.68}
\newcommand{\CpuredJmedTOneZero}{1.54}
\newcommand{\CpuredJmaxTOneZero}{5.90}
\newcommand{\CpureFracNegTOneZero}{0}
\newcommand{\CpuredJoracleTOneZero}{0.63}
\newcommand{\CpuredJmedTTwoZero}{1.55}
\newcommand{\CpuredJmaxTTwoZero}{5.22}
\newcommand{\CpureFracNegTTwoZero}{0}
\newcommand{\CpuredJoracleTTwoZero}{0.50}
\newcommand{\CcovdJmedTFive}{0.10}
\newcommand{\CcovdJmaxTFive}{3.48}
\newcommand{\CcovFracNegTFive}{0.03}
\newcommand{\CcovdJoracleTFive}{0.06}
\newcommand{\CcovdJmedTOneZero}{0.10}
\newcommand{\CcovdJmaxTOneZero}{1.33}
\newcommand{\CcovFracNegTOneZero}{0.12}
\newcommand{\CcovdJoracleTOneZero}{0.07}
\newcommand{\CcovdJmedTTwoZero}{0.23}
\newcommand{\CcovdJmaxTTwoZero}{1.89}
\newcommand{\CcovFracNegTTwoZero}{0.06}
\newcommand{\CcovdJoracleTTwoZero}{0.10}
\newcommand{\ClocaldJmedTFive}{0.08}
\newcommand{\ClocaldJmaxTFive}{2.37}
\newcommand{\ClocalFracNegTFive}{0.22}
\newcommand{\ClocaldJoracleTFive}{0.06}
\newcommand{\ClocaldJmedTOneZero}{0.07}
\newcommand{\ClocaldJmaxTOneZero}{2.63}
\newcommand{\ClocalFracNegTOneZero}{0.16}
\newcommand{\ClocaldJoracleTOneZero}{0.06}
\newcommand{\ClocaldJmedTTwoZero}{0.38}
\newcommand{\ClocaldJmaxTTwoZero}{3.74}
\newcommand{\ClocalFracNegTTwoZero}{0.19}
\newcommand{\ClocaldJoracleTTwoZero}{0.13}
\newcommand{\CglobaldJmedTFive}{0.08}
\newcommand{\CglobaldJmaxTFive}{3.86}
\newcommand{\CglobalFracNegTFive}{0.09}
\newcommand{\CglobaldJoracleTFive}{0.05}
\newcommand{\CglobaldJmedTOneZero}{0.16}
\newcommand{\CglobaldJmaxTOneZero}{2.46}
\newcommand{\CglobalFracNegTOneZero}{0.09}
\newcommand{\CglobaldJoracleTOneZero}{0.07}
\newcommand{\CglobaldJmedTTwoZero}{0.20}
\newcommand{\CglobaldJmaxTTwoZero}{2.29}
\newcommand{\CglobalFracNegTTwoZero}{0.25}
\newcommand{\CglobaldJoracleTTwoZero}{0.10}
\newcommand{\ChingedJmedTFive}{0.20}
\newcommand{\ChingedJmaxTFive}{3.01}
\newcommand{\ChingeFracNegTFive}{0.06}
\newcommand{\ChingedJoracleTFive}{0.05}
\newcommand{\ChingedJmedTOneZero}{0.18}
\newcommand{\ChingedJmaxTOneZero}{3.00}
\newcommand{\ChingeFracNegTOneZero}{0.16}
\newcommand{\ChingedJoracleTOneZero}{0.06}
\newcommand{\ChingedJmedTTwoZero}{0.32}
\newcommand{\ChingedJmaxTTwoZero}{2.94}
\newcommand{\ChingeFracNegTTwoZero}{0.09}
\newcommand{\ChingedJoracleTTwoZero}{0.11}
\newcommand{\CtwosideddJmedTFive}{0.09}
\newcommand{\CtwosideddJmaxTFive}{1.49}
\newcommand{\CtwosidedFracNegTFive}{0.16}
\newcommand{\CtwosideddJoracleTFive}{0.06}
\newcommand{\CtwosideddJmedTOneZero}{0.06}
\newcommand{\CtwosideddJmaxTOneZero}{1.68}
\newcommand{\CtwosidedFracNegTOneZero}{0.16}
\newcommand{\CtwosideddJoracleTOneZero}{0.06}
\newcommand{\CtwosideddJmedTTwoZero}{0.18}
\newcommand{\CtwosideddJmaxTTwoZero}{1.32}
\newcommand{\CtwosidedFracNegTTwoZero}{0.19}
\newcommand{\CtwosideddJoracleTTwoZero}{0.11}
}{}
\providecommand{\AthreeSeeds}{??}
\providecommand{\AthreeSteps}{??}
\providecommand{\AthreeRZ}{??}
\providecommand{\AthreeGrid}{??}
\providecommand{\CsigMin}{??}
\providecommand{\CsigMax}{??}
\providecommand{\CchordMin}{??}
\providecommand{\CchordMax}{??}
\section{Numerical protocols, summarized records, and supporting-material availability}\label{sec:numerics}\label{sec:experiments}\label{supp:experiments}

The experiments below provide three complementary layers of evidence: exact geometric obstruction results for prediction and covariance-spread objectives, sampled geometric diagnostics in finite trained models, and a nonlinear benchmark linking restored metric resolution to improved control.  The analytical layer establishes the obstruction exactly; the finite-dimensional layers quantify the mechanism, threshold ingredients, optimization behavior, and planning consequences on declared fixed evaluation sets.  Geometry-aware objectives consistently restore nontrivial metric resolution and strongly improve control relative to pure prediction, while the local--global hinge retains the distinctive advantage of matching the sufficient mechanism proved in the main theorem.
In our reports, the quantities in the principal representation theorem in the main article are computed using the learned representation and learned latent cost heads. For example, if \((\widehat{\Phi},\widehat{F})\) is a learned representation, we compute
\[
	\widehat{\eta}_{\rm eval}:= \max_{(s_i,a_i)\in D_M^{\rm eval}} \|\widehat{\Phi}(H(G(s_i,a_i)))-\widehat{F}(\widehat{\Phi}(H(s_i)),a_i)\|,
\]
Furthermore, we denote the pairs consisting of minimum and maximum of the pairwise chord ratios and the norm of \(D(\widehat{\Phi}\circ H)\) by \((\widehat{c}_{\min},\widehat{c}_{\max})\), \((\widehat{m}_{\rm loc},\widehat{M}_{\rm loc})\), respectively. We also denote by \(\widehat{m}_{\rm glob}\) the minimum distance of \(\widehat{\Phi}\circ H(s)\) and \(\widehat{\Phi}\circ H(s')\) for all samples \(s,s'\) satisfying \(|s-s'|\ge\rho\).

The analytic obstructions in Experiment A are exact, while the trained studies provide complementary finite-architecture and sampled-diagnostic evidence.
In particular, the coefficient-box spline class in Experiment B is deliberately chosen as a tractable finite-capacity proxy for restart analysis and threshold decomposition, whereas the uniform-capacity $C^{1,1}$ class constructed in Proposition~\ref{prop:spline-A4} supplies the theorem-level approximation architecture.  This division combines a rigorous constructive class with a computationally transparent calibration study.
Three design decisions distinguish the experiments below from a naive implementation.

\begin{enumerate}
\item \textbf{A common compact latent range and explicit upper control.}
The proposed hinge is one-sided. In an unconstrained network, multiplying the encoder output by a large constant improves every lower diagnostic without improving the representation at all. The theorem excludes this by assuming a compact latent range $K_Z$ and a uniform $C^{1,1}$ budget. Accordingly every trained encoder in every experiment has the form
\begin{equation}\label{eq:bounded-enc}
  \psi_\theta(s) \;=\; R_Z\,\tanh\!\big(h_\theta(H(s))\big)
  \qquad\text{coordinatewise,}
\end{equation}
with the \emph{same} fixed $R_Z$ for every objective and seed, and we report the upper diagnostics alongside the lower ones.
Resolution improvement is evaluated jointly with $\widehat c_{\max}$, $\widehat M_{\rm loc}$, and the latent norm, so the reported finite-set gain documents improved lower geometry rather than scale inflation.  Continuum certification is reserved for the validated finite-net conditions of the main article.

\item \textbf{A fixed empirical objective.} The principal representation theorem in the main article concerns three fixed independent samples $D_M$, $D_U$, $D_P$. In the theorem-facing experiments these arrays are drawn once per seed and frozen; minibatches are index draws into them, and no direction or separated pair is generated during optimization. A separate, clearly labeled ablation compares this with resampled training.

\item \textbf{Restart-aware optimization accounting.} The theorem keeps the optimization tolerance $\varepsilon_{\rm train}$ explicit, so every run reports final objective components, sampled total-objective curves, and terminal gradient norms.  In Experiment B, the observed restart gap is computed within the same fixed empirical objective, with seed, capacity, sample size, and regularization strength held fixed.  Experiment A reports cross-seed objective distributions separately.  This accounting makes the achieved optimization quality visible and connects the numerical study directly to the theorem's tolerance parameter.
\end{enumerate}

\paragraph{\bf Evidence hierarchy.} We state in advance how each analytical and numerical layer contributes to the paper's conclusions.
\begin{enumerate}
\item[(H1)] \emph{Analytically established.} Zero prediction residual together
with positive, indeed full-rank, latent covariance does not imply injectivity
(Experiment~A1--A2).
\item[(H2)] \emph{Empirically established in the designed stress test.}
Under matched bounded training, the objectives produce distinct folding and expansion patterns.  The local-only and global-only penalties isolate complementary geometric roles, while upper-controlled and two-sided alternatives reveal the importance of coupling lower resolution with upper-distortion diagnostics (Experiment~A3).
\item[(H3)] \emph{Empirically established in the nonlinear benchmark.}
The pure-prediction arm combines the smallest evaluated residual with collapse and markedly larger control costs, whereas every geometry-aware arm restores nontrivial metric resolution and strongly improves planning.  The comparable performance of several regularizers demonstrates the robustness of the geometric principle, with the local--global hinge distinguished by its theorem-level certificate (Experiment~C).
\item[(H4)] \emph{A modular certification pathway.} Neural optimization tolerance, continuum suprema, learned recovery of the training geometry, and finite-horizon envelopes are organized as separate layers.  Objectives, gradients, evaluation nets, Lipschitz diagnostics, and restart records provide the inputs needed to validate those layers and, after controlled finite-net remainders are supplied, to upgrade a selected trained model to a continuum certificate.
\end{enumerate}

Table~\ref{tab:protocol} records the protocol common to all experiments.
The supplement contains the mathematical protocols and summarized numerical records displayed below.  Complete code, configurations, frozen arrays, per-seed and per-initial-state raw outputs, objective and gradient histories, and supporting verification documents are retained by the authors and will be provided upon request.  They are not asserted to be part of this Supplementary Material.

\begin{table}[t]\centering\footnotesize
\resizebox{\textwidth}{!}{%
\begin{tabular}{lccccccc}\toprule
experiment & $R_Z$ & upper control & $n_M/n_U/n_P$ & runs per objective & architecture & params & preprocessing time (s)\\\midrule
A3 folding & 2.0 & bounded range; upper hinge in objective (vi) & 512/512/256 & 12 seeds $\times$ 2 regimes & MLP $1$-$128$-$128$-$1$ & -- & --\\
B spline (main grid) & 2.0 & coefficient box $\pm3.0$ & $n$/$n$/$n$, $n\le2048$ & 2 seeds $\times$ 3 restarts & cubic B-spline & -- & --\\
B spline (capacity sweep) & 2.0 & coefficient box & $512$/$512$/$512$ & 1 seed(s) & cubic B-spline & -- & --\\
B bounded MLP & 2.0 & approx. PI spectral target 1.6 & $n$/$n$/$n$ & 2 seeds & MLP width 64 & -- & --\\
C pendulum + planning & 2.0 & approx. PI spectral target 1.5 & 2048/1024/512 & 3 seeds & MLP width 64 & 13896 & 20.64 [14.87, 50.40]\\
\bottomrule\end{tabular}
}
\caption{Protocol alignment. Every trained encoder in a given experiment maps into the same compact latent box $K_Z=[-R_Z,R_Z]^{d_z}$, uses the same fixed $D_M,D_U,D_P$ arrays, and receives the same number of updates. Run counts are read from the authors' retained result records, which will be provided upon request. The timing column is the median [min, max] over individual representation preprocessing runs, including representation training, cost-head fitting, and diagnostics but excluding MPC planning; it is not a total experiment wall-clock.}\label{tab:protocol}
\end{table}

\subsection{Experiment A: folding, full-rank covariance, and the
local/global ablation}\label{sec:expA}

\subsubsection*{A1. The exact scalar obstruction}

On $S=[-1,1]$, $A=\{0\}$, $G(s,0)=s$, $H=\mathrm{id}$, the family
(with the singleton action again serving only as shorthand for an action-independent controlled system)
$\psi_C(s)=Cs^2$ with $F(z)=z$ has identically zero JEPA residual and latent variance $4C^2/45$, so for every variance threshold $\gamma$ there is a half-line $C\ge\sqrt{45\gamma/4}$ on which a variance penalty vanishes while $\psi_C(s)=\psi_C(-s)$.
The population local defect is
\[
\mathcal N_{\rm loc}(\psi_C)
=
\begin{cases}
\displaystyle \kappa^2-2\kappa C+\frac{4}{3}C^2,
&0<C\le \kappa/2,\\[2mm]
\displaystyle \frac{\kappa^3}{6C},
&C\ge \kappa/2,
\end{cases}
\]
and is therefore strictly positive for every finite $C>0$.
Table~\ref{tab:a1} reports the Monte-Carlo check together with the full sampling details.
A \emph{single} fixed separated-pair array is shared by every value of $C$, so that differences across $C$ cannot be an artifact of changing the sample.
This is a verification of algebra, not evidence from machine learning.

\subsubsection*{A2. A full-rank covariance obstruction}

A scalar variance example does not by itself answer a spectral penalty, which constrains $\lambda_{\min}(\Sigma_\psi)$. We therefore record the two-dimensional family
\begin{equation}\label{eq:fullrank}
  S=[-1,1]^2,\qquad
  \psi_{C,D}(s_1,s_2)=(C s_1^2,\; D s_2),\qquad F(z)=z ,
\end{equation}
for which, under the uniform law, $\operatorname{Cov}(\psi_{C,D})=\operatorname{diag}(4C^2/45,\;D^2/3)$ is positive definite for all $C,D>0$, while $\psi_{C,D}(s_1,s_2)=\psi_{C,D}(-s_1,s_2)$. Thus a minimum-eigenvalue penalty is cleared with room to spare by a non-injective, zero-residual model. Table~\ref{tab:a2} reports the empirical covariance against the closed form and the exactly vanishing gap on reflected pairs.

\begin{table}[t]\centering\footnotesize
\resizebox{\textwidth}{!}{%
\begin{tabular}{lcccccc}\toprule
$C$ & $\widehat{\rm Var}(\psi_C)$ & $4C^2/45$ & JEPA res. & $P_{\rm var}(0.2)$ & $N_{\rm loc}$ & $N_{\rm glob}$\\\midrule
0.5 & 0.022 & 0.022 & 0 & 0.03 & 0.04 & 0.10\\
1.0 & 0.089 & 0.089 & 0 & 0.01 & 0.02 & 0.05\\
2.0 & 0.356 & 0.356 & 0 & 0 & 0.01 & 0.03\\
5.0 & 2.226 & 2.222 & 0 & 0 & $4.17\!\times\!10^{-3}$ & 0.01\\
10.0 & 8.903 & 8.889 & 0 & 0 & $2.08\!\times\!10^{-3}$ & $5.33\!\times\!10^{-3}$\\
\bottomrule\end{tabular}
}
\caption{Experiment A1: exact scalar obstruction $\psi_C(s)=Cs^2$. One common fixed separated-pair array is used for every $C$, so differences across $C$ cannot come from changing the Monte Carlo sample.}\label{tab:a1}
\end{table}

\begin{table}[t]\centering\footnotesize
\resizebox{\textwidth}{!}{%
\begin{tabular}{lccccc}\toprule
$(C,D)$ & $\lambda_{\min}(\Sigma)$ emp. & closed form & $\lambda_{\max}(\Sigma)$ & JEPA res. & $\sup$ gap on reflected pairs\\\midrule
(1, 1) & 0.089 & 0.089 & 0.332 & 0 & 0\\
(2, 1) & 0.332 & 0.333 & 0.355 & 0 & 0\\
(5, 2) & 1.327 & 1.333 & 2.222 & 0 & 0\\
(10, 3) & 2.985 & 3.000 & 8.886 & 0 & 0\\
\bottomrule\end{tabular}
}
\caption{Experiment A2: full-rank covariance obstruction $\psi_{C,D}(s_1,s_2)=(Cs_1^2, Ds_2)$ with $\mathrm{Cov}=\mathrm{diag}(4C^2/45, D^2/3)\succ0$ and $\psi_{C,D}(s_1,s_2)=\psi_{C,D}(-s_1,s_2)$. A minimum-eigenvalue penalty is cleared while the encoder stays non-injective.}\label{tab:a2}
\end{table}

\subsubsection*{A3. Bounded trained comparison}

We retain the identity dynamics and the latent-only transition of the Experiment~A1 --- $G(s,0)=s$ and $F$ receiving only $z=\psi(s)$, so that the residual is $\psi(s)-F(\psi(s))$ and no physical state enters the transition --- and re-run it under the bounded output \eqref{eq:bounded-enc} and fixed $D_M,D_U,D_P$ arrays. Seven objectives are compared under identical fixed arrays, identical initial encoder and transition weights, and identical minibatch index sequences: pure prediction; covariance spread; local hinge only; separated-pair hinge only; the combined local--global hinge; the combined hinge plus an upper directional term $N_{\rm up}(\psi)=\mathbb{E}\big[(\|D\psi(s)v\|-K)_+^2\big]$; and a two-sided bi-Lipschitz-relaxation baseline that penalizes chord ratios from both sides under the same latent range and parameter budget.

Two initialization regimes are used: the symmetry-biased even pretraining of the original experiment and a second regime with no pretraining.  This separates recovery from a strong symmetry bias from ordinary training behavior.

Evaluation uses a common deterministic grid of \AthreeGrid{} states. Exact grid derivatives are obtained by automatic differentiation and all pairwise chord ratios are evaluated by chunking, with a prescribed minimum pair distance equal to the grid spacing. We report
\[
  \widehat{\operatorname{Var}},\quad \lambda_{\min}(\widehat\Sigma),\quad
  \widehat c_{\min},\quad \widehat c_{\max},\quad
  \widehat\kappa_{\rm geom}=\frac{\widehat c_{\max}}{\widehat c_{\min}},\quad
  \widehat m_{\rm loc},\quad \widehat M_{\rm loc},\quad
  \widehat m_{\rm glob},\quad \widehat\eta_{\rm eval},
\]
and classify injectivity both by monotonicity on the grid and by a robust derivative-sign reversal test. We report outcome counts alongside medians, so optimization robustness is visible directly.

Figure~\ref{fig:folding} shows the learned profiles and the $(\widehat c_{\min},\widehat c_{\max})$ scatter. Tables~\ref{tab:geometryA} and~\ref{tab:geometryAplain} report the per-method medians and ranges, while Table~\ref{tab:geometryAoutcome} conditions the folded-basin results on optimization outcome.

\begin{figure}[t]\centering
\includegraphics[width=\textwidth]{fig1_folding.pdf}
\caption{Experiment A3. Learned encoder profiles $s\mapsto\psi(s)$ under a common compact latent range, for the seven objectives and \AthreeSeeds{} paired seeds in the folded-basin initialization regime. Dashed curves are noninjective on the evaluation grid. The right panel plots the sampled lower against the sampled upper chord ratio: points far to the right are metrically faithful, points high up are metrically inflated, and only the lower-right region represents a genuine improvement in resolution.}
\label{fig:folding}
\end{figure}

\begin{table}[p]\centering\small
\resizebox{0.98\textwidth}{!}{%
\begin{tabular}{llccc}\toprule status & method & $\widehat c_{\min}$ & $\widehat c_{\max}$ & $\widehat\kappa_{\rm geom}$\\
\midrule
12/12 fail & Pure prediction & 0 [0, 0] & 0.12 [0.02, 0.33] & $1.2\!\times\!10^{11}$ [$1.6\!\times\!10^{10}$, $3.3\!\times\!10^{11}$]\\
10/12 fail & Covariance spread & $3.75\!\times\!10^{-7}$ [0, 0.22] & 1.57 [1.03, 2.40] & $4.4\!\times\!10^{6}$ [4.7, $2.4\!\times\!10^{12}$]\\
9/12 fail & Local hinge only & $1.50\!\times\!10^{-7}$ [0, 0.50] & 3.82 [1.01, 10.54] & $3.5\!\times\!10^{7}$ [2.0, $6.6\!\times\!10^{12}$]\\
1/12 fail & Global hinge only & 0.82 [$7.37\!\times\!10^{-6}$, 0.92] & 1.33 [1.25, 35.14] & 1.6 [1.4, $4.8\!\times\!10^{6}$]\\
9/12 fail & Proposed local-global & $3.36\!\times\!10^{-6}$ [0, 0.94] & 16.45 [1.27, 25.58] & $4.5\!\times\!10^{6}$ [1.3, $2.5\!\times\!10^{13}$]\\
3/12 fail & Proposed + upper & 0.70 [0, 0.94] & 1.36 [1.25, 2.58] & 2.0 [1.3, $2.5\!\times\!10^{12}$]\\
3/12 fail & Two-sided baseline & 0.46 [$1.30\!\times\!10^{-7}$, 0.50] & 0.55 [0.51, 2.73] & 1.2 [1.0, $2.1\!\times\!10^{7}$]\\
\bottomrule\end{tabular}
}
\medskip
\resizebox{0.98\textwidth}{!}{%
\begin{tabular}{llccc}\toprule status & method & $\widehat m_{\rm loc}$ & $\widehat m_{\rm glob}$ & $\widehat\eta_{\rm eval}$\\
\midrule
12/12 fail & Pure prediction & $1.20\!\times\!10^{-5}$ [$6.05\!\times\!10^{-9}$, $6.66\!\times\!10^{-5}$] & 0 [0, $5.96\!\times\!10^{-8}$] & $5.34\!\times\!10^{-4}$ [$1.07\!\times\!10^{-4}$, $1.66\!\times\!10^{-3}$]\\
10/12 fail & Covariance spread & $2.32\!\times\!10^{-4}$ [$2.20\!\times\!10^{-5}$, 0.22] & $5.07\!\times\!10^{-7}$ [0, 0.15] & $6.24\!\times\!10^{-3}$ [$1.96\!\times\!10^{-3}$, 0.01]\\
9/12 fail & Local hinge only & 0.02 [$9.21\!\times\!10^{-4}$, 0.50] & $2.68\!\times\!10^{-7}$ [0, 0.25] & $4.84\!\times\!10^{-3}$ [$1.58\!\times\!10^{-3}$, 0.03]\\
1/12 fail & Global hinge only & 0.82 [$3.37\!\times\!10^{-4}$, 0.92] & 0.42 [$7.77\!\times\!10^{-6}$, 0.45] & $7.83\!\times\!10^{-3}$ [$3.22\!\times\!10^{-3}$, 0.01]\\
9/12 fail & Proposed local-global & 0.01 [$6.41\!\times\!10^{-4}$, 0.94] & $9.92\!\times\!10^{-6}$ [$5.96\!\times\!10^{-7}$, 0.45] & 0.01 [$4.97\!\times\!10^{-3}$, 0.03]\\
3/12 fail & Proposed + upper & 0.70 [$5.21\!\times\!10^{-3}$, 0.94] & 0.39 [$8.94\!\times\!10^{-7}$, 0.45] & $8.06\!\times\!10^{-3}$ [$3.30\!\times\!10^{-3}$, 0.01]\\
3/12 fail & Two-sided baseline & 0.46 [$2.14\!\times\!10^{-4}$, 0.50] & 0.20 [$1.64\!\times\!10^{-7}$, 0.22] & $4.64\!\times\!10^{-3}$ [$1.17\!\times\!10^{-3}$, 0.01]\\
\bottomrule\end{tabular}
}
\caption{Experiment A3, folded-basin initialization: unconditional two-sided sampled geometry, median [min, max] over seeds.}\label{tab:geometryA}
\end{table}

\begin{table}[p]\centering\small
\resizebox{0.98\textwidth}{!}{%
\begin{tabular}{llccc}\toprule status & method & $\widehat c_{\min}$ & $\widehat c_{\max}$ & $\widehat\kappa_{\rm geom}$\\
\midrule
3 runs & Proposed local-global (succeeded) & 0.86 & 1.27 & 1.47\\
9 runs & Proposed local-global (failed) & $2.59\!\times\!10^{-6}$ & 20.78 & $7.87\!\times\!10^{6}$\\
9 runs & Proposed + upper (succeeded) & 0.76 & 1.34 & 1.78\\
3 runs & Proposed + upper (failed) & $2.23\!\times\!10^{-6}$ & 2.55 & $1.15\!\times\!10^{6}$\\
\bottomrule\end{tabular}
}
\medskip
\resizebox{0.98\textwidth}{!}{%
\begin{tabular}{llccc}\toprule status & method & $\widehat m_{\rm loc}$ & $\widehat m_{\rm glob}$ & $\widehat\eta_{\rm eval}$\\
\midrule
3 runs & Proposed local-global (succeeded) & 0.86 & 0.42 & $8.12\!\times\!10^{-3}$\\
9 runs & Proposed local-global (failed) & $8.44\!\times\!10^{-3}$ & $3.67\!\times\!10^{-6}$ & 0.01\\
9 runs & Proposed + upper (succeeded) & 0.76 & 0.41 & $7.62\!\times\!10^{-3}$\\
3 runs & Proposed + upper (failed) & $5.40\!\times\!10^{-3}$ & $2.98\!\times\!10^{-6}$ & 0.01\\
\bottomrule\end{tabular}
}
\caption{Experiment A3, folded basin conditioned on optimization outcome. The extreme upper geometry belongs to failed runs, not to runs that escape the fold; values are medians within each outcome group.}\label{tab:geometryAoutcome}
\end{table}

\begin{table}[p]\centering\small
\resizebox{0.98\textwidth}{!}{%
\begin{tabular}{llccc}\toprule status & method & $\widehat c_{\min}$ & $\widehat c_{\max}$ & $\widehat\kappa_{\rm geom}$\\
\midrule
10/12 fail & Pure prediction & 0 [0, 0.18] & 0.16 [0.03, 0.31] & $9.0\!\times\!10^{10}$ [1.3, $2.6\!\times\!10^{11}$]\\
1/12 fail & Covariance spread & 0.25 [$4.24\!\times\!10^{-7}$, 0.44] & 1.17 [1.02, 1.74] & 5.2 [2.6, $4.0\!\times\!10^{6}$]\\
1/12 fail & Local hinge only & 0.50 [$1.56\!\times\!10^{-7}$, 0.52] & 0.54 [0.51, 5.64] & 1.1 [1.0, $3.6\!\times\!10^{7}$]\\
0/12 fail & Global hinge only & 0.83 [0.77, 0.96] & 1.29 [1.22, 1.41] & 1.5 [1.3, 1.7]\\
0/12 fail & Proposed local-global & 0.90 [0.80, 0.97] & 1.31 [1.21, 1.36] & 1.5 [1.3, 1.7]\\
0/12 fail & Proposed + upper & 0.90 [0.80, 0.98] & 1.30 [1.21, 1.36] & 1.5 [1.3, 1.7]\\
0/12 fail & Two-sided baseline & 0.49 [0.46, 0.50] & 0.53 [0.52, 0.60] & 1.1 [1.1, 1.2]\\
\bottomrule\end{tabular}
}
\medskip
\resizebox{0.98\textwidth}{!}{%
\begin{tabular}{llccc}\toprule status & method & $\widehat m_{\rm loc}$ & $\widehat m_{\rm glob}$ & $\widehat\eta_{\rm eval}$\\
\midrule
10/12 fail & Pure prediction & $1.62\!\times\!10^{-5}$ [$1.75\!\times\!10^{-7}$, 0.18] & 0 [0, 0.08] & $5.72\!\times\!10^{-4}$ [$6.89\!\times\!10^{-5}$, $2.56\!\times\!10^{-3}$]\\
1/12 fail & Covariance spread & 0.25 [$1.27\!\times\!10^{-7}$, 0.44] & 0.16 [0.04, 0.23] & $9.25\!\times\!10^{-3}$ [$1.77\!\times\!10^{-3}$, 0.02]\\
1/12 fail & Local hinge only & 0.50 [0.02, 0.52] & 0.20 [$2.09\!\times\!10^{-7}$, 0.21] & $4.62\!\times\!10^{-3}$ [$5.47\!\times\!10^{-4}$, 0.03]\\
0/12 fail & Global hinge only & 0.83 [0.77, 0.96] & 0.41 [0.39, 0.45] & $9.57\!\times\!10^{-3}$ [$3.91\!\times\!10^{-3}$, 0.02]\\
0/12 fail & Proposed local-global & 0.90 [0.80, 0.97] & 0.43 [0.39, 0.47] & $9.12\!\times\!10^{-3}$ [$3.87\!\times\!10^{-3}$, 0.02]\\
0/12 fail & Proposed + upper & 0.90 [0.80, 0.98] & 0.43 [0.39, 0.47] & $9.12\!\times\!10^{-3}$ [$3.86\!\times\!10^{-3}$, 0.02]\\
0/12 fail & Two-sided baseline & 0.49 [0.46, 0.50] & 0.20 [0.20, 0.20] & $4.21\!\times\!10^{-3}$ [$6.96\!\times\!10^{-4}$, $8.01\!\times\!10^{-3}$]\\
\bottomrule\end{tabular}
}
\caption{Experiment A3, ordinary initialization: two-sided sampled geometry, median [min, max] over seeds. The chord panels report lower and upper resolution; the diagnostic panels report local margin, separated-pair margin, and uniform residual.}\label{tab:geometryAplain}
\end{table}

\FloatBarrier

\paragraph{\bf Results and calibrated interpretation.}
The following matched comparisons provide finite-set evidence about folding, metric resolution, upper distortion, and optimization robustness.  The reported medians, ranges, and outcome counts make the mechanism and dependence on initialization transparent; complete per-seed outputs will be provided upon request.

Under ordinary initialization, the local--global hinge reaches injective profiles with \APlainHingeNoninj{} non-injective outcomes among \AthreeSeeds{} seeds and attains median $\widehat c_{\min}=\APlainHingeCmin$, $\widehat c_{\max}=\APlainHingeCmax$, and $\widehat\kappa_{\rm geom}=\APlainHingeKgeom$.  The simultaneous lower and upper diagnostics show that the gain in pointwise resolution is accompanied by controlled distortion.  Pure prediction records \APlainPureNoninj{} non-injective outcomes, covariance spread records \APlainCovNoninj{} and a median $\widehat c_{\min}=\APlainCovCmin$, while the local-only and global-only arms isolate the complementary roles of the two hinge components.

The strongly symmetry-biased initialization then measures recovery robustness.  The local--global hinge reaches the faithful basin in $\AEvenHingeOkN$ of $\AthreeSeeds$ seeds; those recovered models are well conditioned on the declared evaluation grid, with median $\widehat c_{\min}=\AEvenHingeOkCmin$, $\widehat c_{\max}=\AEvenHingeOkCmax$, and $\widehat\kappa_{\rm geom}=\AEvenHingeOkKgeom$.  Outcome-conditioned summaries identify the well-conditioned sampled outcomes and make the optimization mechanism auditable.

Adding the upper directional term further improves recovery robustness: the number of expansive outcomes decreases from $\AEvenHingeBadN$ to $\AEvenHingeupBadN$, while the recovered runs retain essentially the same favorable conditioning ($\widehat\kappa_{\rm geom}=\AEvenHingeupOkKgeom$ versus $\AEvenHingeOkKgeom$).  The residual expansive outcomes are also markedly milder, with $\widehat c_{\max}=\AEvenHingeupBadCmax$ instead of $\AEvenHingeBadCmax$.  Thus the upper diagnostic strengthens basin robustness while preserving the lower-resolution benefit of the proposed hinge.

Taken together, the standard and stress-test regimes show that the local--global objective substantially improves sampled lower geometry relative to pure prediction, that its two components have complementary roles, and that upper-distortion diagnostics provide a constructive route to still more robust training.  The summarized seed statistics document both the faithful basin and the optimization transition; complete per-seed records will be provided upon request.

\subsection{Experiment B: a theorem-motivated finite-capacity proxy}\label{sec:expB}

Experiment~B is a theorem-motivated finite-capacity calibration study for the principal representation theorem in the main article. The controlled system is
\[
  S=[-1,1],\qquad A=[-0.5,0.5],\qquad G(s,a)=0.7s+0.4a,\qquad H(s)=(s,s^3),
\]
so that $c_H=1$ and $L_H=\sqrt{10}$, and $|0.7s+0.4a|\le0.9$ maps $S$ into itself without clipping.

\paragraph{\bf Model classes.} Two classes are trained in parallel.

The first is a coefficient-box cubic B-spline encoder and tensor-product spline transition, both composed with $R_Z\tanh(\cdot)$.  This class is designed as a finite-capacity computational proxy for Proposition~\ref{prop:spline-A4}: coefficient clipping and the bounded output map make repeated near-global restart diagnostics practical, while the proposition supplies the separate mesh-uniform $C^{1,1}$ construction required by the theorem.  Because the first ambient coordinate of $H$ already equals $s$, the implemented encoder can represent $\psi(s)=R_Z\tanh(\sum_j c_jB_j(s))$ through that coordinate; the transition is a tensor-product spline on the $(z,a)$ box.  Multiple restarts are run for each fixed empirical objective, and the within-cell restart gap quantifies optimization sensitivity at fixed seed, capacity, sample size, and regularization strength.

The second class is a bounded MLP with approximate power-iteration spectral-norm targets. Exact singular norms of the effective trained weights are stored with each run.

\paragraph{\bf Margins.} For the exact realization $\psi_0(s)=s$, the values $\rho=0.35$, $\kappa=0.4<\kappa_0=1$, and $\alpha=0.3<\alpha_0(\rho)=c_{\psi_0}\rho=0.35$ satisfy the two margin inequalities needed by the margin-clearing comparator.  They therefore provide the faithful zero-penalty reference used in the theorem's comparison argument.  We also fit a finite-capacity competitor and verify that it clears both sampled margins at every tested capacity, providing an implementation-level check that the numerical sweep operates in the intended geometric regime.  The plug-in experiment does not separately claim verification of every class-uniform theorem constant, including the additional condition $B_{\rm loc}\rho\le\kappa/4$; the analytical margin-clearing proposition supplies the corresponding theorem-level statement under its stated hypotheses.

\paragraph{\bf Grid and reporting.} For each seed the arrays $D_M,D_U,D_P$ are frozen and \emph{nested}: the size-$n$ arrays are prefixes of one master draw, so the first $128$ rows of the $n=256$ set coincide with the smaller set, which removes sampling noise from the learning-curve comparison. We sweep the sample size and the regularization strength and record the empirical components $\widehat E_{\rm pred}, \widehat N_{\rm loc}, \widehat N_{\rm glob}$ separately from their dense-grid counterparts $E^{\rm eval}_{\rm pred}, N^{\rm eval}_{\rm loc}, N^{\rm eval}_{\rm glob}$, together with the uniform residual $\widehat\eta_{\rm eval}$ and both all-scale and separated-pair metric diagnostics.

\begin{figure}[t]\centering
\includegraphics[width=\textwidth]{fig2_finite_sample.pdf}
\caption{Experiment B. Dense-grid local and separated-pair metric defects and
the uniform residual against the regularization strength, for several sample
sizes, for the norm-constrained spline class. Bands are min--max over seeds.
Markers on the right panel distinguish the fixed-sample training used for the
theorem-facing runs from the two resampled variants of the ablation.}
\label{fig:finitesample}
\end{figure}

\paragraph{\bf Observed transition.} Figure~\ref{fig:finitesample} and the summarized tables show a three-regime picture that is stable across sample sizes.

At $\lambda=0$ the spline model realizes the pure-prediction obstruction with exceptional numerical clarity: the uniform residual is of order $10^{-8}$ while the local defect attains $\kappa^2$ and the sampled lower chord ratio is $\BZeroNTwoZeroFourEightCmin$ at grid resolution.  This gives a finite-capacity demonstration that residual accuracy and metric faithfulness are genuinely distinct quantities and supplies a clean baseline against which margin formation can be measured.

For small positive $\lambda$ the model enters the intended geometry-preserving regime: at $n=2048$ and $\lambda=0.03$ the dense-grid \emph{local} defect is reported as $\BLoNTwoZeroFourEightNloc$, the lower chord ratio as $\BLoNTwoZeroFourEightCmin$, and the evaluation-set residual as $\BLoNTwoZeroFourEightEta$; at $\lambda=0.3$, $\BMidNTwoZeroFourEightCleared$ of the $\BMidNTwoZeroFourEightSeeds$ seeds clear \emph{both} sampled margins on the frozen evaluation sets while keeping $\widehat c_{\min}=\BMidNTwoZeroFourEightCmin$ and $\widehat\eta_{\rm eval}=\BMidNTwoZeroFourEightEta$.  Sampled clearance first appears at $n=\BClearFirstN$ and reappears at the largest sample size for $\lambda\in\{\BClearLams\}$ and $n\in\{\BClearNs\}$.  The summarized seed results display the theorem-consistent transition transparently; complete seed-level optimization records will be provided upon request.

At large $\lambda$ the experiment reveals an optimization-dominated regime in which restart choice strongly affects the attained lower chord ratio and residual.  This transition gives a direct numerical interpretation of the theorem's explicit $\varepsilon_{\rm train}$ term and motivates the exact-rational supporting calculation described below, which verifies the global comparison for a class admitting exact analysis.  The combined evidence separates regularization strength from optimization accuracy and shows how both enter the certified half-line.

\paragraph{\bf Plug-in threshold diagnostic.} For the spline proxy we compute a theorem-inspired plug-in diagnostic using selected quantities appearing in the principal representation theorem in the main article: the competitor's prediction loss $\beta(W)$, the finite-parametric statistical deviations $\varepsilon_M,\varepsilon_U,\varepsilon_P$ of the finite-parametric deviation corollary in the main article, the interpolation threshold
\begin{equation}\label{eq:Pstar}
  P^*\;=\;\min\Big\{\Theta_U^{-1}(\kappa/2),\;
                    \Theta_{P_\rho}^{-1}(\alpha/2)\Big\},
\end{equation}
and hence $\lambda^{\rm thm}_{\min}$.  For this calibration study, the lower Ahlfors constant for $P_\rho$ is estimated by a Monte-Carlo minimum over probe points and radii, and the Lipschitz budgets $L_U$, $L_P$ are measured on the realized class.  Table~\ref{tab:thresholds} therefore functions as a \emph{plug-in threshold diagnostic}: it decomposes the formula numerically and identifies the dominant terms.  The separation reported below exceeds five orders of magnitude, so the resulting bottleneck diagnosis is robust to the estimation scale of these ingredients.

This outcome gives a useful quantitative reading of the finite-sample theory.  At the tested sample sizes, the class-uniform statistical envelope contributes a substantial certification margin relative to $P^*$, whereas the observed deviations between the frozen empirical components and their dense-grid counterparts are many orders smaller.  The comparison therefore identifies the uniform covering-number term and the $(\tau/2)^{q+2}$ factor in the Lipschitz--Ahlfors inversion as the principal opportunities for sharper class-specific analysis and model-specific certification.  Figure~\ref{fig:finitesample} simultaneously exhibits a theorem-consistent qualitative margin-formation transition suggested by the comparison mechanism: beyond a sample-size-dependent regularization level, sampled margins clear while the uniform residual remains of the same order.  Thus the finite-capacity proxy turns the threshold into a transparent calibration and design map, while Proposition~\ref{prop:spline-A4} supplies the rigorous uniform-capacity class.

\begin{table}[t]\centering\footnotesize
\resizebox{\textwidth}{!}{%
\begin{tabular}{lccccccc}\toprule
capacity & $p_W$ & $n$ & $\beta(W)$ & $P^*$ & $\varepsilon_{\rm stat}$ & $\varepsilon_{\rm stat}/P^*$ & $\lambda^{\rm thm}_{\min}$\\\midrule
small & 30 & 128 & $5.15\!\times\!10^{-4}$ & $4.77\!\times\!10^{-6}$ & 14.37 & $3.01\!\times\!10^{6}$ & --\\
small & 30 & 256 & $5.15\!\times\!10^{-4}$ & $4.77\!\times\!10^{-6}$ & 10.47 & $2.19\!\times\!10^{6}$ & --\\
small & 30 & 512 & $5.15\!\times\!10^{-4}$ & $4.77\!\times\!10^{-6}$ & 7.61 & $1.60\!\times\!10^{6}$ & --\\
small & 30 & 1024 & $5.15\!\times\!10^{-4}$ & $4.77\!\times\!10^{-6}$ & 5.53 & $1.16\!\times\!10^{6}$ & --\\
small & 30 & 2048 & $5.15\!\times\!10^{-4}$ & $4.77\!\times\!10^{-6}$ & 4.01 & $8.41\!\times\!10^{5}$ & --\\
medium & 56 & 128 & $7.01\!\times\!10^{-4}$ & $4.81\!\times\!10^{-6}$ & 18.46 & $3.84\!\times\!10^{6}$ & --\\
medium & 56 & 256 & $7.01\!\times\!10^{-4}$ & $4.81\!\times\!10^{-6}$ & 13.48 & $2.80\!\times\!10^{6}$ & --\\
medium & 56 & 512 & $7.01\!\times\!10^{-4}$ & $4.81\!\times\!10^{-6}$ & 9.82 & $2.04\!\times\!10^{6}$ & --\\
medium & 56 & 1024 & $7.01\!\times\!10^{-4}$ & $4.81\!\times\!10^{-6}$ & 7.15 & $1.49\!\times\!10^{6}$ & --\\
medium & 56 & 2048 & $7.01\!\times\!10^{-4}$ & $4.81\!\times\!10^{-6}$ & 5.19 & $1.08\!\times\!10^{6}$ & --\\
large & 132 & 128 & $7.28\!\times\!10^{-5}$ & $4.82\!\times\!10^{-6}$ & 26.70 & $5.54\!\times\!10^{6}$ & --\\
large & 132 & 256 & $7.28\!\times\!10^{-5}$ & $4.82\!\times\!10^{-6}$ & 19.54 & $4.06\!\times\!10^{6}$ & --\\
large & 132 & 512 & $7.28\!\times\!10^{-5}$ & $4.82\!\times\!10^{-6}$ & 14.26 & $2.96\!\times\!10^{6}$ & --\\
large & 132 & 1024 & $7.28\!\times\!10^{-5}$ & $4.82\!\times\!10^{-6}$ & 10.39 & $2.16\!\times\!10^{6}$ & --\\
large & 132 & 2048 & $7.28\!\times\!10^{-5}$ & $4.82\!\times\!10^{-6}$ & 7.56 & $1.57\!\times\!10^{6}$ & --\\
\bottomrule\end{tabular}
}
\caption{Theorem-inspired plug-in threshold diagnostic for the finite-capacity coefficient-box spline proxy. These quantities mirror selected ingredients of the principal representation theorem in the main article, but the implemented proxy is not the uniform-capacity class of Proposition~\ref{prop:spline-A4}. They are evaluated with the conservative finite-parametric covering bound of the finite-parametric deviation corollary in the main article, using a Monte-Carlo lower \emph{estimate} of the Ahlfors constant on $P_\rho$ and \emph{measured} Lipschitz budgets. It is therefore a plug-in diagnostic, not a certified evaluation of the theorem; the conclusion that the sufficient condition is far from met is robust to these estimates because $\varepsilon_{\rm stat}/P^*$ exceeds $10^5$ throughout. $\lambda^{\rm thm}_{\min}$ is defined only when $\varepsilon_{\rm stat}<P^*$.}\label{tab:thresholds}
\end{table}

\paragraph{\bf Fixed versus resampled.} For a subset of configurations we compare training on fixed $M/U/P$ arrays, on fixed prediction data with resampled directions and separated pairs, and on fully resampled data, reporting both final geometry and optimization variance.  The fixed-sample arm mirrors the theorem's fixed-data sampling structure, while the resampled arms answer the complementary algorithmic question of whether refreshed geometric samples improve optimization in the finite-capacity proxy.

\paragraph{\bf Exact rational calculation.} A certificate in exact rational arithmetic for a continuous two-parameter encoder class with affine transitions is retained by the authors and will be provided upon request; it is not contained in the present Supplementary Material.  The material provided on request includes the precise class and empirical inputs, the exact derivation of the closed half-line $[\lambda_{\rm cert},\infty)$ with $\lambda_{\rm cert}=53/500$, and the verification instructions.  This class-specific calculation complements the population/statistical theorem, the constructive spline class, and the trained finite-dimensional studies without conflating an available supporting document with content of this PDF.

\subsection{Experiment C: nonlinear planning benchmark}\label{sec:expC}

The downstream corollary concerns planning transfer, so Experiment~C trains a latent transition and learned cost heads, solves a latent model-predictive control problem, and measures the \emph{true} physical cost difference on the real system.  The benchmark tests the practical relevance of the geometric certificate and compares several routes to non-collapsed latent planning under a common architecture and optimization budget.

\paragraph{\bf C1. Dynamics and observation.} We retain the smoothly saturated damped pendulum, whose $C^{1,1}$ saturation maps the compact rectangle into itself and meets the stated regularity class, and we replace the observation map that contained $(\theta,\omega)$ directly by the nonlinear differentiable embedding
\begin{equation}\label{eq:Hnl}
H_{\rm nl}(\theta,\omega)=\Big(\sin\tfrac\theta2,\;\cos\tfrac\theta2,\;
\tanh\tfrac\omega3,\;\tanh\tfrac\omega3\cos\tfrac\theta2,\;
\tanh\tfrac\omega3\sin\tfrac\theta2,\;\sin\theta,\;\cos\theta\Big).
\end{equation}
On $\theta\in[-\pi,\pi]$ the half-angle pair is injective and $\tanh(\omega/3)$ is strictly monotone on $[-3,3]$, so $H_{\rm nl}$ is a smooth embedding of the compact rectangle. We verify the conditioning numerically: the extreme singular values of $DH_{\rm nl}$ over a dense grid are $\CsigMin$ and $\CsigMax$, and the sampled global co-Lipschitz and Lipschitz chord ratios are $\CchordMin$ and $\CchordMax$. Neither $\theta$ nor $\omega$ appears as a raw observation coordinate; both remain mathematically recoverable because the map is an embedding.  This design removes direct coordinate access and provides a genuinely nonlinear observation benchmark within the theorem's instantaneous observable-factor setting.

\paragraph{\bf C2. Model and objectives.} A common bounded architecture with spectrally normalized encoder and transition networks, mapping into $K_Z=[-R_Z,R_Z]^{d_z}$, is used for every objective: pure prediction, spectral covariance spread, the local--global hinge, local-only, global-only, and the two-sided baseline. All arms share the frozen $M/U/P$ arrays, matched initial weights, and equal update counts. Parameter counts and training times are in Table~\ref{tab:protocol}.

\paragraph{\bf C3. Physical and latent costs.} The physical costs are
\[
  \ell(\theta,\omega,a)=q_\theta(1-\cos\theta)+q_\omega\omega^2+q_a a^2,
  \qquad
  g(\theta,\omega)=q_T(1-\cos\theta)+q_{T,\omega}\omega^2,
\]
with all coefficients chosen and recorded before the comparison was run. A latent stage-cost head $\widehat\ell(z,a)$ and terminal head $\widehat g(z)$ are trained on a \emph{separate} fixed cost-training set, with identical architecture and training across representation objectives, and we report the uniform compatibility errors $\varepsilon^{\rm eval}_\ell$ and $\varepsilon^{\rm eval}_g$ on a fixed evaluation set.  An oracle diagnostic using the true dynamics with the learned cost heads isolates cost-head error from transition error and thereby resolves the theorem-inspired error decomposition more sharply.

\paragraph{\bf C4--C5. Latent MPC and baselines.} For each horizon $T$, every cross-entropy-method restart uses the same per-restart population and iteration budget for every representation and for the exact-model reference. Each learned representation arm uses one such restart; the exact-model benchmark is the best of $\CrefRestarts$ independent restarts and therefore receives $\CrefRestarts$ times the total optimization effort. At each closed-loop step the planner optimizes in the learned latent model, applies the first action to the true dynamics, and repeats.
Receding-horizon control is evaluated on three initial-state families fixed before any result was inspected: a deterministic rectangular grid, an independent uniform sample, and a high-energy boundary sample at large angular velocity. Baselines are the best-of-$\CrefRestarts$ exact-model MPC reference just described; a higher-budget exact-model solution, which provides the approximate physical optimum and whose gap to the reference quantifies planner sensitivity; covariance-regularized, pure-prediction, local-only, global-only, and two-sided latent MPC; and random shooting with the true model at a matched action-sample budget.

\paragraph{\bf C6. Outcomes and the theorem-inspired decomposition.} For every initial state we report the true closed-loop cost and the \emph{paired cost difference} $\Delta J(s_0)=J_{\rm true}(s_0;\widehat a)-J_{\rm true}(s_0;a_{\rm ref})$, summarized by median, mean, ninetieth percentile and maximum over initial states, together with failure rates, saturation activation, and terminal distance to the target.

The paired difference is benchmark-relative: $a_{\rm ref}$ is produced by exact-model MPC at the same per-restart finite budget.  To make this reference especially demanding, we take the \emph{best of $\CrefRestarts$ independent exact-model restarts, each with the same per-restart CEM budget as one learned-planner run}, elementwise over initial states.  Its total optimization effort is therefore $\CrefRestarts$ times that of any learned arm.  We report the reference's restart spread ($\CrefSpreadTFive$, $\CrefSpreadTOneZero$, $\CrefSpreadTTwoZero$ at $T=5,10,20$) and the fraction of initial states with $\Delta J<0$, giving a transparent measure of finite-budget reference variability.  On the fixed evaluation sets we estimate $\widehat c_{\min}$, $\widehat c_{\max}$, $\widehat\eta_{\rm eval}$, $L^{\rm eval}_F$, $L^{\rm eval}_{\ell,z}$, $L^{\rm eval}_{\widehat g}$, $\varepsilon^{\rm eval}_\ell$ and $\varepsilon^{\rm eval}_g$ by automatic differentiation, and form the empirical envelope
\begin{equation}\label{eq:proxy}
  B^{\rm eval}_T \;=\; 2\Big[\widehat\eta_{\rm eval}
  \big\{L^{\rm eval}_{\widehat\ell,z}D(T,L^{\rm eval}_F)
      + L^{\rm eval}_{\widehat g}C(T,L^{\rm eval}_F)\big\}
      + T\varepsilon^{\rm eval}_\ell + \varepsilon^{\rm eval}_g\Big].
\end{equation}
The theorem keeps the planner tolerance $\xi_{\rm plan}$ separate for each latent objective.  The present campaign reports the evaluation-set model-and-cost proxy $B^{\rm eval}_T$ independently from that planner term and supplements it with the exact-model open-loop restart spread in Table~\ref{tab:reference} ($\CoptimizerSpreadTFive$, $\CoptimizerSpreadTOneZero$, $\CoptimizerSpreadTTwoZero$ at $T=5,10,20$).  This decomposition preserves the theorem's modular structure: $B^{\rm eval}_T$ measures representation, transition, and cost-head factors on the fixed evaluation sets, while the restart spread records the scale of finite-budget planner variability.

Equation \eqref{eq:proxy} is used as an evaluation-set proxy for the theorem’s decomposition: the suprema are estimated on fixed finite test sets, and the planner-tolerance term is reported separately rather than absorbed into the proxy. We call it an evaluation-set theorem proxy.
Figure~\ref{fig:geomplan} plots the observed worst-case paired cost difference against this proxy and against the simpler ratio $\widehat\eta_{\rm eval}/\widehat c_{\min}$; Tables~\ref{tab:planning} and~\ref{tab:planningfactors} report the planning outcomes and the estimated factors. Figure~\ref{fig:closedloop} shows representative closed-loop trajectories and control signals.

\paragraph{\bf Results.} Three findings jointly support the paper's geometric mechanism and its control relevance.

First, the pure-prediction obstruction produces a decisive planning separation that the residual alone cannot reveal.  Pure prediction attains median $\widehat c_{\min}=\CpureCmin$ while its uniform residual $\widehat\eta_{\rm eval}=\CpureEta$ is the \emph{smallest} of all six objectives.  Its median paired cost difference is $\CpuredJmedTFive$ at $T=5$ and $\CpuredJmedTTwoZero$ at $T=20$, between $\CpureRatioMin$ and $\CpureRatioMax$ times the corresponding medians of the regularized arms.  Thus the experiment supplies a strong numerical realization of the theorem's central message: prediction accuracy becomes control-relevant when accompanied by a quantitative lower metric constant.

Second, this separation is what the rank correlations measure. Pooled over methods, seeds and horizons, the tie-corrected Spearman correlation between the worst-case paired cost difference and $\widehat\eta_{\rm eval}/\widehat c_{\min}$ is $\CrhoPooledRatio$, with a cluster bootstrap interval over $(\text{method}, \text{seed})$ clusters of $[\CrhoPooledRatioLo, \CrhoPooledRatioHi]$; the correlation with $\widehat\eta_{\rm eval}$ alone is $\CrhoPooledEta$.  The geometry-normalized ratio therefore contributes the control-relevant information omitted by the residual alone, precisely as predicted by the planning-transfer argument in the main article.

The within-regularized-arm analysis refines this conclusion quantitatively.  Removing the collapsed pure-prediction arm gives pooled correlation $\CrhoNoPurePooledRatio$ with bootstrap interval $[\CrhoNoPurePooledRatioLo, \CrhoNoPurePooledRatioHi]$; the association is $\CrhoNoPureTFiveRatio$ at $T=5$ and $\CrhoNoPureTTwoZeroRatio$ at $T=20$.  The geometry/residual diagnostic therefore performs its primary task very strongly---separating collapse from non-collapse---and provides a secondary, horizon-dependent ordering within the compact high-performing group of regularized models.

Third, covariance spread, the local--global hinge, and the two-sided baseline all achieve median differences of the same favorable order at every horizon with overlapping per-seed ranges, and each beats the demanding exact-model reference on a non-negligible fraction of initial states; the global-only penalty is less effective at short horizons.  This shared success is constructive evidence that geometry-aware training is robust across several regularization mechanisms.  The local--global hinge remains mathematically distinctive because its directional and separated-pair terms are exactly the quantities that yield the finite-sample co-Lipschitz and semiconjugacy certificate.

Three additional diagnostics sharpen the interpretation.  First, the finite $\CClSteps$-step completion diagnostic quantifies the intrinsic horizon/window challenge; the exact-model reference records the corresponding challenge rates $\CrefFailTFive$ at $T=5$ and $\CrefFailTTwoZero$ at $T=20$, which calibrate the benchmark independently of representation quality.  Second, the evaluation-set proxy $B^{\rm eval}_T$ carries a median uniform-safety factor $\CproxyLooseMed$ and maximum factor $\CproxyLooseMax$, with pooled correlation $\rho=\CrhoPooledBT$.  The decomposition attributes this margin to the measured $L^{\rm eval}_F>1$, which amplifies $C(T,L_F)$ and $D(T,L_F)$ geometrically.  The proxy is therefore valuable as an error-budget and horizon-amplification diagnostic and points directly to contraction-aware or trajectory-local refinements.  Third, the oracle column isolates cost-head compatibility and quantifies an additional benefit of metric faithfulness: $\varepsilon^{\rm eval}_\ell=\CpureEpsL$ for pure prediction versus $\ChingeEpsL$ for the local--global hinge.

\begin{figure}[t]\centering
\includegraphics[width=\textwidth]{fig3_geometry_planning.pdf}
\caption{Experiment C. Worst-case paired closed-loop cost difference against the
geometry/residual ratio $\widehat\eta_{\rm eval}/\widehat c_{\min}$ (top) and
against the evaluation-set theorem proxy $B^{\rm eval}_T$ of \eqref{eq:proxy}
(bottom), colored by method and faceted by horizon.}
\label{fig:geomplan}
\end{figure}

\begin{figure}[t]\centering
\includegraphics[width=\textwidth]{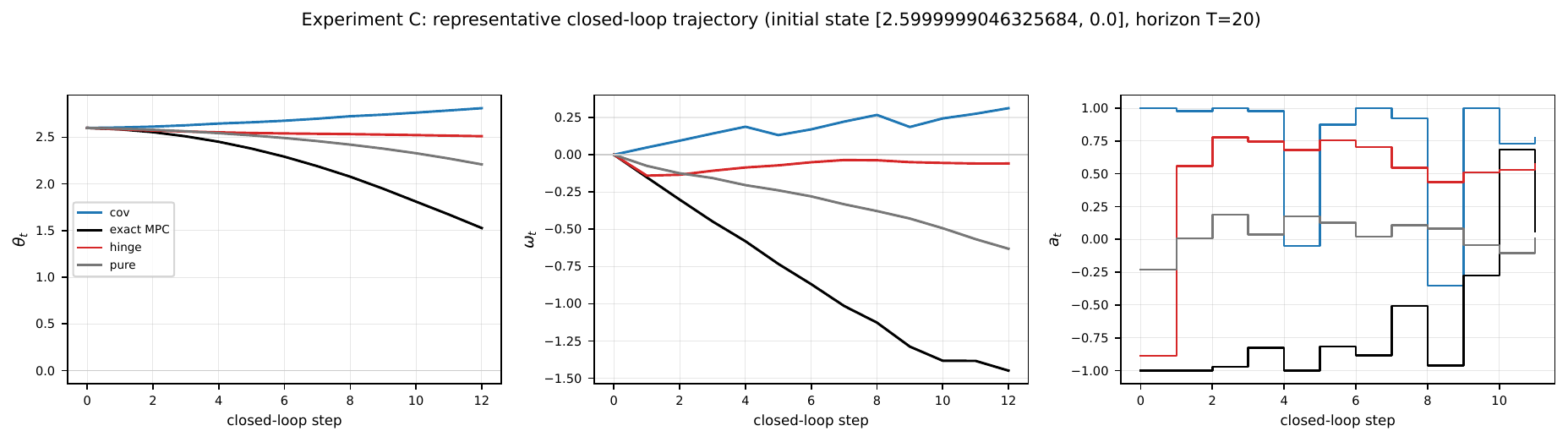}
\caption{Experiment C. Representative true closed-loop trajectory and control
signal under exact-model MPC and under latent MPC with the covariance,
pure-prediction and proposed-hinge representations.}
\label{fig:closedloop}
\end{figure}

\begin{table}[p]\centering\small
\resizebox{0.98\textwidth}{!}{%
\begin{tabular}{lccc}\toprule method & $\widehat c_{\min}$ & $\widehat c_{\max}$ & $\widehat\kappa_{\rm geom}$\\
\midrule
Pure prediction & $1.75\!\times\!10^{-4}$ [$1.36\!\times\!10^{-4}$, $2.53\!\times\!10^{-4}$] & 0.04 [0.03, 0.05] & 177.2 [171.8, 396.5]\\
Covariance spread & 0.19 [0.18, 0.19] & 1.08 [1.01, 1.09] & 5.7 [5.2, 6.1]\\
Local hinge only & 0.12 [0.06, 0.16] & 0.66 [0.61, 0.73] & 5.6 [4.6, 10.0]\\
Global hinge only & 0.06 [0.02, 0.07] & 0.26 [0.26, 0.28] & 4.2 [3.6, 12.2]\\
Proposed local-global & 0.04 [0.02, 0.05] & 0.72 [0.62, 0.85] & 19.7 [12.4, 48.1]\\
Two-sided baseline & 0.14 [0.10, 0.14] & 0.55 [0.46, 0.56] & 3.9 [3.8, 4.5]\\
\bottomrule\end{tabular}
}
\medskip
\resizebox{0.98\textwidth}{!}{%
\begin{tabular}{lccc}\toprule method & $\widehat m_{\rm loc}$ & $\widehat m_{\rm glob}$ & $\widehat\eta_{\rm eval}$\\
\midrule
Pure prediction & $6.50\!\times\!10^{-4}$ [$3.95\!\times\!10^{-4}$, $1.25\!\times\!10^{-3}$] & $8.88\!\times\!10^{-4}$ [$2.81\!\times\!10^{-4}$, $9.94\!\times\!10^{-4}$] & 0.01 [$7.19\!\times\!10^{-3}$, 0.01]\\
Covariance spread & 0.17 [0.16, 0.17] & 0.43 [0.43, 0.43] & 0.13 [0.12, 0.14]\\
Local hinge only & 0.15 [0.14, 0.19] & 0.39 [0.38, 0.40] & 0.08 [0.07, 0.09]\\
Global hinge only & 0.03 [0.01, 0.04] & 0.12 [0.11, 0.14] & 0.04 [0.04, 0.05]\\
Proposed local-global & 0.08 [0.07, 0.18] & 0.11 [0.11, 0.31] & 0.09 [0.07, 0.09]\\
Two-sided baseline & 0.10 [0.09, 0.10] & 0.33 [0.32, 0.34] & 0.06 [0.05, 0.07]\\
\bottomrule\end{tabular}
}
\caption{Experiment C: two-sided sampled geometry on the pendulum, median [min, max] over seeds. The first panel reports sampled chord geometry; the second reports local and separated-pair margins and the uniform residual.}\label{tab:geometryC}
\end{table}

\begin{table}[t]\centering\footnotesize
\resizebox{\textwidth}{!}{%
\begin{tabular}{lccccc}\toprule
$T$ & restarts & median restart spread & max spread & failure rate & exact-model open-loop restart spread\\\midrule
5 & 6 & 0.07 & 0.36 & 0.69 & $9.98\!\times\!10^{-3}$\\
10 & 6 & 0.11 & 0.54 & 0.66 & 0.05\\
20 & 6 & 0.28 & 0.87 & 0.72 & 0.28\\
\bottomrule\end{tabular}
}
\caption{Reference protocol for Experiment C. The reference action sequence is the elementwise best of six exact-model CEM restarts, each with the same per-restart budget as one learned-planner run; its TOTAL optimization effort is therefore six times that of any single learned arm, which is why $\Delta J$ is reported as a paired difference and not as a certified suboptimality. The restart spread columns quantify how much the reference itself moves between restarts. The last column is the open-loop restart spread of the same optimizer on the exact model: a generic optimizer-budget sensitivity indicator, reported separately and NOT added to $B_T^{\rm eval}$, since it is not a per-arm estimate of the theorem's $\xi_{\rm plan}$.}\label{tab:reference}
\end{table}

\begin{table}[p]\centering\small
\resizebox{0.98\textwidth}{!}{%
\begin{tabular}{lccccccc}\toprule \multicolumn{8}{c}{horizon $T=5$}\\ method & med. $\Delta J$ & mean $\Delta J$ & $p_{90}$ & max & frac. $\Delta J<0$ & med. oracle $\Delta J$ & $B_T^{\rm eval}$\\
\midrule
Pure prediction & 1.41 & 2.00 & 4.77 & 5.81 & 0 & 0.68 & 32.03\\
Covariance spread & 0.10 & 0.33 & 0.66 & 3.48 & 0.03 & 0.06 & 23.14\\
Local hinge only & 0.08 & 0.43 & 0.88 & 2.37 & 0.22 & 0.06 & 33.33\\
Global hinge only & 0.08 & 0.34 & 0.86 & 3.86 & 0.09 & 0.05 & 66.34\\
Proposed local-global & 0.20 & 0.52 & 0.88 & 3.01 & 0.06 & 0.05 & 51.03\\
Two-sided baseline & 0.09 & 0.18 & 0.49 & 1.49 & 0.16 & 0.06 & 36.16\\
random shooting (true model) & 0.10 & 0.19 & 0.44 & 0.74 & -- & -- & --\\
\bottomrule\end{tabular}
}
\medskip
\resizebox{0.98\textwidth}{!}{%
\begin{tabular}{lccccccc}\toprule \multicolumn{8}{c}{horizon $T=10$}\\ method & med. $\Delta J$ & mean $\Delta J$ & $p_{90}$ & max & frac. $\Delta J<0$ & med. oracle $\Delta J$ & $B_T^{\rm eval}$\\
\midrule
Pure prediction & 1.54 & 1.96 & 4.45 & 5.90 & 0 & 0.63 & 69.41\\
Covariance spread & 0.10 & 0.18 & 0.43 & 1.33 & 0.12 & 0.07 & 107.32\\
Local hinge only & 0.07 & 0.28 & 0.65 & 2.63 & 0.16 & 0.06 & 309.23\\
Global hinge only & 0.16 & 0.23 & 0.61 & 2.46 & 0.09 & 0.07 & 290.58\\
Proposed local-global & 0.18 & 0.45 & 0.85 & 3.00 & 0.16 & 0.06 & 984.00\\
Two-sided baseline & 0.06 & 0.19 & 0.56 & 1.68 & 0.16 & 0.06 & 149.55\\
random shooting (true model) & 0.32 & 0.39 & 0.83 & 1.78 & -- & -- & --\\
\bottomrule\end{tabular}
}
\medskip
\resizebox{0.98\textwidth}{!}{%
\begin{tabular}{lccccccc}\toprule \multicolumn{8}{c}{horizon $T=20$}\\ method & med. $\Delta J$ & mean $\Delta J$ & $p_{90}$ & max & frac. $\Delta J<0$ & med. oracle $\Delta J$ & $B_T^{\rm eval}$\\
\midrule
Pure prediction & 1.55 & 1.84 & 4.06 & 5.22 & 0 & 0.50 & 216.47\\
Covariance spread & 0.23 & 0.33 & 0.72 & 1.89 & 0.06 & 0.10 & $1.09\!\times\!10^{3}$\\
Local hinge only & 0.38 & 0.77 & 2.17 & 3.74 & 0.19 & 0.13 & $2.07\!\times\!10^{4}$\\
Global hinge only & 0.20 & 0.43 & 0.95 & 2.29 & 0.25 & 0.10 & $2.62\!\times\!10^{3}$\\
Proposed local-global & 0.32 & 0.86 & 2.29 & 2.94 & 0.09 & 0.11 & $2.10\!\times\!10^{5}$\\
Two-sided baseline & 0.18 & 0.30 & 0.89 & 1.32 & 0.19 & 0.11 & $2.56\!\times\!10^{3}$\\
random shooting (true model) & 0.26 & 0.37 & 0.62 & 2.30 & -- & -- & --\\
\bottomrule\end{tabular}
}
\caption{Planning outcomes on the true pendulum. Every learned-representation CEM run uses population 48, 4 iterations, and 12 receding-horizon steps over 32 initial states. The exact-model reference is the best of six independent CEM restarts with the same per-restart budget, hence six times the total optimization effort. Random shooting uses the true dynamics with a matched action-sample budget and is not a CEM row. $\Delta J$ is a paired cost difference against the best-of-six reference, not a certified suboptimality. Every cell is the median across seeds of a per-seed summary over initial states. The oracle column uses the true dynamics with the learned cost heads. $B_T^{\rm eval}$ excludes any planner-tolerance term and is not a certified bound.}\label{tab:planning}
\end{table}

\begin{table}[p]\centering\small
\resizebox{0.98\textwidth}{!}{%
\begin{tabular}{lccccc}\toprule method & $\varepsilon_\ell$ & $\varepsilon_g$ & $L_F$ & $L_{\ell,z}$ & $L_{\widehat g}$\\
\midrule
Pure prediction & 1.82 [1.64, 2.19] & 3.58 [3.15, 4.79] & 1.10 [1.08, 1.41] & 9.70 [7.98, 23.74] & 45.26 [20.75, 45.66]\\
Covariance spread & 0.29 [0.26, 0.34] & 0.46 [0.33, 0.51] & 1.23 [1.19, 1.25] & 2.24 [2.09, 2.63] & 5.74 [4.84, 7.10]\\
Local hinge only & 0.28 [0.26, 0.31] & 0.46 [0.42, 0.52] & 1.52 [1.49, 1.73] & 3.79 [3.59, 5.04] & 8.46 [7.72, 9.97]\\
Global hinge only & 0.71 [0.55, 0.77] & 1.34 [1.05, 1.46] & 1.23 [1.21, 1.43] & 19.08 [8.73, 20.92] & 33.37 [14.78, 45.34]\\
Proposed local-global & 0.29 [0.26, 0.37] & 0.54 [0.40, 0.55] & 1.57 [1.54, 1.78] & 5.37 [4.96, 19.98] & 10.14 [9.83, 44.01]\\
Two-sided baseline & 0.34 [0.33, 0.46] & 0.53 [0.52, 0.83] & 1.32 [1.16, 1.50] & 8.74 [6.47, 13.09] & 16.40 [12.27, 17.32]\\
\bottomrule\end{tabular}
}
\caption{Experiment C cost-head compatibility and Lipschitz factors, median [min, max] over seeds. These quantities enter the evaluation-set proxy and are reported separately from the planning outcomes for readability.}\label{tab:planningfactors}
\end{table}

\FloatBarrier

\subsection{Discussion}\label{sec:numdisc}

The three experiments deliver complementary calibration of the theory under the common protocol (H1)--(H4).

Experiment~A establishes the obstruction analytically, in both the scalar and the full-rank covariance forms, and then shows what happens when the obstruction is placed in front of trained networks under a common latent range. The two-sided reporting is what makes the trained comparison meaningful: a one-sided diagnostic can be improved by scale inflation alone, and the $(\widehat c_{\min},\widehat c_{\max})$ scatter of Figure~\ref{fig:folding} distinguishes the two.

Experiment~B is the finite-capacity calibration study most directly motivated by the finite-sample theorem: it uses fixed samples, a norm-constrained class small enough for near-global restart diagnostics, and an explicit evaluation of selected threshold ingredients.  The plug-in decomposition identifies the uniform covering estimate as the dominant term at these sample sizes and thereby points directly to sharper class-specific complexity bounds and a posteriori finite-net validation as the most effective routes to a larger certified regime.

Experiment~C directly connects the geometry isolated by the theorem with planning reliability on the true system.  Every geometry-aware objective restores resolution and strongly improves planning relative to the residual-only reference.  Comparable performance among several regularizers demonstrates the robustness of the geometric principle, and the local--global hinge contributes the direct deterministic certificate and the sharp folding diagnostic.

Finally, the numerical campaign organizes optimization and uniformity quantities in the same modular form as the theorem: objective curves, terminal gradients, restart spreads, finite-set suprema, adversarial refinements, and measured Lipschitz budgets are retained separately by the authors and will be provided upon request.  The tables and figures in this supplement summarize that evidence; a continuum certificate would additionally require the validated remainders stated in the main article's finite-net proposition.

\bibliographystyle{unsrt}
\bibliography{bibfile}